\documentclass[11pt]{amsart}
\usepackage{graphicx} 
\usepackage{subcaption}
\usepackage{booktabs}
\usepackage[utf8]{inputenc}
\usepackage[margin=1in]{geometry}
\usepackage{amsfonts,amssymb,amsmath,amsthm,tikz,comment,mathtools,setspace,float,stmaryrd,datetime,bbm,adjustbox}
\usepackage[toc]{appendix}
\usepackage{enumitem}
\usepackage{xcolor}

\usepackage[hidelinks]{hyperref}
\usepackage{scalerel} 
\numberwithin{equation}{section}
\usepackage{chngcntr}
\counterwithin{figure}{section}

\allowdisplaybreaks

\newcommand{\N}{\mathbb{N}}
\newcommand{\R}{\mathbb{R}}
\newcommand{\Q}{\mathbb{Q}}
\newcommand{\Z}{\mathbb{Z}}
\newcommand{\F}{\mathcal{F}}

\newcommand{\X}{\mathcal{X}}

\newcommand{\ve}{\varepsilon}
\newcommand{\Ll}{\mathcal{L}}
\newcommand{\Pp}{\mathbb P}
\newcommand{\f}{\frac}
\newcommand{\vep}{\varepsilon}

\newcommand{\wt}{\widetilde}

\newcommand{\Rup}{\R_{{\scaleobj{0.7}{\uparrow}}}^4}
\newcommand{\wh}{\widehat}

\newcommand{\W}{W}

\newcommand{\dir}{\xi}

\newcommand{\graph}{\mathcal G}

\newcommand{\sig}{{\scaleobj{0.8}{\boxempty}}} 
\newcommand{\sigg}{{\scaleobj{0.9}{\boxempty}}} 

\newcommand{\kpzs}{\mathfrak{h}} 
\newcommand{\fun}{\Phi} 
\newcommand{\intc}{\tau}
\newcommand{\sn}{\mathfrak{sn}}
\newcommand{\es}{\mathcal{B}}
\newcommand{\hm}{\mathcal{H}}

\newcommand{\be}{\begin{equation}}
\newcommand{\ee}{\end{equation}}

\newcommand{\Wf}[1]{W\star#1\,}

\DeclareMathOperator*{\argmax}{arg\,max}   

\def\dirs{\pmb{\xi}} 

\def\tex{t_{\text{ext}}}
\def\xex{x_{\text{ext}}}
\def\pex{p}
\def\clr{\mathfrak{C}}
\def\ls{\mathfrak{a}}
\def\rs{\mathfrak{b}}
\def\Xs{\mathfrak{X}}
\def\Rm{\hat{\mathbb{R}}}

\newcommand{\Dom}{\operatorname{Dom}}
\newcommand{\supp}{\operatorname{supp}}

\newtheorem{theorem}{Theorem}[section]
\newtheorem{proposition}[theorem]{Proposition}
\newtheorem{corollary}[theorem]{Corollary}
\newtheorem{lemma}[theorem]{Lemma}

\theoremstyle{definition}
\newtheorem{definition}[theorem]{Definition}
\newtheorem{example}[theorem]{Example}

\theoremstyle{remark}
\newtheorem{remark}[theorem]{Remark}

\title[Eternal solutions for the KPZ fixed point]{Classification of the eternal solutions and multiple coalescing shocks in the KPZ fixed point}

\author{Sudeshna Bhattacharjee, Ofer Busani, and Evan Sorensen}
\address{Sudeshna Bhattacharjee\\University of Maryland, College Park\\Office: 4117, William E. Kirwan Hall\\4176 Campus Dr, College Park\\MD 20742, USA}
\email{sudeshna@umd.edu}
\urladdr{https://sites.google.com/view/sudeshnabhattacharjee/home}
\address{Ofer Busani \\ University of Edinburgh\\ School of Mathematics \\ 5321, James Clerk Maxwell Building\\
Peter Guthrie Tait Road\\
Edinburgh, EH9 3FD}
\email{obusani@ed.ac.uk}
\urladdr{https://oferbusani.github.io/}
\address{Evan Sorensen \\ Indiana University \\ Department of Mathematics \\ Room 311 Rawles Hall, 831 E 3rd St, Bloomington, IN 47405, USA}
\email{elsorens@iu.edu}
\urladdr{https://sites.google.com/view/evan-sorensen}

\begin{document}
\maketitle

\begin{abstract}
    We give a complete classification of the eternal solutions for the KPZ fixed point. Each of these is a (possibly infinite) max-plus convolution of the known eternal solutions, called Busemann functions. Specifically, we show that the space of eternal solutions is homeomorphic to a certain space of upper semicontinuous functions that encodes the weights of each of the Busemann functions. As a result, we show that the Busemann process gives a spectral decomposition of eternal solutions of the KPZ fixed point analogous to that for Hamilton-Jaccobi equations. The resulting evolution of the KPZ fixed point exhibits a shock at each of the boundaries between the different Busemann functions. Moving forward in time, the shocks coalesce, while moving backwards in time, additional shocks can form.  We describe several geometric properties of this tree of shocks. This completes the study of eternal solutions initiated in earlier work of the second and third authors with Sepp\"al\"ainen and continued in the previous work of the authors and in recent work of Rassoul-Agha and Sweeney.
\end{abstract}
\tableofcontents

\section{Introduction} 

\subsection{Growth models and the Kardar-Parisi-Zhang (KPZ) universality class} The KPZ universality class contains a broad class of random growth models with one time and one space dimension, each exhibiting universal scaling exponents and limiting statistics. The last ten years has seen tremendous progress in this area. In particular, in the case where the spatial domain is the full line, the complete space-time scaling limits have been constructed. These objects are known as the KPZ fixed point \cite{KPZfixed} and the directed landscape \cite{Directed_Landscape}, which built off many previous results in capturing the marginal distributions of these limiting processes \cite{Baik-Deift-Johansson-1999,Johansson-2000,Prahofer-Spohn-02,CorwinHammond}. Each of the models in the KPZ universality class exhibits an evolving height function converging to the KPZ fixed point. These height functions are often described by a Hamilton-Jacobi PDE with stochastic forcing, or are a discrete approximation of such a PDE. Viscous Hamilton-Jacobi equations can be solved by first solving a linear equation, then taking its logarithm. This is known as the Cole-Hopf-transform. The corresponding linear equation has a Green's function, and in the case of stochastic forcing, this Green's function is random. Under an appropriate scaling, the logarithm of this Green's function converges to the directed landscape. This was proved in the case of the KPZ equation in \cite{Wu-23}. Thus, the directed landscape can be thought of as the random medium through which solutions of the KPZ fixed point evolve. Indeed, the KPZ fixed point can be described via a variational formula involving the directed landscape, recorded below as \eqref{eq:KPZFP_DL}. This is analogous to the solution of a inviscid Hamilton-Jacobi PDE, although it has been shown that the KPZ fixed point does not itself solve a stochastic PDE \cite{Himwich-Parekh-2024}. Convergence to the directed landscape has also been shown for many metric-like models, including exactly solvable last-passage percolation models \cite{Dauvergne-Virag-21}, the asymmetric simple exclusion process \cite{Aggarwal-Corwin-Hegde-2024}, and the inverse-gamma polymer \cite{Zhang-2025}. More recently, a characterization of the directed landscape from the KPZ fixed point was given in \cite{Dauvergne-Zhang-2024}.

\subsection{The directed landscape}
Let $\Rup = \{(x,s;y,t) \in \R^4: s < t\}$. The directed landscape is a random continuous function $\Ll:\Rup \to \R$ originally constructed in \cite{Directed_Landscape} as the scaling limit of last-passage percolation. It was shown in \cite{reflected_KPZfixed} that the KPZ fixed point with initial data $f$ from initial time $s$ may be represented as
\be \label{eq:KPZFP_DL}
\kpzs_t(x;f,s) = \sup_{y \in \R}[f(y) + \Ll(y,s;x,t)],\quad t > s,x \in \R.
\ee
A natural state space for the KPZ fixed point consists of upper semi-continuous functions such that the supremum in \eqref{eq:KPZFP_DL} is finite for all $(x,t) \in \R^2$. In \cite{reflected_KPZfixed}, it was shown that the identity in \eqref{eq:KPZFP_DL} is valid for upper-semicontinuous initial conditions $f:\R \to [-\infty,\infty)$ such that $f(x) \le A + B|x|$ for some constants $A,B > 0$ and $f(x) > -\infty$ for some $x \in \R$. However, we will consider a larger space of all functions such that the supremum in \eqref{eq:KPZFP_DL} is finite for all $(x,t) \in \R \times \R_{>s}$. Specifically, the solution is finite for all $(x,t) \in \R \times \R_{>s}$ if and only if 
\[
\sup_{y \in \R}[f(y) - ay^2] < \infty,\quad\text{for all }a > 0.
\]
Furthermore, when this condition holds, the solution is continuous on the domain $\R \times \R_{>s}$ (see Lemmas \ref{lem:f-ay_bd}-\ref{lem:general_restrict_max}). With the directed landscape constructed, we take \eqref{eq:KPZFP_DL} as our definition of the KPZ fixed point, as an alternative to the definition originally given in \cite{KPZfixed} in terms of Fredholm determinant formulas for the transition probabilities. We let $(\Omega,\F,\Pp)$ be a probability space on which the directed landscape is defined, and we will let $\omega$ denote a generic sample point from this space.  

\subsection{Invariant measures and eternal solutions}
As a process in time, the KPZ fixed point is a Markov process, so it is natural to ask about its invariant measures. Strictly speaking, there are no invariant measures for the KPZ fixed point since it is a model of random growth. However, if one considers the recentered solution 
\[
\kpzs_t(x;f,s) - \kpzs_t(0;f,s),
\]
then there is a one-parameter family of invariant measures, namely Brownian motions with diffusion coefficient $\sqrt 2$ and arbitrary drift. This invariance has been known since the construction of the KPZ fixed point \cite{KPZfixed} since, for the prelimiting object (the totally asymmetric exclusion process), i.i.d. Bernoulli measures are invariant. In \cite{Busa-Sepp-Sore-22a}, a one force--one solution principle was shown for the KPZ fixed point. That is, when started with an arbitrary initial condition with asymptotic slopes as $y \to -\infty$ and $y \to +\infty$, the recentered solution at large times converges in distribution to a Brownian motion with drift, where the drift is determined by a phase diagram of the initial slopes. The only case not considered in that work is when the slope to $+\infty$ is $\theta > 0$ and the slope to $-\infty$ is $-\theta$. It was shown in \cite{Dunlap-Sorensen-2025} that there are no invariant measures of this form. This was used in that work to prove that Brownian motions with drift are the only extremal invariant measures for the KPZ fixed point. 

The convergence result in \cite{Busa-Sepp-Sore-22a} actually proved more than just convergence to an invariant measure. Specifically, if we start the process at a time in the distant past, then the recentered solution across all times converges almost surely to an eternal solution whose spatial increments are stationary in time. We define eternal solutions now. 
\begin{definition} \label{def:eternal}
An \textbf{eternal solution} for the KPZ fixed point is any function $b:\R^2 \to [-\infty,\infty)$ such that, for every $x \in \R$ and $s < t$, we have 
\begin{equation} \label{eq:bL_def}
b(x,t) = \sup_{y \in \R}[b(y,s) + \Ll(y,s;x,t)]. 
\end{equation}
We say that $b$ is a \textbf{finite eternal solution} if $b(x,t) > -\infty$ for all $(x,t) \in \R^2$.
\end{definition}
\begin{remark} \label{rmk:b=-inf}
Allowing $b$ to take the value $-\infty$ is done for topological convenience. Since $\Ll$ is real-valued, one can readily see that if $b(x,t) = -\infty$ for some $(x,t) \in \R^2$, then $b(y,s) = -\infty$ for every $s < t$ and $y \in \R$. Hence, $b \equiv -\infty$ is the only eternal solution that is not a finite eternal solution.  
\end{remark}

The eternal solutions obtained as limits in \cite{Busa-Sepp-Sore-22a} are called \textbf{Busemann functions}. In the context of the KPZ fixed point, these were first constructed in \cite{Rahman-Virag-21}. The Busemann functions are indexed by a parameter $\xi \in \R$ related to the slope of the initial condition. The Busemann function is denoted as $W^\xi(0,0;x,t)$ and satisfies
\be \label{eq:slope}
\lim_{|x| \to \infty} \f{W^\dir(0,0;x,t)}{x} = 2\dir,
\ee
for all $t \in \R$.
The work \cite{Busa-Sepp-Sore-22a} extended this to a \textbf{Busemann process} defined for all $\xi \in \R$. However, care must be taken in doing this, as this extension introduces a random, countably dense set of discontinuities, denoted as $\Xi = \Xi_\omega$ (noting that the set $\Xi$ is random). We therefore construct an extended parameter space of the Busemann process, denoted as $\Xs = \Xs_\omega$. The set $\Xs$ consists of elements $\xi = \eta^\sig$, where $\eta \in \R$ and $\sigg \in \{-,0,+\}$. Specifically, we define the set  
\begin{equation} \label{eq:X_space}
\Xs:=\{\eta^{0}:\eta\in \R\setminus \Xi\}\cup \{\eta^{\sig}:\eta\in \Xi,\sigg \in \{-,+\}\}.
\end{equation}
The subscript $\omega$ is used to emphasize that the set is random (depending on the random countably infinite dense set $\Xi$).  We define the projections $\mathcal P:\Xs \to \R$ and $\sn:\Xs \to \{-,0,+\}$ as follows:
\be \label{eq:P_s_funs}
\mathcal P(\xi) = \eta,\quad\text{and}\quad \sn(\xi) = \sigg,\quad\text{when}\quad \xi = \eta^\sig.
\ee
We call the elements of $\Xs$ \textbf{colors}. Define the subspaces
\begin{equation}
    \begin{aligned}
        \Xi^\sig&=\{\dir\in \Xs: \sn(\dir)=\sigg\},\quad \text{and}\quad \Xi^c = \{\dir \in \Xs: \sn(\dir) = 0\}.
    \end{aligned}
\end{equation}
where we have made a slight abuse of notation by identifying the complement $\Xi^c$, which is nominally a subset of $\R$, as a subset of $\Xs$. We can therefore write $\Xs$ as the disjoint union $\Xi^c \cup \Xi^- \cup \Xi^+$.

We define a total order on the set $\Xs$ as follows: we say that $\xi_1 < \xi_2$ if $\mathcal P(\xi_1) < \mathcal P(\xi_2)$ and we also say that $\eta^- < \eta^+$ for $\eta \in \Xi$. We readily see that this defines a total ordering on $\Xs$, so we can make sense of the intervals $(\xi_1,\xi_2), [\xi_1,\xi_2),(\xi_1,\xi_2]$, and $[\xi_1,\xi_2]$.

 We now define a (random) topology on $\Xs$ as follows: First, for $\ve > 0$, we define the $\ve$-neighborhood of $\xi \in \Xs$ as follows:
 \begin{equation}\label{eq:o36}
I_\ve(\dir)=
    \begin{cases}
        (\dir-\ve,\dir+\ve) & \dir\in\Xi^c\\
        [\dir,\dir+\ve) & \dir\in\Xi^+\\
         (\dir-\ve,\dir] & \dir\in\Xi^-.
    \end{cases}
\end{equation}
where if $\xi = \eta^\sig \in \Xs$, $\xi - \ve$ denotes the element $(\eta -\ve)^+$ if $\eta -\ve \in \Xi$ and $(\eta - \ve)^0$ otherwise. Similarly, $\xi + \ve$ denotes $(\eta + \ve)^-$ if $\eta + \ve \in \Xi$ and $(\eta + \ve)^0$ otherwise. The topology on $\Xs$ is then the topology generated by the sets $\{I_\ve(\xi): \ve > 0, \xi \in \Xs\}$. 
 \begin{remark} \label{rmk:convergence}
  One can readily see that convergence of sequences $\{\dir_n\}$ in this topology is then characterized as follows:
\begin{enumerate}
    \item If $\dir \in \Xi^c$, then $\dir_n \to \dir$ if and only if $\mathcal P(\xi_n) \to \mathcal P(\xi)$. 
    \item If $\dir \in \Xi^-$, then $\dir_n \to \dir$ if and only if $\mathcal P(\dir_n) \nearrow \mathcal P(\dir)$.
    \item If $\dir \in \Xi^+$, then $\dir_n \to \dir$ if and only if $\mathcal P(\dir_n) \searrow \mathcal P(\dir)$. 
 \end{enumerate}      
 \end{remark}
We show in Lemma \ref{lem:X_metric} that this topology makes $\Xs$ a metric space. We then write the Busemann process as 
\[
\Bigl(W^{\dir}(0,0;x,t): (x,t) \in \R^2, \dir \in \Xs\Bigr),
\]
When $\eta \in \Xi$, the Busemann functions $W^{\eta-}$ and $W^{\eta+}$  denote the left and right limits of the Busemann process at $\eta$.

 When $\xi \notin \Xi$, the Busemann function is the only eternal solution satisfying \eqref{eq:bL_def} (up to addition by a constant). This is known as the \textbf{one force--one solution principle}. In our previous work \cite{bhat-bus-soren}, we studied the set of eternal solutions satisfying \eqref{eq:bL_def} when $\xi \in \Xi$. From the construction of the Busemann process  we showed that there are uncountably many of these eternal solutions. Each is a ``patching" together of the solutions $(x,t) \mapsto W^{\dir-}(0,0;x,t)$ and $(x,t) \mapsto W^{\dir +}(0,0;x,t)$, separated by a bi-infinite competition interface.  Alternatively, one can write these eternal solutions as 
\be \label{b_max_comb}
b(x,t) = \max\Bigl(W^{\xi-}(0,0;x,t) + \varphi_-, W^{\xi+}(0,0;x,t) + \varphi_+\Bigr),
\ee
where $\varphi_-,\varphi_+ \in [-\infty,\infty)$ (not both $-\infty$), and the associated bi-infinite interface is a continuous path $(\tau_t,t)_{t \in \R}$ along which
\[
W^{\xi-}(0,0;\tau_t,t) + \varphi_- = W^{\xi+}(0,0;\tau_t,t) + \varphi_+.
\]
It is fairly immediate from the definition of eternal solutions that any combination of the form \eqref{b_max_comb} produces another eternal solution. Hence, the novelty of \cite{bhat-bus-soren} came from showing that all eternal solutions satisfying \eqref{eq:bL_def} are of this form. This came from a careful analysis of the associated semi-infinite geodesics and Busemann process.  

One can also see quickly that we can produce eternal solutions from this maximal combination with \textit{any} two eternal solutions. For example, if $\xi_1 < \xi_2$ (ignoring the $\pm$ distinction for now), then for any $\varphi_1,\varphi_2 \in \R$, the combination
\[
b(x,t) := \max\Bigl(W^{\xi_1}(0,0;x,t) + \varphi_1, W^{\xi_2}(0,0;x,t) + \varphi_2\Bigr)
\]
gives an eternal solution satisfying 
\be \label{eq:left_right_slopes}
\lim_{x \to -\infty} \f{b(x,t)}{x} = \xi_1,\quad\text{and}\quad \lim_{x \to +\infty} \f{b(x,t)}{x} = \xi_2.
\ee
It is known from \cite{Busa-Sepp-Sore-22a,Dunlap-Sorensen-2025} that there are no invariant measures with different slopes as $x \to -\infty$ and $x \to \infty$. In fact, Brownian motions with drift are the only extremal invariant measures. Hence, since we have an eternal solution of this form, the interface must diverge as $t \to \infty$, and in any compact window of space (values of $x$), the solution converges to exactly one of the eternal solutions as $t \to \infty$.

Furthermore, one is not restricted to using only two eternal solutions in creating a new eternal solution. The maximum of any collection of eternal solutions (even an infinite collection) will also produce an eternal solution, provided that the resulting solution is finite  (see Lemma \ref{lem:interchange_sups}). This creates a tree of interfaces that is more interesting and difficult to analyze. Furthermore, the first limit in \eqref{eq:left_right_slopes} can be $-\infty$, and the second limit can be $+\infty$. Since no invariant measure has this property, for each fixed compact window of space, the solution in that window will not converge to any particular Busemann function; instead, infinitely many of the Busemann functions will pass through the window as $t \to \infty$.

We introduce the following notation: For a function $\varphi:\Xs \to [-\infty,\infty)$, define the function $W\star \varphi:\R^2 \to [-\infty,\infty]$ by
\[
(W\star \varphi)(x,t) := \sup_{\dir \in \Xs}[\varphi(\dir) + W^\dir(0,0;x,t)].
\]

Let $\es_\omega$ denote the set of eternal solutions. A priori, we do not assume anything about the regularity of the eternal solutions; we only assume that they satisfy  Definition \ref{def:eternal}. However, in Corollary \ref{cor:cts_bounded} we show that $(x,t) \mapsto b(x,t)$ is continuous for any finite eternal solution $b$ (See the recent preprint \cite[Theorem 3.7]{RassoulAghaSweeney2026} for an alternative proof of the continuity of $x \mapsto b(x,t)$ for each $t$). Hence, $\es_\omega$ is a topological space when considered as a subspace of the space $C(\R^2,[-\infty,\infty))$ equipped with the uniform-on-compact topology. Let $\mathcal T_{\es_\omega}$ denote the topology on this space. It is clear from the discussion above that the space  $\es_\omega$ is closed under the $\text{max}$ operator $\vee$. 

Next, let $\operatorname{USC}(\Xs)$ denote the space of all upper semicontinuous functions $\varphi:\Xs \to [-\infty,\infty)$. Here, upper semicontinuity is defined in the usual way by the condition that 
\[
\limsup_{\eta \to \xi} \varphi(\eta) \le \varphi(\xi),
\]
for all $\xi \in \Xs$, where the convergence $\eta \to \xi$ is with respect to the topology on $\Xs$. Next, define
\begin{equation} \label{eq:Phi_omega_def}
\fun_\omega:=\Bigl\{\varphi\in \operatorname{USC}(\Xs):\limsup_{|\mathcal P(\xi)| \to \infty} \f{\varphi(\xi)}{\mathcal P(\xi)^2} = -\infty\Bigr\}.
\end{equation}

We equip the space $\fun_\omega$ with a topology $\mathcal{T}_{\fun_\omega}$ determined by convergence of hypographs and uniform boundedness of $\f{\varphi(\dir)}{\mathcal P(\dir)^2}$ outside a compact set of $\dir$. This topology is defined precisely in Definition \ref{def:fun_convergence}. The space $\Phi$ also comes equipped with its own version of the max operator $\vee$, defined in the obvious way by $(\varphi_1 \vee  \varphi_2)(\dir) := \varphi_1(\dir) \vee \varphi_2(\dir)$. Our main result is the following
\begin{theorem}\label{thm:mr}
For almost-every $\omega$, the mapping $\hm_\omega[\varphi] := W\star \varphi$ is a homeomorphism $(\fun_\omega,\mathcal T_{\fun_\omega}) \to (\es_\omega,\mathcal T_{\es_\omega})$ that satisfies
    \be \label{eq:hom_max}
    \hm[\varphi_1] \vee \hm[\varphi_2] = \hm[\varphi_1 \vee \varphi_2],\quad \forall \varphi_1,\varphi_2 \in \Phi,
    \ee
    and whose inverse is given by
    \be \label{eq:hom_inv}
    \hm^{-1}[b](\dir) = \inf_{(x,t)\in\R^2}\{b(x,t)-W^\dir(0,0;x,t)\},\quad \forall\; b \in \es.
    \ee
\end{theorem}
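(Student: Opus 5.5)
The plan is to prove the theorem in four stages: well-definedness of $\hm$, the max-identity \eqref{eq:hom_max}, bijectivity with inverse \eqref{eq:hom_inv}, and bicontinuity.

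\textbf{Well-definedness and the max-identity.} Fix $\varphi\in\fun_\omega$. To show $W\star\varphi$ is a finite eternal solution (or $\equiv-\infty$), I first check finiteness. The growth condition in \eqref{eq:Phi_omega_def} gives $\varphi(\xi)\le -a\mathcal P(\xi)^2$ for large $|\mathcal P(\xi)|$ and any $a>0$. Quantitative bounds on the Busemann process, uniform over $(x,t)$ in compacts, give $W^\xi(0,0;x,t)\le C(1+|\xi|)(|x|+|t|)+ C\xi^2|t|$ or similar. Combining these shows the supremum is attained over a compact set of colors and is finite.

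The eternal property then follows from Lemma \ref{lem:interchange_sups}: interchange $\sup_\xi$ and $\sup_y$, and use that each $W^\xi$ is eternal. The identity \eqref{eq:hom_max} is immediate, since a supremum over $\Xs$ of a pointwise maximum splits into the maximum of the two suprema.

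\textbf{Bijectivity and the inverse formula.} Define $\Psi[b](\xi)=\inf_{(x,t)}\{b(x,t)-W^\xi(0,0;x,t)\}$. I will show two things.

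First, $\Psi[W\star\varphi]=\varphi$. The inequality $\ge$ is trivial. For $\le$, I need points $(x_n,t_n)$ at which the $\xi$-term dominates all others in the supremum up to $o(1)$. The natural choice is to follow the $\xi$-directed semi-infinite geodesic backwards to $t\to-\infty$, along which $W^\xi$ beats $W^\eta$ for $\eta\neq\xi$ by an amount that grows linearly in $|t|$. Upper semicontinuity of $\varphi$ at $\xi$ then controls the colors $\eta$ close to $\xi$, and this is exactly where the topology on $\Xs$ (the one-sided neighborhoods at $\Xi^\pm$) is needed. For $\xi=\eta^\pm$ one uses the $\eta\pm$ geodesics.

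Second, and this is the \textbf{main obstacle}, surjectivity: every finite eternal solution $b$ equals $W\star\Psi[b]$. The inequality $b\ge W\star\Psi[b]$ is immediate. For the reverse, fix $(x,t)$ and take $s\to-\infty$ with maximizers $y_s$ in $b(x,t)=\sup_y[b(y,s)+\Ll(y,s;x,t)]$. Using the continuity and growth control of Corollary \ref{cor:cts_bounded}, I would show that $y_s/|s|$ stays bounded. Passing to a subsequence, $y_s/|s|\to$ some direction, so the backward geodesics from $(x,t)$ converge to a semi-infinite geodesic with some color $\xi$.

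Along that geodesic, $b$ increments coincide with $W^\xi$ increments. This gives $b(x,t)-W^\xi(0,0;x,t)=\lim[b(y_s,s)-W^\xi(0,0;y_s,s)]\ge\Psi[b](\xi)$, hence equality. The technical heart is this: the geodesic comparison needs the $\pm$ refinement at $\Xi$ together with the coalescence structure from \cite{bhat-bus-soren}, generalizing its two-color argument.

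Finally, $\Psi[b]\in\fun_\omega$. Upper semicontinuity holds because $\Psi[b]$ is an infimum of functions continuous in the $\Xs$-topology, since $\xi\mapsto W^\xi$ is continuous there. The quadratic decay comes from choosing $(x,t)$ with $t$ negative and $x$ proportional to $\xi t$, where $W^\xi$ grows like $\xi^2|t|$ while $b$ is bounded by its continuity and growth.

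\textbf{Bicontinuity.} For continuity of $\hm$, hypograph convergence plus the uniform quadratic tails lets the supremum be localized to a compact set of colors. There the maps $\xi\mapsto W^\xi$ are uniformly continuous on compacts in $(x,t)$, and standard epi/hypo-convergence arguments give uniform convergence of the suprema. For continuity of $\hm^{-1}$, I would argue that uniform-on-compact convergence of $b_n$ and a uniform bound on the maximizer directions (from the surjectivity step, made uniform in $n$) give both halves of hypograph convergence and the uniform tail bound.
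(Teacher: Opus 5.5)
There is a genuine gap in your surjectivity step. To get $b(x,t)\le (W\star\Psi[b])(x,t)$ you need a color $\xi$ with $\Psi[b](\xi)\ge b(x,t)-W^\xi(0,0;x,t)$. In other words, you must show that $(x,t)$ minimizes $(y,s)\mapsto b(y,s)-W^\xi(0,0;y,s)$ over the \emph{entire plane}.

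What you actually derive is that $b-W^\xi(0,0;\cdot)$ is constant along the backward geodesic, and hence $b(x,t)-W^\xi(0,0;x,t)\ge\Psi[b](\xi)$. That is the trivial direction: it holds for every $\xi$ and every $(x,t)$ by definition of the infimum. So ``hence equality'' does not follow, since constancy along one path says nothing about points off it. The missing ingredient is a comparison with arbitrary points, and it uses coalescence rather than a direction limit.

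Take $g=g^{b,L}_{(x,t)}$. By Lemma \ref{lem:SIG_facts_collect}\ref{it:some_g} it is a $\xi$-geodesic for $\xi=\xi^b_{x,t}$; this classification, not the existence of $\lim y_s/|s|$, is what lets you replace $\Ll$ by $W^\xi$ along it. For any $(y,s)$, the geodesic $g^{\xi,L}_{(y,s)}$ coalesces with $g$ at some $(z,r)$ by Lemma \ref{lem:all_coalesce}. Then $b(y,s)\ge b(z,r)+\Ll(z,r;y,s)=b(z,r)+W^\xi(z,r;y,s)$, while $b(x,t)=b(z,r)+W^\xi(z,r;x,t)$. Subtracting and using additivity gives $b(y,s)-W^\xi(0,0;y,s)\ge b(x,t)-W^\xi(0,0;x,t)$.

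Hence $\Psi[b](\xi^b_{x,t})=b(x,t)-W^{\xi^b_{x,t}}(0,0;x,t)$ and $b=W\star\Psi[b]$. The growth condition on $\Psi[b]$ then follows from Proposition \ref{prop:Wstarphi_bd}. The paper reaches the same representation through the time-slice decomposition in Lemmas \ref{lem:s3}--\ref{lm:glb} and Proposition \ref{prop:exists_varphi}.

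The rest of your plan differs from the paper but is repairable. The paper proves injectivity first, via Corollary \ref{cor:o1} and dominating colors (Lemma \ref{lm:o5}, Proposition \ref{prop:uniq}), and only then deduces \eqref{eq:hom_inv} in Proposition \ref{prop:Legendre}. Proving $\Psi[W\star\varphi]=\varphi$ directly, as you propose, would bypass the dominating-color machinery and give injectivity at once.

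Your heuristic does not carry that step, though. The advantage of $W^\xi$ over $W^\eta$ along the $\xi$-geodesic degenerates as $\mathcal P(\eta)\to\mathcal P(\xi)$, and it is sublinear for the partner color of $\xi\in\Xi^\pm$. What works is the following. At $x_T=g^{\xi,L}_{(0,0)}(T)$ one has exactly $W^\eta(0,0;x_T,T)\le W^\xi(0,0;x_T,T)$ for all $\eta$, by \eqref{eq:landscape_leq} and Lemma \ref{lem:W=landscape}. Together with upper semicontinuity, this handles $\eta\in I_\ve(\xi)$. The colors outside $I_\ve(\xi)$, uniformly in $\eta$, are handled by the localization of Lemmas \ref{lm:o8}--\ref{lm:o2}, with $\varphi(\xi)$ replaced by an arbitrary constant when $\varphi(\xi)=-\infty$.

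Finally, for continuity of $\hm^{-1}$, the $\limsup$ half of hypograph convergence and Condition \eqref{it:c1} do follow from $\varphi_n(\xi)\le b_n(x,t)-W^\xi(0,0;x,t)$ at fixed $(x,t)$. However, ``a uniform bound on maximizer directions'' gives you no recovery sequences. The paper instead combines the compactness Lemma \ref{lm:o4} with continuity and injectivity of $\hm$.
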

Some remarks are in order.
\begin{remark} \label{rmk:non_unique}
    In the definition of $\Phi_\omega$ \eqref{eq:Phi_omega_def}, the superquadratic decay condition ensures that $(W\star \varphi)(x,t) < \infty$ for all $(x,t) \in \R^2$. In general, we have that $W\star \varphi < \infty$ if and only if the superquadratic decay condition holds and $\varphi$ is bounded from above on bounded sets (see Proposition \ref{prop:Wstarphi_bd}). The  condition that $\varphi \in \operatorname{USC}(\Xs)$ in \eqref{eq:Phi_omega_def} gives the uniqueness of the representation $W\star \varphi$ so that the map $\hm$ in Theorem \ref{thm:mr} is a bijection. This uniqueness is one of the key consequences of Theorem \ref{thm:mr}. Without the condition that $\varphi \in \operatorname{USC}(\Xs)$, the uniqueness does not hold.  For example, if one takes $\varphi(\dir) = 0$ for $\dir \in [0,1]$ and $-\infty$ otherwise, Proposition \ref{prop:exists_varphi} below states that $W\star \varphi = W\star \wh \varphi$ for some $\wh \varphi$ with countable support. However, imposing the condition of countable support is not enough to give uniqueness. As an example, consider the function $\varphi$ defined by $ \varphi(n^{-1}) = -\f{1}{n}$ for $n \in \N$ and $\varphi(\xi) = -\infty$ otherwise. We immediately see that $\varphi$ has countable support, and by using continuity of the Busemann process at $\dir = 0$, one can see that $W\star \varphi = W\star \wh \varphi$ if we define $\wh \varphi(\dir) = \varphi(\dir)$ for $\dir \neq 0$ and $\wh \varphi(0) = 0$. We therefore see that the uniqueness statement is quite delicate. It is shown in Section \ref{sec:unique}.
\end{remark}

\begin{remark}
    Since $W^\xi(0,0;x,t)$ grows as $2 \xi x$ for each fixed $t \in \R$, we see that if we choose $\varphi(\xi) \sim -\xi^{\alpha}$ as $\dir \to \infty$ along some subsequence for $\alpha > 2$, then for $x > 0$,
    \[
    b(x,t) \sim \sup_{\xi}[2\xi x - \xi^\alpha] \sim x^{\f{\alpha}{\alpha -1}},
    \]
    so we see a continuum of spatial growth rates for $b(x,t)$ greater than or equal to $O(x)$ and less than $O(x^2)$, depending on the choice of $\alpha$.
\end{remark}

\subsection{Consequences of Theorem \ref{thm:mr}} \label{sec:consequences}
We now discuss some important and very interesting consequences to Theorem \ref{thm:mr}.

\medskip \noindent {\textbf{The Busemann process as a spectral decomposition of the KPZ fixed point eternal solutions}}. A Hamilton Jacobi equation (HJE) with concave Hamiltonian $H$ is defined by 
 \begin{equation}\label{eq:HJE}
     u_t(x,t)+H(t,\nabla u(t,x))=0.
 \end{equation}
 The Lagrangian is given by 
 \begin{equation}
     L(t,v)=\inf_p\{p\cdot v-H(t,p)\},
 \end{equation}
 and the action by 
 \begin{equation} \label{eq:int_Lagrange}
     B_{s,t}(y,x)=\sup_{\substack{\gamma(s)=y\\ \gamma(t)=x}}\int_s^t L(r,\gamma(s))dr.
 \end{equation}
 The solution to equation \eqref{eq:HJE} is then given by 
 \begin{equation} \label{eq:u_B_evolve}
     u(x,t)=S_{s,t}[u(\cdot,s)](x)=\sup_y\{u(y,s)+B_{s,t}(y,x)\}, \qquad \forall s<t.
 \end{equation}
 Many PDEs are amenable to a spectral decomposition which allows one to go from a PDE to a system of infinite ODE's. For example, let $D$ be the differential operator on $\R^d$ and consider the equation 
 \begin{equation}
     u_t=P(D)u, \qquad \text{ on $\R^d$},
 \end{equation}
 for some polynomial $P$. Letting $\{h_p\}=\{e^{ip\cdot x}\}_{p\in\R^d}$, $\oplus=\int$, and $\odot$ be the usual product  on $\R$, then
 \begin{equation}  u(x,t)=\int_{\R^d}\varphi(p,t)e^{ip\cdot x}, \qquad \forall t\in\R
 \end{equation}
 and
 \begin{equation}
     \varphi_t(p,t)=P(ip)\varphi(p,t),\qquad \forall p\in\R^d.
 \end{equation}
 More generally, a PDE is amenable to a spectral decomposition if there exists a family of modes $\{h_p\}_{p\in\mathcal{I}}$ and operations $\odot$ and $\oplus$ such that
 \begin{equation}
     u(\cdot,s)=\bigoplus_p \varphi(p,s)\odot h_p, \qquad \varphi(p,s)\in\R.
 \end{equation}
 and for every $s<t$ and $p\in\mathcal{I}$, $h_p$ is an eigenvector of the solution operator $\{S_{s,t}\}_{s<t}$ i.e.\
 \begin{equation}
     S_{s,t}(c\odot h_p)=c\odot \beta(p;s,t)\odot h_p, \qquad \beta(p;s,t)\in\R.
 \end{equation}
 Thus,
 \begin{equation}  S_{s,t}u(\cdot,s)=\bigoplus\beta(p;s,t)\odot\varphi(p,s)\odot h_p,
 \end{equation}
 In other words, in order to understand the evolution of $S_{s,t}$ it is enough to understand the evolution of $\beta(p;s,t)$ for all $p\in\mathcal{I}$. 
 
 For Hamilton-Jacobi equations, things are a bit more delicate and the decomposition is applicable to a certain initial conditions, namely convex functions. When the Hamiltonian $H$ is strictly concave, one can show that any eternal solution $u$ is convex at each time $t$ i.e.\ $u(\cdot,t)$ is convex. Now, set $\oplus=\vee$ and $\odot=+$ and $h_p(x)=p x$, from the convexity of $u(\cdot,t)$ we see that 
 \begin{equation} \label{eq:o611}
     u(x,t)=\sup_{p \in \R}(\varphi(p,t)+px),
 \end{equation}
  where
 \begin{equation}\label{eq:o61}
     \varphi(p,t)=\inf_{x\in\R}(u(x,t)-px).
 \end{equation}
 Now, let $x^*=x(p,t)$ be the minimizer in \eqref{eq:o61}. Then, by optimality, we have
 \begin{equation}
     \nabla u(x(p,t),t)=p.
 \end{equation}
 Differentiating $\varphi(p,t)=u(x^*,t)-px^*$ with respect to $t$ and using the last display, we obtain
 \begin{equation}
     \varphi_t(p,t)=u_t(x^*,t)
 \end{equation}
 But from the HJE $-u_t(x^*,t)=H(t,p)$, we conclude that 
 \begin{equation} \label{eq:betaH}
     \beta_{\text{\tiny HJE}}(p;s,t)=\varphi(p,t)-\varphi(p,s)=-\int_s^tH(r,p)dr.
 \end{equation}
We now compare this with the results of Theorem \ref{thm:mr}. For each $t\in\R$ and $p\in\Xs$ set $h^t_p(x):=W^p(0,t,x,t) := W^p(0,0;x,t) - W^p(0,0;0,t)$. By a proof similar to that in  Proposition \ref{prop:Legendre}, we see that for $b\in\es$ and $t \in \R$, we may write
\begin{equation} \label{eq:o612}
    b(x,t)=\sup_{p \in \Xs}(\varphi(p,t)+h^t_p(x)),
\end{equation}
where
\begin{equation} \label{eq:o6122}
    \varphi(p,t)=\inf_{x \in \R}(b(x,t)-h^t_p(x)).
\end{equation}
Then, since $b$ and $(x,t) \mapsto h_p^t(x)$ are eternal solutions, for $s < t$, we have 
\begin{align*}
b(x,t)&=\sup_{z \in \R} [b(z,s)+\Ll(z,s;x,t)]=\sup_{z \in \R} \Bigl [\sup_{p \in \Xs}[\varphi(p,s)+h^s_p(z)]+\Ll(z,s;x,t) \Bigr]\\
&=\sup_{p \in \Xs} \Bigl[\varphi(p,s)+\sup_{z \in \R} [h^s_p(z)+\Ll(z,s;x,t)] \Bigr] =\sup_{p \in \Xs } [\varphi(p,s)+W^p(0,s;x,t)]\\&=\sup_{p \in \Xs }  [\varphi(p,s)+W^p(0,s;0,t)+h^t_p(x)].
\end{align*}
From here, we see that

\begin{equation} \label{eq:betaW}
    \beta_{\text{\tiny KPZ}}(p;s,t)=\varphi(p,t)-\varphi(p,s)=W^p(0,s;0,t).
\end{equation}

We now see that as in the HJE where the time-integrated Hamiltonian carries the information of the coefficients of the modes, for the KPZ fixed-point this is done by the evolution of the Busemann process in time. The values of the Busemann process at a fixed time gives the family $\{h_p\}$ that allows us to decompose any eternal solution evaluated at a fixed time. Thus, at least heuristically, by comparing \eqref{eq:o611}-\eqref{eq:o61} to \eqref{eq:o612}-\eqref{eq:o6122}, comparing  \eqref{eq:u_B_evolve} to \eqref{eq:KPZFP_DL} and comparing \eqref{eq:betaH} to \eqref{eq:betaW}, we see that for eternal solutions, we have the following comparison between the deterministic HJE and the KPZ fixed point.
\begin{table}[ht]
\centering
\renewcommand{\arraystretch}{1.25}
\begin{tabular}{ccc}
\textbf{Description}
& \textbf{Hamilton--Jacobi equation}
& \textbf{KPZ fixed point} \\[2mm]
Time evolution
& Integrated Lagrangian \(L\)
& Directed landscape \\[1mm]
Spectral evolution
& Integrated Hamiltonian \(H\)
& The Busemann process
\end{tabular}
\end{table}

\medskip \noindent 
{\textbf{The space  $\es$ has a rich structure}.} From Definition \ref{def:eternal}, it is not hard to see that the set $\es$ is closed under the maximum operation $\vee$. However, it is not easy to find other natural operations under which $\es$ is closed. Theorem \ref{thm:mr} unveils a much richer structure in $\es$ than that of $(\es,\vee)$. Indeed, the space $\text{USC}(\Xs)$ is well understood; it is closed under summation and the max and min operations, and the interaction between the different operations is clear. Any operation $\odot$ in $\fun$ gives rise to one in $\es$ via
\begin{equation}
    b_1\odot b_2= \hm\bigl[\hm^{-1}[b_1]\odot \hm^{-1}[b_2]\bigr].
\end{equation}
Moreover, any algebraic relation between such operations in $\fun$ and their topological properties has a dual picture in $\es$.

Let us see a concrete application of Theorem \ref{thm:mr}. Consider the min operation $\wedge$ on $\Phi$ defined as $(\varphi_1 \wedge \varphi_2)(\dir) = \varphi_1(\dir) \wedge \varphi_2(\dir)$. One can readily see that $\varphi_1 \wedge \varphi_2 \in \Phi$ for all $(\varphi_1,\varphi_2) \in \Phi^2$, as the space of upper semicontinuous functions is closed under $\wedge$. We can also consider the min operator $\wedge$ on the space of functions $C(\R^2,[-\infty,\infty))$. While $\wedge$ preserves the space $C(\R^2,[-\infty,\infty))$, it does not preserve the subspace $\es$. In general, we define the operation $\wt{\wedge}:\es\times \es \to \es$ by 
\[
b_1\, \wt{\wedge}\, b_2 := \hm\bigl[\hm^{-1}[b_1] \wedge \hm^{-1}[b_2]\bigr]
\]

Furthermore, setting $b_i = \hm[\varphi_i]$ for $i = 1,2$, we can readily see that if $b_1 \wedge b_2 \in \es$, then 
\begin{equation}\label{eq:o60}
    b_1 \,\wt{\wedge}\,b_2 = b_1 \wedge b_2.
\end{equation}
Indeed, from \eqref{eq:hom_inv},
\[
    \begin{aligned}
        &\quad \;\hm^{-1}[b_1](\dir)\wedge \hm^{-1}[b_2](\dir) = \inf_{(x,t)\in\R^2}\{b_1(x,t)-W^\dir(0,0;x,t)\}\wedge \inf_{(x,t)\in\R^2}\{b_2(x,t)-W^\dir(0,0;x,t)\}\\
    &=\inf_{(x,t)\in\R^2}\{b_1\wedge b_2(x,t)-W^\dir(0,0;x,t)\}=\hm^{-1}[b_1\wedge b_2](\dir),
    \end{aligned}
\]
 where in the last equality, we used the assumption that $b_1\wedge b_2\in\es$.
  
 It is an interesting open problem to characterize the pairs $(b_1,b_2)$ for which $b_1\wedge b_2\in\es$. Another interesting problem is to characterize all points of continuity of $\wedge$ in $\es\times\es$; i.e.\ all points $(b_1,b_2)\in\es$ such that  whenever $(b_1^n,b_2^n)\rightarrow (b_1,b_2)$ then $b_1^n\wedge b_2^n\rightarrow b_1\wedge b_2$. Note that for a point $(b_1,b_2)$ to be a continuity point  it must have a neighborhood $(b_1,b_2)\in U$ such that $\wedge$ is defined on $U$. We can also ask the same question about the set of continuity points of $\wt \wedge$, which is easier since this operation is defined on all of $\es$. 
The following result and its proof demonstrate how Theorem \ref{thm:mr} can be used to study the space $\es$. 
\begin{proposition}
   The pair $(b_1,b_2)$ is a continuity point of $\wt \wedge$ if and only if $b_1 \equiv - \infty$ or $b_2 \equiv -\infty$. The  operator $\wedge$ has no continuity points in $\es\times\es$.
\end{proposition}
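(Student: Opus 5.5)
Via the homeomorphism $\hm$ of Theorem \ref{thm:mr}, continuity of $\wt\wedge$ at $(b_1,b_2)$ is the same as continuity of $\wedge$ on $\fun\times\fun$ at $(\varphi_1,\varphi_2)=(\hm^{-1}[b_1],\hm^{-1}[b_2])$, with respect to $\mathcal T_{\fun_\omega}$. So I will work entirely in $\fun$, where the topology is given by hypograph convergence plus a uniform superquadratic-decay bound (Definition \ref{def:fun_convergence}).

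\emph{Sufficiency.} If, say, $b_1\equiv-\infty$, then $\varphi_1\equiv-\infty$ by \eqref{eq:hom_inv}, and $\varphi_1\wedge\varphi_2\equiv-\infty$. Suppose $\varphi_1^n\to\varphi_1$ and $\varphi_2^n\to\varphi_2$. Then $\varphi_1^n\wedge\varphi_2^n\le\varphi_1^n$. The hypograph of the minimum is contained in that of $\varphi_1^n$, whose limsup is $\mathrm{hypo}(\varphi_1)$, which is empty in the finite part. The uniform decay bound passes to the smaller function. Hence $\varphi_1^n\wedge\varphi_2^n\to-\infty$. I still need to check against Definition \ref{def:fun_convergence} that convergence to $-\infty$ is characterized by exactly this upper condition, i.e.\ that the liminf half of hypograph convergence is vacuous.

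\emph{Necessity for $\wt\wedge$.} Assume both $\varphi_i\not\equiv-\infty$, and pick $\xi_i$ with $\varphi_i(\xi_i)>-\infty$. The idea is to perturb both functions so that their supports become disjoint. Fix $n$.
\begin{itemize}
\item Choose a finite $1/n$-net (in the metric of Lemma \ref{lem:X_metric}) of a large compact set of colors, and points $\zeta_{i,k}$ near its points, all distinct between $i=1$ and $i=2$.
\item Set $\varphi_i^n=\max_k(\varphi_i(\zeta_{i,k}))$ on the chosen $\zeta_{i,k}$ and $-\infty$ elsewhere. Using upper semicontinuity, choose the $\zeta_{i,k}$ to nearly realize the local sup of $\varphi_i$ on each cell.
\end{itemize}
The hypographs of these finitely supported functions converge to $\mathrm{hypo}(\varphi_i)$, and since $\varphi_i^n\le \sup$ of $\varphi_i$ on cells, the decay bound is uniform. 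So $\varphi_i^n\to\varphi_i$. The two supports are disjoint, so $\varphi_1^n\wedge\varphi_2^n\equiv-\infty$.

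This does not converge to $\varphi_1\wedge\varphi_2$ unless $\varphi_1\wedge\varphi_2\equiv-\infty$. In that remaining case (disjoint "supports"), I use a different perturbation: add a bump of height $\varphi_1(\xi_1)$ to $\varphi_2^n$ at a color $\zeta_n\to\xi_1$ with $\zeta_n\ne\xi_1$ but $\zeta_n$ in the support of $\varphi_1^n$. Equivalently, take $\varphi_2^n=\varphi_2\vee(\text{point mass of height }-n \text{ at } \xi_1)$. That converges to $\varphi_2$, and its minimum with $\varphi_1$ equals $-n$ at $\xi_1$. This is not yet a discontinuity, so instead use height $\varphi_1(\xi_1)-1$ at $\zeta_n$ approaching $\xi_1$ from the appropriate side and going off to infinity in $\Xs$. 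The cleanest choice: put the bump at $\zeta_n$ with $\mathcal P(\zeta_n)\to\infty$, with height below the decay envelope but bounded below. Then $\varphi_1^n$ needs a matching bump, so add the same bump to both; the minimum then contains that bump and fails to converge to $-\infty$. Making this compatible with the uniform decay condition means allowing heights like $-|\mathcal P(\zeta_n)|^2\cdot C$ with $C$ fixed, which respects a uniform $\limsup/\mathcal P^2$ bound, though I must verify this matches Definition \ref{def:fun_convergence}. \textbf{This is the step I expect to be the main obstacle:} finding a perturbation that is admissible for the topology yet changes the minimum.

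\emph{Necessity for $\wedge$.} First, if $b_1\equiv-\infty$, then every neighborhood of $(b_1,b_2)$ contains pairs $(\hm[\psi_1],\hm[\psi_2])$ with small, disjointly supported $\psi_i$ (say point masses at distinct colors with very negative heights), for which $b_1^n\wedge b_2^n$ is a minimum of two different Busemann combinations. This is not an eternal solution, because the eternal solutions are closed under $\vee$ and a genuine minimum of two distinct shifted Busemann functions violates \eqref{eq:bL_def}: the evolution of the minimum is at least the max-convolution. Hence $\wedge$ is undefined on every neighborhood. Second, if neither is $-\infty$, the same disjoint-support approximation gives nearby pairs where the pointwise minimum is not in $\es$. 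So $\wedge$ is never defined on a full neighborhood, and there are no continuity points.
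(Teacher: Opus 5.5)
The step you flagged as the main obstacle cannot be completed, because the claim fails in exactly that case. Take $\supp\varphi_1\cap\supp\varphi_2=\varnothing$, where the support is the set on which $\varphi_i>-\infty$. Then $(\varphi_1,\varphi_2)$ is a continuity point of $\wedge$ on $\fun$, whether or not either $\varphi_i\equiv-\infty$. To see this, let $\varphi_i^n\to\varphi_i$.
\begin{itemize}
\item Since $\varphi_1^n\wedge\varphi_2^n\le\varphi_1^n$, the minimum inherits Condition \eqref{it:c1}.
\item Suppose $(\xi,y)$ with $y>-\infty$ lies in the PK upper limit of $\operatorname{hypo}(\varphi_1^n\wedge\varphi_2^n)$. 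Then it lies in the upper limits of both $\operatorname{hypo}(\varphi_i^n)$. This forces $y\le\varphi_1(\xi)\wedge\varphi_2(\xi)=-\infty$.
\item The lower-limit half of PK convergence to $-\infty$ is vacuous, since every hypograph contains $\Xs\times\{-\infty\}$.
\end{itemize}
Hence $\varphi_1^n\wedge\varphi_2^n\to-\infty=\varphi_1\wedge\varphi_2$. This is precisely the ``if'' half of Lemma \ref{lm:mincon}.

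Your bump construction fails for this reason too. Bumps of height $-C\mathcal P(\zeta_n)^2$ with $\mathcal P(\zeta_n)\to\infty$ violate Condition \eqref{it:c1}, which requires the ratio to be eventually $\le R$ off a compact set for \emph{every} $R$. So $\varphi_i^n\not\to\varphi_i$. More generally, any perturbation admissible for both $\varphi_i^n$ is harmless for their minimum.

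A concrete counterexample to the first assertion: take $b_i=W^{\xi_i}(0,0;\cdot)$ with $\xi_1\neq\xi_2$, so $\varphi_i$ is the point mass $0$ at $\xi_i$. By Theorem \ref{thm:mr}, $(b_1,b_2)$ is a continuity point of $\wt\wedge$, yet neither $b_i\equiv-\infty$.

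So the first sentence of the proposition is false as stated, and the paper's proof does not establish it either. After Lemma \ref{lm:mincon}, it writes $b_1\wedge b_2=\hm[\varphi_1]\wedge\hm[\varphi_2]\equiv-\infty$. This tacitly uses $\hm[\varphi_1]\wedge\hm[\varphi_2]=\hm[\varphi_1\wedge\varphi_2]$, which by \eqref{eq:o60} is available only when $b_1\wedge b_2\in\es$. In the example above, $b_1\wedge b_2$ is finite.

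The correct statement is: $(b_1,b_2)$ is a continuity point of $\wt\wedge$ if and only if $\supp\hm^{-1}[b_1]\cap\supp\hm^{-1}[b_2]=\varnothing$, equivalently $b_1\wt\wedge b_2\equiv-\infty$. Your sufficiency argument, run pointwise with whichever factor vanishes at $\xi$, together with your disjoint finite-support approximation, proves exactly this.

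The second assertion, that $\wedge$ has no continuity points, does survive. Your last paragraph can prove it for every pair without Lemma \ref{lm:o6}, but it needs two repairs.
\begin{itemize}
\item The reason $\hm[\psi_1]\wedge\hm[\psi_2]\notin\es$ for nontrivial, disjointly supported $\psi_i$ is \eqref{eq:o60}: if it were in $\es$, it would equal $\hm[\psi_1\wedge\psi_2]\equiv-\infty$, yet a minimum of two finite functions is finite. Your comparison of the evolution with the max-convolution does not establish this.
\item When $b_1\equiv-\infty\not\equiv b_2$, you cannot use two ``small'' functions, since a small $\psi_2$ is not close to $b_2$. Instead, pair your finite-support approximant of $\varphi_2$ with a point mass of height $-n$ at a color outside that finite support. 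A point mass $c\,\delta_\zeta$ with $\zeta\in\supp\varphi_2$ and $c\le\varphi_2(\zeta)$ would give $\min(c+W^\zeta,b_2)=c+W^\zeta\in\es$.
\end{itemize}
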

\begin{proof}
 By $\eqref{eq:o60}$ and Theorem \ref{thm:mr} we see that $(b_1,b_2)$ is a continuity point of $\wt \wedge$ in $\es$ if and only if $(\hm^{-1}(b_1),\hm^{-1}(b_2))$ is a continuity point of  $(\fun, \wedge)$, and the space $(\fun, \wedge)\subseteq (\text{USC}(\Xs),\wedge)$ is much easier to work with. Indeed, Lemma \ref{lm:mincon} says that $(\varphi_1,\varphi_2)$ is a continuity point if and only if $\supp(\varphi_1)\cap\supp(\varphi_2)=\varnothing$.  We conclude that $\varphi_1\wedge\varphi_2\equiv-\infty$. But this implies that $\hm[\varphi_1\wedge\varphi_2]\equiv-\infty$, so 
   if we set $b_i := \hm[\varphi_i]$, then $b_1\wedge b_2= \hm[\varphi_1]\wedge\hm[\varphi_2]\equiv -\infty$. By Remark \ref{rmk:b=-inf}, we conclude that that $(b_1,b_2)$ is a continuity point of $(\es,\wt \wedge)$ if and only if either $b_1\equiv-\infty$ or $b_2\equiv -\infty$.
   
   Now, for $(b_1,b_2)$ to be a continuity point of $\wedge$ in $\es$, \eqref{eq:o60} implies that $(b_1,b_2)$ must be a continuity point of $\wt \wedge$, and $\wedge$ must be defined in a neighborhood of $(b_1,b_2)$. Lemma \ref{lm:o6} says that for any $b\in\es$ and any neighborhood $U$ of $(b,-\infty)$ there exists $(b_1,b_2)\in U$ such that $b_1\wedge b_2\notin \es$. This shows that $(b,-\infty)$ is not a continuity point for any $b \in \es$, and by symmetry, $(-\infty,b)$ is also not a continuity point of $\wedge$. 
 \end{proof}

\subsection{Colors and semi-infinite geodesics}
Intimately tied with the Busemann process is the notion of semi-infinite geodesics in the directed landscape. Each Busemann function $W^\dir$ is associated to a family of backward coalescing semi-infinite geodesics known as $\dir$-geodesics. For $\dir \in \Xs$, each semi-infinite geodesic rooted at a point $(x,t)$ is encoded by a continuous function $g_{(x,t)}^\dir:(-\infty,t] \to \R$, and when $\dir = \eta^\sig$, the geodesics have asymptotic direction $\eta$:
\[
\lim_{s \to -\infty} \f{g_{(x,t)}^{\dir}(s)}{|s|} = \eta. 
\]
Each of these families of semi-infinite geodesics is distinct, so when $\eta \in \Xi$, the $\eta^-$ and $\eta^+$ families are non-coalescing families of semi-infinite geodesics in the same direction. It is shown in \cite[Theorem 2.5]{Busa-Sepp-Sore-22a} that every semi-infinite geodesic has a direction, and it was later shown in \cite[Theorem 1.5]{Busani_N3G} that the only semi-infinite geodesics in the directed landscape are those associated with the Busemann functions. See Section \ref{sec:SIG_Buse} for more precise statements.

Given a finite eternal solution $b$, there is a family of semi-infinite geodesics associated to it. Simply stated, the location of a semi-infinite geodesic from $(x,t)$ at time $s < t$ is a maximizer of 
\[
y \mapsto b(y,s) + \Ll(y,s;x,t). 
\]
Some care needs to be taken to ensure that one obtains a continuous function, as there are exceptional points where the function above has more than one maximizer. One consistent choice is to always choose the leftmost maximizer (see Definition \ref{def:b-geodesics} below). Since all of the semi-infinite geodesics in the directed landscape come from the Busemann process, we can define the \textbf{color} of a point $(x,t)$ as the color $\dir \in \Xs$ of the leftmost geodesic emanating from $(x,t)$. Note that, unlike the Busemann functions, the semi-infinite geodesics from an arbitrary eternal solution can have different colors across points $(x,t)$. For $b \in \es$, we define $\dirs^b$ to be the set of colors associated to $b$ across all points $(x,t) \in \R^2$. See \eqref{eq18} and \eqref{eq:colors} below for the precise definition.   For $\dir \in \dirs^b$, we let $\clr^b(\dir)$ be the subset of $\R^2$ consisting of points with color $\dir$. In the case where $b \equiv - \infty$, there are no such semi-infinite geodesics, and we set $\dirs^b = \varnothing$. We call the associated map $\R^2 \to \Xs$ the \textbf{coloring map}. Our next main result states that we can read off the colors associated to $b$ from the function $\varphi$. We first make the following definition. For this, recall the intervals $I_\ve(\dir)$ from \eqref{eq:o36}. 
\begin{definition} \label{def:dominating}
For $\varphi \in \fun$, we call a point $\dir\in\supp(\varphi)$ a (left) dominating color if there exists $\ve_0>0$ such that for every $\ve<\ve_0$
\begin{align}
     \varphi(\dir')& <\varphi(\dir), \qquad \forall \dir'\in I_\ve(\dir),\dir'< \dir\label{eq:o17}\\
      \varphi(\dir')& \leq \varphi(\dir), \qquad \forall \dir'\in I_\ve(\dir),\dir'\geq \dir\label{eq:o18}
\end{align}
Note that \eqref{eq:o17} (resp. \eqref{eq:o18}) is vacuously true for points in $\Xi^+$ (resp. $\Xi^-$).
We denote by $\Dom(\varphi)$  the set of dominating colors.  See Figure \ref{fig:function}. 
\end{definition}

\begin{example}
Let $a<b$ be elements in $\Xs$. Define the function 
\begin{equation}
\varphi(\dir)=
    \begin{cases}
        \mathcal{P}(\dir) & \dir\in [a,b]\\
        -\infty & \dir\in [a,b]^c
    \end{cases}
\end{equation}
The function $\varphi$ is in $\fun_\omega$, and 
\begin{equation}
    \Dom(\varphi)=\Xi^- \cap [a,b]
\end{equation}
\end{example}

\begin{theorem} \label{thm:cc}
    For almost-every $\omega$, the following hold. 
    \begin{enumerate} [label=(\roman*), font=\normalfont]
        \item For every $b \in \es_\omega$, the set $\dirs^b$ is at most countable. \label{it:dirs_count}
        \item For every $\varphi \in \fun_\omega$, $\dirs^{W \star \varphi} = \Dom(\varphi)$, and this set is dense in $\supp(\varphi)$. \label{it:dominating}
        \item \label{it:equals_p+W} For every $\varphi \in \fun_\omega$, $\dir \in \dirs^{W\star \varphi}$, and $(x,t) \in \clr^{W\star \varphi}(\dir)$,
        \[
        (W\star \varphi)(x,t) = \varphi(\dir) + W^\dir(0,0;x,t). 
        \]
    \end{enumerate}
\end{theorem}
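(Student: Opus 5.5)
We take Theorem~\ref{thm:mr} as given, so every $b\in\es$ equals $W\star\varphi$ for a unique $\varphi\in\fun$, and part~\ref{it:dirs_count} will follow from part~\ref{it:dominating} once we show $\Dom(\varphi)$ is countable. The argument rests on one identity linking $b=W\star\varphi$ to its semi-infinite geodesics. Fix $(x,t)$ and let $\xi$ be the color of the leftmost $b$-geodesic $g$ from $(x,t)$. By \cite[Theorem 1.5]{Busani_N3G}, $g$ is a $\xi$-geodesic, so along it $W^\xi(0,0;x,t)-W^\xi(0,0;g(s),s)=\Ll(g(s),s;x,t)$. Since $g(s)$ maximizes $y\mapsto b(y,s)+\Ll(y,s;x,t)$, we also have $b(x,t)-b(g(s),s)=\Ll(g(s),s;x,t)$. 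Hence $b(\cdot)-W^\xi(0,0;\cdot)$ is constant along $g$. By \eqref{eq:hom_inv}, $b-W^\xi\ge\varphi(\xi)$ everywhere, with infimum exactly $\varphi(\xi)$. The plan is to show that the infimum is attained asymptotically along $g$ as $s\to-\infty$, so the constant equals $\varphi(\xi)$. This is part~\ref{it:equals_p+W}.

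To get this, we compare slopes. As $s\to-\infty$, $g(s)\approx \mathcal P(\xi)|s|$. On that ray the term $W^{\xi'}$ for colors $\xi'$ far from $\xi$ is smaller than $W^\xi$ by order $|s|(\mathcal P(\xi')-\mathcal P(\xi))^2$, using the known quadratic shape $W^{\eta}(0,0;y,s)\approx 2\eta y+\eta^2 s$. The superquadratic decay condition in $\fun$ controls large $|\mathcal P(\xi')|$. So $b(g(s),s)$ is determined, up to $o(1)$, by colors in a shrinking neighborhood $I_\ve(\xi)$. Within that neighborhood, $W^{\xi'}-W^\xi$ along $g$ tends to $0$ or to $-\infty$ according to which side of the $\xi$-geodesic family $\xi'$ sits; this uses the ordering and coalescence of geodesics and the $\pm$ distinction at $\Xi$. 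Combining this with upper semicontinuity of $\varphi$ gives $\lim_s[b-W^\xi](g(s),s)\le \varphi(\xi)+o_\ve(1)$, which proves~\ref{it:equals_p+W}.

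For~\ref{it:dominating}, we first show $\dirs^b\subseteq\Dom(\varphi)$. Suppose $\xi$ is a color at $(x,t)$. By~\ref{it:equals_p+W}, $\varphi(\xi)+W^\xi$ achieves the sup defining $b$ all along $g$. If some $\xi'<\xi$ close to $\xi$ had $\varphi(\xi')\ge\varphi(\xi)$, then $\varphi(\xi')+W^{\xi'}$ would tie or beat the $\xi$ term at some point of $g$, using monotonicity of $W^{\xi'}-W^\xi$ in the spatial variable. That would produce a $\xi'$-geodesic which is a $b$-geodesic strictly to the left of $g$, contradicting leftmostness; this is \eqref{eq:o17}. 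Similarly, if some $\xi'>\xi$ in $I_\ve(\xi)$ had $\varphi(\xi')>\varphi(\xi)$, the $\xi'$ term would strictly dominate far back along $g$, contradicting the identity; this is \eqref{eq:o18}.

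For the converse, take $\xi\in\Dom(\varphi)$. We pick a point $(x,t)$ far out in the direction where $W^\xi$ wins, at a time $t\to+\infty$ roughly along slope $\mathcal P(\xi)$, or equivalently by a forward-in-time argument. The domination condition then makes $\xi$ the unique maximizing color in a window, so its leftmost geodesic has color $\xi$.

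Countability of $\Dom(\varphi)$ comes from a standard fact about left-strict local maxima. Each $\xi\in\Dom(\varphi)$ has an interval $I_\ve(\xi)\cap\{\xi'<\xi\}$ on which $\varphi<\varphi(\xi)$. Assign to $\xi$ a rational pair $(q,r)$ with $\mathcal P(\xi)-q<\ve$ and $r\approx\varphi(\xi)$; the resulting map is injective on each level band. Density of $\Dom(\varphi)$ in $\supp\varphi$ follows from upper semicontinuity: on any small interval meeting the support, $\varphi$ attains its max on a compact subinterval, and the leftmost maximizer is a dominating color. The main obstacle is the asymptotic comparison near $\xi$ used for~\ref{it:equals_p+W}, especially at jump points $\eta\in\Xi$ where $\eta^-$ and $\eta^+$ share a direction. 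There I would use the $\pm$ geodesic ordering from \cite{Busa-Sepp-Sore-22a} and the two-color analysis of \cite{bhat-bus-soren}.
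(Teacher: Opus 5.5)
Your treatment of \ref{it:equals_p+W} leaves its one real difficulty unproved, and the mechanism you describe is not the right one. Take the leftmost $b$-geodesic $g=g^{\dir,L}_{(x,t)}$. For each fixed $\dir'\neq\dir$, the difference $D_{\dir'}(s)=W^{\dir'}(0,0;g(s),s)-W^{\dir}(0,0;g(s),s)$ is nonincreasing as $s\downarrow-\infty$ by \eqref{eq:landscape_leq}. By Lemma \ref{lm:o8} it tends to $-\infty$ on \emph{both} sides of $\dir$, so there is no ``$0$ versus $-\infty$'' dichotomy. What you actually need is that at $(g(s),s)$ with $s\ll 0$, all colors outside $I_\ve(\dir)$ are beaten simultaneously. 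This includes the same-direction partner $\eta^{\mp}$ when $\dir=\eta^{\pm}$, where the quadratic shape gives no separation at all. That uniform localization is exactly the content of Lemmas \ref{lm:o8}--\ref{lm:o2}: competition interfaces eventually lie strictly between $g^{\dir_1,R}$ and $g^{\dir_2,L}$, and a monotonicity argument reduces everything to the two extreme colors $I^{L,-}_\ve$, $I^{R,+}_\ve$ weighted by $\sup\varphi$. You do not supply this.

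More to the point, no asymptotics are needed for \ref{it:equals_p+W}. Since $\varphi$ is upper semicontinuous with superquadratic decay, $\Gamma_{x,t}$ is nonempty and compact, so $\dir^L_{x,t}=\min\Gamma_{x,t}$ exists. For every $\zeta\in\Gamma_{x,t}$, the point $g^{\zeta,L}_{(x,t)}(T)$ maximizes $y\mapsto (W\star\varphi)(y,T)+\Ll(y,T;x,t)$. Any maximizer $y_0$ is realized by some $\dir^*$, which is then forced into $\Gamma_{x,t}$, giving $y_0\ge g^{\dir^*,L}_{(x,t)}(T)\ge g^{\dir^L_{x,t},L}_{(x,t)}(T)$. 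Hence $g^{W\star\varphi,L}_{(x,t)}=g^{\dir^L_{x,t},L}_{(x,t)}$ (Lemma \ref{lm:o1}): the color at $(x,t)$ \emph{is} the leftmost element of $\Gamma_{x,t}$. This gives \ref{it:equals_p+W} in one line, and it is also the identification your argument for \ref{it:dominating} is missing.

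For $\Dom(\varphi)\subseteq\dirs^{W\star\varphi}$, the sketch fails as written.
\begin{itemize}
\item Going forward in time is the wrong direction. A color bounded between two others goes extinct (Theorem \ref{thm:shocks_coal}\ref{it:ext_pt}), e.g.\ the middle point of a three-point support, so no point at large $t$ need carry that color.
\item Domination never makes $\dir$ the \emph{unique} maximizer. Condition \eqref{eq:o18} permits $\varphi(\dir')=\varphi(\dir)$ for $\dir'>\dir$, and local constancy of $\dir'\mapsto W^{\dir'}(0,0;x,t)$ then produces ties wherever $\dir$ attains the supremum. So you need ``color $=\min\Gamma_{x,t}$'' rather than uniqueness.
\end{itemize}
The argument that works goes backward in time. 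At the root $(0,0)$, where $W^{\zeta}(0,0;0,0)=0$ for all $\zeta$, domination says $\dir_0$ is the leftmost maximizer of $\varphi$ on $I_\ve(\dir_0)$. This persists along $g^{\dir_0,L}_{(0,0)}$: $f_{\dir_0}$ equals $W\star\varphi_{I_\ve(\dir_0)}$ there, with strict inequality for $\dir'<\dir_0$ in $I_\ve(\dir_0)$. Lemma \ref{lm:o2} then removes all colors outside $I_\ve(\dir_0)$ at $(g^{\dir_0,L}_{(0,0)}(T),T)$ for $T\ll0$.

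Smaller points:
\begin{itemize}
\item In the \eqref{eq:o18} direction, the contradiction occurs at the root by local constancy, not ``far back'', where $D_{\dir'}$ only decreases.
\item Your density argument needs the compact subinterval to be clopen, e.g.\ $[\eta_1^+,\eta_2^-]$ with $\eta_1<\eta_2$ in $\Xi$. Otherwise the leftmost maximizer can sit at an endpoint and fail \eqref{eq:o17} or \eqref{eq:o18}.
\item Invoking Theorem \ref{thm:mr} is circular in the paper's logic, since its bijectivity and \eqref{eq:hom_inv} rest on \ref{it:dominating}--\ref{it:equals_p+W}. Meanwhile \ref{it:dirs_count} follows directly from Lemmas \ref{lem:2} and \ref{lm:4}: $\dirs^b=\bigcup_{n\in\Z}\dirs^{b,n}$, with each slice countable.
\end{itemize}
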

Theorem \ref{thm:cc} is proved as separate items throughout the text.  Specifically, Item \ref{it:dirs_count} is shown as part of Proposition \ref{prop:exists_varphi}, Item \ref{it:dominating} is Lemma \ref{lm:o5}, and Item \ref{it:equals_p+W} is Corollary \ref{cor:o1}. These are then used as inputs in the proof of Theorem \ref{thm:mr}.

\begin{remark}
   The definition of dominating colors is quite delicate. The asymmetry in Definition \ref{def:dominating} of left dominating colors comes because we define the color of a point as the color of the leftmost semi-infinite geodesics. One can readily get an analogous definition of right dominating colors, which correspond to the colors of rightmost semi-infinite geodesics  by a symmetric proof. An illustrative example comes by considering the function in $\Phi$ defined by 
    \[
    \varphi(\dir) = \begin{cases}
        0 &\mathcal |P(\dir)| \le 1 \\
        -\infty &\text{otherwise}.
    \end{cases}
    \]
    With probability one, $\pm 1 \notin \Xi$, and we represent the elements $\pm 1^0 \in \Xi$ as  $\pm 1$ for shorthand. Then, the set of left dominating colors is the countable set $(\Xi^+ \cap [-1,1]) \cup \{-1\}$, while the set of right dominating colors is the set $(\Xi^- \cap [-1,1]) \cup \{1\}$. Thus, the set of colors of rightmost semi-infinite geodesics associated to $b := W\star \varphi$ is disjoint from the set of colors of leftmost semi-infinite geodesics associated to $b$. This is in sharp contrast to the setting of our previous work \cite{bhat-bus-soren} where we consider eternal solutions with two colors $\eta^-$ and $\eta^+$. In that case, the the cutoff points between the two colors may differ on some time horizons, but set of colors of leftmost and rightmost geodesics is always the same 
\end{remark}

\begin{figure}
    \centering
    \begin{tikzpicture}
     \begin{scope}
        \node[anchor=south west, inner sep=0] (image) at (0,0)
        { \includegraphics[page=5, scale=0.9, trim=100 300 100 350, clip]{interface_proof.pdf}};

\node[red] at (3.8,-0.5) {$\eta^-_1$};
\node[blue] at (4.4,-0.5) {$\eta^+_1$};
\node[red] at (6.5,-0.5) {$\eta^-_2$};
\node[blue] at (7.1,-0.5) {$\eta^+_2$};
\node[red] at (9.4,-0.5) {$\eta^-_3$};
\node[blue] at (10,-0.5) {$\eta^+_3$};
\node[black] at (8.25,-0.5) {$\eta_4$};
        \end{scope}
    \end{tikzpicture}
    
    \caption{\small A conceptual graph of a continuous element $\varphi$ in $\Phi$ where we annotated 7 points in the support of $\varphi$. The colors $\eta_1^-,\eta_1^+,\eta_4$ and $\eta_3^+$ are all in $\Dom(\varphi)$ and will appear in the coloring map of $\hm[\varphi]$, while the colors $\eta_2^-,\eta_2^+,\eta_3^-$ are not dominating colors.}
    \label{fig:function}
\end{figure}

\subsection{Multiple coalescing shocks} \label{sec:mult_shocks}
It is a classical fact about the deterministic inviscid Burgers equation that shock solutions exist. The Burgers' equation gives a nice deterministic comparison to the stochastic models in the KPZ universality class because the spatial derivative of the KPZ equation formally solves the stochastic Burgers' equation. The shocks solutions in the inviscid Burgers' equation consist of two constant solutions that are separated by a discontinuity. When integrated out, the solution forms the shape of the capital letter $V$, as we have two linear functions with a discontinuity at the bottom of the $V$. On the level of the KPZ fixed point, these correspond to two noisy linear functions. Shocks for the KPZ equation and KPZ fixed point have been studied recently in \cite{Dunlap-Ryzhik-2020,Rahman-Virag-21,Dunlap-Sorensen-2024,Dunlap-Sorensen-2025} (see also \cite{Ferrari-Fontes-1994b,Ferrari-Ghosal-Nejjar-2019} for related works in particle systems).

It is also well-known that the Burgers' equation can have solutions with multiple coalescing shocks. See also \cite{Ferrari-Nejjar-2020} for discussion of an analogous phenomenon in the totally asymmetric simple exclusion process. As mentioned above, the description of the eternal solutions in Theorem \ref{thm:mr} may be thought of as a bi-infinite collection of shock solutions. For an eternal solution, each point in the plane corresponds to a backwards infinite geodesic. Each of those geodesics has a direction corresponding to one of the parameters of the Busemann process (See Section \ref{sec:SIG_Buse}). Monotonicity of the semi-infinite geodesics across different directions implies that the plane is partitioned into disjoint connected sets corresponding to the directions of the semi-infinite geodesics (more specifically, the color of the semi-infinite geodesic, described more precisely in Section \ref{sec:SIG_Buse}). Within each connected component, the semi-infinite geodesics coalesce. The borders between these directions are the shocks in this setting. Our next main theorem describes this family of shocks as the interfaces between colors. Refer to Figure \ref{fig:color_map_intro} for a visual representation.
\begin{figure}
    \centering
    \includegraphics[width=0.5\linewidth]{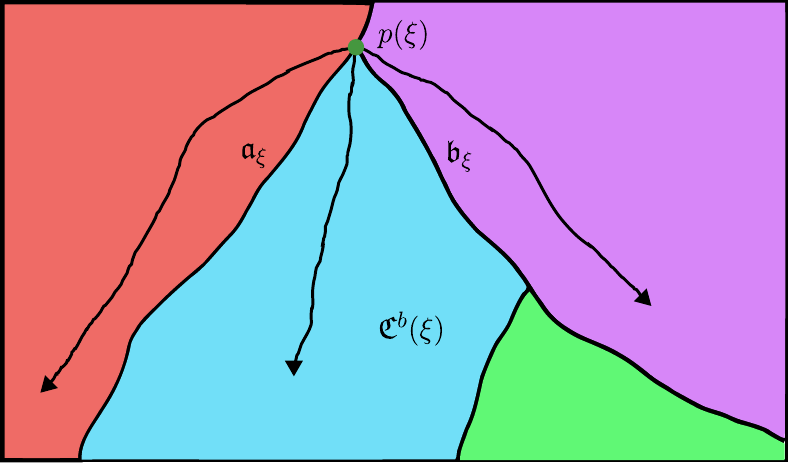}
    \caption{\small A depiction of the coloring map. Each color represents the set $\mathfrak C^b(\xi)$ for a different color $\xi$. In this figure, the labeled set $\mathfrak C^b(\xi)$ is in blue. We see the point $p(\xi)$ where the blue color goes extinct, as well as the three disjoint semi-infinite geodesics emanating from it. The paths $\ls_\xi$ and $\rs_\xi$ denote the borders between colors. }
    \label{fig:color_map_intro}
\end{figure}

\begin{theorem} \label{thm:shocks_coal}
For almost-every $\omega$, the following holds for all $b \in \es_\omega$. 
\begin{enumerate} [label=(\roman*), font=\normalfont] 
    \item \label{it:Cb_set} For each $\xi \in \dirs^b$, the set $\clr^b(\dir)$ is connected and unbounded. Specifically, we may write
    \be \label{eq:Cb_intro}
    \clr^b(\dir)=\bigcup_{s<\tex(\dir)} (\ls^s_\dir,\rs^s_{\dir}]\times \{s\},
    \ee
    for numbers $\ls^s_\dir \in [-\infty,\infty)$ and $\rs_s^\dir \in (-\infty,\infty]$ with $\ls^s_\dir<\rs^s_{\dir}$ for all $s < \tex(\dir)$, and $\tex(\dir) \in (-\infty,\infty]$ is called the \textbf{extinction time} for the color $\xi \in \dirs^b$. If $\ls_\xi^{s'} > -\infty$ for some $\xi \in \dirs^b$ and $s' <\tex(\dir)$, then $\ls_\dir^s > -\infty$ for all $s <\tex(\dir)$, and the function $s \mapsto \ls_\dir^s$ is continuous. Similarly, if $\rs_\dir^{s'} < \infty$, for some $s' <\tex(\dir)$, then $\rs_\dir^s < \infty$ for all $s <\tex(\dir)$, and $s \mapsto \rs_\dir^s$ is continuous.

     \item \label{it:ext_pt} For $\dir \in \dirs^b$, $\tex(\dir)<\infty$  if and only if $\ls_\dir^s > -\infty$ and $\rs_\dir^s < \infty$ for some $s \in \R$. That is, a color survives forward in time if and only if it is not bounded between two other colors. If this is the case, the following limits exist and are equal:
    \be \label{eq:extinction}
    \xex(\xi) :=\lim_{s\uparrow \tex}\ls_\dir^s=\lim_{s\uparrow \tex}\rs_\dir^s
    \ee
    We call $\pex(\xi) :=  (\xex(\xi),\tex(\xi))$ the \textbf{extinction point} of the color $\xi$. 
  
    \item \label{it:locally_finite} No two colors share the same extinction point. From every extinction point, there exists three disjoint semi-infinite geodesics \rm{(}i.e., they are distinct except at the initial point\rm{)}. Furthermore, the set of extinction points is locally finite. 
\end{enumerate}
\end{theorem}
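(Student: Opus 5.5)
We will prove Theorem \ref{thm:shocks_coal} through the geometry of the leftmost $b$-geodesics, using the fact recalled in Section \ref{sec:SIG_Buse} that every semi-infinite geodesic is a $\dir$-geodesic for some $\dir\in\Xs$. By Theorem \ref{thm:mr} we write $b=W\star\varphi$. By Theorem \ref{thm:cc}\ref{it:equals_p+W}, on $\clr^b(\dir)$ we have $b=\varphi(\dir)+W^\dir(0,0;\cdot)$, and everywhere $b\ge\varphi(\dir)+W^\dir$.

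\textbf{Step 1: monotonicity of colors and the form of $\clr^b(\dir)$.} Leftmost $b$-geodesics from points on a common time level are ordered in space; this follows from planarity and the choice of leftmost maximizers. Geodesics with different directions, or with $\eta^-$ versus $\eta^+$, are ordered as well. Hence at each time $s$ the color map $x\mapsto$ color of $(x,s)$ is nondecreasing in $\Xs$. It follows that $\clr^b(\dir)\cap(\R\times\{s\})$ is an interval.

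We show this interval is left-open and right-closed using continuity properties of leftmost maximizers. The leftmost maximizer is left-continuous in the endpoint $x$, so the color is left-continuous in $x$, which gives the form $(\ls^s_\dir,\rs^s_\dir]$.

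\textbf{Step 2: backward invariance and connectedness.} If $(x,t)$ has color $\dir$, then every point $(g(s),s)$ on its leftmost geodesic also has color $\dir$. This holds because restrictions of leftmost $b$-geodesics are leftmost $b$-geodesics, via the semigroup property \eqref{eq:bL_def}. So the set of times at which color $\dir$ is present is a half-line $(-\infty,\tex(\dir))$, and each slice is connected to earlier slices through geodesics. This yields connectedness and unboundedness.

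\textbf{Step 3: continuity and finiteness of the boundaries.} If $\ls_\dir^{s'}>-\infty$, then some color $\dir'<\dir$ lies to the left at time $s'$. Its geodesics persist backward, so $\ls_\dir^s>-\infty$ for all earlier $s$. For later times we use that the color $\dir'$ region is also a backward-closed half-line region to the left of color $\dir$, so it persists up to time $\tex(\dir)$.

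Continuity of $s\mapsto\ls^s_\dir$ will follow by identifying $\ls_\dir$ with a competition interface. Concretely, $\ls^s_\dir$ is the leftmost point where $\varphi(\dir)+W^\dir$ attains $b$. We then adapt the two-color interface argument of \cite{bhat-bus-soren} applied to $\max(\sup_{\dir'<\dir}[\varphi+W^{\dir'}],\sup_{\dir'\ge\dir}[\varphi+W^{\dir'}])$. Both pieces are eternal solutions by Lemma \ref{lem:interchange_sups}, and their interface is a continuous path.

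\textbf{Step 4: extinction.} Item (ii) is the main obstacle. Suppose color $\dir$ is bounded on both sides by colors $\dir_1<\dir<\dir_2$. Then along a compact window the solution would locally carry a stable region of intermediate slope. We aim to show that the interfaces must meet in finite time.

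The first tool is the comparison $W^{\dir_1}\le W^{\dir}\le W^{\dir_2}$ in increments: by Busemann monotonicity, the differences $W^{\dir}-W^{\dir_1}$ and $W^{\dir_2}-W^{\dir}$ are nondecreasing in $x$. These differences evolve with drift, since the Busemann functions have slopes $2\mathcal P(\dir)$ from \eqref{eq:slope}, and the KPZ scaling yields boundaries moving at different asymptotic speeds.

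We expect the cleanest route to be the following. Because $\ls_\dir$ and $\rs_\dir$ are interfaces between $\dir_1$- and $\dir$-geodesics and between $\dir$- and $\dir_2$-geodesics, a $\dir$-colored point surviving forever would give a $\dir$-geodesic family trapped between two interfaces with asymptotic forward slopes. The nonexistence of invariant measures with distinct slopes \cite{Busa-Sepp-Sore-22a,Dunlap-Sorensen-2025}, applied as in the two-color discussion, forces one interface to cross the other. Equality of the limits in \eqref{eq:extinction} then follows from continuity of the boundaries together with the fact that the slice $(\ls^s_\dir,\rs^s_\dir]$ shrinks to a point.

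Conversely, if say $\ls\equiv-\infty$, then color $\dir$ is the leftmost color. Its region contains points far to the left at every time: for any $t$, large negative $x$ makes $\varphi(\dir)+W^\dir$ dominate, by comparing slopes $2\mathcal P(\dir)$ against the slopes of larger colors. So $\tex(\dir)=\infty$.

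\textbf{Step 5: item (iii).} At an extinction point, the leftmost geodesic has color $\ge$ the right neighbor color. Limits of $\dir$-geodesics from points approaching $\pex$ give a $\dir$-geodesic, and the neighboring regions give $\dir_1$- and $\dir_2$-geodesics. These three geodesics are distinct directions or colors, hence disjoint except at the root, since geodesics of distinct colors from a common point cannot re-meet by coalescence properties.

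Two colors sharing an extinction point would produce four disjoint geodesics from one point. Together with local finiteness, this is ruled out by the known bound that only finitely many points in a compact set are roots of three disjoint semi-infinite geodesics, the N3G-type results of \cite{Busani_N3G}. This final appeal, plus Step 4, is where we expect most of the work.
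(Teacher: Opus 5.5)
Steps 1--2 essentially reproduce Lemmas \ref{lem:5} and \ref{lm:4}. One caveat: left-continuity of colors needs the geodesics from $(y,t)$, $y\uparrow x$, to actually \emph{meet} $g^{b,L}_{(x,t)}$. That comes from no-bubbles and Lemma \ref{lem:overlap}, not from convergence of maximizers alone. Your path-connectedness argument via leftmost geodesics is fine.

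\textbf{Step 4 is the central gap.} Nonexistence of invariant measures with distinct slopes \cite{Busa-Sepp-Sore-22a,Dunlap-Sorensen-2025} is a statement about laws of stationary processes. It gives no pathwise information about an arbitrary $b\in\es_\omega$, and the theorem must hold for all $b$ simultaneously on one event. So it cannot force the two boundaries of a trapped color to meet. The missing idea is geometric:
\begin{itemize}
\item Suppose $-\infty<\ls^s_\dir<\rs^s_\dir<\infty$ and $\tex(\dir)=\infty$. Take $t_n\to\infty$ and $y_n\in(\ls^{t_n}_\dir,\rs^{t_n}_\dir]$.
\item The geodesics $g^{b,L}_{(y_n,t_n)}$ are $\dir$-geodesics lying in $\clr^b(\dir)$. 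By finiteness and \emph{continuity} of $\ls_\dir,\rs_\dir$, they stay in a bounded set over each compact time window.
\item Lemma \ref{lem:precompact} and a diagonal argument give a limit on $[s,\infty)$. Extended backwards by a $\dir$-geodesic, it satisfies $\Ll=W^\dir$ between any two of its points, so it is a bi-infinite geodesic. This contradicts \cite[Proposition 34]{Bha24}.
\end{itemize}
Also, equality of the limits in \eqref{eq:extinction} does not follow from ``the slice shrinks to a point''. That shrinking is the claim itself, since continuity of the boundaries is only known on $(-\infty,\tex(\dir))$. It comes instead from consistency at time $\tex(\dir)$: a point at time $\tex(\dir)$ whose leftmost geodesic passes through $\clr^b(\dir)$ at times $s\uparrow\tex(\dir)$ would itself have color $\dir$.

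\textbf{Step 3's continuity argument fails as stated.} First, identifying $\ls^s_\dir$ with the leftmost point where $f_\dir=b$ is wrong. Take $\varphi=0$ on $\{\dir_1,\dir_2\}$ and $-\infty$ elsewhere. Both $f_{\dir_1}$ and $f_{\dir_2}$ attain $b$ on $[\tau^{L,c,\dir_1,\dir_2}_s,\tau^{R,c,\dir_1,\dir_2}_s]$ with $c=(0,0)$, and the color there is $\inf\Gamma_{x,s}=\dir_1$. So $\ls^s_{\dir_2}=\tau^{R,c,\dir_1,\dir_2}_s$, not the left endpoint, whenever that interval is nondegenerate.

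Second, splitting $b$ into $\sup_{\dir'<\dir}f_{\dir'}$ and $\sup_{\dir'\ge\dir}f_{\dir'}$ need not produce a separating curve. Take $\varphi(\dir')=\mathcal P(\dir')$ on $[\zeta_1,\zeta_2]$ and a color $\dir=\eta^-$ with $\eta\in\Xi\cap(\mathcal P(\zeta_1),\mathcal P(\zeta_2))$. Strong continuity of the Busemann process gives $\sup_{\dir'<\dir}f_{\dir'}\ge f_\dir$, so both pieces equal $b$ on all of $\clr^b(\dir)$. Moreover, Lemma \ref{lemma 4.1} and \cite{bhat-bus-soren} only treat maxima of two Busemann functions.

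The paper proves continuity directly by four cases: $\rs_\dir$ jumping up or down as $t_n\uparrow t_0$ or $t_n\downarrow t_0$. Each case is contradicted using continuity of $g^{b,L}$, consistency of colors along leftmost geodesics, and Lemma \ref{lem:precompact}.

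A smaller point in Step 3: a fixed color $\dir'<\dir$ need not survive until $\tex(\dir)$, since it can itself be squeezed out. To keep $\ls^s_\dir$ finite for $s>s'$, use instead that leftmost geodesics from the far left at time $s$ reach the far left at time $s'$.

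\textbf{Step 5 has two problems.} First, distinct colors do not make geodesics from a common root disjoint: such geodesics can share an initial segment, and Hausdorff limits of disjoint paths can touch. Disjointness must be transferred from the approximants, using that leftmost $b$-geodesics of distinct colors never meet, together with Lemmas \ref{lem:no_bubbles} and \ref{lem:overlap}.

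Second, the citations are off. A shared extinction point for two colors is excluded by the nonexistence of $4$-stars \cite[Theorem 1.5]{Dauvergne-23}. \cite{Busani_N3G} concerns three non-coalescing geodesics in a single direction and gives no local-finiteness bound on points with three disjoint geodesics. Local finiteness is argued inside a single $b$: distinct extinction points in a box carry distinct colors. Their leftmost $b$-geodesics from nearby points are pairwise disjoint and cross a fixed earlier time in a bounded interval. This contradicts Lemma \ref{lem:2}, or equivalently \cite[Lemma B.14]{Busa-Sepp-Sore-22a}.
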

\begin{remark}
    The paths $s \mapsto \ls_\xi^s$ and $s \mapsto \rs_\xi^s$  are borders between different colors and therefore can be interpreted as shocks. These shocks are also competition interfaces between two colors (see Lemma \ref{lemma 4.1}). Item \ref{it:ext_pt} states that at most two colors can survive forever forward in time. By definition of extinction points in Item \ref{it:ext_pt} and the distinctness of extinction points for different colors in Item \ref{it:locally_finite}, when a color goes extinct, this corresponds to two interfaces coalescing. Two colors cannot have the same extinction point because the same proof would then show that there are $4$ disjoint semi-infinite geodesics from that extinction point, a contradiction to a result in \cite{Dauvergne-23}.   See Figure \ref{fig:color_map_intro}. 
    
\end{remark}

\begin{remark}
    It is helpful to make a comparison to a deterministic equation. For each $\xi \in \R$, the function $w^\dir(x,t): \xi^2 t + 2\xi x$ is an eternal solution to the Hamilton-Jacobi PDE
    \[
    \partial_t h(x,t) = \f{1}{4}\Bigl(\partial_x h(x,t)\Bigr)^2,
    \]
    which is solved from a general initial condition $f$ at time $s$ as 
    \[
    h(x,t) = \sup_{y \in \R}\Bigl[f(y) - \f{(x-y)^2}{t-s}\Bigl].
    \]
    Thus, by Lemma \ref{lem:Landscape_global_bound}, we may think of the KPZ fixed point as a noisy version of this equation.

    Consider the function $\varphi:\R \to [-\infty,\infty)$ such that $\varphi(\xi) = 0$ for $\xi \in [-1,1]$, and $\varphi(\xi) = -\infty$ otherwise. Then, consider the following eternal solution to the deterministic equation:
    \[
    b(x,t) = \sup_{\xi \in \R}[\varphi(\xi) +w^\dir(x,t) ] = \begin{cases}
        w^{-1}(x,t) &x \le t \wedge 0 \\
        w^{1}(x,t) &x \ge (-t) \vee 0 \\
        w^\dir(x,t) & t < 0, \dir = -\f{x}{t} \in [-1,1]
    \end{cases}
    \]
    From here, we see that all colors in $[-1,1]$ are present in the coloring map, a distinct behavior from the countable set of colors in Theorem \ref{thm:shocks_coal}. We also see that all colors in the interval $(-1,1)$ go extinct at the space-time point $(0,0)$; in the noisy case, Item \ref{it:locally_finite} states that no two colors go extinct at the same point. However, we do see a qualitative similarity in that only the two extremal colors, $-1$ and $1$ survive as $t \to \infty$ (compare to Item \ref{it:ext_pt}).  
\end{remark}

\subsection{Methods, contribution, and related work}
The previous works \cite{Akian-Gaubert-Walsh-2005,Walsh-2009} developed similar spectral decompositions via Busemann functions for discrete time and space evolutions in the max-plus algebra. Less than a week before the posting of the first version of this paper, the article \cite{RassoulAghaSweeney2026} was posted on arXiv. The authors obtained, among other results, a result similar to our Proposition \ref{prop:exists_varphi} below. In particular, they showed that all eternal solutions of the KPZ fixed point are of the form $W \star \varphi$ for a function $\varphi$ with countable support. In both works, this is an intermediate result that leads to different stronger consequences.  The work \cite{Rassoul-Sweeney-2026} uses this result in the context of the Martin boundary of the directed landscape and connects to results in hyperbolic geometry and potential theory. By contrast, our Theorem \ref{thm:mr} is new in that it completely characterizes the space of eternal solutions $\es$ with its spectral decomposition $\fun$ and shows that the bijection is a homeomorphism.

    Some preliminaries are presented in Section \ref{sec:SIG_Buse}, and then 
    Proposition \ref{prop:exists_varphi} is proved in Section \ref{sec:decomp}. The main novelty and technical innovation then comes in the later sections. To classify the set of eternal solutions, one first needs to understand the set of functions $\varphi$ such that $W\star \varphi < \infty$. This is handled in Section \ref{sec:finiteness} by developing control on the Busemann functions across all $\dir \in \Xs$ (Lemma \ref{lem:Buse_global}), then comparing this to the shape of the directed landscape (Lemma \ref{lem:Landscape_global_bound}). Proposition \ref{prop:Wstarphi_bd} identifies the largest set of functions $\mathcal D$ such that $W\star (\mathcal D) = \es$.

    However, we can see from Remark \ref{rmk:non_unique} that uniqueness of the representation $W\star \varphi$ does not hold across $\varphi \in \mathcal D$. Section \ref{sec:unique} identifies the appropriate subset $\Phi \subseteq \mathcal D$ on which $\varphi \mapsto W\star \varphi$ is a bijection onto $\es$. After establishing some preliminaries about the space $\Phi$, Corollary \ref{cor:o1} shows that any function $\varphi \in \Phi$ such that $b = W\star \varphi$ must be an extension of the function with countable support from Proposition \ref{prop:exists_varphi}. There is a standard way to give such an extension (see Lemma \ref{lem:one_extension}), but it remains to show that no other extension exists. Then, Lemmas \ref{lm:o8}-\ref{lm:o5} give a comparative analysis  of competition interfaces across colors and points and introduce the notion of dominating colors. This is used to prove the uniqueness in Proposition \ref{prop:uniq}. This uniqueness is then used in Proposition \ref{prop:Legendre} to show the representation of $\varphi$ in \eqref{eq:hom_inv}. Finally, in Section \ref{sec:main_proof}, we characterize the topology on the space $\fun$ and prove that $\Wf$ and its inverse are continuous. The characterization of the decay rate required for $W\star \varphi$ in Proposition \ref{prop:Wstarphi_bd} informs the choice of topology, as we need to impose a uniform bound on $\f{\varphi_n(\dir)}{\mathcal P(\dir)^2}$ outside a compact window in defining the notion of convergence $\varphi_n \to \varphi$. 
     
    Regarding our two other results above, Theorem \ref{thm:cc}\ref{it:dominating} is new.  The continuity of the interfaces (Theorem \ref{thm:shocks_coal}\ref{it:Cb_set}) and the fact that no  two colors go extinct at the same time (Theorem \ref{thm:shocks_coal}\ref{it:locally_finite}) is also new. The proof of Theorem \ref{thm:shocks_coal} is geometric in nature and can be read independently of the other parts of the paper. It is proved in Section \ref{sec:coal_proofs}. As mentioned previously, Theorem \ref{thm:cc} is proved in several results throughout the paper.

\subsection{Acknowledgments}
The first version of this article was completed while E.S. was affiliated with Columbia University, where he was partially supported by Ivan Corwin's Simons Investigator Grant \#929852. Part of this work was completed while S.B. and E.S. were visiting the workshop on stochastic interacting particle systems and random matrices at the R\'enyi institute in Budapest, Hungary in June 2025. The travel of E.S. during that trip was supported by AMS-Simons travel grant AMMS CU23-2401. The travel of S.B. during that trip was funded by Indian Institute of Science (IISc) GARP support. Other parts of this work were completed during a visit by S.B. to Columbia University, where she was supported by Ivan Corwin's Simons Investigator Grant \#929852. A significant portion of this work was done during a visit by S.B. to the University of Edinburgh. The visit was supported by O.B.'s  ERC Starting Grant UnivKPZ (grant agreement No.101219600). S.B. was partially supported by scholarship from National Board for Higher Mathematics (NBHM) (ref no: 0203/13(32)/2021-R\&D-II/13158). S.B. was also supported by EPSRC grant EP/W032112/1 Standard Grant of the UK. S.B. thanks Riddhipratim Basu for discussion during the initial part of the project. O.B. acknowledges support from the European Research Council (ERC) under the European Union’s Horizon Europe research and innovation programme through the ERC Starting Grant UnivKPZ (grant agreement No.101219600). O.B. Would like to thank Duncan Dauvergne for a helpful discussion on the existence of three stars and infinite geodesics in the directed landscape. We also thank Alexander Dunlap, Firas Rassoul-Agha, and Mikhail Sweeney for helpful comments and discussions of an earlier version of the manuscript. OpenAI's Chat GPT 5.6 gave the idea for Remark \ref{rmk:topology} to show that Definition \ref{def:fun_convergence} indeed defines a topology, but the formulation of the notion of convergence in Definition \ref{def:fun_convergence}, as well as all other content and writing of the paper, came from the authors.

\section{Semi-infinite geodesics and Busemann functions}  \label{sec:SIG_Buse}   

In the present paper, we make use of the Busemann process and their associated geodesics. We record several  key preliminary facts in this section.

\subsection{Directed landscape}

The directed landscape may be thought of as an anti-metric in the plane. Indeed, it satisfies the reverse triangle inequality:
\be \label{triangle}
\Ll(x,s;y,t) \ge \Ll(x,s;z,r) + \Ll(z,r;y,t).
\ee
For a continuous function $\gamma:[s,t] \to \R$, we define its $\Ll$-length as
\[
\Ll(\gamma) = \inf_{k \in \Z_{>0}} \; \inf_{s = t_0 < t_1 < \cdots < t_k = t} \sum_{i = 1}^k \Ll\bigl(\gamma(t_{i - 1}),t_{i - 1};\gamma(t_i),t_i\bigr).
\]
A path $\gamma$ is called a \textbf{point-to-point geodesic} from $(x,s)$ to $(y,t)$ if its $\Ll$-length is maximal among all paths $\gamma:[s,t] \to \R$ with $\gamma(s) = x$ and $\gamma(t) = y$. Equivalently, 
\[
\Ll(\gamma) = \sum_{i = 1}^k \Ll\bigl(\gamma(t_{i - 1}),t_{i - 1};\gamma(t_i),t_i\bigr)
\]
for all $k \in \Z_{>0}$ and partitions $s = t_0 < t_1 < \cdots < t_k = t$. In particular, when $s < r <t$, we have  $\gamma(r) = z$ for some geodesic from $(x,s)$ to $(y,t)$ if and only if equality holds in \eqref{triangle}. Sometimes, when referring to the geodesic, we will identify it with its path in the plane $(\gamma(r),r)_{r \in [s,t]}$.

We will use the following fact about geodesics, which will refer to as the property that geodesics do not form interior bubbles. 
\begin{lemma}\cite[Theorem 1]{Bha24}, \cite[Lemma 3.3]{Dauvergne-23} \label{lem:no_bubbles}
With probability one, there exist no points $(x,s;y,t) \in \Rup$ and distinct geodesics $\gamma_1,\gamma_2$ from $(x,s)$ to $(y,t)$, such that, for some $\delta > 0$, $\gamma_1$ and $\gamma_2$ agree on the set $[s,s+\delta] \cap [t-\delta,t]$.
 \end{lemma}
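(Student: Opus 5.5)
The plan is to reduce the statement to a countable family of fixed times, argue by planarity and geodesic ordering, and conclude with a uniqueness statement for geodesics whose endpoints lie in the interior of a longer geodesic. I read the hypothesis as: $\gamma_1$ and $\gamma_2$ agree on $[s,s+\delta]\cup[t-\delta,t]$. With an intersection the condition would say nothing when $s+\delta<t-\delta$.

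\textbf{Reduction to a bubble with fixed times.} Suppose for contradiction that such $(x,s;y,t)$, $\gamma_1\ne\gamma_2$ and $\delta$ exist. Choose rational times $s'\in(s,s+\delta)$ and $t'\in(t-\delta,t)$. Since $\gamma_1=\gamma_2$ near both ends, we may assume they differ somewhere in $(s',t')$. Set $p=(\gamma_1(s'),s')$ and $q=(\gamma_1(t'),t')$. The restrictions $\gamma_1|_{[s',t']}$ and $\gamma_2|_{[s',t']}$ are distinct geodesics from $p$ to $q$. Each of them extends, through the common segments $\gamma_1|_{[s,s']}$ and $\gamma_1|_{[t',t]}$, to a geodesic from $(x,s)$ to $(y,t)$. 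By the characterization of geodesics via equality in \eqref{triangle}, this reduces the lemma to the following claim:
\begin{quote}
a.s., for all rational $s<s'<t'<t$, there is no pair $p$ at time $s'$, $q$ at time $t'$ such that the geodesic from $p$ to $q$ is non-unique while some geodesic through $p$ and $q$ extends strictly beyond both of them (to times $s$ and $t$).
\end{quote}
Since there are countably many choices of times, it suffices to fix $s<s'<t'<t$.

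\textbf{Trapping the bubble between geodesics from rational points.} Let $\gamma^L\le\gamma^R$ be the leftmost and rightmost geodesics from $p$ to $q$, distinct on some subinterval. I would use planarity and geodesic ordering (monotonicity of leftmost/rightmost geodesics in the endpoints). Consider geodesics from rational points $(x_n,s)$, $x_n\to x$, to rational points $(y_n,t)$, $y_n\to y$, with $x_n,y_n$ chosen on either side of $x$ and $y$. For fixed rational endpoints these geodesics are a.s.\ unique. By ordering and precompactness of geodesics, they converge to geodesics from $(x,s)$ to $(y,t)$ and sandwich $\gamma^L,\gamma^R$ on $[s',t']$. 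Then I would apply the overlap (coalescence) property of geodesics with nearby endpoints. A geodesic from $(x_n,s)$ to $(y_n,t)$ that is close to $\gamma_1$ must overlap $\gamma_1$ on most of $[s',t']$, and in particular must pass through $p$ and $q$ for large $n$. Doing this from the left and from the right forces the two approximating unique geodesics to follow $\gamma^L$ and $\gamma^R$ respectively, while both pass through $p$ and $q$. Exchanging their middle segments then produces geodesics between the rational endpoints with a crossing pattern that contradicts either uniqueness for those endpoints or the ordering.

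\textbf{Main obstacle.} The hardest step is the overlap claim: that the approximating geodesics from rational endpoints actually hit $p$ and $q$ exactly, rather than only approaching them. For fixed endpoints this is the standard coalescence of geodesics, but here $p$ and $q$ are random and exceptional. I would first try to deduce it from a geodesic-star / network count in the spirit of \cite{Dauvergne-23}. A bubble strictly inside a longer geodesic would give, at $p$, three geodesics that are disjoint except at $p$: the backward segment to $(x,s)$ together with $\gamma^L$ and $\gamma^R$. Symmetrically, $q$ would carry three such geodesics. Such a configuration at two points joined by a non-unique geodesic should have dimension zero in the classification of geodesic networks, and hence should not occur a.s. If that route is too heavy, I would fall back on the direct argument in \cite[Theorem 1]{Bha24}, which uses only the rational-approximation scheme above.
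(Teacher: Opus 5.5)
Your reading of the hypothesis as agreement on $[s,s+\delta]\cup[t-\delta,t]$ is the intended one, and the reduction to rational times is harmless. But it only reformulates the lemma, and the argument breaks at the trapping step.

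Take unique geodesics $\pi_n$ from rational $(x_n,s)$ to rational $(y_n,t)$ with $x_n\uparrow x$, $y_n\uparrow y$. By ordering and Lemma \ref{lem:precompact}, they converge to the \emph{leftmost} geodesic $\Gamma^L$ from $(x,s)$ to $(y,t)$. Approximating from the right gives the rightmost one $\Gamma^R$, and with mixed sides you have no control over which geodesic the limit is. The sandwich $\Gamma^L\le\gamma^L\le\gamma^R\le\Gamma^R$ on $[s',t']$ gives nothing more. In a bubble the geodesic from $(x,s)$ to $(y,t)$ is non-unique by assumption. Nothing in your argument excludes a further geodesic between these endpoints lying strictly to the left of $\gamma_1$ on $(s,t)$, and then the $\pi_n$ never reach $p$ or $q$. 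Case (ii) of Lemma \ref{lem:overlap} is unavailable for exactly this reason, and case (i) only speaks about the limit the $\pi_n$ actually have.

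So the phrases ``a geodesic from $(x_n,s)$ to $(y_n,t)$ that is close to $\gamma_1$'' and ``forces the two approximating unique geodesics to follow $\gamma^L$ and $\gamma^R$ \dots\ while both pass through $p$ and $q$'' assume the whole point. The existence of rational-endpoint geodesics through both $p$ and $q$ implies the lemma at once. What your scheme actually proves is only the easy case: interior segments of leftmost or rightmost geodesics are unique. Moving the outer endpoints to points $A,B$ on the common segments does not repair this. If the leftmost geodesic from $A$ to $B$ misses $p$, then, extended along $\gamma_1$, it forms a new bubble with $\gamma_1$, and nothing in the proposal stops this regress.

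Conversely, the step you single out as the main obstacle is the easy one. Once rational-endpoint geodesics converge to some geodesic through $p$ and $q$, case (i) of Lemma \ref{lem:overlap} makes them agree with it on $[s',t']$ for large $n$. A single such geodesic already contradicts a.s.\ uniqueness for rational endpoints (swap in the other geodesic from $p$ to $q$), so no left/right crossing argument is needed.

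The fallback does not close the gap either:
\begin{itemize}
\item $\gamma^L$ and $\gamma^R$ need not be disjoint near $p$; the true branch point is at time $\sup\{r:\gamma_1=\gamma_2 \text{ on } [s,r]\}$.
\item Points with one backward and two forward locally disjoint arms, and with two backward and one forward, do exist, so the obstruction cannot be local.
\item ``Dimension zero'' does not mean ``empty''.
\item Invoking the geodesic-network classification of \cite{Dauvergne-23} is circular, since the no-bubble property is itself one of the basic lemmas (Lemma 3.3 there) on which that classification is built.
\end{itemize}
The paper gives no proof of this statement and imports it from \cite{Bha24} and \cite{Dauvergne-23}. To stand on its own, your proposal needs a genuinely new ingredient at the step identified above.
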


A \textbf{semi-infinite geodesic} rooted at the point $(x,t) \in \R^2$ is a continuous function $g:(-\infty,t] \to \R$ such that $g(t) = x$ and the restriction $g|_{[s,t]}$ is a point-to-point geodesic for any $s < t$. It was shown in \cite[Theorem 2.5]{Busa-Sepp-Sore-22a} that each semi-infinite geodesic has a direction. That is, 
\[
\text{The limit }\lim_{s \to -\infty} \f{g(s)}{|s|}\quad\text{exists in }\R.
\]
We say that two semi-infinite geodesics $g_1$ and $g_2$ \textbf{coalesce} if there exists $S \in \R$ such that, for all $s \le S$, $g_1(s) = g_2(s)$. It was shown in \cite[Theorem 3.19]{Rahman-Virag-21} that for any fixed direction, with probability one, all semi-infinite geodesics in that direction coalesce. In \cite{Busa-Sepp-Sore-22a}, it was further shown that there is a random exceptional set $\Xi$ of directions such that there are two coalescing families of semi-infinite geodesics, which we call the $-$ geodesics and the $+$ geodesics. Then, in \cite{Busani_N3G}, it was shown that in these directions, there are exactly two families of coalescing geodesics. These geodesics are then indexed by their \textbf{colors} in $\Xs$: each geodesic with color $\xi = \eta^\sig$ has a direction $\eta\in \R$ and a sign $\sigg \in \{-,0,+\}$. The colors in $\Xs$ index both the Busemann process and the semi-infinite geodesics. To discuss this connection, we first cite the following result from \cite{Busa-Sepp-Sore-22a}:
\begin{proposition} \cite[Theorem 5.1, Theorem 5.3, Lemma 5.12]{Busa-Sepp-Sore-22a} \label{prop:Buse_basic_properties}
On the space $(\Omega,\F,\Pp)$, there exists a process
\[
\Bigl(W^{\dir}(x,s;y,t): \dir \in \Xs, (x,s;y,t) \in \R^4\Bigr)
\]
such that for each fixed $t \in \R$ and $\dir \in \Xs$, $\dir \in \Xi^c$, and $x \mapsto W^\dir(0,t;x,t)$ is a Brownian motion with diffusion coefficient $\sqrt 2$ and drift $2\mathcal P(\dir)$. Furthermore, the process satisfies the following properties on a single event of probability one:
\begin{enumerate}[label=(\roman*), font=\normalfont]
\item{\rm(Continuity)} \label{itm:general_cts} The $\R^4 \times \Xs \to \R$ function  $(x,s;y,t;\dir) \mapsto \W^{\dir}(x,s;y,t)$ is  continuous. Furthermore, if $K \subseteq \R^4$ is a compact set, then for all $\dir \in \Xs$ and sequences $\dir_n \to \dir$, there exists $N \in \N$ so that for all $n \ge N$, we have 
\[
W^{\dir_n}(x,s;y,t) = W^{\dir}(x,s;y,t),\quad \text{for all }(x,s;y,t) \in K.
\]
 \item {\rm(Additivity)} \label{itm:DL_Buse_add} For all $\dir \in \Xs$ and $p,q,r \in \R^2$, 
    $\W^{\dir}(p;q) + \W^{\dir}(q;r) = \W^{\dir}(p;r)$.   In particular, \\ $\W^{\dir}(p;q) = -\W^{\dir}(q;p)$ and $\W^{\dir}(p;p) = 0$.
    \item {\rm(Monotonicity along a horizontal line)}
    \label{itm:DL_Buse_gen_mont} Whenever $\dir_1< \dir_2$ in $\Xs$, $x < y$, and $t \in \R$,
    \[
  \W^{\dir_1}(x,t;y,t) \le \W^{\dir_2}(x,t;y,t).
    \]
    \item \rm{(Divergence to $\pm \infty$)} \label{it:to_pm_inf}
    For all $\dir_1 < \dir_2$ and $t \in \R$,
    \[
\lim_{x \to \pm \infty} W^{\dir_2} (0,0;x,t) -W^{\dir_1}(0,0;x,t) = \pm \infty.
\]
    \item {\rm(Eternal solution)}\label{itm:Buse_KPZ_description} For 
    all $\dir \in \Xs$, $x,y \in \R$, and $s < t$,
    \be\label{W_var}
    \W^{\dir}(x,s;y,t) = \sup_{z \in \R}\{\W^{\dir}(x,s;z,s) +
 \Ll(z,s;y,t)\}.
    \ee
\end{enumerate}
\end{proposition}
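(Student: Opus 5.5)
This proposition is imported from \cite{Busa-Sepp-Sore-22a}, so the plan is to recall how the process is built there and check each item on one full-probability event.

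\textbf{Construction.} For each fixed $\xi$ in a countable dense set $D \subseteq \R$, I would first build $W^\xi$ as a Busemann limit. Concretely,
\[
W^\xi(x,s;y,t) = \lim_{r \to -\infty}\bigl[\Ll(z_r,r;y,t) - \Ll(z_r,r;x,s)\bigr],
\]
with $z_r/|r| \to \xi$. Existence of this limit uses coalescence of semi-infinite geodesics in a fixed direction \cite[Theorem 3.19]{Rahman-Virag-21}: once the two geodesics from $(x,s)$ and $(y,t)$ have met, the difference above is constant in $r$. The Brownian marginal with drift $2\xi$ then follows from stationarity of the KPZ fixed point under Brownian initial data.

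\textbf{Items for fixed $\xi \in D$.}
\begin{itemize}
\item Additivity~\ref{itm:DL_Buse_add} is immediate from telescoping the differences.
\item The variational identity~\ref{itm:Buse_KPZ_description} comes from passing the metric composition law
\[
\Ll(z_r,r;y,t)=\sup_z[\Ll(z_r,r;z,s)+\Ll(z,s;y,t)]
\]
to the limit. This needs the maximizers to stay in a compact set, which I would get from geodesic localization.
\item Monotonicity~\ref{itm:DL_Buse_gen_mont} follows from planarity: geodesic ordering together with a crossing (quadrangle) inequality for $\Ll$.
\end{itemize}

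\textbf{Extension to all of $\Xs$.} Monotonicity in $\xi$ on $D$ lets me define left and right limits $W^{\eta\pm}$ at every $\eta \in \R$. The set $\Xi$ is then exactly the set of $\eta$ where these two limits differ. It is countable because each pair of points gives a monotone function of $\eta$, and such a function has only countably many jumps. With $\Xs$ defined this way, the limiting process inherits the following:
\begin{itemize}
\item additivity, monotonicity, and \eqref{W_var}, by monotone limits, using again that maximizers stay in compact sets;
\item continuity in the topology of $\Xs$, by construction of the one-sided limits.
\end{itemize}

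\textbf{Main obstacle: local constancy in~\ref{itm:general_cts}.} I would need that on a compact $K$, $W^{\xi_n}=W^\xi$ for all large $n$. I would reduce this to geodesics. $W^\xi$ on $K$ is determined by the $\xi$-geodesics from a compact set of points and their coalescence point. For $\xi_n$ close to $\xi$ from the appropriate side, these geodesics agree with the $\xi$-geodesics until coalescence. This agreement comes from the discreteness of geodesic structure: only finitely many geodesic "branches" cross a compact set, in the style of the no-bubbles Lemma~\ref{lem:no_bubbles}.

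\textbf{Divergence~\ref{it:to_pm_inf}.} For $\xi_1<\xi_2$, the difference $W^{\xi_2}-W^{\xi_1}$ is monotone in $x$. It cannot stay bounded: since the drifts differ, the difference of the two Brownian-type functions grows like $2(\xi_2-\xi_1)x$ for $\xi_i$ in $D$, and sandwiching extends this to all of $\Xs$.
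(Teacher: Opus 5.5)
The paper gives no proof of this proposition; it imports it from \cite{Busa-Sepp-Sore-22a}, so your sketch has to stand on its own. Most of it is a plausible outline, but item~\ref{it:to_pm_inf} has a genuine gap.

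\textbf{Item~\ref{it:to_pm_inf}.} Your argument compares drifts and then sandwiches. That works when $\mathcal P(\xi_1)<\mathcal P(\xi_2)$. The statement, however, is for all $\xi_1<\xi_2$ in $\Xs$, including $\xi_1=\eta^-<\xi_2=\eta^+$ with $\eta\in\Xi$.
\begin{enumerate}
\item There is no element of $D$, or of $\Xs$, strictly between $\eta^-$ and $\eta^+$, so there is nothing to sandwich with.
\item Both $W^{\eta-}(0,t;\cdot,t)$ and $W^{\eta+}(0,t;\cdot,t)$ have asymptotic slope $2\eta$. Hence
\[
x\mapsto W^{\eta+}(0,0;x,t)-W^{\eta-}(0,0;x,t)
\]
is nondecreasing and $o(|x|)$.
\item No law-of-large-numbers or comparison-of-drifts argument can rule out that this difference stays bounded.
\end{enumerate}
Showing that it diverges needs an input specific to the exceptional directions. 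This is not a corner case: the present paper relies on it. Lemma~\ref{lem:7842} is applied to $\eta^-<\eta^+$, for example in the two-color solutions \eqref{b_max_comb} and via $I_\ve^{L,-}(\eta^+)=\eta^-$ in Lemma~\ref{lm:o2}, where it guarantees that the $\eta^\pm$ interfaces are finite.

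Even in the case $\mathcal P(\xi_1)<\mathcal P(\xi_2)$, the Brownian law of large numbers gives the slope only almost surely for each fixed $t$. To get it for all $t$ on one event, you must propagate it from rational times using \eqref{W_var} and Lemma~\ref{lem:Landscape_global_bound}.

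\textbf{Omitted and loose steps.}
\begin{itemize}
\item \emph{Fixed $\xi$ lies in $\Xi^c$.} You never show that a fixed $\xi$ is almost surely in $\Xi^c$, which is also what gives the Brownian marginal for $\xi\notin D$. The standard argument runs as follows. For $x>0$ you have $W^{\xi-}(0,t;x,t)\le W^{\xi+}(0,t;x,t)$. Both are almost-sure limits of Brownian motions whose drifts tend to $2\xi$, so both have the law of Brownian motion with drift $2\xi$. A nonnegative difference with mean zero vanishes. Rational $x,t$, continuity, and \eqref{W_var} then give $W^{\xi-}=W^{\xi+}$ everywhere.
\item \emph{Countability of $\Xi$.} Your argument is not right as stated. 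The map $\eta\mapsto W^{\eta}(x,s;y,t)$ is monotone only for horizontal pairs, and there are uncountably many pairs. Restrict to rational horizontal pairs, then use continuity and \eqref{W_var} to show that agreement there forces $W^{\eta-}=W^{\eta+}$ everywhere.
\item \emph{Local constancy.} Your geodesic route is plausible, but its key step is asserted rather than derived. That step is that $\xi_n$-geodesics from a compact set eventually agree with the $\xi$-geodesics below their coalescence time. To make it work you need three things: convergence of $\xi_n$-geodesics to $\xi$-geodesics, which requires an argmax argument; the overlap property, Lemma~\ref{lem:overlap}; and finiteness of the intermediate-time locations of geodesics from a compact set, as in Lemma~\ref{lem:2}.
\end{itemize}
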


In the following, we will often refer to the properties of \textbf{continuity, additivity, monotonicity, and eternal solution}, without specific reference to the proposition. 

\begin{remark}
    In \cite{Busa-Sepp-Sore-22a}, the Busemann process is indexed by $\dir^\sig$ for $\dir \in \R$ and $\sig \in \{-,+\}$. In this sense, $W^{\dir-}$ is left-continuous, and $W^{\dir+}$ is right-continuous. However, because of our choice of topology of the index set $\Xi$ (see Remark \ref{rmk:convergence}), the Busemann process becomes continuous as a function of $\dir \in \Xs$.  
\end{remark}

For a given point $(x,t)$ and a color $\dir \in \Xs$, there are two particularly relevant geodesics corresponding to the Busemann function $W^{\dir}$. We denote these as $g_{(x,t)}^{\dir,L}$ and $g_{(x,t)}^{\dir,R}$. They are defined as follows:
\begin{definition} \label{def:SIG_xi}
For $(x,t) \in \R^2$ and $\dir \in \Xs$, define the functions $g_{(x,t)}^{\dir \sig,L}, g_{(x,t)}^{\dir \sig,R}:(-\infty,0] \to \R$ as follows: For $s < t$, let $g_{(x,t)}^{\dir,L/R}(s)$ be the leftmost/rightmost maximizers of the function
\[
y \mapsto W^{\dir \sig}(0,0;y,s) + \Ll(y,s;x,t) \quad\text{over } y \in \R,
\]
and for $s = t$, we define $g_{(x,t)}^{\dir,L/R}(t) = x$. 
\end{definition}
It is shown in \cite[Theorem 5.9]{Busa-Sepp-Sore-22a} that each of these is a semi-infinite geodesic with direction $\xi$ (possibly with a $\pm$ distinction). A key fact about these geodesics is that the weight of $\Ll$ along the geodesics is equal to the Busemann function:
\begin{lemma} \label{lem:W=landscape}\cite[Lemma 5.9]{Busa-Sepp-Sore-22a}
    For all $\dir \in \Xs$ and $s_1 < s_2 \le t$, we have that 
    \begin{equation} \label{L =landscape}
\Ll(y_1,s_1;y_2;s_2) = W^\dir(y_1,s_1;y_2,s_2), \quad\text{ where }  y_i = g_{(x,t)}^{\dir,L/R}(s_i).
\end{equation}
In general, for $\dir \in \Xs$, $s_1 < s_2$, and $y_1,y_2 \in \R$,
\be \label{eq:landscape_leq}
\Ll(y_1,s_1;y_2;s_2) \le  W^\dir(y_1,s_1;y_2,s_2)
\ee
\end{lemma}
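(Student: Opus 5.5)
The plan is to derive both statements from two properties in Proposition \ref{prop:Buse_basic_properties}, the eternal solution property \eqref{W_var} and additivity, together with the reverse triangle inequality \eqref{triangle}. I would prove the general inequality \eqref{eq:landscape_leq} first and then use it to get the equality \eqref{L =landscape}.

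\emph{Step 1: the inequality.} Fix $\dir \in \Xs$, $s_1 < s_2$ and $y_1,y_2 \in \R$. Apply \eqref{W_var} with base point $(y_1,s_1)$ and target point $(y_2,s_2)$, and bound the supremum below by its value at $z = y_1$. Additivity gives $W^\dir(y_1,s_1;y_1,s_1) = 0$, so
\[
W^\dir(y_1,s_1;y_2,s_2) \ge W^\dir(y_1,s_1;y_1,s_1) + \Ll(y_1,s_1;y_2,s_2) = \Ll(y_1,s_1;y_2,s_2).
\]
This is exactly \eqref{eq:landscape_leq}.

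\emph{Step 2: equality at the root.} Let $y_1 = g^{\dir,L/R}_{(x,t)}(s_1)$. By Definition \ref{def:SIG_xi}, $y_1$ is a maximizer of $y \mapsto W^\dir(0,0;y,s_1) + \Ll(y,s_1;x,t)$. The existence of such maximizers is part of the cited fact that these are semi-infinite geodesics, and I would take it as given. By \eqref{W_var} (after additivity moves the base point to $(0,0)$), the maximal value equals $W^\dir(0,0;x,t)$. Hence
\[
W^\dir(0,0;x,t) = W^\dir(0,0;y_1,s_1) + \Ll(y_1,s_1;x,t),
\]
and additivity turns this into $\Ll(y_1,s_1;x,t) = W^\dir(y_1,s_1;x,t)$. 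This already proves the case $s_2 = t$, since then $y_2 = x$.

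\emph{Step 3: equality at intermediate times.} Suppose $s_2 < t$ and let $y_2 = g^{\dir,L/R}_{(x,t)}(s_2)$. Chain the reverse triangle inequality with Step 1 and additivity:
\[
\Ll(y_1,s_1;x,t) \ge \Ll(y_1,s_1;y_2,s_2) + \Ll(y_2,s_2;x,t),
\]
\[
\Ll(y_1,s_1;y_2,s_2) + \Ll(y_2,s_2;x,t) \le W^\dir(y_1,s_1;y_2,s_2) + W^\dir(y_2,s_2;x,t) = W^\dir(y_1,s_1;x,t).
\]
By Step 2 the two outer quantities are equal, so every inequality in the chain is an equality. In particular, since each summand satisfies the one-sided bound of Step 1, each bound must hold with equality, giving $\Ll(y_1,s_1;y_2,s_2) = W^\dir(y_1,s_1;y_2,s_2)$.

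I expect no serious obstacle here. The only point needing care is that the maximizers in Definition \ref{def:SIG_xi} exist, so that Step 2 involves an actual equality rather than a supremum. I would take this from the cited construction in \cite{Busa-Sepp-Sore-22a}: the drift of the Busemann function plus the parabolic decay of $\Ll$ make the function coercive, and upper semicontinuity then gives attainment.
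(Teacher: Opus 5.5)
The paper does not prove this lemma; it quotes it from \cite{Busa-Sepp-Sore-22a}, so I am judging your argument on its own. Steps 1 and 2 are correct, but Step 3 does not close.

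\textbf{The gap in Step 3.} Set $S=\Ll(y_1,s_1;y_2,s_2)+\Ll(y_2,s_2;x,t)$. Your two displays say $S\le \Ll(y_1,s_1;x,t)$ and $S\le W^\dir(y_1,s_1;x,t)$. By Step 2 these right-hand sides are the same number, so you have one upper bound on $S$ written twice, and nothing forces equality. Step 2 at time $s_2$ already gives $\Ll(y_2,s_2;x,t)=W^\dir(y_2,s_2;x,t)$. So the claim is equivalent to the \emph{lower} bound $S\ge \Ll(y_1,s_1;x,t)$, i.e.\ equality in \eqref{triangle}: $(y_2,s_2)$ must lie on a geodesic from $(y_1,s_1)$ to $(x,t)$. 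That is the real content of the lemma, and it cannot follow from \eqref{W_var}, additivity and \eqref{triangle} alone.

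One way to see this: your argument never uses that $y_1,y_2$ are both leftmost (or both rightmost) maximizers, yet with mixed choices the conclusion is false. Suppose $g^{\dir,L}_{(x,t)}(s_1)<g^{\dir,R}_{(x,t)}(s_1)$, and take $y_1=g^{\dir,R}_{(x,t)}(s_1)$ and $y_2=g^{\dir,L}_{(x,t)}(s_2)$. If equality held, concatenate $g^{\dir,R}_{(x,t)}$ up to time $s_1$, a geodesic from $(y_1,s_1)$ to $(y_2,s_2)$, and then $g^{\dir,L}_{(x,t)}$. This path has $\Ll$-increments equal to $W^\dir$-increments, so it is a geodesic. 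It agrees with $g^{\dir,L}_{(x,t)}$ below their coalescence time (Lemma \ref{lem:all_coalesce}) and on $[s_2,t]$, but differs from it at $s_1$. That contradicts Lemma \ref{lem:no_bubbles}.

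\textbf{What is missing.} You need consistency of the leftmost maximizers, $g^{\dir,L}_{(x,t)}(s_1)=g^{\dir,L}_{(y_2,s_2)}(s_1)$; this is Lemma \ref{lem:geodesics_from_b}(ii) for $b=W^\dir(0,0;\cdot,\cdot)$. Given it, Step 2 applied with root $(y_2,s_2)$ gives the equality at once. It can be proved in three steps.
\begin{enumerate}
\item[(a)] Your composition computation shows that every level-$s_1$ maximizer for the root $(y_2,s_2)$ is also one for the root $(x,t)$. Hence $g^{\dir,L}_{(y_2,s_2)}(s_1)\ge y_1$.
\item[(b)] Let $z$ be the time-$s_2$ location of a point-to-point geodesic from $(y_1,s_1)$ to $(x,t)$. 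The same computation with Step 1 shows $z$ is a level-$s_2$ maximizer for $(x,t)$, so $z\ge y_2$. It also shows $y_1$ is a level-$s_1$ maximizer for the root $(z,s_2)$, so $g^{\dir,L}_{(z,s_2)}(s_1)\le y_1$.
\item[(c)] The leftmost maximizer is nondecreasing in the root location, by path crossing. Suppose $u<u'$ but $a:=g^{\dir,L}_{(u,s_2)}(s_1)>a':=g^{\dir,L}_{(u',s_2)}(s_1)$. Geodesics $(a,s_1)\to(u,s_2)$ and $(a',s_1)\to(u',s_2)$ must cross, giving $\Ll(a',s_1;u,s_2)+\Ll(a,s_1;u',s_2)\ge \Ll(a,s_1;u,s_2)+\Ll(a',s_1;u',s_2)$. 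Strict leftmost-maximality of $a$ and maximality of $a'$ give the strict reverse inequality, a contradiction.
\end{enumerate}
Combining these, $y_1\le g^{\dir,L}_{(y_2,s_2)}(s_1)\le g^{\dir,L}_{(z,s_2)}(s_1)\le y_1$. The $R$ case is symmetric. This is exactly where the $L$/$L$ (or $R$/$R$) matching enters, and your proof needs it.
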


Here, we have constructed the leftmost and rightmost semi-infinite geodesics associated to a particular Busemann function. For a fixed initial point, the leftmost and rightmost geodesics agree, but there is a random set where the geodesics disagree near the root point, to coalesce later \cite{Busa-Sepp-Sore-22a,Bha24}. There is also a smaller random set with three (locally distinct) semi-infinite geodesics having the same color.  (See, for example, \cite{Bha24,Rassoul-Sweeney-2026}). In general, it is shown in \cite[Theorem 5.9]{Busa-Sepp-Sore-22a} that continuous functions that satisfy \eqref{L =landscape} for all $s_1 < s_2 \le t$ are semi-infinite geodesics, and we call these $\dir$-geodesics. 
\begin{lemma} \cite[Lemma 7.1(i)]{Busa-Sepp-Sore-22a} \label{lem:all_coalesce}
    On a single event of probability one, for all $\dir \in \Xs$, all $\dir$-geodesics coalesce. 
\end{lemma}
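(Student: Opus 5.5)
Since this lemma is quoted from \cite[Lemma 7.1(i)]{Busa-Sepp-Sore-22a}, the proof would follow the route used there. The strategy is to prove coalescence first for a countable family of directions and rational root points, where the fixed-direction result \cite[Theorem 3.19]{Rahman-Virag-21} applies. The statement would then be extended to all $\dir \in \Xs$ and all root points simultaneously, using monotonicity, continuity and additivity of the Busemann process.

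\textbf{Step 1 (countable family).} Fix a countable dense set $D \subseteq \R$. For each $\eta \in D$ we have $\eta \notin \Xi$ almost surely, and \cite[Theorem 3.19]{Rahman-Virag-21} gives that all semi-infinite geodesics in direction $\eta$ coalesce. Taking a countable intersection, on one full-probability event this holds for every $\eta \in D$, and in particular for all $\eta^0$-geodesics rooted at rational points.

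\textbf{Step 2 (reduction to trapped geodesics).} For general $\dir \in \Xs$, suppose two $\dir$-geodesics $g_1,g_2$ do not coalesce. I would first reduce to the case $g_1(s)<g_2(s)$ for all $s \le S$. Indeed, if they meet at some time, then by \eqref{L =landscape} the $\Ll$-weight along each geodesic equals $W^\dir$. So the concatenation obtained by switching from one path to the other at a meeting time is again a $\dir$-geodesic, and one can pass to leftmost and rightmost versions, which are ordered. Next, pick a rational point $(z,r)$ strictly between the two paths. Leftmost $\dir$-geodesics from different roots do not cross, since a crossing together with \eqref{L =landscape} would contradict leftmost maximality. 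Hence the $\dir$-geodesic from $(z,r)$ stays in the closed strip between $g_1$ and $g_2$.

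\textbf{Step 3 (contradiction).} The key tool is additivity together with \eqref{L =landscape}. For each $s \le S$, set
\[
\Delta(s) := W^\dir\bigl(g_1(s),s;g_2(s),s\bigr).
\]
Then $\Delta(s) = W^\dir(g_1(s),s;x_1,t_1) + W^\dir(x_1,t_1;x_2,t_2) + W^\dir(x_2,t_2;g_2(s),s)$, where $(x_i,t_i)$ is the root of $g_i$. The two outer terms equal $\mp\Ll$ along the geodesics, so $\Delta(s)$ is a sum of $\Ll$-increments along disjoint paths. Now squeeze with directions from $D$: take $\eta_1<\mathcal P(\dir)<\eta_2$ in $D$, or one-sided if $\dir \in \Xi^\pm$. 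The corresponding geodesics from the endpoints of $g_1,g_2$ at time $s$ coalesce by Step 1, and they bound the $\dir$-geodesics by monotonicity of geodesics in the color. By continuity (Proposition \ref{prop:Buse_basic_properties}\ref{itm:general_cts}), on any compact set $W^{\dir_n}$ agrees with $W^\dir$ for $\dir_n \to \dir$. The aim is to conclude that the two separated $\dir$-geodesics would generate two distinct Busemann-type eternal solutions of the same color $\dir$. Concretely, their limiting Busemann functions differ because $\Delta$ does not vanish. This contradicts the fact that $W^\dir$ is determined by $\dir$, that is, the uniqueness of the $\pm$ families from \cite{Busani_N3G}.

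\textbf{Main obstacle.} The hardest part is Step 3: ruling out disjoint $\dir$-geodesics for the uncountably many exceptional colors $\dir \in \Xi^\pm$, where no fixed-direction result applies. I would first try to show that disjointness forces a third coalescing family in direction $\mathcal P(\dir)$, distinct from both the $\mathcal P(\dir)^-$ and $\mathcal P(\dir)^+$ families. That contradicts \cite[Theorem 1.5]{Busani_N3G}. If this proves too delicate, the fallback is to use the Busemann-limit construction of $W^{\dir\pm}$ as monotone limits from $D$ to see directly that all $\dir$-geodesics lie between coalescing $D$-geodesics that pinch together.
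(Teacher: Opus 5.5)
The paper gives no proof of this lemma; it imports it from \cite[Lemma 7.1(i)]{Busa-Sepp-Sore-22a}. Judged on its own, your proposal has a genuine gap exactly at Step 3, which is where all the content lies, and none of the mechanisms you offer there works.

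(a) The squeeze does not pinch. The $\eta_1$-geodesics bound $g_1$ from the left and the $\eta_2$-geodesics bound $g_2$ from the right. Coalescence within each of these two families is entirely compatible with $g_1<g_2$ for all time, since the region between directions $\eta_1<\eta_2$ widens linearly in $|r|$. The local constancy in Proposition \ref{prop:Buse_basic_properties}\ref{itm:general_cts} identifies $W^{\dir_n}$ with $W^\dir$ only on a compact set, whereas coalescence is a statement about $r\to-\infty$. As $\dir_n\to\dir$, the coalescence points of the $\dir_n$-geodesics may escape to $-\infty$, and nothing in your argument controls this.

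(b) $\Delta(s)$ is just a $W^\dir$-increment between two points of a horizontal line, so its non-vanishing says nothing about coalescence. Any Busemann-type limit built along $g_1$ or $g_2$ is dominated by $W^\dir$ through \eqref{eq:landscape_leq}. You give no reason why it would differ from $W^\dir$, nor a uniqueness theorem that would make such a difference contradictory.

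(c) The appeal to \cite{Busani_N3G} is circular, because that classification is built on the Busemann-geodesic theory of \cite{Busa-Sepp-Sore-22a}, including coalescence within each family. It is also insufficient. For $\dir$ in the uncountable random set $\Xi^c\setminus D$ there is only one family in direction $\mathcal P(\dir)$, so two disjoint $\dir$-geodesics give only two non-coalescing geodesics, which a ``no three'' theorem does not exclude.

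Two smaller problems: the rational point in Step 2 is never used, and meeting infinitely often is not the same as coalescing, so that reduction is incomplete too.

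The missing idea is a way to apply local constancy to a \emph{single pair} of points rather than along an entire geodesic. For instance, let $p=(x,s)$ and $q=(y,s)$ with $x<y$, let $\dir\in\Xi^-\cup\Xi^c$, and use Proposition \ref{prop:Buse_basic_properties}\ref{itm:general_cts} to pick $\alpha<\dir$ with $\mathcal P(\alpha)<\mathcal P(\dir)$ and $W^\alpha(p;q)=W^\dir(p;q)$. Because their directions differ, $g^{\alpha,L}_q$ must meet any $\dir$-geodesic $g_p$ from $p$ at some point $z$. Additivity together with \eqref{L =landscape} and \eqref{eq:landscape_leq} then gives
\[
W^\alpha(p;q)\le \Ll(z;q)-\Ll(z;p)\le W^\dir(p;q).
\]
The outer equality forces $W^\dir(z;q)=\Ll(z;q)$. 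Hence following $g^{\alpha,L}_q$ from $q$ down to $z$, and then $g_p$, produces a $\dir$-geodesic from $q$. This path lies weakly to the right of $g^{\dir,L}_q$, while $g^{\alpha,L}_q\le g^{\dir,L}_q$, so $g^{\dir,L}_q$ must pass through $z$. Taking $g_p=g^{\dir,L}_p$ and using consistency of leftmost geodesics below $z$ gives coalescence of the leftmost $\dir$-geodesics. Symmetrically, rightmost $\dir$-geodesics coalesce for $\dir\in\Xi^+\cup\Xi^c$.

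This alone does not cover every $\dir$-geodesic (for example, leftmost ones when $\dir\in\Xi^+$), which needs a further argument. Still, some device of this kind, rather than a squeeze or a uniqueness citation, is what your Step 3 would have to supply.
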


One can also build semi-infinite geodesics from a general finite eternal solution, which we define now.  
    \begin{definition} \label{def:b-geodesics}
             If $b$ is a finite eternal solution, then for $(x,t) \in \R^2$, define the functions $g_{(x,t)}^{b,L},g_{(x,t)}^{b,R}:(-\infty,t] \to \R$ as follows: For $s < t$, let $g_{(x,t)}^{b,L/R}(s)$ be the leftmost/rightmost maximizers of the function
\be \label{eq:b+L_maxes}
y \mapsto b(y,s)+\Ll(y,s;x,t)
\ee
over $y \in \R$. For $s = t$, we define $g_{(x,t)}^{b,L/R}(t) = x$. 
    \end{definition}
\begin{remark}
    Corollary \ref{cor:cts_bounded} shows that $(x,t) \mapsto b(x,t)$ is continuous for any finite eternal solution $b$ and that the set of maximizers of the function \eqref{eq:b+L_maxes} is nonempty and bounded (see also \cite[Theorems 3.6-3.7]{Rassoul-Sweeney-2026} for an alternative proof). By continuity, any limit point of a maximizer is a maximizer, so the quantities $g_{(x,t)}^{b,L/R}(s)$ are well-defined and real-valued. 
\end{remark}

    Similarly as for the geodesics associated to the Busemann functions, the analogous equality to \eqref{L =landscape} holds (see \cite[Lemma A.14]{bhat-bus-soren})
    \be \label{b=Landscape}
    \Ll(y_1,s_1;y_2,s_2) = b(y_2,s_2) - b(y_1,s_1),\quad\text{for all }s_1 < s_2 \le t, \quad\text{ where }  y_i = g_{(x,t)}^{b,L/R}(s_i).
    \ee
    There may be some initial points where the eternal solution $b$ exhibits semi-infinite geodesics other than the leftmost and rightmost geodesics. In general, we call continuous functions $g:(-\infty,t] \to \R$ satisfying \eqref{b=Landscape} \textbf{$b$-geodesics}. By Lemma \ref{lem:geodesics_from_b} (originally from \cite{bhat-bus-soren}), with probability one, all $b$-geodesics are semi-infinite geodesics and hence are  continuous functions $(-\infty,t] \to \R$. 

We collect the following relevant facts:
\begin{lemma} \label{lem:SIG_facts_collect}
    The following hold on an event of probability one:
    \begin{enumerate}[label=(\roman*), font=\normalfont]
        \item \label{it:xi_g_mont} \cite[Theorem 6.3(i)]{Busa-Sepp-Sore-22a} For $\dir_1 < \dir_2$ in $\Xs$, $s < t$,  $x \in \R$, and $S \in \{L,R\}$,
        \[
        g_{(x,t)}^{\dir_1,S}(s) \le g_{(x,t)}^{\dir_2,S}(s).
        \]
        \item \label{it:some_g} \cite[Lemma 7.1]{bhat-bus-soren}  \rm{(}see also \cite{Busani_N3G}\rm{)} If $b$ is a finite eternal solution, then for each $(x,t) \in \R^2$ and $S \in \{L,R\}$, $g_{(x,t)}^{b,S} = g_{(x,t)}^{\dir,S}$ for some $\dir \in \Xs$.
        \item \label{it:g_b_mont} \cite[Corollary A.15]{bhat-bus-soren} If $b$ is a finite eternal solution, then for $x < y$ and $t \in \R$, we have $g_{(x,t)}^{b,S}(s) \le g_{(y,t)}^{b,S}(s)$ for all $x < y$.
        \item \cite[Corollary A.16]{bhat-bus-soren}\label{it:g_b_lim} If $b$ is a finite eternal solution, then for all $s < t$ and $x \in \R$,
        \[
        \lim_{y \nearrow x} g_{(y,t)}^{b,L}(s) = g_{(x,t)}^{b,L}(s),\quad\text{and}\quad \lim_{y \searrow x} g_{(y,t)}^{b,R}(s) = g_{(x,t)}^{b,R}(s).
        \]
        \item \cite[Theorem 6.3(v)]{Busa-Sepp-Sore-22a}\label{it:limits_to_infty} For all $\dir \in \Xs$ and $S \in \{L,R\}$,  
        \[
        \lim_{x \rightarrow \pm \infty} g_{(x,t)}^{\dir, S}(s)=\pm \infty.
        \]
        In particular, Items \ref{it:some_g} and \ref{it:g_b_mont} imply that  for every finite eternal solution $b$,
        \[
        \lim_{x \rightarrow \pm \infty} g_{(x,t)}^{b, S}(s)=\pm \infty.
        \]
    \end{enumerate}
\end{lemma}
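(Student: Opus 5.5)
Items \ref{it:xi_g_mont}--\ref{it:g_b_lim} and the first display of Item \ref{it:limits_to_infty} are quoted from \cite{Busa-Sepp-Sore-22a} and \cite{bhat-bus-soren}, and I will not reprove them. It only remains to derive the ``in particular'' statement: $g_{(x,t)}^{b,S}(s)\to\pm\infty$ as $x\to\pm\infty$ for a finite eternal solution $b$. I treat $x\to+\infty$; the case $x \to -\infty$ is symmetric, with the inequalities reversed.

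Fix $s<t$ and $S\in\{L,R\}$, and take a base point $x_0\in\R$. By Item \ref{it:some_g}, for each $x$ there is a color $\dir_x\in\Xs$ with $g^{b,S}_{(x,t)}=g^{\dir_x,S}_{(x,t)}$.

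The first step is to compare the directions of these colors. By Item \ref{it:g_b_mont}, for $x\ge x_0$ we have
\[
g^{b,S}_{(x,t)}(r)\ge g^{b,S}_{(x_0,t)}(r)\quad\text{for all } r\le t.
\]
Every $\dir$-geodesic has asymptotic direction $\mathcal P(\dir)$, by \cite[Theorem 2.5]{Busa-Sepp-Sore-22a} and the discussion after Definition \ref{def:SIG_xi}. Dividing the inequality by $|r|$ and letting $r\to-\infty$ therefore gives $\mathcal P(\dir_x)\ge\mathcal P(\dir_{x_0})$ for all $x\ge x_0$.

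Now pick any color $\dir'\in\Xs$ with $\mathcal P(\dir')<\mathcal P(\dir_{x_0})$, for instance $\dir' = (\mathcal P(\dir_{x_0})-1)^0$ if that point is not in $\Xi$, and one of its $\pm$ versions otherwise. Then $\dir'<\dir_x$ in the total order on $\Xs$ for every $x\ge x_0$. This holds regardless of the $\pm$ labels, because the projections are strictly ordered.

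Item \ref{it:xi_g_mont} then yields, for all $x\ge x_0$,
\[
g^{b,S}_{(x,t)}(s)=g^{\dir_x,S}_{(x,t)}(s)\ge g^{\dir',S}_{(x,t)}(s).
\]
By the first part of Item \ref{it:limits_to_infty}, applied to the single fixed color $\dir'$, the right-hand side tends to $+\infty$ as $x \to +\infty$. This proves the claim.

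The only delicate point is that the color $\dir_x$ varies with $x$, so Item \ref{it:limits_to_infty} cannot be applied to it directly. The obstacle is handled by fixing one color $\dir'$ that lies strictly below every $\dir_x$. Such a $\dir'$ exists because the directions of the $b$-geodesics are monotone in $x$, and taking a strict inequality in projection sidesteps the possible $\eta^-$ versus $\eta^+$ ambiguity.
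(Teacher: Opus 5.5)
Your proof is correct, and it is the argument the paper leaves implicit when it says the second display of Item \ref{it:limits_to_infty} follows from Items \ref{it:some_g} and \ref{it:g_b_mont}; like any such derivation, it also uses Item \ref{it:xi_g_mont} and the first display of Item \ref{it:limits_to_infty}. Comparing against one fixed color whose projection lies strictly below $\mathcal P(\xi_{x_0})$ is a clean choice: monotonicity of the asymptotic directions is enough, so you avoid needing monotonicity of the colors themselves (Lemma \ref{lem:5}, which is only proved afterwards).
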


     \begin{definition} \label{def:color}
         For a finite eternal solution $b$ and $(x,t) \in \R^2$, define  $\dir^{b}_{x,t} \in \Xs $ to be the color of $g_{(x,t)}^{b,L}$.
     \end{definition}
     We  now prove the following result.
\begin{lemma}\label{lem:5}
     The following holds for almost every $\omega$: for all finite eternal solutions $b$ and for all $t$, the function $x \mapsto \dir^b_{x,t}$ is non-decreasing and left continuous. 
\end{lemma}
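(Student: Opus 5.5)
We work on the full-probability event where Lemma \ref{lem:SIG_facts_collect}, Lemma \ref{lem:all_coalesce} and Proposition \ref{prop:Buse_basic_properties} all hold. Fix a finite eternal solution $b$ and $t \in \R$. Monotonicity and left continuity are proved separately; both rest on comparing leftmost $b$-geodesics with the Busemann geodesics through Lemma \ref{lem:SIG_facts_collect}\ref{it:some_g}.

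\textbf{Monotonicity.} Let $x<y$, and set $\xi_1=\dir^b_{x,t}$ and $\xi_2=\dir^b_{y,t}$. By Lemma \ref{lem:SIG_facts_collect}\ref{it:some_g} we have $g^{b,L}_{(x,t)}=g^{\xi_1,L}_{(x,t)}$ and $g^{b,L}_{(y,t)}=g^{\xi_2,L}_{(y,t)}$. Suppose for contradiction that $\xi_2<\xi_1$. Lemma \ref{lem:SIG_facts_collect}\ref{it:g_b_mont} gives $g^{\xi_1,L}_{(x,t)}(s)\le g^{\xi_2,L}_{(y,t)}(s)$ for all $s<t$.

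On the other hand, Lemma \ref{lem:SIG_facts_collect}\ref{it:xi_g_mont} gives $g^{\xi_2,L}_{(y,t)}\le g^{\xi_1,L}_{(y,t)}$. Moreover, $g^{\xi_1,L}_{(y,t)}$ and $g^{\xi_1,L}_{(x,t)}$ coalesce by Lemma \ref{lem:all_coalesce}. Hence for all sufficiently negative $s$,
\[
g^{\xi_1,L}_{(x,t)}(s)\le g^{\xi_2,L}_{(y,t)}(s)\le g^{\xi_1,L}_{(y,t)}(s)=g^{\xi_1,L}_{(x,t)}(s),
\]
so $g^{\xi_2,L}_{(y,t)}(s)=g^{\xi_1,L}_{(x,t)}(s)$ for all $s$ below some $S$. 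Thus a $\xi_2$-geodesic and a $\xi_1$-geodesic coalesce.

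This is impossible for $\xi_1\neq\xi_2$. If $\mathcal P(\xi_1)\neq\mathcal P(\xi_2)$, the two geodesics have different directions. If $\xi_1=\eta^-$ and $\xi_2=\eta^+$ (or the reverse), the $\eta^-$ and $\eta^+$ families are non-coalescing, as recorded before Proposition \ref{prop:Buse_basic_properties}. This can also be seen directly: along a common tail, \eqref{L =landscape} would force $W^{\xi_1}$ and $W^{\xi_2}$ to agree on arbitrarily distant pairs of points on that tail, contradicting the divergence in Proposition \ref{prop:Buse_basic_properties}\ref{it:to_pm_inf} after moving the points onto a horizontal line via additivity. Hence $\xi_1\le\xi_2$.

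\textbf{Left continuity.} This is the main obstacle. Let $y_n\nearrow x$ and write $\xi_n=\dir^b_{y_n,t}$. By monotonicity, $\xi_n$ is non-decreasing and bounded above by $\xi:=\dir^b_{x,t}$, so $\mathcal P(\xi_n)$ converges. The task is to show that $\xi_n$ equals $\xi$ for all large $n$, which in particular gives $\xi_n\to\xi$ in the topology of $\Xs$.

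We would use Lemma \ref{lem:SIG_facts_collect}\ref{it:g_b_lim}: for each $s<t$, $g^{b,L}_{(y_n,t)}(s)\to g^{b,L}_{(x,t)}(s)$. Combined with monotonicity in the starting point, this convergence is monotone from below. Fix $s<t$ and set $z=g^{b,L}_{(x,t)}(s)$. We want to show that for large $n$ the geodesic from $(y_n,t)$ actually passes through $(z,s)$. If this holds, then the geodesic from $(y_n,t)$ shares its tail below time $s$ with the $\xi$-geodesic from $(x,t)$, and so its color is $\xi$.

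The planned argument for this step is as follows. By Lemma \ref{lem:SIG_facts_collect}\ref{it:g_b_mont}, the paths $g^{b,L}_{(y_n,t)}$ are ordered and lie weakly to the left of $g^{b,L}_{(x,t)}$. Take a time $s'<s$. Geodesics in the directed landscape restricted to compact sets take only finitely many values near a limit; that is, a sequence of geodesics converging to a geodesic eventually overlaps with it away from the endpoints. We would use this to get that $g^{b,L}_{(y_n,t)}$ and $g^{b,L}_{(x,t)}$ agree on $[s',s]$ for large $n$. Leftmost $b$-geodesics have the property that if two of them meet, the one started further left then follows the other, because both are leftmost maximizers of \eqref{eq:b+L_maxes}. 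Hence the two geodesics agree on all of $(-\infty,s]$, and the colors coincide.

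If this overlap property is not directly available, the fallback is a limit argument. By the continuity part of Proposition \ref{prop:Buse_basic_properties}, $W^{\xi_n}$ converges locally to $W^{\xi'}$, where $\xi'$ is the limit of $\xi_n$ in $\Xs$. Passing to the limit in \eqref{L =landscape} shows that the limiting path $g^{b,L}_{(x,t)}$ is a $\xi'$-geodesic. It is also a $\xi$-geodesic. The coalescence contradiction used in the monotonicity step then forces $\xi'=\xi$.
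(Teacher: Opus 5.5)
Your monotonicity argument is correct. It expands the paper's one-line citation of Lemma \ref{lem:SIG_facts_collect}\ref{it:xi_g_mont},\ref{it:g_b_mont}: the sandwich combined with non-coalescence of distinct colors is exactly what that citation leaves implicit. Your ``direct'' justification of non-coalescence needs one more step. Agreement of $W^{\xi_1}$ and $W^{\xi_2}$ along the common tail does not by itself contradict horizontal divergence. You must also use Lemma \ref{lem:all_coalesce} to route the $\xi_1$- and $\xi_2$-geodesics from two points on a horizontal line into that tail. Additivity then gives $W^{\xi_1}=W^{\xi_2}$ on horizontal pairs, which does contradict Proposition \ref{prop:Buse_basic_properties}\ref{it:to_pm_inf}.

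For left continuity, your primary route is the paper's. It uses pointwise convergence from Lemma \ref{lem:SIG_facts_collect}\ref{it:g_b_lim}, then exact agreement at an intermediate time via Lemma \ref{lem:overlap}, then consistency of leftmost $b$-geodesics. However, you invoke the overlap fact without its hypothesis. Lemma \ref{lem:overlap} applies only if the limiting path is the unique geodesic between its endpoints (or the approximating endpoints are rational). The paper obtains this from Lemma \ref{lem:no_bubbles}: $g^{b,L}_{(x,t)}|_{[t-3,t-1]}$ is the unique geodesic between its endpoints, since splicing in another one would create a bubble. The conclusion is also agreement on an interval with endpoints converging to $s'$ and $s$, not all of $[s',s]$, though one interior time suffices.

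Your fallback is a genuinely different and complete argument. The monotone, bounded sequence $\xi_n$ has a limit $\xi'$ in $\Xs$. The strong continuity in Proposition \ref{prop:Buse_basic_properties}\ref{itm:general_cts} lets you pass to the limit in \eqref{L =landscape}. Finally, a single path cannot be both a $\xi$- and a $\xi'$-geodesic unless $\xi'=\xi$. This avoids both no-bubbles and the overlap lemma, and it proves exactly the stated topological left continuity.

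What it does not give is the paper's stronger conclusion: $\xi^b_{y,t}=\xi^b_{x,t}$ for all $y<x$ sufficiently close to $x$. That local constancy is what yields the half-open structure $(\ls^t_\xi,\rs^t_\xi]$ in \eqref{eq:s2} directly. With only $\xi_n\to\xi$ in $\Xs$, you would also need the local finiteness of Lemma \ref{lem:2} to rule out infinitely many colors accumulating at $x$ from the left.
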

\begin{proof}
    By Lemma \ref{lem:SIG_facts_collect}\ref{it:xi_g_mont},\ref{it:g_b_mont}, $x \mapsto \xi_{x,t}^b$ is non-decreasing. We now show left continuity at each $x$. By Lemma \ref{lem:SIG_facts_collect}\ref{it:g_b_lim}, we have $\lim_{y \nearrow x} g_{(y,t)}^{b,L}(s) = g_{(x,t)}^{b,L}(s)$ for all $s < t$  and $x \in \R$. Since geodesics do not form interior bubbles, we know that $g_{(x,t)}^{b,L}|_{[t-3,t-1]}$ is the unique geodesic between its points. Then, by \cite[Lemma 3.3]{Dauvergne-Sarkar-Virag-2022}, we must have that for $y < x$ sufficiently close to $x$, we have $g_{(y,t)}^{b,L}(t-2) = g_{(x,t)}^{b,L}(t-2)$. Hence, since these leftmost geodesics coalesce if they meet (Lemma \ref{lem:geodesics_from_b}\ref{itm: consis}), we have that $\xi_{y,t}^{b,L} = \xi_{x,t}^{b,L}$. But since $y \mapsto \xi_{y,t}^{b,L}$ is nondecreasing, we must have $\lim_{y \nearrow x} \xi_{y,t}^{b,L} = \xi_{x,t}^{b,L}$. 
\end{proof}

For an eternal solution $b$ and each $t\in\R$ define 
\begin{equation}\label{eq18}
\dirs^{b,t}=\left \{\dir^{b}_{x,t}: x \in \R \right \}.    
\end{equation}
When $b \equiv -\infty$, we have $\dirs^{b,t} = \varnothing$.

 From  Lemma \ref{lem:5}, we see that, whenever $b$ is a finite eternal solution,
\begin{equation}
\label{eq:s2}
  \dir^{b}_{x,t}=\sum_{\dir\in \dirs^{b,t}}\dir\,1_{(\ls^t_\dir,\rs^t_{\dir}]},
\end{equation}
where $-\infty \le \ls^t_\dir <\rs^t_{\dir} \le \infty$.

 \begin{lemma}\label{lem:2}
        For all $t\in\R$, the process $x\mapsto \dir^{b}_{x,t}$ is a jump process with only finitely many jumps in each compact interval.
    \end{lemma}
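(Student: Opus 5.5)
By Lemma \ref{lem:5}, for each fixed $t$ the map $x \mapsto \dir^b_{x,t}$ is non-decreasing and left-continuous, so it is a step-type process. It therefore suffices to show that for every compact interval $[a,c]$ the set $\{\dir^b_{x,t} : x \in [a,c]\}$ is finite. The plan is to push the problem back in time and use coalescence of leftmost $b$-geodesics together with their continuity in the root point.

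Fix $t$ and $[a,c]$, and pick the time $s = t-1$. Lemma \ref{lem:SIG_facts_collect}\ref{it:g_b_mont} says that $x \mapsto g^{b,L}_{(x,t)}(s)$ is non-decreasing, so for $x\in[a,c]$ the values $g^{b,L}_{(x,t)}(s)$ lie in the compact interval $[g^{b,L}_{(a,t)}(s), g^{b,L}_{(c,t)}(s)]$. The key claim is that the set $\{g^{b,L}_{(x,t)}(s): x\in[a,c]\}$ is finite.

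\begin{enumerate}[label=(\arabic*)]
\item Each restriction $g^{b,L}_{(x,t)}|_{[s,t]}$ is a point-to-point geodesic of $\Ll$ with endpoints in a compact set. This holds because $b$-geodesics are geodesics (Lemma \ref{lem:geodesics_from_b}), and their endpoints lie in $[a,c]\times\{t\}$ and in a compact interval at time $s$.
\item Apply the standard finiteness result for the directed landscape: for $s<r<t$ and compact sets of endpoints, the set $\{\gamma(r)\}$ over all geodesics $\gamma$ between those endpoints is almost surely finite. This is the ``finitely many geodesic locations at intermediate times'' fact; see \cite{Dauvergne-Sarkar-Virag-2022} and the same ingredient used in the proof of Lemma \ref{lem:5}. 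Taking $r = t - 1/2$, it follows that $\{g^{b,L}_{(x,t)}(r): x\in[a,c]\}$ is a finite set.
\item By Lemma \ref{lem:geodesics_from_b}\ref{itm: consis}, leftmost $b$-geodesics that meet agree from then on backward. Hence $\dir^b_{x,t}$ depends only on the point $(g^{b,L}_{(x,t)}(r), r)$, namely it equals the color $\dir^b_{g^{b,L}_{(x,t)}(r),\,r}$. Consequently $\{\dir^b_{x,t}: x\in[a,c]\}$ is finite.
\end{enumerate}

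A finite-valued monotone function has finitely many jumps on $[a,c]$, which gives the lemma. Item (3) is justified as follows. Consistency of leftmost geodesics means that the leftmost $b$-geodesic from $(x,t)$, restricted to $(-\infty,r]$, is the leftmost $b$-geodesic from its position at time $r$. This follows from the fact that the maximizers in \eqref{eq:b+L_maxes} compose via the metric composition law.

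The main obstacle is step (2). We need the finiteness statement uniformly over all geodesics with endpoints in compact sets, not merely for fixed endpoints. It must also apply to the specific geodesics $g^{b,L}$, whose endpoints at time $s$ depend on $b$. For this reason I would invoke the global version, which holds on a single almost-sure event for all compact endpoint sets, via \cite{Dauvergne-Sarkar-Virag-2022}. Since the endpoints at both times $s$ and $t$ already lie in compact sets, the conclusion then holds simultaneously for all finite eternal solutions $b$. If only the weaker ``geodesics near a unique geodesic coincide locally'' statement (as used in Lemma \ref{lem:5}) were available, the alternative would be a compactness argument. One would cover $[a,c]$ by neighborhoods on which the time-$r$ location is locally constant, using left limits via Lemma \ref{lem:SIG_facts_collect}\ref{it:g_b_lim} and the analogous right limits of leftmost geodesics. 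This route is more delicate at points with non-unique geodesics.
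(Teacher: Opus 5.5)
Your proof is correct and follows essentially the same route as the paper. Both arguments bound the earlier-time locations of the leftmost $b$-geodesics by monotonicity, invoke finiteness of geodesic locations at an intermediate time (the paper cites \cite[Lemma B.14]{Busa-Sepp-Sore-22a}), and transfer this finiteness to the colors via consistency of leftmost $b$-geodesics (Lemma \ref{lem:geodesics_from_b}\ref{itm: consis}). The only difference is the time window. The paper works on the interior window $[t-3,t-1]$ at time $t-2$, where the absence of interior bubbles makes each restricted geodesic unique. You work on $[t-1,t]$ with the root as an endpoint, which needs the finiteness statement for arbitrary, possibly non-unique, geodesics between compact endpoint sets. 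That version is true: it follows from Lemmas \ref{lem:precompact}, \ref{lem:overlap} and \ref{lem:no_bubbles} by restricting to a slightly smaller interior window. Shifting your window inward as the paper does avoids needing it.
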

\begin{proof}
    We first break $\R$ into compact intervals $K_n$. Let us consider all leftmost geodesics starting from $(x,t)$ where $x \in K_n.$ We consider the set of intersection points $K_{n,1}:=\{g^{b,L}_{(x,t)}(t-1): x \in K_n \}.$ Clearly, $K_{n,1}$ is a bounded set. Further, define $K_{n,3}:=\{g^{b,L}_{(x,t)}(t-3): x \in K_n \}.$ Then $K_{n,3}$ is also a bounded set and as geodesics do not form bubbles \cite[Theorem 1]{Bha24}, for all $x \in K_n, g^{b.L}_{(x,t)}|_{[t-3,t-1]}$ is a unique geodesic between $g^{b,L}_{(x,t)}(t-3)$ and $g^{b,L}_{(x,t)}(t-1)$. Now as both $K_{n,1}$ and $K_{n,3}$ are bounded sets, by \cite[Lemma B.14]{Busa-Sepp-Sore-22a} we know that $K_{n,2}:=\{g^{b,L}_{(x,t)}(t-2): x \in K_n \}$ is a finite set. This implies that all leftmost $b$ geodesics starting from points of $K_n$ have coalesced into finitely many points by time $t-2.$ Further, once two leftmost $b$ geodesics meet, they do not separate. Thus $\left \{\dir^{b}_{x,t}: x \in \R  \right \}$ is countable and locally finite.
\end{proof}

The following lemma will be useful.
\begin{lemma} \label{lem:Buse=W}
    If $b$ is an eternal solution, and $(x,t),(y,s) \in \R^2$ satisfy $\xi_{x,t}^b = \xi_{y,s}^b = \xi$, then 
    \[
    b(x,t) - b(y,s) = W^\dir(y,s;x,t).
    \]
\end{lemma}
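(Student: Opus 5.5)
The plan is to express both $b(x,t)$ and $b(y,s)$ as $\Ll$-weights along their leftmost $b$-geodesics down to a common point. Then we compare with the Busemann weights along the same paths. The key facts are \eqref{b=Landscape}, Lemma \ref{lem:W=landscape}, and coalescence of $\dir$-geodesics (Lemma \ref{lem:all_coalesce}).

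First, by Lemma \ref{lem:SIG_facts_collect}\ref{it:some_g}, $g^{b,L}_{(x,t)} = g^{\dir,L}_{(x,t)}$ and $g^{b,L}_{(y,s)} = g^{\dir,L}_{(y,s)}$, since both have color $\dir$. Both are therefore $\dir$-geodesics, and by Lemma \ref{lem:all_coalesce} they coalesce. So there is a time $r < s \wedge t$ and a point $z$ with $g^{b,L}_{(x,t)}(r) = g^{b,L}_{(y,s)}(r) = z$.

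Next, apply \eqref{b=Landscape} along $g^{b,L}_{(x,t)}$ with $s_1 = r$ and $s_2 = t$, which gives $\Ll(z,r;x,t) = b(x,t) - b(z,r)$. Lemma \ref{lem:W=landscape}, applied to the same path viewed as $g^{\dir,L}_{(x,t)}$, gives $\Ll(z,r;x,t) = W^\dir(z,r;x,t)$. Hence $b(x,t) - b(z,r) = W^\dir(z,r;x,t)$. The identical argument along $g^{b,L}_{(y,s)}$ gives $b(y,s) - b(z,r) = W^\dir(z,r;y,s)$.

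Subtracting these and using additivity in Proposition \ref{prop:Buse_basic_properties}\ref{itm:DL_Buse_add} gives
\[
b(x,t) - b(y,s) = W^\dir(z,r;x,t) - W^\dir(z,r;y,s) = W^\dir(y,s;x,t).
\]
Here $b$ must be finite for the subtraction to make sense. This holds because colors are only defined for finite eternal solutions (Definition \ref{def:color}).

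The main point to check is the identification of $g^{b,L}$ with an actual $\dir$-geodesic, so that Lemma \ref{lem:W=landscape} applies to it. Lemma \ref{lem:SIG_facts_collect}\ref{it:some_g} gives equality with some $g^{\dir',L}$, and the color of that geodesic is by definition $\dir$. The other point is that coalescence holds for all colors simultaneously on one full-probability event, which is what Lemma \ref{lem:all_coalesce} provides. If the identification in Lemma \ref{lem:SIG_facts_collect}\ref{it:some_g} only gave some $\dir$-geodesic rather than specifically the leftmost one, the argument would be unchanged. Lemma \ref{lem:W=landscape}'s equality extends to all $\dir$-geodesics, since they are defined as the continuous paths satisfying \eqref{L =landscape}.
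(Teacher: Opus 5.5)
Your proof is correct and follows essentially the same route as the paper's. Both arguments identify the two leftmost $b$-geodesics as $\dir$-geodesics, use Lemma \ref{lem:all_coalesce} to find a common point, write each $b$-increment to that point as an $\Ll$-weight and then as a $W^\dir$-increment via Lemma \ref{lem:W=landscape}, and subtract using additivity.
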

\begin{proof}
    By assumption, $g_{(x,t)}^{b,L} = g_{(x,t)}^{\dir,L}$ and $g_{(y,s)}^{b,L} = g_{(y,s)}^{\dir,L}$ have the same color $\xi$, so they coalesce by Lemma \ref{lem:all_coalesce}. Let $p \in \R^2$ be a common point along both geodesics. Then, since $b$ is an eternal solution and the locations of geodesics are maximizers by definition, we have 
    \begin{align*}
        b(x,t) = b(p) + \Ll(p;x,t),\quad\text{and}\quad b(y,s) = b(p) + \Ll(p;y,s).
    \end{align*}
    Next, by the equality of the directed landscape to the Busemann functions along a geodesic \eqref{L =landscape}, we have 
    \[
    b(x,t) = b(p) + W^\dir(p;x,t),\quad\text{and}\quad b(y,s) = b(p) + W^\dir(p;y,s).
    \]
    Subtracting the two equations and using the additivity of the Busemann function completes the proof. 
\end{proof}

\section{Decomposition of eternal solutions by colors} \label{sec:decomp}
In this section, we will show that every eternal solution can be represented as $W\star \varphi$ for a suitable function $\varphi$ (Proposition \ref{prop:exists_varphi} below). The uniqueness will come later in Section \ref{sec:unique}.

\subsection{KPZ evolution in a half-plane}
In this subsection, we describe how eternal solutions starting from a fixed time level $s$ evolves forward in time. We start with the following general result:
\begin{lemma} \label{lem:interchange_sups}
    Let $(f_\alpha)_{\alpha \in \mathcal I}$ be any indexed collection of functions $f_\alpha:\R \to [-\infty,\infty)$, and define the function $f:\R \to [-\infty,\infty]$ by
    \[
    f(y) = \sup_{\alpha \in I} f_\alpha(y).
    \]
    Then, for all $t > s$ and $x \in \R$, we have 
    \be \label{eq:interchange_sup}
    \kpzs_t(x;f,s) = \sup_{\alpha \in \mathcal I} \kpzs_t(x;f_\alpha,s).
    \ee
    Consequently, if $\varphi:\Xs \to [-\infty,\infty)$ is such that $b(x,t) := (W \star \varphi)(x,t) < \infty$ for all $(x,t) \in \R^2$, then $b$ is an eternal solution. 
\end{lemma}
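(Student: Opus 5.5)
The identity \eqref{eq:interchange_sup} is an interchange of two suprema, and I will prove it directly from the definition \eqref{eq:KPZFP_DL}, allowing values in $[-\infty,\infty]$ throughout. Fix $t>s$ and $x\in\R$. By definition,
\[
\kpzs_t(x;f,s) = \sup_{y\in\R}\Bigl[\sup_{\alpha\in\mathcal I} f_\alpha(y) + \Ll(y,s;x,t)\Bigr].
\]
Since $\Ll(y,s;x,t)$ is a real number, adding it commutes with the inner supremum, including when that supremum is $\pm\infty$. Hence
\[
\sup_{\alpha}f_\alpha(y)+\Ll(y,s;x,t) = \sup_\alpha[f_\alpha(y)+\Ll(y,s;x,t)].
\]
A double supremum over $(y,\alpha)\in\R\times\mathcal I$ may be taken in either order, so this equals
\[
\sup_\alpha\sup_y[f_\alpha(y)+\Ll(y,s;x,t)] = \sup_\alpha \kpzs_t(x;f_\alpha,s).
\]
The conventions $-\infty+c=-\infty$ and $+\infty+c=+\infty$ for real $c$ make each step valid in the extended reals. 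The empty index set, where every supremum is $-\infty$, is also consistent.

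For the consequence, let $b=W\star\varphi$ be finite-valued (or possibly $\equiv-\infty$), and fix $s<t$ and $x$. Take index set $\mathcal I=\Xs$ and set $f_\dir(y)=\varphi(\dir)+W^\dir(0,0;y,s)$, so that $b(\cdot,s)=\sup_\dir f_\dir$. Applying \eqref{eq:interchange_sup} gives
\[
\sup_y[b(y,s)+\Ll(y,s;x,t)] = \sup_\dir \sup_y[\varphi(\dir)+W^\dir(0,0;y,s)+\Ll(y,s;x,t)].
\]
When $\varphi(\dir)=-\infty$, the inner term is $-\infty$ and contributes nothing. When $\varphi(\dir)\in\R$, I pull out the constant and use additivity, $W^\dir(0,0;y,s)=W^\dir(0,0;0,s)+W^\dir(0,s;y,s)$. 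The eternal solution property \eqref{W_var} then gives
\[
\sup_y[W^\dir(0,s;y,s)+\Ll(y,s;x,t)]=W^\dir(0,s;x,t),
\]
and additivity again yields $\varphi(\dir)+W^\dir(0,0;x,t)$. Taking the supremum over $\dir$ gives $(W\star\varphi)(x,t)=b(x,t)$, which is \eqref{eq:bL_def}. The hypothesis $b<\infty$ only ensures that $b$ maps into $[-\infty,\infty)$, as Definition~\ref{def:eternal} requires.

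I expect no real obstacle here. The only delicate point is the bookkeeping with $\pm\infty$ when commuting the real constant $\Ll(y,s;x,t)$ with the supremum, together with the $\varphi(\dir)=-\infty$ terms. Both are handled by the extended-real conventions noted above.
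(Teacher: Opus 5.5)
Your proposal is correct and takes essentially the same approach as the paper. Both prove \eqref{eq:interchange_sup} by interchanging the two suprema, then apply it to $f_\dir(y)=\varphi(\dir)+W^\dir(0,0;y,s)$ using the eternal-solution property of the Busemann functions; your extended-real bookkeeping and the derivation of that property from \eqref{W_var} via additivity just make explicit what the paper leaves implicit.
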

\begin{proof}
    Equation \eqref{eq:interchange_sup} follows immediately from the definition of the KPZ fixed point \eqref{eq:KPZFP_DL} and by changing the order of the supremum (note that both sides of the equality may be $\infty$ for this general result). Next,  let $b := W\star \varphi$ for some $\varphi:\Xs \to [-\infty,\infty)$. Then, since each $(x,t) \mapsto W^\dir(0,0;x,t)$ is an eternal solution (Proposition \ref{prop:Buse_basic_properties}\ref{itm:Buse_KPZ_description}), we have that, for $s < t$ and $x \in \R$,
    \begin{align*}
    b(x,t) &= \sup_{\dir \in \Xs}[\varphi(\xi) + W^\dir(0,0;x,t)]  \\
    &= \sup_{\dir \in \Xs}\bigl[\varphi(\xi) + \sup_{y \in \R}[W^\dir(0,0;y,s) + \Ll(y,s;x,t)]\bigr] = \sup_{y \in \R}[b(y,s) +\Ll(y,s;x,t)], 
    \end{align*}
    where we again changed the order of the supremum. Hence, if $b(x,t) < \infty$ for all $(x,t) \in \R^2$, then $b$ is an eternal solution, as desired. 
\end{proof}

For $s \in \R$, recall $\dirs^{b,s}$ and $\bigl\{\ls_{\dir}^{s}\bigr \}_{\dir \in \dirs^{b,s}}$ and $\bigl \{\rs^s_{\dir}\bigr \}_{\dir \in \dirs^{b,s}}$, as defined in \eqref{eq18} and \eqref{eq:s2}. By Lemma \ref{lem:2}, $x \mapsto \dir^b_{x,s}$ is a discrete jump process. As such, we order the elements of $\dirs^{b,s}$ as follows.

Assume that $\dirs^{b,s}$ contains at least two colors for some $s \in \R.$ We choose and fix $\dir_0 \in \dirs^{b,s}$ such that $\ls^s_{\dir_0} \in \R.$ For $i \in \Z_{\ge 0}$, having defined $\dir_i^s$, we inductively define $\dir_{i+1}^s$ to be the unique $\dir' \in \dirs^{b,s}$ such that $\ls_{\dir'}^{s} = \rs_{\dir_i^s}^s$, whenever such a $\dir'$ exists. If $\mathfrak b_{\xi_i^s}^s = \infty$, then no such $\dir'$ exists, and we terminate the procedure. In this case, for the next two lemmas we shall formally write $\ls_{\xi_{i+1}^s}^s = \infty$, even though we have not defined $\xi_{i+1}^s$. 
 Similarly, for $i\in \Z_{\le 0}$, having defined $\dir_{i}^s$ we inductively define $\dir_{i-1}^s$ to be the unique $\dir' \in \dirs^{b,s}$ such that $\rs_{\dir'}^s=\ls_{\dir_i^s}^s$ whenever such a $\dir'$ exists. If If $\ls_{\xi_{i}}^s = -\infty$, then no such $\dir'$ exists, and we terminate the procedure. This gives us an integral interval of indices $\mathcal I^s$ that can be either infinite or finite and such that $\dirs^{b,s} = (\dir_i^s)_{i \in \mathcal I^s}$. The monotonicity in Lemma \ref{lem:5} implies that $\xi_i > \xi_j$ whenever $i > j$ in $\mathcal I^s$. We now prove the following lemma. 
\begin{lemma} 
\label{lem:s3}
Let $b$ be an eternal solution, and assume there exists $s \in \R$ such that $\dirs^{b,s}$ contains at least two colors. Then, for all $x \in \R$,
  \begin{equation}
  \label{4.32}
  \begin{aligned}
         &\quad \;b(x,s)-b(\ls_{\dir_0},s) \\
         &=
         \begin{cases}
            \sum_{j=0}^{i-1}W^{\dir_j}\left(\ls_{\dir_j},s;\ls_{\dir_{j+1}},s \right)+W^{\dir_{i}}\left({\ls_{\dir_{i}},s;x,s}\right) & x\in[\ls_{\dir_i},\ls_{\dir_{i+1}}), i \in \Z_{\ge 0}\\
             -\sum_{j=i+1}^{-1}W^{\dir_j}\left(\ls_{\dir_{j}},s;\ls_{\dir_{j+1}},s\right)+W^{\dir_i}(\ls_{\dir_{i+1}},s;x,s) & x\in(\ls_{\dir_i},\ls_{\dir_{i+1}}], i \in \Z_{< 0},
         \end{cases}
         \end{aligned}
     \end{equation}
     where we have used the shorthand notation $\xi_i = \xi_i^s$ and $\ls_\xi = \ls_\xi^s$ for our particular choice of $s$. Here, in the $i = 0$ and $i = -1$ cases, we interpret the empty sum $\sum_{j = 0}^{-1}$ as $0$. 
     \end{lemma}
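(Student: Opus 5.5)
The plan is to telescope $b(\cdot,s)$ across the consecutive color intervals, using Lemma \ref{lem:Buse=W} on each interval and continuity of $b$ at the interval endpoints.

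\textbf{Within one color interval.} Fix $i \in \mathcal I^s$. For any two points $x,y$ in the half-open interval $(\ls_{\dir_i},\rs_{\dir_i}]$, both have color $\dir_i$ at time $s$. Lemma \ref{lem:Buse=W} then gives
\[
b(x,s)-b(y,s)=W^{\dir_i}(y,s;x,s).
\]
So on each color interval, $b(\cdot,s)$ differs from $W^{\dir_i}(0,0;\cdot,s)$ by a constant.

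\textbf{Passing to the left endpoint.} The left endpoint $\ls_{\dir_i}$ itself has color $\dir_{i-1}$, not $\dir_i$, so it is not covered by the previous step. To include it, I let $y \searrow \ls_{\dir_i}$ inside the interval and take limits. This uses:
\begin{itemize}
\item continuity of $b$ (Corollary \ref{cor:cts_bounded}, assumed available since it is quoted earlier);
\item continuity of the Busemann function $W^{\dir_i}$ in its spatial arguments.
\end{itemize}
This yields, for every $x \in [\ls_{\dir_i},\rs_{\dir_i}]$,
\[
b(x,s)-b(\ls_{\dir_i},s)=W^{\dir_i}(\ls_{\dir_i},s;x,s).
\]
By construction $\rs_{\dir_i}=\ls_{\dir_{i+1}}$, so this covers $x \in [\ls_{\dir_i},\ls_{\dir_{i+1}}]$. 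It also applies to the boundary intervals: if $\rs_{\dir_i}=\infty$, the interval is $[\ls_{\dir_i},\infty)$, matching the formal convention $\ls_{\dir_{i+1}}=\infty$. Continuity of $b$ is the one step I expect to need care. If it is not yet available at this point in the paper, I would instead use that the cut point $\ls_{\dir_i}$ lies on a $\dir_i$-geodesic obtained as a limit of geodesics from the right, and apply \eqref{L =landscape} there.

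\textbf{Case $i \ge 0$.} Write
\[
b(x,s)-b(\ls_{\dir_0},s)
\]
as a telescoping sum of the increments $b(\ls_{\dir_{j+1}},s)-b(\ls_{\dir_j},s)$ for $j=0,\dots,i-1$, plus the final increment $b(x,s)-b(\ls_{\dir_i},s)$. Each increment is identified by the endpoint formula above, applied with $x=\ls_{\dir_{j+1}}$ in the $j$-th interval. This gives the first line of \eqref{4.32}.

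\textbf{Case $i<0$.} Take $x\in(\ls_{\dir_i},\ls_{\dir_{i+1}}]$, which is exactly the color interval of $\dir_i$. Apply the within-interval formula with base point $\ls_{\dir_{i+1}}=\rs_{\dir_i}$, which belongs to this interval. Then telescope from $\ls_{\dir_{i+1}}$ up to $\ls_{\dir_0}$ through the intervals $j=i+1,\dots,-1$, subtracting the corresponding increments. Additivity of $W^{\dir_j}$ and a sign check reproduce the second line of \eqref{4.32}.
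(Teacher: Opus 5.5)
Your proposal is correct and follows essentially the same route as the paper: Lemma \ref{lem:Buse=W} on each color interval, extension to the finite left endpoint $\ls_{\dir_i}$ by continuity of $b$ (Corollary \ref{cor:cts_bounded}) and of $W^{\dir_i}$, and then telescoping across consecutive intervals. The paper phrases the telescoping as an induction and only says ``symmetric'' for $i<0$, which you write out explicitly. Your fallback is unnecessary, since Corollary \ref{cor:cts_bounded} is proved independently in the appendix.
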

     \begin{proof}
     First, assume that $x \in (\ls_{\dir_i},\ls_{\dir_{i+1}})$ for some $i$ such that $\ls_{\dir_i} > -\infty$ (this is always true for $i = 0$ by definition). Then, for all $\ve > 0$ sufficiently small, \eqref{eq:s2} gives us
     \[
     \xi_{x,s}^b = \xi_{\ls_{\dir_i} + \ve,s}^b = \xi_i.
     \]
   By Lemma \ref{lem:all_coalesce}, all semi-infinite geodesics with color $\dir_{i}$ coalesce, so for all sufficiently small $\ve > 0$,  Lemma \ref{lem:Buse=W} implies
         \[
         b(x,s)-b(\ls_{\dir_i} + \ve,s)=W^{\dir_i}\left( \ls_{\dir_i}+\ve,s;x,s\right).
         \]
         Then, using continuity of $b$ (Corollary \ref{cor:cts_bounded}) and continuity of the Busemann function (Proposition \ref{prop:Buse_basic_properties}\ref{itm:general_cts}), by sending $\ve \searrow 0$, we obtain
        \be \label{eq:bdiff0}
         b(x,s)-b(\ls_{\dir_i},s)=W^{\dir_i}\left( \ls_{\dir_i},s;x,s\right),\quad\text{for all }x \in [\ls_{\xi_i},\ls_{\xi_i + 1}),
        \ee
        where the equality is trivial for $x = \ls_{\xi_i}$. As a remark, the equality also holds for $x = \ls_{\xi_{i+1}}$ when $\ls_{\xi_{i+1}} < \infty$. 

        Next, we use induction to prove \eqref{4.32}. We see that the case when $x \in [\ls_{\xi_0},\ls_{\xi_1})$ is the $i = 0$ case of \eqref{eq:bdiff0}. Now, suppose, for some $i \in \Z_{\ge 0}$ that for all for $x \in [\ls_{\dir_i},\ls_{\dir_{i+1}})$,
         \[
         b(x,s)-b(\ls_{\dir_0},s)=\sum_{j=0}^{i-1} W^{\dir_j}\left(\ls_{\dir_j},s;\ls_{\dir_{j+1}},s \right)+W^{\dir_{i}}\left({\ls_{\dir_{i}},s;x,s}\right).
         \]
         We will also assume that $\ls_{\xi_{i+1}} < \infty$; otherwise, we terminate the induction. Then, by continuity, for $x = \ls_{\xi_{i+1}}$, we have  
\be \label{eq:bdiff1}
b(\ls_{\dir_{i+1}},s)-b(\ls_{\dir_{0}},s)=\sum^{i}_{j=0}W^{\dir_j}\left(\ls_{\dir_j},s;\ls_{\dir_{j+1}},s \right).
\ee
         Next, for $x \in [\ls_{\dir_{i+1}},\ls_{\dir_{i+2}})$, by the $i +1$ case of \eqref{eq:bdiff0}, we have that 
         \be \label{eq:bdiff2}
        b(x,s)-b(\ls_{\dir_{i+1}},s)=W^{\dir_{i+1}}\left( \ls_{\dir_{i+1}},s;x,s\right).
         \ee
         Then, the result comes by adding \eqref{eq:bdiff1} and 
         \eqref{eq:bdiff2}. A symmetric inductive argument holds for $x \in (\ls_{\xi_1},\ls_{\xi_{i+1}}]$ when $i \in \Z_{<0}$, starting from $i = -1$.
     \end{proof}

We now have the following lemma.
\begin{lemma}\label{lem:eternal_time_s}
    Let $b$ be an eternal solution, and assume there exists $s \in \R$ so that $\dirs^{b,s}$ contains at least two colors. Then, for all $x \in \R$,
    \[
    b(x,s) =(\Wf{\mathfrak{c}^s})(x,s),
    \]
    where $\mathfrak{c}^s:\Xs \to [-\infty,\infty)$ is defined by $\mathfrak{c}^s(\xi) = -\infty$ for all $\xi \notin \dirs^{s,b}$, and for $\xi \in \dirs^{s,b} = \{\xi_i\}_{i \in \mathcal I^s}$,
    \begin{equation}\label{eq2}
    \mathfrak{c}^s \left(\dir_i \right)=
        b(\ls_{\dir_0},s) + \begin{cases}
            \sum^{i-1}_{j=0}W^{\dir_j}\left(\ls_{\dir_j},s;\ls_{\dir_{j+1}},s \right)+W^{\dir_{i}}\left(\ls_{\dir_{i}},s;0,0\right) & i \in \Z_{\ge 0}\\
            -\sum_{j=i+1}^{-1}W^{\dir_{j}}\left(\ls_{\dir_{j}},s;\ls_{\dir_{j+1}},s\right)+W^{\dir_{i}}\left(\ls_{\dir_{i+1}},s;0,0\right)& i \in \Z_{< 0},
        \end{cases}
    \end{equation}
    and we use the same shorthands for $\xi_i$ and $\ls_{\xi}$ as in Lemma \ref{lem:s3}.
\end{lemma}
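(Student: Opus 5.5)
We prove the two inequalities $b(x,s)\ge (W\star\mathfrak c^s)(x,s)$ and $b(x,s)\le (W\star\mathfrak c^s)(x,s)$. The starting observation is that, by additivity of the Busemann functions (Proposition \ref{prop:Buse_basic_properties}\ref{itm:DL_Buse_add}), the definition \eqref{eq2} is arranged so that for each $i\in\mathcal I^s$ and all $x\in\R$,
\[
\mathfrak c^s(\dir_i)+W^{\dir_i}(0,0;x,s) = b(\ls_{\dir_0},s)+A_i+W^{\dir_i}(y_i,s;x,s),
\]
where $A_i$ is the sum in \eqref{eq2}, $y_i=\ls_{\dir_i}$ for $i\ge 0$, and $y_i=\ls_{\dir_{i+1}}$ for $i<0$. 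This is exactly the right-hand side of \eqref{4.32}, but now evaluated for an arbitrary $x$ rather than only $x$ in the $i$-th interval. Lemma \ref{lem:s3} therefore says that for $x$ in the $i$-th interval, the $i$-th term of the supremum equals $b(x,s)$. Since every $x\in\R$ lies in some such interval (the intervals partition $\R$ by \eqref{eq:s2}), we get $(W\star\mathfrak c^s)(x,s)\ge b(x,s)$.

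For the reverse inequality, we fix $x$ in the $i$-th interval and show that the $j$-th term is at most $b(x,s)$ for every $j\ne i$. The term for $j$ equals $b(z,s)+W^{\dir_j}(z,s;x,s)$ for any $z$ in the $j$-th interval, by Lemma \ref{lem:s3} and additivity. We argue by induction across the boundary points, using monotonicity along horizontal lines (Proposition \ref{prop:Buse_basic_properties}\ref{itm:DL_Buse_gen_mont}).

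Suppose $j<i$, so $\dir_j<\dir_k$ for all $j<k\le i$. Write the increment of $b$ from the boundary point $\ls_{\dir_{j+1}}$ to $x$ as a telescoping sum of $W^{\dir_k}$-increments over consecutive subintervals to the right, with $k>j$. Each such increment is over a left-to-right interval, so by monotonicity it is at least the corresponding $W^{\dir_j}$-increment. Hence
\[
b(x,s)-b(\ls_{\dir_{j+1}},s)\ge W^{\dir_j}(\ls_{\dir_{j+1}},s;x,s).
\]
Combining this with the value of the $j$-th term at $z=\ls_{\dir_{j+1}}$, which is valid by continuity (see the remark after \eqref{eq:bdiff0}), shows that the $j$-th term is at most $b(x,s)$.

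For $j>i$ the argument is symmetric. We go leftward from $\ls_{\dir_j}$ to $x$; there $W^{\dir_j}(\ls_{\dir_j},s;x,s)=-W^{\dir_j}(x,s;\ls_{\dir_j},s)$, and monotonicity gives $W^{\dir_k}(u,s;v,s)\le W^{\dir_j}(u,s;v,s)$ for $k<j$ and $u<v$. This yields the same bound.

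The main obstacle is bookkeeping. First, the index conventions for $i<0$ versus $i\ge0$ in \eqref{4.32} differ, with left-closed versus right-closed intervals. Second, when $\mathcal I^s$ is infinite, the telescoping sum must still run over finitely many $k$; this holds because only finitely many colors separate $x$ from a fixed interval, by Lemma \ref{lem:2}. Third, when $\ls_{\dir_i}=-\infty$ or $\rs_{\dir_i}=\infty$ for an extremal color, we must pick a finite reference point $z$ inside the interval rather than the boundary.

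Once these cases are handled, the supremum over $\Xs$ reduces to a supremum over $\dirs^{b,s}$, since $\mathfrak c^s=-\infty$ elsewhere. Each term is at most $b(x,s)$ and one term equals it, which gives $b(x,s)=(W\star\mathfrak c^s)(x,s)$.
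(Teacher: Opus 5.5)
Your proposal is correct and is essentially the paper's proof. The paper likewise uses Lemma \ref{lem:s3} and additivity to see that the $i$-th term equals $b(x,s)$ on the $i$-th interval, and then bounds each $j$-th term for $j\neq i$ by the same telescoping-plus-monotonicity computation, written in terms of $\mathfrak c^s$ rather than increments of $b$. Your bookkeeping worries are milder than you fear: only the $|i-j|$ consecutive indices between $j$ and $i$ enter, and every boundary point you use ($\ls_{\dir_{j+1}}$, $\ls_{\dir_i}$ for $j<i$; $\ls_{\dir_j}$, $\ls_{\dir_{i+1}}$ for $j>i$) is automatically finite.
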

\begin{proof}
By Lemma \ref{lem:s3} and additivity of the Busemann functions, for all indices $i$ and  $x \in (\ls_{\dir_i},\ls_{\dir_{i+1}})$,
\[
b(x,s)=W^{\dir_i}(0,0;x,s)+\mathfrak{c}^s(\dir_i),
\]
and equality holds at the endpoints whenever they are finite. 
We show that for any index $j \neq i$, 
\[
W^{\dir_i}(0,0;x,s)+\mathfrak{c}^s(\dir_i) \ge W^{\dir_j}(0,0;x,s)+\mathfrak{c}^s(\dir_j),\qquad\forall \; x \in (\ls_{\xi_i},\ls_{\xi_{i+1}}).
\]
The extension to $x = \ls_{\xi_i}$ and $x = \ls_{\xi_{i+1}}$ when these are finite then follows by continuity. 
         First, by definition of $\mathfrak c^s$ \eqref{eq2}, for any index $j<i$,
        \begin{align*}
            &\quad \;\mathfrak{c}^s(\dir_i)+W^{\dir_i}(0,0;x,s) - \Bigl(\mathfrak{c}^s(\dir_j)+W^{\dir_j}(0,0;x,s)\Bigr) \\
           &=W^{\dir_i}(\ls_{\xi_{i}},s;x,s) + W^{\dir_{j}}(x,s;\ls_{\xi_{j+1}},s)  +   \sum_{\ell = j+1}^{i-1}W^{\dir_\ell} \left(\ls_{\dir_{\ell}},s;\ls_{\dir_{\ell+1}},s \right)
            \\&\ge W^{\dir_i}(\ls_{\xi_{i}},s;x,s) + W^{\dir_{j}}(x,s;\ls_{\xi_{j+1}},s)  +   \sum_{\ell = j+1}^{i-1}W^{\dir_j} \left(\ls_{\dir_{\ell}},s;\ls_{\dir_{\ell+1}},s \right) \\
        &= W^{\dir_i}( \ls_{\xi_{i}},s;x,s) + W^{\dir_j}(x,s;\ls_{\xi_{i}},s)  = W^{\dir_i}( \ls_{\xi_{i}},s;x,s) - W^{\dir_j}(\ls_{\xi_{i}},s;x,s) \ge 0,   
        \end{align*}
        where in both inequalities, we have used the monotonicity of the Busemann functions and the assumption $x > \ls_{\xi_i}$ for the last inequality. The intermediate equalities used additivity of the Busemann functions.

         Next, observe that for indices $j > i$,
         \begin{align*}
             &\quad \;\mathfrak{c}^s(\dir_i)+W^{\dir_i}(0,0;x,s) - \Bigl(\mathfrak{c}^s(\dir_j)+W^{\dir_j}(0,0;x,s)\Bigr) \\
             &= W^{\dir_i}(\ls_{\xi_{i+1}},s;x,s) - W^{\dir_j}(\ls_{\xi_{j}},s;x,s) - \sum_{\ell = i+1}^{j-1} W^{\dir_\ell}(\ls_{\dir_\ell},s;\ls_{\dir_{\ell+1}},s) \\
             &\ge W^{\dir_i}(\ls_{\xi_{i+1}},s;x,s) - W^{\dir_j}(\ls_{\xi_{j}},s;x,s) - \sum_{\ell = i+1}^{j-1} W^{\dir_j}(\ls_{\dir_\ell},s;\ls_{\dir_{\ell+1}},s) \\
             &= W^{\dir_i}(\ls_{\xi_{i+1}},s;x,s) - W^{\dir_j}(\ls_{\xi_{i+1}},s;x,s) \ge 0,
         \end{align*}
         where we used the definition of $\mathfrak c^s$ in the first equality, the monotonicity of the Busemann functions in the first inequality, additivity in the last equality, and monotonicity along with the assumption that $x < \ls_{\xi_{i+1}}$ in the last inequality (the assumption that an index $j > i$ exists implies that $\ls_{\xi_{i+1}} < \infty$).  
\end{proof}

    \begin{lemma}\label{lm:glb}
        Let $b$ be an eternal solution such that $\dirs^{b,s}$ contains at least two colors for some $s \in \R$. For this choice of $s$, let $\mathfrak{c}^s$ be as defined  in Lemma \ref{lem:eternal_time_s}. Then, for all $t > s$ and $x \in \R$,
        \begin{equation}
            b(x,t)=(\Wf{\mathfrak{c}^s})(x,t).
        \end{equation}
    \end{lemma}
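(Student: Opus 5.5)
This follows by combining the time-$s$ representation of Lemma \ref{lem:eternal_time_s} with the interchange of suprema in Lemma \ref{lem:interchange_sups}. Fix $t > s$ and $x \in \R$. Since $b$ is an eternal solution, Definition \ref{def:eternal} gives
\[
b(x,t) = \sup_{y \in \R}\bigl[b(y,s) + \Ll(y,s;x,t)\bigr].
\]
Lemma \ref{lem:eternal_time_s} says that $b(y,s) = (\Wf{\mathfrak c^s})(y,s) = \sup_{\dir \in \Xs}[\mathfrak c^s(\dir) + W^\dir(0,0;y,s)]$ for every $y \in \R$. So $b(\cdot,s)$ is a pointwise supremum of the functions $f_\dir(y) := \mathfrak c^s(\dir) + W^\dir(0,0;y,s)$, where $f_\dir \equiv -\infty$ for $\dir \notin \dirs^{b,s}$.

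We apply \eqref{eq:interchange_sup} of Lemma \ref{lem:interchange_sups} with the index set $\Xs$ and these functions $f_\dir$. This gives
\[
b(x,t) = \kpzs_t(x;b(\cdot,s),s) = \sup_{\dir \in \Xs} \kpzs_t(x;f_\dir,s).
\]
For $\dir \in \dirs^{b,s}$, the eternal solution property \eqref{W_var} together with additivity gives
\[
\kpzs_t(x;f_\dir,s) = \mathfrak c^s(\dir) + \sup_{y}\bigl[W^\dir(0,0;y,s) + \Ll(y,s;x,t)\bigr] = \mathfrak c^s(\dir) + W^\dir(0,0;x,t).
\]
Here we write $W^\dir(0,0;y,s) = W^\dir(0,0;0,s) + W^\dir(0,s;y,s)$ and apply \eqref{W_var} with base point $(0,s)$. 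For $\dir \notin \dirs^{b,s}$, the term is $-\infty$. Taking the supremum over $\dir$ yields $b(x,t) = (\Wf{\mathfrak c^s})(x,t)$.

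The argument is essentially bookkeeping, so no step should be a real obstacle. Two points need a check. First, Lemma \ref{lem:interchange_sups} must be applicable even though the functions $f_\dir$ are not assumed to satisfy the growth conditions under which \eqref{eq:KPZFP_DL} is classically valid. This is fine because the lemma holds for arbitrary families, with both sides possibly infinite, and here the left side equals $b(x,t)$, which is finite. Second, the values $\mathfrak c^s(\dir) = -\infty$ must be handled consistently, i.e.\ the convention $-\infty + \Ll = -\infty$, so that colors outside $\dirs^{b,s}$ contribute nothing. No finiteness of $\Wf{\mathfrak c^s}$ needs to be checked separately: the identity itself shows it equals $b < \infty$.
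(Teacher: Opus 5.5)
Your proposal is correct and matches the paper's proof. The paper uses the same chain: the eternal-solution identity from time $s$, substitution of the time-$s$ representation from Lemma \ref{lem:eternal_time_s}, and the interchange of suprema in Lemma \ref{lem:interchange_sups}, which uses that each Busemann function is an eternal solution. Your added details (splitting $W^\dir(0,0;y,s)$ by additivity and treating the $-\infty$ values of $\mathfrak c^s$) spell out bookkeeping the paper leaves implicit.
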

    \begin{proof}
By the assumption that $b$ is an eternal solution, followed by Lemma \ref{lem:eternal_time_s}, then Lemma \ref{lem:interchange_sups},
\[
    b(x,t) = \kpzs_t(x;b(\cdot,s),s) =\kpzs_t\bigl(x; (\Wf{\mathfrak{c}^s})(\cdot,s), s\bigr)(x,t) = (W\star \mathfrak c^s)(x,t). \qedhere
\]
       \end{proof}

\subsection{Colors across times}
Lemma \ref{lm:glb} shows that en eternal solution can be represented as $W\star \varphi$ for $(x,t) \in \R \times \R_{>s}$. To show that this can be extended to the full plane, we need to look at the set of colors across all times. The following results says that if a color appears at some time then in also appears at any previous time. 
\begin{lemma}\label{lm:4}
    For all $s<t$, we have $\dirs^{b,t}\subseteq\dirs^{b,s}$.
\end{lemma}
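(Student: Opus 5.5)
Fix $s<t$ and a color $\xi\in\dirs^{b,t}$, so $\xi=\xi^b_{x,t}$ for some $x\in\R$, i.e.\ the leftmost $b$-geodesic $g:=g^{b,L}_{(x,t)}$ has color $\xi$. The natural candidate witnessing $\xi\in\dirs^{b,s}$ is the point $y:=g(s)$ where this geodesic crosses time level $s$. The plan is to show that the leftmost $b$-geodesic from $(y,s)$ is exactly the restriction $g|_{(-\infty,s]}$. This restriction has color $\xi$: by Lemma \ref{lem:SIG_facts_collect}\ref{it:some_g}, $g=g^{\xi,L}_{(x,t)}$, and its restriction is a $\xi$-geodesic. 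It would then follow that $\xi^b_{y,s}=\xi$.

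The key step is the consistency of leftmost maximizers. For $r<s$, we show $g^{b,L}_{(y,s)}(r)=g(r)$. First, $g(r)$ is a maximizer of $z\mapsto b(z,r)+\Ll(z,r;y,s)$. This follows from \eqref{b=Landscape} applied to $g$: we have $b(y,s)=b(g(r),r)+\Ll(g(r),r;y,s)$, while the eternal solution property gives $b(y,s)\ge b(z,r)+\Ll(z,r;y,s)$ for all $z$.

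Second, it is the leftmost such maximizer. Suppose instead some $z<g(r)$ is also a maximizer. Then
\[
b(z,r)+\Ll(z,r;y,s)+\Ll(y,s;x,t)=b(g(r),r)+\Ll(g(r),r;y,s)+\Ll(y,s;x,t)=b(x,t),
\]
where the last equality again uses \eqref{b=Landscape} along $g$. By the reverse triangle inequality \eqref{triangle}, $b(z,r)+\Ll(z,r;x,t)\ge b(x,t)$. Combined with the eternal solution property, this means $z$ is a maximizer of $z'\mapsto b(z',r)+\Ll(z',r;x,t)$. That contradicts $g(r)$ being the leftmost such maximizer. Hence $g^{b,L}_{(y,s)}=g|_{(-\infty,s]}$.

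Lemma \ref{lem:geodesics_from_b} (consistency of leftmost $b$-geodesics) essentially packages this argument, and could be cited instead. The argument also needs well-definedness of the maximizers, which Corollary \ref{cor:cts_bounded} supplies. The only point requiring care is the leftmost-maximizer contradiction above, and it is routine. So I expect no real obstacle: the lemma amounts to the observation that every tail of a leftmost $b$-geodesic is itself a leftmost $b$-geodesic.
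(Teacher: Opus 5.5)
Your proof is correct and follows the paper's argument: take $y=g^{b,L}_{(x,t)}(s)$ and use consistency of leftmost $b$-geodesics, Lemma \ref{lem:geodesics_from_b}\ref{itm: consis}, to conclude $\xi^b_{y,s}=\xi^b_{x,t}$. The only difference is that you re-derive that consistency inline, via the leftmost-maximizer argument with the reverse triangle inequality, whereas the paper simply cites it.
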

\begin{proof}
Fix $x\in\R$. Let $\dir=\dir_{x,t}^b$. Then $g_{(x,t)}^{b,L}$ will intersect the time horizon $s$ at some spatial point $y\in\R$. By the consistency of geodesics in Lemma \ref{lem:geodesics_from_b}\ref{itm: consis}, $\dir = \dir_{x,t}^b = \dir_{y,s}^b$. Therefore, $\dir \in \dirs^{b,s}$ as well.  
\end{proof}

\noindent 
Now, define
\begin{equation} \label{eq:colors}
    \dirs^b:=\bigcup_{s \in \R} \dirs^{b,s}.
\end{equation}
We call the set $\dirs^b$ the \textbf{set of colors of $b$}.
For the dynamics of an eternal solution $b$, we define the \textit{extinction time} $\tex:\Xs\rightarrow \R$ by
\begin{equation}
    \tex(\dir):=\sup\{s:\dir\in\dirs^{b,s}\}.
\end{equation}
If the set on the right hand side is empty we set $\tex(\dir)=-\infty$. We say that $\dir\in\Xs$ goes extinct if $-\infty<\tex(\dir)<\infty$. We then call $\tex(\dir)$ its time of extinction.

For an eternal solution $b$, we define its \textbf{coloring map}  $\clr^b:\dirs^b\rightarrow \mathcal{B}(\R^2)$ by
\begin{equation}\label{cm}
\clr^b(\dir)=\{p\in\R^2:\dir^{b}_p=\dir\},    
\end{equation}
where $\mathcal{B}(\R^2)$ is the set of Borel subsets of $\R^2$.
Lemma \ref{lem:5} and \eqref{eq:s2} together show that
\begin{equation} \label{eq:Cb_rep}
    \clr^b(\dir)=\bigcup_{s<\tex(\dir)} (\ls^s_\dir,\rs^s_{\dir}]\times \{s\},
\end{equation}
where $-\infty \le \ls_\dir^s < \rs_\dir^s \le \infty$ for all $s < \tex(\dir)$.

With this notation in hand, we are ready to prove the main result of this section. 
\begin{proposition} \label{prop:exists_varphi} 
    For almost-every $\omega$, whenever $b \in \es_\omega$, the set $\dirs^b$ is countable. Furthermore, there exists a function $\varphi:\Xs \to [-\infty,\infty)$ with $\supp(\varphi) = \dirs^{b}$  such that $b = W\star \varphi$. For this choice of $\varphi$, we have that
    \be \label{eq:b=phi+W_thm}
    b(x,t) = \varphi(\xi) + W^\dir(0,0;x,t),\quad \forall \dir \in \dirs^{b},\text{ and } (x,t) \in \clr^b(\xi).
    \ee
\end{proposition}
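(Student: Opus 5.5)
The plan is to glue together the time-slice representations from Lemma \ref{lm:glb} along a sequence of times $s_n \to -\infty$, after showing that the coefficients they produce do not depend on the time slice.

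\textbf{Trivial cases.} If $b \equiv -\infty$, take $\varphi \equiv -\infty$, so that $\supp(\varphi) = \varnothing = \dirs^b$. If $\dirs^b$ is a single color $\xi$, then every point of the plane has color $\xi$. Fix $(x_0,t_0)$ and set $\varphi(\xi) := b(x_0,t_0) - W^\xi(0,0;x_0,t_0)$, with $\varphi = -\infty$ elsewhere. Lemma \ref{lem:Buse=W} gives $b(x,t) = \varphi(\xi) + W^\xi(0,0;x,t)$ everywhere, which is \eqref{eq:b=phi+W_thm}, and $b = W \star \varphi$.

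\textbf{Countability.} By Lemma \ref{lem:2}, each $\dirs^{b,s}$ is countable. By Lemma \ref{lm:4}, $\dirs^{b,s}$ is nonincreasing in $s$, so
\[
\dirs^b = \bigcup_{n \in \N} \dirs^{b,-n},
\]
which is a countable union of countable sets.

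\textbf{Definition of $\varphi$.} For $\xi \in \dirs^b$, choose any point $(x,t) \in \clr^b(\xi)$ and set
\[
\varphi(\xi) := b(x,t) - W^\xi(0,0;x,t),
\]
with $\varphi(\xi) = -\infty$ for $\xi \notin \dirs^b$. This is well defined. If $(x,t)$ and $(y,s)$ both lie in $\clr^b(\xi)$, Lemma \ref{lem:Buse=W} gives $b(x,t) - b(y,s) = W^\xi(y,s;x,t)$, and additivity of the Busemann functions then shows the two candidate values coincide. So \eqref{eq:b=phi+W_thm} holds by construction, and $\supp(\varphi) = \dirs^b$ because $b$ is finite.

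\textbf{Showing $b = W \star \varphi$.} Fix $s$ such that $\dirs^{b,s}$ has at least two colors. I claim the function $\mathfrak c^s$ of Lemma \ref{lem:eternal_time_s} equals $\varphi$ on $\dirs^{b,s}$. Indeed, the proof of that lemma shows
\[
b(x,s) = W^{\xi_i}(0,0;x,s) + \mathfrak c^s(\xi_i), \qquad x \in (\ls_{\xi_i}^s, \rs_{\xi_i}^s],
\]
and these are exactly the points of color $\xi_i$ at time $s$. Comparing with the definition of $\varphi$ gives the claim. Since $\mathfrak c^s = -\infty$ off $\dirs^{b,s}$, we get $\mathfrak c^s \le \varphi$ on all of $\Xs$. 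Lemma \ref{lm:glb} then yields, for $t > s$,
\[
b(x,t) = (W\star \mathfrak c^s)(x,t) \le (W\star \varphi)(x,t).
\]

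\textbf{Main obstacle: the reverse inequality.} I must show $\varphi(\xi) + W^\xi(0,0;x,t) \le b(x,t)$ for every $\xi \in \dirs^b$ and every $(x,t)$, including $\xi$ that appear only at times far below $t$ and $(x,t)$ arbitrary. Given $\xi$ and $t$, pick $s < t$ with $\xi \in \dirs^{b,s}$ and at least two colors present at time $s$; this is possible by monotonicity in Lemma \ref{lm:4}. Lemma \ref{lem:eternal_time_s} gives
\[
b(\cdot,s) \ge \mathfrak c^s(\xi) + W^\xi(0,0;\cdot,s) = \varphi(\xi) + W^\xi(0,0;\cdot,s).
\]
Both sides are propagated by the KPZ fixed point (\eqref{eq:bL_def} for $b$, and the eternal-solution property for $W^\xi$), and this map is monotone. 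Hence
\[
b(x,t) \ge \varphi(\xi) + W^\xi(0,0;x,t).
\]
Combined with the previous paragraph, $b = W\star\varphi$ at every $(x,t)$, choosing $s < t$ for each $t$.

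\textbf{Remaining case.} It is possible that $\dirs^{b,s}$ is a single color for all $s$ but the color changes with $s$. Lemma \ref{lm:4} excludes this: a color present at time $t$ is present at all earlier times $s < t$, so two distinct colors present at different times are both present at the earlier of the two times. Hence if $\dirs^b$ has at least two colors, all sufficiently negative $s$ carry at least two colors, and the argument above applies.
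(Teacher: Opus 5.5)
Your proposal is correct and takes essentially the paper's route: Lemma \ref{lem:Buse=W}, Lemmas \ref{lem:eternal_time_s}--\ref{lm:glb}, and the monotonicity of Lemma \ref{lm:4} play the same roles, giving the upper bound $b \le W\star\varphi$ and the lower bound via a time slice $s<t$ carrying at least two colors. The only difference is organizational. You define $\varphi(\xi)$ directly as $b(x,t)-W^\xi(0,0;x,t)$ for $(x,t)\in\clr^b(\xi)$, which is well defined by Lemma \ref{lem:Buse=W}, and then show $\mathfrak c^s=\varphi$ on $\dirs^{b,s}$; the paper instead sets $\varphi(\xi)=\mathfrak c^s(\xi)$ and proves this value does not depend on $s<\tex(\xi)$.
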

\begin{proof} 
Lemmas \ref{lem:2} and \ref{lm:4} imply the countability of $\dirs^b$. We turn to proving the remaining parts of the proposition. First, we handle the trivial case where $\dirs^b = \varnothing$, where we see that we may take $\varphi \equiv -\infty$. 

Next, we handle the simple case where $\dirs^b$ has only one element. In this case, there exists $\xi \in \Xs$ so that $\xi_{x,t}^b = \xi$ for all $(x,t) \in \R^2$. Then, for $(x,t) \in \R^2$, Lemma \ref{lem:Buse=W} implies that
\begin{align*}
    b(x,t) - b(0,0) = W^{\dir}(0,0;x,t),
\end{align*}
so the result follows by taking $\varphi(\dir) = b(0,0)$, and $\varphi = -\infty$ on $\Xs \setminus \{\xi\}$.

We now assume that $|\dirs^b| \ge 2$ so that $|\dirs^{b,s}| \ge 2$ for some $s \in \R$. Then, for all $s < t$, Lemma \ref{lm:glb} states that 
\be \label{eq:bsup_W1}
b(x,t) = \sup_{\dir \in \dirs^{b,s}}[\mathfrak c^s(\xi) + W^\dir(0,0;x,t)],
\ee
where $\mathfrak c^s$ is defined in Lemma \ref{lem:eternal_time_s}, Equation \eqref{eq2}. 

Now, let $\xi \in \dirs^b$. For each $s < \tex(\dir)$, choose $y_s \in (\ls_{\xi_{i}^s}^s,\ls_{\xi_{i+1}^s}^s )$, where $i$ is chosen so that $\xi_i^s = \xi$. Then, for each $(x,t) \in \clr^b(\dir)$ where $s < t < \tex(\dir)$, we have $\xi_{x,t}^b = \xi_{y_s,s}^b = \xi$ by Lemma \ref{lm:4}. Then, by Lemma \ref{lem:Buse=W},
\be \label{eq:ysBuse}
b(x,t) - b(y_s,s) = W^\dir(y_s,s;x,t).
\ee
Next, by Lemma \ref{lem:s3}, definition of $\mathfrak c^s$ \eqref{eq2}, and additivity of the Busemann functions, we have 
\be \label{eq:bys_c}
b(y_s,s) - \mathfrak c^s(\xi) = W^\dir(0,0;y_s,s).
\ee
Adding the equations \eqref{eq:ysBuse} and \eqref{eq:bys_c}, we see that
\be \label{eq:bxt=c+W}
b(x,t) = \mathfrak c^s(\xi) + W^\dir(0,0;x,t),\quad \forall \; \dir \in \dirs^{b,s},\; s < t < \tex(\xi),\text{ and } (x,t) \in \clr^b(\dir).
\ee
In particular, $\mathfrak c^s(\xi)$ does not depend on $s$, so long as $s < \tex(\xi)$. Then, for $\xi \in \dirs^b$, we define $\varphi(\xi)$ to be the $s$-independent value  $\mathfrak c^s(\xi)$ when $s < \tex(\xi)$. We also define $\varphi(\xi) = -\infty$ for $\xi \notin \dirs^b$. Then, by \eqref{eq:bsup_W1}, for all $(x,t) \in \R^2$ and $s < t$, 
\be \label{1678}
    b(x,t) = \sup_{\dir \in \dirs^{b,s}}[\varphi(\xi) + W^\dir(0,0;x,t)] \le \sup_{\dir \in \Xs}[\varphi(\xi) + W^\dir(0,0;x,t)].
\ee
Next, let $(x,t) \in \R^2$ and $\xi \in \dirs^b$, and choose $s < \min(t,\tex(\dir))$. Then, $\xi \in \dirs^{b,s}$ and $\varphi(\xi) = \mathfrak c^s(\xi)$, so 
\be \label{1679}
\varphi(\xi) + W^\dir(0,0;x,t) = \mathfrak c^s(\dir) + W^\dir(0,0;x,t) \le b(x,t),
\ee
where we have used \eqref{eq:bsup_W1} in the inequality. By combining the upper bound in \eqref{1678} with the lower bound in \eqref{1679} (after taking the supremum over 
$\dir \in \Xs$), for all $(x,t) \in \R^2$, we obtain
\[
b(x,t) = \sup_{\xi \in \Xs}[\varphi(\xi) + W^\dir(0,0;x,t)] = (\Wf{\varphi})(x,t).
\]
Equation \eqref{eq:b=phi+W_thm} then follows from \eqref{eq:bxt=c+W} and the definition of $\varphi$. 
\end{proof}

    \section{Condition for real-valued eternal solutions} \label{sec:finiteness}
In this section, we describe exactly which functions $\varphi$ satisfy $(W\star \varphi)(x,t) <\infty$ for all $(x,t) \in \R^2$; i.e., which give rise to eternal solutions. The main result of thi section is Proposition \ref{prop:Wstarphi_bd}. We first prove the following lemmas. 
\begin{lemma} \label{lem:Buse_fixed_time_bd}
    Let $t \in \R$. Then, on a $t$-dependent event of probability one, there exists a constant $C > 0$ so that, for all $x \in \R$ and $\xi \in \Xs$,
    \[
    |W^\xi(0,t;x,t) - 2\mathcal P(\xi) x| \le 
    C + |\mathcal P(\xi)| + 3|x|.
    \]
\end{lemma}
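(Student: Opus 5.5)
The plan is to first prove the bound for integer (or rational) values of $\mathcal P(\xi)$, and then pass to all $\xi \in \Xs$ by monotonicity. Fix $t$. For each fixed $\eta \in \R$, $\eta \notin \Xi$ almost surely, and $x \mapsto W^{\eta}(0,t;x,t) - 2\eta x$ is a two-sided Brownian motion $B^\eta$ with diffusion coefficient $\sqrt 2$ (Proposition \ref{prop:Buse_basic_properties}). By the law of the iterated logarithm, or simply the Gaussian tail bound $\Pp(\sup_{|x|\le n}|B^\eta(x)| > n) \le C e^{-cn}$ combined with Borel--Cantelli over $n$, almost surely there is a random constant $C_\eta$ with $|B^\eta(x)| \le C_\eta + |x|$ for all $x$. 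To get a single constant uniform in $\eta$, I will sum the failure probabilities over the countable grid $\eta = k \in \Z$. Concretely, I would show that
\[
\Pp\bigl(\exists x: |B^k(x)| > m + |k| + |x|\bigr) \le C e^{-c(m+|k|)}.
\]
This is summable in $(k,m)$, so by Borel--Cantelli, for some random $C_0$ we get $|W^{k}(0,t;x,t) - 2kx| \le C_0 + |k| + |x|$ for all $k \in \Z$ and $x\in\R$ simultaneously. The distributional input here is only the Brownian marginal at fixed $\eta$. This is the step to get right, although it is routine.

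Next I interpolate. Let $\xi \in \Xs$, and choose integers $k \le \mathcal P(\xi) \le k+1$ with $k$ and $k+1$ viewed in $\Xs$ (a.s.\ $\Z \cap \Xi = \varnothing$). Then $k \le \xi \le k+1$ in the order on $\Xs$. By monotonicity along a horizontal line (Proposition \ref{prop:Buse_basic_properties}\ref{itm:DL_Buse_gen_mont}), together with $W(0,t;x,t) = -W(x,t;0,t)$ from additivity, for $x > 0$ we have
\[
W^{k}(0,t;x,t) \le W^{\xi}(0,t;x,t) \le W^{k+1}(0,t;x,t),
\]
and the inequalities reverse for $x<0$. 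Hence, for $x>0$,
\[
W^\xi(0,t;x,t) - 2\mathcal P(\xi)x \le 2(k+1)x - 2\mathcal P(\xi)x + C_0 + |k+1| + |x| \le C_0 + |k+1| + 3|x|,
\]
since $0 \le (k+1) - \mathcal P(\xi) \le 1$. The lower bound is symmetric, with $2kx - 2\mathcal P(\xi)x \ge -2|x|$. The case $x<0$ is handled in the same way. Finally, $|k|, |k+1| \le |\mathcal P(\xi)| + 1$, so the constant $C := C_0 + 1$ works.

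The main obstacle is only bookkeeping: the summable tail bound in the first step must grow in $|k|$, which is why the allowance $|\mathcal P(\xi)|$ appears in the statement. The endpoint-to-slope error, which is at most $2|x|$, explains the $3|x|$ rather than $|x|$.
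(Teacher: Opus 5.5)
Your proposal is correct and follows essentially the same route as the paper. Both arguments apply Borel--Cantelli over the integer grid of slopes, where the allowance $|\mathcal P(\xi)|$ makes the failure probabilities summable; the paper gets this from the exponential law of the supremum of a Brownian motion with drift $\mp 1$, which is exactly your tail estimate $\Pp(\exists x: |B^k(x)| > a + |x|) \le 4e^{-a}$. Both then sandwich $W^\xi$ between the integer-slope Busemann functions $W^{k}$ and $W^{k+1}$ by monotonicity, and your sign bookkeeping and the constant $C = C_0 + 1$ check out.
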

\begin{proof}
    We prove the bound for $x > 0$, and the result for $x < 0$ follows a symmetric proof. By Proposition \ref{prop:Buse_basic_properties}, each $x \mapsto W^\xi(0,t;x,t)$ is a Brownian motion with drift $2\mathcal P(\xi)$ and diffusion coefficient $\sqrt 2$. For each $\xi \in \Xs$, define
    \[
    C_\xi^R = \sup_{x > 0}[ W^\xi(0,t;x,t) - (2\mathcal P(\xi) + 1)x], \quad\text{and}\quad C_\xi^L := -\inf_{x > 0}[ W^\xi(0,t;x,t) - (2\mathcal P(\xi) - 1)x].
    \]
    Note that $C_\xi^R$ is the supremum of a Brownian motion with negative drift, and $C_\xi^L$ is the negative of the infimum of Brownian motion with positive drift. Thus, these two quantities have the exponential distribution (see, for example, \cite{BM_handbook}, Equation 1.1.4 (1) on pg 251]). Therefore, by the Borel-Cantelli lemma, there exists a constant $C$ such that for all $\xi \in \{\dir \in \Xs:\mathcal P(\dir) \in \Z\}$, we have  $\max(C_\xi^R,C_\xi^R) \le C - 1 + |\mathcal P(\xi)|$ (here, we are using the fact that each fixed $\dir \in \Xs$ is in  $\Xi^c$ with probability one (Proposition \ref{prop:Buse_basic_properties}). For $\dir \in \Xs$, let $\lceil \dir \rceil$ denote the element $\mathcal P(\dir)^0 \in \Xs$. Then, by monotonicity of the Busemann functions, by comparing $\mathcal P(\xi)$ to $\lceil \mathcal P(\xi) \rceil$, for all $\xi \in \Xs$ and $x > 0$, we have  
    \begin{align*}
    W^\xi(0,t;x,t) &\le W^{\lceil \xi \rceil}(0,t;x,t) \le  C_{\lceil \xi \rceil} + (2\lceil \mathcal P(\xi) \rceil + 1)x  \\
    &\le C - 1 + |\lceil \mathcal P(\xi) \rceil|
 +   (2\lceil \mathcal P( \xi) \rceil + 1)x \le C + |\mathcal P(\xi)| + (2\mathcal P(\xi) + 3)x. \qedhere
 \end{align*}
The proof of the lower bound is symmetric. 
\end{proof}

\begin{lemma} \label{lem:Buse_global}
Fix $s \in \R$. Then, on an $s$-dependent event of probability one, there exist continuous functions $C(x,s,t)$ defined for $t > s$ and $x \in \R$ and $a(s,t)$ defined for $t > s$ so that, for all $x,y \in \R$, $\xi \in \Xs$, and $t > s$,
   \[
   L(x,s,t,\xi) 
       \le W^\xi(0,s;x,t) \le R(x,s,t,\xi),
   \] 
   where
    \begin{align*}
        L(x,s,t,\xi) &= -|\mathcal P(\xi)| - C(x,s,t)   \\
        & +  \begin{cases}
            (2\mathcal P(\xi) -a(s,t))x + (t-s)\left(\mathcal P(\xi) - \f{a(s,t)}{2}\right)^2  &x + (t-s)\mathcal P(\xi) > \f{a(s,t)}{2}(t-s) \\
            -\f{x^2}{t-s} &|x + (t-s)\mathcal P(\xi)| \le \f{a(s,t)}{2}(t-s) \\
            (2\mathcal P(\xi) + a(s,t))x + (t-s)\Bigl(\mathcal P(\xi) + \f{a(s,t)}{2}\Bigr)^2 &x+ (t-s)\mathcal P(\xi) < -\f{a(s,t)}{2}(t-s),
        \end{cases}
    \end{align*}
    and 
    \begin{align*}
     R(x,s,t,\xi) &= |\mathcal P(\xi)| + C(x,s,t) \\
     &+\begin{cases}
            (2\mathcal P(\xi) + a(s,t))x + t\left(\mathcal P(\xi) + \f{a(s,t)}{2}\right)^2  &x + (t-s)\mathcal P(\xi) \ge 0 \\
            (2\mathcal P(\xi) - a(s,t))x + (t-s)\Bigl(\mathcal P(\xi) + \f{a(s,t)}{2}\Bigr)^2 &x+ (t-s)\mathcal P(\xi) \le 0.
        \end{cases}
    \end{align*}
\end{lemma}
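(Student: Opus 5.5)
The plan is to combine the fixed-time bound of Lemma \ref{lem:Buse_fixed_time_bd} at time $s$ with a global quadratic bound on the directed landscape, through the eternal-solution property \eqref{W_var}. By additivity and \eqref{W_var},
\[
W^\xi(0,s;x,t) = \sup_{z\in\R}\bigl[W^\xi(0,s;z,s) + \Ll(z,s;x,t)\bigr],
\]
so it suffices to sandwich both terms inside the supremum. Lemma \ref{lem:Buse_fixed_time_bd} at time $s$ gives
\[
2\mathcal P(\xi) z - C - |\mathcal P(\xi)| - 3|z| \le W^\xi(0,s;z,s) \le 2\mathcal P(\xi) z + C + |\mathcal P(\xi)| + 3|z|,
\]
uniformly in $\xi\in\Xs$ and $z\in\R$. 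For the landscape, I would use the standard global shape bound (the one the paper later records as Lemma \ref{lem:Landscape_global_bound}, valid on an event of probability one). It says that
\[
\Bigl|\Ll(z,s;x,t) + \tfrac{(x-z)^2}{t-s}\Bigr| \le C'(s,t)\,\log^{2}\bigl(2+|x|+|z|\bigr),
\]
with a constant depending continuously on $(s,t)$. Since $s$ is fixed, this may be weakened to the statement that for each $\delta>0$ the error is at most $\delta|z| + C(x,s,t)$ with $C$ continuous. I would take $\delta$ proportional to $a(s,t)$, and collect all linear-in-$|z|$ errors into a single slope $a(s,t)$ that absorbs the $3|z|$ from Lemma \ref{lem:Buse_fixed_time_bd}, say $a = 3+\delta$.

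For the \emph{upper bound}, I bound $W^\xi(0,s;z,s)+\Ll(z,s;x,t)$ by
\[
|\mathcal P(\xi)| + C(x,s,t) + 2\mathcal P(\xi) z + a|z| - \frac{(x-z)^2}{t-s}.
\]
I then split according to the sign of $z$: the bound $a|z|\le \pm a z$ replaces the slope $2\mathcal P(\xi)$ by $2\mathcal P(\xi)\pm a$. Maximizing the resulting concave quadratic in $z$ explicitly gives values of the form
\[
(2\mathcal P(\xi)\pm a)x + (t-s)\Bigl(\mathcal P(\xi)\pm\tfrac a2\Bigr)^2.
\]
The maximizer lies at $z = x+(t-s)(\mathcal P(\xi)\pm a/2)$, and I would check that the larger of the two constrained maxima is the claimed expression on each side of $x+(t-s)\mathcal P(\xi)=0$. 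This is where the case split in $R$ comes from; any discrepancy, such as $t$ versus $t-s$ in the written formula, is harmless up to enlarging $C$ or $a$. (I may be fixing a convention here.)

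For the \emph{lower bound}, I evaluate at a single well-chosen $z$. I lower bound the integrand by
\[
-|\mathcal P(\xi)| - C + (2\mathcal P(\xi) - a\,\mathrm{sgn}(z))z - \frac{(x-z)^2}{t-s},
\]
i.e.\ with $-a|z|$, and maximize the resulting function, which is concave in $z$ because $-a|z|$ is concave. There are three regimes. If the unconstrained maximizer with slope $2\mathcal P(\xi)-a$ is positive, which is exactly the condition $x+(t-s)\mathcal P(\xi) > \tfrac a2 (t-s)$, it is admissible and yields the first line of $L$. Symmetrically, the third line comes from slope $2\mathcal P(\xi)+a$ at a negative maximizer. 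Otherwise the maximum is attained at the kink $z=0$, giving $-x^2/(t-s)$.

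Continuity of $C(x,s,t)$ and $a(s,t)$ follows from the continuity of the landscape bound's constants in $(s,t)$. Only one $s$-dependent null event is needed, namely the one from Lemma \ref{lem:Buse_fixed_time_bd} at time $s$, together with the global landscape event.

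The main obstacle is obtaining the landscape bound uniformly over all $(z,x,t)$ with only linear-in-$|z|$ slack and constants continuous in $t$ near $s$. Near $t\downarrow s$ the constants must be allowed to blow up, which is permissible since $a,C$ need only be continuous on $t>s$. I would derive this from the global estimate by using $\log^2(2+|z|)\le \delta |z| + C_\delta$.
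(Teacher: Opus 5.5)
Your proposal is correct and follows the paper's proof essentially step for step: the variational formula \eqref{W_var}, Lemma \ref{lem:Buse_fixed_time_bd} at time $s$, a linear-in-$|z|$ weakening of Lemma \ref{lem:Landscape_global_bound}, and explicit maximization of $2\mathcal P(\xi)z \pm a|z| - \f{(x-z)^2}{t-s}$. The only difference is cosmetic: the paper weakens the landscape error to $C_2 + C_3(t-s)^{1/3}(C_4+|x|+|z|+|t|+|s|)$, giving $a(s,t)=3+C_3(t-s)^{1/3}$, while your bound $\log^2(2+|z|)\le \delta|z|+C_\delta$ gives a constant $a$ and keeps $C(x,s,t)$ free of $\xi$.

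One correction to your hedge. The expressions you derive are the right ones: $(t-s)$ in both lines of $R$, and $\bigl(\mathcal P(\xi)-\f{a}{2}\bigr)^2$ in its second line. The written $t$ and $+\f a2$ are typos and cannot be absorbed by enlarging $C$ or $a$. For instance, when $s<0$ the written $R$ grows like $t\,\mathcal P(\xi)^2$, but the (correct) lower bound $L$ already forces growth $(t-s)\,\mathcal P(\xi)^2$.
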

\begin{proof}
     For all $t > s$ and $x \in \R$, we have 
    \be \label{eq:W_eternal_1234}
    W^\xi(0,s;x,t) = \sup_{y \in \R}[W^\xi(0,s;y,s) + \Ll(y,s;x,t)].
    \ee
    By Lemma \ref{lem:Buse_fixed_time_bd}, there exists a constant $C_1$ so that for all $\xi \in \Xs$ and $x \in \R$, we have 
    \[
    |W^\xi(0,s;y,s) - 2\mathcal P(\xi) y| \le C_1 + |\mathcal P(\xi)| + 3|x|. 
    \]
    Next, by replacing the bound in Lemma \ref{lem:Landscape_global_bound}, with a cruder bound, there exists constants $C_2,C_3,C_4$ (depending on $s$) so that for all $x,y \in \R$ and $t > s$,
    \[
    \left|\Ll(y,s;x,t) + \f{(x-y)^2}{t-s}\right| \le C_2 + C_3 (t-s)^{1/3}(C_4 + |x| + |y| + |t|+ |s|).
    \]
    Then, from \eqref{eq:W_eternal_1234}, for all $t > s$ and $x \in \R$, we have 
    \[
         L(x,s,t,\xi) 
       \le W^\xi(0,s;x,t) \le R(x,s,t,\xi).
    \]
    where 
    \begin{align*}
        L(x,s,t,\xi) &:= -|\mathcal P(\xi)| - C(x,s,t) + \sup_{y \in \R}\left[2\mathcal P(\xi) y - a(s,t)|y| - \f{(x-y)^2} {t}\right], \\
        R(x,s,t,\xi) &:= |\mathcal P(\xi)| + C(x,s,t) + \sup_{y \in \R}\left[2\mathcal P(\xi) y + a(s,t)|y| - \f{(x-y)^2}{t}\right],\quad \text{with} \\
        C(x,s,t) &:= C_1 + |\mathcal P(\xi)| + C_2 + C_3(t-s)^{1/3}(C_4 + |x| + |t| + |s|),\quad\text{and} \\
        a(s,t) &:=  3 + C_3 (t-s)^{1/3}.
    \end{align*}
    The lemma then follows from computing these suprema. 
\end{proof}

\begin{proposition} \label{prop:Wstarphi_bd}
The following equivalence holds for almost every $\omega$: a function $\varphi:\Xs \to [-\infty,\infty)$ satisfies $(W\star \varphi)(x,t) < \infty$ for all $(x,t) \in \R^2$ if and only if the following two conditions hold:
\begin{enumerate} [label=(\roman*), font=\normalfont]
    \item \label{it:bd_cpct} Whenever $I \subseteq \Xs$ is such that $\mathcal P(I)$ is bounded,
    \[
    \sup_{\dir \in I}\varphi(\dir) < \infty.
    \]
    \item \label{it:growth} We have 
    \[
    \limsup_{|\mathcal P(\dir)| \to \infty} \f{\varphi(\dir)}{|\mathcal P(\dir)|^2} = -\infty.
    \]
\end{enumerate}
\end{proposition}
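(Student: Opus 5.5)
The plan is to prove both directions using the two-sided bounds of Lemma \ref{lem:Buse_global} together with additivity, $W^\dir(0,0;x,t) = W^\dir(0,0;0,s) + W^\dir(0,s;x,t)$. To control the term $W^\dir(0,0;0,s)$ uniformly in $\dir$, we first apply Lemma \ref{lem:Buse_global} with the base time $s$ chosen in a countable set, say $s \in \Z$. The event is then still of full probability. For any $t$ we pick an integer $s < \min(t,0)$ and write $W^\dir(0,0;x,t) = W^\dir(0,s;x,t) - W^\dir(0,s;0,0)$. Both terms are bounded above and below by Lemma \ref{lem:Buse_global}. The resulting bounds have the form
\[
W^\dir(0,0;x,t) = 2\mathcal P(\dir)x + \bigl((t-s) - (0-s)\bigr)\mathcal P(\dir)^2 + O\bigl(|\mathcal P(\dir)|(1+|x|) + C\bigr),
\]
with constants locally uniform in $(x,t)$. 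The leading part is $2\mathcal P(\dir) x + t\,\mathcal P(\dir)^2$, which matches the deterministic eternal solution $w^\dir$. Writing out the error terms carefully from the explicit piecewise formulas of $L$ and $R$ is routine.

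\textbf{Sufficiency.} Assume (i) and (ii) and fix $(x,t)$. The upper bound gives $W^\dir(0,0;x,t) \le t\,\mathcal P(\dir)^2 + A|\mathcal P(\dir)| + B$ for constants $A,B$ depending on $(x,t)$. Condition (ii) gives $M$ such that $\varphi(\dir) \le -K\mathcal P(\dir)^2$ when $|\mathcal P(\dir)| > M$, with $K > |t|+1$. On that region the supremum of $\varphi + W^\dir$ is therefore bounded. On the region $|\mathcal P(\dir)| \le M$, condition (i) bounds $\varphi$, and the estimate above bounds $W^\dir$. Hence $(W\star\varphi)(x,t) < \infty$.

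\textbf{Necessity.} Assume $W\star\varphi < \infty$ everywhere. For (i), suppose $\sup_{\dir\in I}\varphi(\dir) = \infty$ with $\mathcal P(I)$ bounded. The lower bound $L$ makes $W^\dir(0,0;0,0) = 0$ trivially bounded below; more generally $W^\dir(0,0;x,t)$ is bounded below uniformly over $\dir \in I$. Then $(W\star\varphi)(0,0) \ge \sup_I \varphi = \infty$, a contradiction. Actually $(W\star\varphi)(0,0) = \sup_\dir \varphi(\dir)$ directly, so (i) even holds globally in a weak sense.

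For (ii), suppose instead that along a sequence $\dir_n$ with $|\mathcal P(\dir_n)|\to\infty$ we have $\varphi(\dir_n) \ge -K\mathcal P(\dir_n)^2$ for some fixed $K$. We choose $t > K + 1$ (the sign convention being that the $t\,\mathcal P^2$ term appears with positive sign for $t > 0$, as in $w^\dir$) and $x = 0$. The lower bound of Lemma \ref{lem:Buse_global} in the middle regime $|x + (t-s)\mathcal P(\dir)| \le \tfrac{a}{2}(t-s)$ is not the relevant one here. For large $|\mathcal P(\dir)|$ we are in the outer regime, which gives
\[
W^\dir(0,s;0,t) \ge (t-s)\bigl(|\mathcal P(\dir)| - \tfrac a2\bigr)^2 - |\mathcal P(\dir)| - C.
\]
Combining this with the upper bound on $W^\dir(0,s;0,0) \le (-s)\bigl(|\mathcal P(\dir)|+\tfrac a2\bigr)^2 + |\mathcal P(\dir)| + C$ yields $W^{\dir_n}(0,0;0,t) \ge t\,\mathcal P(\dir_n)^2 - A'|\mathcal P(\dir_n)| - B'$. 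Therefore $\varphi(\dir_n) + W^{\dir_n}(0,0;0,t) \to \infty$, a contradiction.

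\textbf{Main obstacle.} The main difficulty is this bookkeeping of the quadratic coefficients. The constants $a(s,t)$ grow with $t-s$, so the linear error $a\,|\mathcal P(\dir)|(t-s)$ must be checked to be genuinely lower order than the $\mathcal P(\dir)^2$ terms. This is true since it is linear in $|\mathcal P(\dir)|$. A second point is that the two applications of Lemma \ref{lem:Buse_global}, with the same base time $s$ but different end times, must use the same $s$-event. This is arranged by taking $s \in \Z$.
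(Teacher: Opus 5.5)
Your proposal is correct and follows essentially the same route as the paper. Both arguments compare $\varphi$ against the $t\,\mathcal P(\dir)^2$ growth of $W^\dir(0,0;x,t)$ given by Lemma \ref{lem:Buse_global}, using the upper bound for sufficiency and the outer-regime lower bound for necessity of (ii). The differences are minor: the paper works only on the $s=0$ event and reduces to $t>0$ via the eternal-solution identity (if $b(y,s)=\infty$ then $b(\cdot,t)\equiv\infty$ for all $t>s$), rather than using your additivity step with integer base times $s<\min(t,0)$; and your observation $(W\star\varphi)(0,0)=\sup_{\dir}\varphi(\dir)$ proves necessity of (i) more directly than the paper's lower bound on $W^{\dir_k}$.
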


\begin{proof}
Assume first that \ref{it:bd_cpct}-\ref{it:growth} hold, and define $b = W\star \varphi$. Since each Busemann function is an eternal solution, Lemma \ref{lem:interchange_sups} implies that for each $s < t$ and $x,y \in \R$,
    \[
    b(x,t) = \sup_{y \in \R}[b(y,s) +\Ll(y,s;x,t)].
    \]
    Then, if $b(y,s) = \infty$, for some $(y,s) \in \R^2$, we have that $b(x,t) = \infty$ for all $t > s$ and $x \in \R$. Hence, we see that $b(x,t) < \infty$ for all $(x,t) \in \R^2$ if and only if $b(x,t) < \infty$ for all $(x,t) \in \R \times \R_{> 0}$. We thus turn our attention to $t > 0$.

    From the expression for $R(x,t,\xi):= R(x,0,t,\xi)$ in Lemma \ref{lem:Buse_global}, for each $(x,t) \in \R \times \R_{> t}$, as $|\mathcal P(\xi)| \to \infty$, the leading order term of $R(x,t,\xi)$ is $t \mathcal P(\xi)^2$. Then, by the conditions \ref{it:bd_cpct}-\ref{it:growth}, we have that $\varphi(\xi) \le C - 2t\mathcal P(\xi)^2$ for a $t$-dependent constant $C$. Hence, for each $t > 0$ and $x \in \R$, we have 
\begin{equation}\label{eq:eub}
     b(x,t) \le \sup_{\dir \in \Xs}[C - 2t \mathcal P(\xi)^2 + R(x,t,\xi)] < \infty.
\end{equation}
    Conversely, assume that one of \ref{it:bd_cpct}-\ref{it:growth} fails. First, we assume that \ref{it:bd_cpct} fails. Then, there is a sequence $\{\dir_k\}$ such that $\mathcal P(\dir_k)$ is bounded and $\varphi(\dir_k) \to \infty$. By boundedness of $\mathcal P(\xi_k)$, Lemma \ref{lem:Buse_global} implies that $W^{\xi_k}(0,0;x,t)$ is bounded from below for each $(x,t) \in \R^2$, so
    \begin{equation}\label{eq:lb}
         b(x,t) \ge \sup_{k \in \N}[\varphi(\dir_k) + W^{\dir_k}(0,0;x,t)] = \infty.
    \end{equation}
    Now, assume that \ref{it:growth} fails. Then, there exists a sequence $\xi_k \in \Xs$ such that $|\mathcal P(\xi_{k})| \to \infty$ as $k \to \infty$ and $\varphi(\xi_{k}) \ge -A\mathcal P(\xi_k)^2$ for all $k$ and some constant $A$. For any given $(x,t) \in \R \times \R_{> 0}$ and sufficiently large $k$, we have $|x + t \mathcal P(\xi_k)| > \f{a(t)}{2}t$, where $a(t)$ is the function $a(0,t)$ in Lemma \ref{lem:Buse_global}. Then, for each fixed $(x,t) \in \R \times \R_{> 0}$, the expression $L(x,t,\xi_k):= L(x,0,t,\xi_k)$ in Lemma \ref{lem:Buse_global} has leading order $t \mathcal P(\xi_k)^2$ as $k \to \infty$.  Hence, for all $t > A$ and $x \in \R$,
    \[
    b(x,t) = \sup_{\xi \in \Xs}[W^\xi(0,0;x,t) +  \varphi(\xi) ] \ge \liminf_{k \to \infty} [ W^{\xi_k}(0,0;x,t) -A \mathcal P(\xi_k)^2] = \infty. \qedhere
    \]
    \end{proof}

\section{Uniqueness of $\varphi$} \label{sec:unique}
For convenience of the reader, we recall the definition of the set $\Phi_\omega$ from \eqref{eq:Phi_omega_def} here:
\be \label{eq:fun_repeat}
\fun_\omega:=\Bigl\{\varphi\in \operatorname{USC}(\Xs):\limsup_{|\mathcal P(\xi)| \to \infty} \f{\varphi(\xi)}{\mathcal P(\xi)^2} = -\infty\Bigr\},
\ee
where $\operatorname{USC}(\Xs)$ is the space of all upper semicontinuous functions $\Xs \to [-\infty,\infty)$. An immediate consequence of Lemma \ref{eq:interchange_sup}, Proposition \ref{prop:Wstarphi_bd}, and Lemma \ref{lem:USC_bd_cpct} is that $W\star \varphi \in \es_\omega$ (that is, $W\star \varphi$ is an eternal solution) for all $\varphi \in \fun_\omega$.  To construct the homeomorphism in Theorem \ref{thm:mr}, we must show that each eternal solution can be uniquely represented as $W\star \varphi$ for $\varphi \in \Phi$. Proposition \ref{prop:exists_varphi} gives the existence of a function $\varphi:\Xs \to [-\infty,\infty)$ with countable support, but that function does not necessarily lie in $\operatorname{USC}(\Xs)$. There is a standard procedure for extending functions to be upper semicontinuous, and this procedure does not change $W\star \varphi$ (see Lemma \ref{lem:one_extension}). It remains to show there is no other way to extend the function $\varphi$ to be in $\operatorname{USC}(\Xs)$. This is proved in Proposition \ref{prop:uniq}. Before then, we must prove several intermediate lemmas. These are divided into several subsections. In Section \ref{sec:maxes}, we show several facts about maximizers of the function $\varphi(\dir) + W^\dir(0,0;x,t)$. This culminates in the proof of Corollary \ref{cor:o1}, which implies that any two functions in $\fun$ with the same eternal solution must agree on the set of colors for that eternal solution. Then, Section \ref{sec:ci} introduces competition interfaces. These are used to prove Lemma \ref{lm:o8}, which allows us to restrict the max of $\varphi(\dir) + W^\dir(0,0;x,t)$ to a bounded interval. Section \ref{sec:dominating} then introduces the concept of dominating colors, which are shown to be equivalent to the set of colors of semi-infinite geodesics. These concepts are tied together to give the uniqueness statement in Proposition \ref{prop:uniq}. We then prove Proposition \ref{prop:Legendre} as a corollary, which gives the unique function $\varphi$ as a Legendre-type transform involving $b$ and $W$. 

\subsection{Maximizers in the variational formula} \label{sec:maxes}

\begin{lemma}  \label{lem:compact_interval}
    Fix $\dir_1<\dir_2$ in $\Xs$. Then $I=[\dir_1,\dir_2]$ is  compact in $\Xs$. Furthermore, any compact subset of $\Xs$ is contained in one of these intervals. 
\end{lemma}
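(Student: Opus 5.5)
We use that, by Lemma \ref{lem:X_metric}, $\Xs$ is a metric space. Compactness is therefore equivalent to sequential compactness, and we prove that $I=[\dir_1,\dir_2]$ is sequentially compact. Take a sequence $\{\eta_n\}\subseteq I$. Its projections $\mathcal P(\eta_n)$ lie in the compact real interval $[\mathcal P(\dir_1),\mathcal P(\dir_2)]$. Passing to a subsequence, we may assume $\mathcal P(\eta_n)\to p$ and that one of three cases holds: $\mathcal P(\eta_n)$ is eventually equal to $p$; it is strictly increasing to $p$; or it is strictly decreasing to $p$.

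In the first case, the terms lie in the finite set $\{p^-,p^0,p^+\}\cap\Xs$. Some element then repeats infinitely often, and that constant subsequence converges. That element lies in $I$ because it is a term of the sequence.

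In the increasing case, we set $\eta=p^0$ if $p\notin\Xi$ and $\eta=p^-$ if $p\in\Xi$. Remark \ref{rmk:convergence} gives $\eta_n\to\eta$. For membership in $I$: since $\mathcal P(\eta)>\mathcal P(\eta_n)\ge\mathcal P(\dir_1)$, we get $\eta>\dir_1$. Since $\mathcal P(\eta)=p\le\mathcal P(\dir_2)$, we get $\eta\le\dir_2$. The only possible failure would be $p=\mathcal P(\dir_2)\in\Xi$ with $\dir_2=p^-<\eta$, and this cannot happen because we chose $\eta=p^-$, the minimal element over $p$.

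The decreasing case is symmetric, with $\eta=p^+$ when $p\in\Xi$.

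The point needing care is exactly this ordering check: the correct one-sided limit is the one from the side the sequence approaches. Points with projection strictly inside the interval are then automatically in $I$.

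For the second claim, let $K\subseteq\Xs$ be compact. The projection $\mathcal P$ is continuous from $\Xs$ to $\R$ by Remark \ref{rmk:convergence}, since every convergent sequence in $\Xs$ has convergent projections. Hence $\mathcal P(K)$ is bounded, say $\mathcal P(K)\subseteq[-M,M]$. Choose real numbers $a<-M$ and $c>M$ that are not in $\Xi$; this is possible since $\Xi$ is countable. Then $K\subseteq[a^0,c^0]$, because every element of $K$ has projection strictly between $a$ and $c$.

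(If $K$ is empty, any interval works.)
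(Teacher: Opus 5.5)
Your proof is correct and follows the same route as the paper: you use sequential compactness via the metric of Lemma \ref{lem:X_metric}, extract a subsequence whose projections converge monotonically, lift the limit using Remark \ref{rmk:convergence}, and then bound $\mathcal P(K)$ for the second claim. Your version is slightly more careful than the paper's on three points: you treat the eventually-constant case explicitly; you check membership in $I$ directly by choosing the correct one-sided lift, rather than arguing by contradiction that $I$ is closed; and you choose endpoints $a,c\notin\Xi$ so that $[a^0,c^0]$ is genuinely an interval in $\Xs$, whereas the paper's $[a^-,b^+]$ tacitly requires $a,b\in\Xi$.
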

\begin{proof}
    As $\Xs$ is a metric space (Lemma \ref{lem:X_metric}), it is enough to verify that every sequence in $I$ has a convergent subsequence.  Let $\{\dir_n\}\subseteq I$. Then, $\{\mathcal P(\dir_n)\}\subseteq \mathcal{P}(I)$ has a subsequence that converges to some $\eta\in \R$. Then, there is a further subsequence $\{\mathcal P(\dir_{n_k})\}$ that either converges to $\eta$ from the right or from the left. By the characterization of convergence given in Remark \ref{rmk:convergence}, the lifted subsequence $\{\dir_{n_k}\}$ converges to some $\dir \in \Xs$. Suppose, by way of contradiction, that $\dir \notin I$. Then,  by the characterization of convergence in Remark \ref{rmk:convergence}, we must have that $\dir_{n_k} \notin I$ for sufficiently large $k$, a contradiction. Now, for a general compact set $K \subseteq \Xs$, we can see that $\mathcal P(K)$ is compact in $\R$, and so $K \subseteq [a,b]$ for some $a < b$ in $\R$. Then, $K \subseteq [a^-,b^+]$ in $\Xs$. 
\end{proof}

With Lemma \ref{lem:compact_interval} in mind, we will refer to an interval of the form $[\dir_1,\dir_2] \subseteq \Xs$ as a compact interval of $\Xs$. For $\varphi:\Xs \to [-\infty,\infty)$, we define  
\[
f_\dir(x,t):=\varphi(\dir) + W^{\dir}(0,0;x,t).
\]
\begin{lemma}\label{lm:o3}
   For almost-every $\omega$,  for all $\varphi\in\fun_\omega$ with $\supp(\varphi) \neq \varnothing$ and $(x,t)\in\R^2$, there exists $\dir^*\in\supp(\varphi)$ and a compact interval $I \subseteq \Xs$ such that
    \[
        f_{\dir^*}(x,t)=(\Wf{\varphi})(x,t) > \sup_{\dir \notin I} f_\dir(x,t).
    \]
\end{lemma}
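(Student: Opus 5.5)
We argue in two steps: first localize the supremum to a compact interval of $\Xs$ using the superquadratic decay of $\varphi$, and then use upper semicontinuity on that compact interval to show the supremum is attained.

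\textbf{Step 1 (localization).} Fix $(x,t)$ and work on the probability-one event of Lemma \ref{lem:Buse_global}, applied with some $s<t$ (say $s=\lfloor t\rfloor-1$, so that countably many events suffice). By additivity, $W^\dir(0,0;x,t)=W^\dir(0,0;0,s)+W^\dir(0,s;x,t)$.

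For the first term, Lemma \ref{lem:Buse_global} applied with base time $s'<s$ bounds $W^\dir(0,0;0,s)$ by a quadratic in $\mathcal P(\dir)$; alternatively, reverse the roles if $s<0$. In either case,
\[
|W^\dir(0,0;x,t)|\le C_{x,t}\bigl(1+\mathcal P(\dir)^2\bigr)
\]
for some constant $C_{x,t}$ depending on $(x,t)$ and $\omega$. This is the crude quadratic upper bound already used in \eqref{eq:eub}.

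Since $\varphi\in\fun_\omega$, we have $\varphi(\dir)\le -(C_{x,t}+1)\mathcal P(\dir)^2$ once $|\mathcal P(\dir)|\ge M_0$. Hence
\[
f_\dir(x,t)\le C_{x,t}-\mathcal P(\dir)^2\to-\infty \quad\text{as } |\mathcal P(\dir)|\to\infty.
\]
Pick any $\dir_0\in\supp(\varphi)$, so that $f_{\dir_0}(x,t)>-\infty$. Choose $M>M_0$ large enough that $C_{x,t}-M^2<f_{\dir_0}(x,t)-1$, and set $I=[(-M)^-,M^+]$, with the obvious adjustment if $\pm M\notin\Xi$. This $I$ is compact by Lemma \ref{lem:compact_interval}, and
\[
\sup_{\dir\notin I}f_\dir(x,t)\le f_{\dir_0}(x,t)-1.
\]

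\textbf{Step 2 (attainment).} On $I$ the map $\dir\mapsto f_\dir(x,t)$ is upper semicontinuous: $\varphi$ is upper semicontinuous and $\dir\mapsto W^\dir(0,0;x,t)$ is continuous on $\Xs$ by Proposition \ref{prop:Buse_basic_properties}\ref{itm:general_cts}. An upper semicontinuous function with values in $[-\infty,\infty)$ on a compact metric space (Lemma \ref{lem:X_metric}) attains its supremum. To see this, take a maximizing sequence, extract a convergent subsequence, and apply the limsup inequality; the supremum is finite because the values are bounded above by $\varphi$ bounded on $I$, which follows from upper semicontinuity and compactness.

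So there is $\dir^*\in I$ with
\[
f_{\dir^*}(x,t)=\sup_I f_\dir(x,t)\ge f_{\dir_0}(x,t)>\sup_{\dir\notin I}f_\dir(x,t).
\]
It follows that $f_{\dir^*}(x,t)=(W\star\varphi)(x,t)$. Moreover $f_{\dir^*}(x,t)>-\infty$, so $\varphi(\dir^*)>-\infty$, i.e.\ $\dir^*\in\supp(\varphi)$.

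\textbf{Main obstacle.} The only delicate point is the uniform quadratic bound on $W^\dir(0,0;x,t)$ over all $\dir\in\Xs$ simultaneously, for all $t\in\R$ rather than just $t>0$. The plan is to handle it with additivity: write $W^\dir(0,0;x,t)=W^\dir(0,0;0,s)+W^\dir(0,s;x,t)$ and use Lemma \ref{lem:Buse_global} at a countable set of base times $s$, so that all required bounds hold on one probability-one event.
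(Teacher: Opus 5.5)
Your proof is correct and follows essentially the same route as the paper. A quadratic upper bound on $W^\dir(0,0;x,t)$ from Lemma \ref{lem:Buse_global} (on the integer-time event), combined with the superquadratic decay of $\varphi$, localizes the supremum to a compact interval, and upper semicontinuity plus compactness then gives attainment. Your explicit splitting $W^\dir(0,0;x,t)=W^\dir(0,0;0,s)+W^\dir(0,s;x,t)$ fills in a step the paper glosses over: the paper bounds $f_\dir(x,t)$ directly by $\varphi(\dir)+R(x,s,t,\dir)$, but $R(x,s,t,\dir)$ only controls $W^\dir(0,s;x,t)$.
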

\begin{proof}
    We will take the event of probability one to be the event on which the bounds in Lemma \ref{lem:Buse_global} hold for all $s \in \Z$.  Choose $\varphi \in \fun$ satisfying $\supp(\varphi) \neq \varnothing$, and choose $\dir_0 \in \Xs$ satisfying $\varphi(\dir_0) > -\infty$. Let $(x,t) \in \R^2$, and pick any integer $s$ less than $t$.  From the superquadratic decay of $\varphi$ in \eqref{eq:fun_repeat} and the bounds in Lemma \ref{lem:Buse_global}, for all $\dir \in \Xs$, we have 
    \[
    f_\dir(x,t) \le \varphi(\dir) + R(x,s,t,\dir) \le -A\mathcal P(\dir)^2.
    \]
    for some constant $A > 0$ depending on $s$ and $t$. Hence, there exists a compact interval $I \subseteq \Xs$ such that 
    \[
    \sup_{\dir \notin I}f_\dir(x,t) < f_{\dir_0}(x,t),\quad\text{and therefore,}\quad (W\star \varphi)(x,t) = \sup_{\dir \in I} f_\dir(x,t).
    \]
Then, there exists a sequence $\{\dir_n\}\subseteq I$ such that 
\[
(\Wf{\varphi})(x,t)=\lim_{n \to \infty}f_{\dir_n}(x,t).
\]
By the compactness of $I$, there exists $\dir^* \in I$ and a  subsequence $\{\dir_{n_k}\}\subseteq I$ such that $\dir_{n_k}\rightarrow \dir^*$. Then, by upper semicontinuity of $\varphi$, 
    \begin{equation}
        (\Wf{\varphi})(x,t)=\lim_{k \to \infty}[W^{\dir_{n_k}}(0,0;x,t)+\varphi(\dir_{n_k})]= W^{\dir^*}(0,0;x,t)+\lim_{k\to\infty}\varphi(\dir_{n_k})\leq f_{\dir^*}(x,t).
    \end{equation}
    Since $f_{\dir}(x,t) \le (\Wf{\dir})(x,t)$ for general $\dir \in \Xs$, the result follows. 
\end{proof}
With Lemma \ref{lm:o3} in mind, for $(x,t)\in\R^2$ and $\varphi\in\fun$, we define
 \begin{align} \label{eq:Gammaxt}
         \Gamma_{x,t}&=\{\dir \in \Xs:\Wf{\varphi}(x,t)=f_\dir(x,t)\},\quad\text{and} \\
         \dir^{L}_{x,t}&=\inf\Gamma_{x,t}. \label{eq:xixt}
     \end{align} 
         \begin{lemma}\label{lm:o9}
             For almost-every $\omega$, for all $\varphi \in \Phi_\omega$ and $(x,t)\in\R^2$, the set $\Gamma_{x,t} \subseteq\Xs$ is compact. In particular, $\dir^{L}_{x,t}\in \Gamma_{x,t}$.
         \end{lemma}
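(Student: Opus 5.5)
The plan is to show that $\Gamma_{x,t}$ is a closed subset of the compact interval $I$ produced by Lemma \ref{lm:o3}. Closed subsets of compact sets in the metric space $\Xs$ (Lemma \ref{lem:X_metric}) are compact, so this gives the first claim.

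Work on the event of Lemma \ref{lm:o3}, and fix $\varphi \in \fun_\omega$ and $(x,t)$. The trivial case $\supp(\varphi)=\varnothing$ is handled first. There $W\star\varphi \equiv -\infty$, so $\Gamma_{x,t}=\Xs$. That set is not compact, so the statement must be read for $\supp(\varphi)\neq\varnothing$, or with the convention that $\Gamma_{x,t}$ is restricted to $\supp(\varphi)$. I would add a remark to that effect and then assume $\supp(\varphi)\neq\varnothing$.

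Lemma \ref{lm:o3} then gives a compact interval $I$ with $\sup_{\dir\notin I} f_\dir(x,t) < (W\star\varphi)(x,t)$. Hence $\Gamma_{x,t}\subseteq I$, and $\Gamma_{x,t}\neq\varnothing$ because $\dir^*\in\Gamma_{x,t}$.

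For closedness, take $\dir_n\in\Gamma_{x,t}$ with $\dir_n\to\dir$. By Proposition \ref{prop:Buse_basic_properties}\ref{itm:general_cts}, $W^{\dir_n}(0,0;x,t) = W^{\dir}(0,0;x,t)$ for all large $n$. Combining this with upper semicontinuity of $\varphi$,
\[
(W\star\varphi)(x,t) = \limsup_n f_{\dir_n}(x,t) \le \varphi(\dir) + W^\dir(0,0;x,t) = f_\dir(x,t) \le (W\star\varphi)(x,t).
\]
So $\dir\in\Gamma_{x,t}$, and $\Gamma_{x,t}$ is closed.

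For the infimum, $\Gamma_{x,t}$ is a nonempty compact subset of the totally ordered space $\Xs$. Take $\dir_n\in\Gamma_{x,t}$ decreasing to $\dir^L_{x,t}=\inf\Gamma_{x,t}$ in the order. By compactness some subsequence converges to a limit $\zeta\in\Gamma_{x,t}$. It remains to show $\zeta=\dir^L_{x,t}$; this order-topology compatibility is the step that needs care, because of the $\pm$ splitting.
\begin{itemize}
\item If the $\dir_n$ are eventually constant, the infimum is attained and there is nothing to prove.
\item Otherwise, write $\eta=\lim\mathcal P(\dir_n)$, so $\mathcal P(\dir_n)\searrow\eta$ strictly along a subsequence. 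By Remark \ref{rmk:convergence} the limit is then $\eta^0$ if $\eta\notin\Xi$, and $\eta^+$ if $\eta\in\Xi$.
\item Any element $\zeta'<\zeta$ of $\Xs$ satisfies either $\mathcal P(\zeta')<\eta$, or $\zeta'=\eta^-$. In both cases $\zeta'<\dir_n$ for all $n$, so $\zeta'$ is a lower bound of $\{\dir_n\}$.
\item Hence $\zeta$ is the greatest lower bound of $\{\dir_n\}$, which equals $\inf\Gamma_{x,t}$.
\end{itemize}
Therefore $\dir^L_{x,t}=\zeta\in\Gamma_{x,t}$.
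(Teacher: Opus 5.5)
Your proof is correct and follows essentially the same route as the paper: $\Gamma_{x,t}$ lies in the compact interval from Lemma \ref{lm:o3}, and it is closed because upper semicontinuity of $\varphi$ and the locally constant continuity of the Busemann process give $f_\dir(x,t)\ge (W\star\varphi)(x,t)$ at any limit point $\dir$. Your caveat that $\supp(\varphi)\neq\varnothing$ is needed (otherwise $\Gamma_{x,t}=\Xs$) is valid and is what the paper implicitly assumes through Lemma \ref{lm:o3}, and your argument for $\dir^L_{x,t}\in\Gamma_{x,t}$, which the paper leaves implicit, also works, except that the fact that actually closes it is $\zeta\le\dir_n$ for all $n$ (since $\mathcal P(\zeta)=\eta<\mathcal P(\dir_n)$), which together with $\zeta\in\Gamma_{x,t}$ forces $\zeta=\inf\Gamma_{x,t}$; your statement about elements below $\zeta$ does not by itself give this.
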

         \begin{proof}
            Since $\X$ is a metric space (Lemma \ref{lem:X_metric}), it suffices to show sequential compactness. By Lemma \ref{lm:o3}, the set $\Gamma_{x,t}$ is nonempty and contained in a compact interval $I$. Hence, every sequence $\{\dir_n\} \subseteq \Gamma_{x,t}$ has a convergent subsequence $\{\dir_{n_k}\}$ whose limit $\dir$ lies in $I$. It remains to show that $\dir \in \Gamma_{x,t}$.    By upper semicontinuity of $\varphi$ and continuity of the Busemann process, we have
\begin{equation}    \begin{aligned}f_{\dir}(x,t)&=\varphi(\dir) + W^{\dir}(0,0;x,t) \\
                 &\geq \limsup_{k \to \infty} [W^{\xi_{n_k}}(0,0;x,t)+\varphi(\xi_{n_k})]=\limsup_{n \to \infty} f_{\xi_{n_k}}(x,t)=(\Wf{\varphi})(x,t),
                 \end{aligned}
             \end{equation}
             where in the last step, we used the assumption that $\dir_{n_k} \in \Gamma_{x,t}$ so that $f_{\dir_{n_k}}(x,t) = (\Wf{\varphi})(x,t)$ for all $k$. Therefore, $\dir \in \Gamma_{x,t}$, as desired.
         \end{proof}

\begin{lemma}\label{lm:o1}
      Let $\varphi \in \Phi_\omega$ and $(x,t)\in\R^2$.
     Then,
     \begin{equation}
         g^{\Wf{\varphi},L}_{(x,t)}=g^{\dir^{L}_{x,t},L}_{(x,t)}.
     \end{equation}
\end{lemma}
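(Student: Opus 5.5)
Write $b = W\star\varphi$ and $\dir^* := \dir^L_{x,t}$, which lies in $\Gamma_{x,t}$ by Lemma \ref{lm:o9}. The goal is to show that for every $s<t$ the leftmost maximizers of
\[
y\mapsto b(y,s)+\Ll(y,s;x,t)\quad\text{and}\quad y\mapsto W^{\dir^*}(0,0;y,s)+\Ll(y,s;x,t)
\]
coincide. We argue in two steps.

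\emph{Step 1: maximizers for $\dir^*$ are maximizers for $b$.} Since $\dir^*\in\Gamma_{x,t}$, we have $b(x,t) = \varphi(\dir^*)+W^{\dir^*}(0,0;x,t)$. Let $y^*=g^{\dir^*,L}_{(x,t)}(s)$. Using $b(y,s)\ge \varphi(\dir^*)+W^{\dir^*}(0,0;y,s)$, the eternal solution property of $W^{\dir^*}$, and the eternal solution property of $b$, we get
\[
b(x,t)\ge b(y^*,s)+\Ll(y^*,s;x,t)\ge \varphi(\dir^*)+W^{\dir^*}(0,0;x,t) = b(x,t).
\]
So every such $y^*$ maximizes $b(\cdot,s)+\Ll(\cdot,s;x,t)$, and we also have $b(y^*,s)=f_{\dir^*}(y^*,s)$. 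It follows that $g^{b,L}_{(x,t)}(s)\le g^{\dir^*,L}_{(x,t)}(s)$.

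\emph{Step 2: the reverse inequality.} Let $z$ be any maximizer for $b$ at time $s$. By Lemma \ref{lm:o3} applied at $(z,s)$, there is $\eta\in\Gamma_{z,s}$, so $b(z,s)=f_\eta(z,s)$. Then
\[
b(x,t)=f_\eta(z,s)+\Ll(z,s;x,t)\le f_\eta(x,t)\le b(x,t),
\]
hence $\eta\in\Gamma_{x,t}$, and so $\eta\ge\dir^*$ by the definition of $\dir^*$ as the infimum. Equality along the chain also shows that $z$ maximizes $W^{\eta}(0,0;\cdot,s)+\Ll(\cdot,s;x,t)$. Therefore $z\ge g^{\eta,L}_{(x,t)}(s)\ge g^{\dir^*,L}_{(x,t)}(s)$, using monotonicity of geodesics in the color (Lemma \ref{lem:SIG_facts_collect}\ref{it:xi_g_mont}). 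Taking $z=g^{b,L}_{(x,t)}(s)$ gives $g^{b,L}_{(x,t)}(s)\ge g^{\dir^*,L}_{(x,t)}(s)$.

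Combining the two steps gives equality for every $s<t$. Both paths equal $x$ at time $t$, so they agree as functions.

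The main point to check is the step in Step 2 where $\eta$ is both in $\Gamma_{z,s}$ and yields equality in the variational formula; this is what lets us pass from "$z$ is a maximizer for $b$" to "$z$ is a maximizer for $W^\eta$". We also need $b$ to be finite, so that maximizers exist (Corollary \ref{cor:cts_bounded}); the case $\supp(\varphi)=\varnothing$ is vacuous. Everything holds on the single full-probability event used in Lemmas \ref{lm:o3} and \ref{lm:o9}, together with the monotonicity in Lemma \ref{lem:SIG_facts_collect}, so the statement is simultaneous over all $\varphi$ and $(x,t)$.
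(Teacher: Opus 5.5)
Your proof is correct and follows essentially the same route as the paper. Step 1 is the paper's chain of equalities showing that $g^{\dir^L_{x,t},L}_{(x,t)}(s)$ maximizes $b(\cdot,s)+\Ll(\cdot,s;x,t)$. Step 2 is the paper's use of Lemma \ref{lm:o3} at a competing maximizer to produce a color in $\Gamma_{x,t}$; you phrase it directly rather than by contradiction. Your remark that the case $\supp(\varphi)=\varnothing$ must be excluded, so that $b$ is finite and $\dir^L_{x,t}$ is defined, is a point the paper leaves implicit.
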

\begin{proof}
Recalling Definitions \ref{def:b-geodesics} and \ref{def:SIG_xi}, for any $T<t$ and $\xi \in \Xs_\omega$,
\be \label{eq:max_1567}
\begin{aligned}
g^{\Wf{\varphi},L}_{(x,t)}(T) &= \inf \argmax_{y \in \R}[W\star \varphi(y,T) + \Ll(y,T;x,t)],\quad\text{and} \\
g_{(x,t)}^{\xi,L}(T) &= \inf \argmax_{y \in \R}[W^{\dir}(y,T) + \Ll(y,T;x,t) ].
\end{aligned}
\ee
Now, observe that for $\dir \in \Gamma_{x,t}$,
\be \label{eq:S58291}
\begin{aligned}
    (W\star \varphi)(x,t) &\ge (W\star \varphi)\bigl(g_{(x,t)}^{\dir,L}(T),T\bigr) + \Ll\bigl(g_{(x,t)}^{\dir,L}(T),T;x,t\bigr) \\
    &\ge \varphi(\dir) +W^\dir\bigl(0,0;g_{(x,t)}^{\dir,L}(T),T\bigr) + \Ll\bigl(g_{(x,t)}^{\dir,L}(T),T;x,t\bigr) \\
    &= \varphi(\xi) + \sup_{y \in \R}[W^\dir(0,0;y,T) + \Ll(y,T;x,t)] \\
    &= \varphi(\xi) + W^\dir(0,0;x,t)= (W\star \varphi)(x,t),
\end{aligned}
\ee
where the first inequality follows because $W\star \varphi$ is an eternal solution, the second inequality follows by definition of $W\star \varphi$, the first equality follows by the second line of \eqref{eq:max_1567}, the second equality follows because the Busemann functions are eternal solutions,  and the last equality is the assumption $\dir \in \Gamma_{x,t}$. We then see that all inequalities above are equalities. In particular, for $\dir \in \Gamma_{x,t}$,
\be \label{eq:78291}
(W\star \varphi)\bigl(g_{(x,t)}^{\dir,L}(T),T\bigr) + \Ll\bigl(g_{(x,t)}^{\dir,L}(T),T;x,t\bigr) = (W\star \varphi)(x,t)
\ee
Furthermore, by setting $\xi = \xi_{x,t}^{L}$ and using Lemma \ref{lm:o9}, we obtain
\[
g_{(x,t)}^{\xi_{x,t}^L,L}(T) \in \argmax_{y \in \R}[(W\star \varphi)(y,T) + \Ll(y,T;x,t)].
\]
Now, by the first line in \eqref{eq:max_1567}, it suffices to show that if $y_0 < g_{(x,t)}^{\xi_{x,t}^L,L}(T)$, then $y_0$ is not in the $\argmax$ set above. Assume, by way of contradiction, that 
\be \label{eq:7341}
(W\star \varphi)(y_0,T) + \Ll(y_0,T;x,t) = \max_{y \in \R}[(W\star \varphi)(y,T) + \Ll(y,T;x,t)] = (W\star \varphi)(x,t).
\ee
For this choice of $y_0$, Lemma \ref{lm:o3} implies that there exists $\dir^* \in \Xs_\omega$ so that 
\[
(W\star \varphi)(y_0,T) = \varphi(\dir^*) + W^{\dir^*}(0,0;y_0,T).
\]
Combining this with \eqref{eq:7341}, we have 
\be \label{eq:6945}
\begin{aligned}
    (W\star \varphi)(x,t) &= \varphi(\dir^*) + W^{\dir^*}(0,0;y_0,T) + \Ll(y_0,T;x,t) \\
    &\le \varphi(\dir^*) + \sup_{y \in \R}[W^{\dir^*}(0,0;y,T) + \Ll(y,T;x,t)]  \\
    &= \varphi(\dir^*) + W^{\dir^*}(0,0;x,t) \le (W\star \varphi)(x,t).
\end{aligned}
\ee
Hence, all inequalities above are equalities. In particular, $\varphi(\dir^*) + W^{\dir^*}(0,0;x,t) = (W\star \varphi)(x,t)$, so by definition of $\Gamma_{x,t}$ \eqref{eq:Gammaxt}, we must have that $\dir^* \in \Gamma_{x,t}$.

Next, by definition of $\xi_{x,t}^L$ \eqref{eq:xixt}, we must have that $\xi^* \ge \xi_{x,t}^L$, so $y_0 < g_{(x,t)}^{\xi_{x,t}^L,L}(T)  \le  g_{(x,t)}^{\xi^*,L}(T)$ by ordering of geodesics.
Then, by the second line of \eqref{eq:max_1567}, $y_0$ is not a maximizer of $y \mapsto W^{\xi^*}(0,0;y,T) + \Ll(y,T;x,t)$, and we have
\[
\begin{aligned}
    &\quad \; \varphi(\dir^*) +W^{\dir^*}(0,0;y_0,T) + \Ll(y_0,T;x,t) \\ &< \varphi(\xi^*) + \sup_{y \in \R}[W^{\dir^*}(0,0;y,T) + \Ll(y,T;x,t)] \\
    &=\varphi(\dir^*) + W^{\dir^*}(0,0;x,t) = (W\star \varphi)(x,t).
\end{aligned}
\]
This contradicts the first equality of \eqref{eq:6945}.
\end{proof}

For the following corollary, recall the definition of the set of colors of an eternal solution $\dirs^b$ \eqref{eq:colors}  and the coloring map $\clr^b(\xi)$ \eqref{cm}. 
 \begin{corollary} \label{cor:o1}
             
             For any $\varphi\in\fun_\omega$,
             \begin{equation}
                 (\Wf{\varphi})(x,t)=f_\dir(x,t), \qquad \forall \;  \dir\in\dirs^{W\star \varphi}\text{ and } (x,t) \in \clr^{\Wf{\varphi}}(\dir).
             \end{equation}
       \end{corollary}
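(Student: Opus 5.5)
Let $b := W\star\varphi$. If $\supp(\varphi)=\varnothing$ then $b\equiv-\infty$, $\dirs^b=\varnothing$, and there is nothing to prove; so assume $\supp(\varphi)\neq\varnothing$. Then $b$ is a finite eternal solution: Lemma~\ref{lm:o3} shows $b(x,t)=f_{\dir^*}(x,t)>-\infty$ for every $(x,t)$, and finiteness comes from Proposition~\ref{prop:Wstarphi_bd}. Fix $\dir\in\dirs^b$ and $(x,t)\in\clr^b(\dir)$, so that $\dir^b_{x,t}=\dir$. By Definition~\ref{def:color}, this means $g^{b,L}_{(x,t)}$ has color $\dir$.

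The plan is to identify the color with the leftmost maximizer from Lemma~\ref{lm:o9}. By Lemma~\ref{lm:o1}, $g^{b,L}_{(x,t)}=g^{\dir^L_{x,t},L}_{(x,t)}$. The right-hand side is a $\dir^L_{x,t}$-geodesic, so its color is $\dir^L_{x,t}$. Hence $\dir=\dir^b_{x,t}=\dir^L_{x,t}$. This uses that distinct colors in $\Xs$ give distinct semi-infinite geodesics, which holds because the $\eta^\pm$ families are non-coalescing and different directions differ; this is also what makes the color in Definition~\ref{def:color} well defined. Lemma~\ref{lm:o9} gives $\dir^L_{x,t}\in\Gamma_{x,t}$, i.e.\ $(W\star\varphi)(x,t)=f_{\dir^L_{x,t}}(x,t)=f_{\dir}(x,t)$, which is the claim.

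The only delicate point is the identification ``color of $g^{\dir',L}_{(x,t)}$ equals $\dir'$''. If it needs more care, I would argue as follows. Suppose $g^{\dir',L}_{(x,t)}$ is also a $\dir''$-geodesic with $\dir''\neq\dir'$. By Lemma~\ref{lem:all_coalesce}, the $\dir'$- and $\dir''$-families would then have coalescing members, forcing $W^{\dir'}$ and $W^{\dir''}$ to agree along that path (Lemma~\ref{lem:W=landscape}), and hence everywhere after coalescence. This contradicts the divergence property in Proposition~\ref{prop:Buse_basic_properties}\ref{it:to_pm_inf}.

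With that identification in hand, the corollary follows directly from Lemmas~\ref{lm:o1} and~\ref{lm:o9}, and I expect no further obstacle.
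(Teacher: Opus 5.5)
Your proof is correct and matches the paper's argument: Lemma \ref{lm:o1} identifies the color $\dir^b_{x,t}$ with $\dir^L_{x,t}$, and Lemma \ref{lm:o9} gives $\dir^L_{x,t}\in\Gamma_{x,t}$, which is the claim. Your extra check that the leftmost $\dir'$-geodesic has color exactly $\dir'$ (via coalescence and the divergence in Proposition \ref{prop:Buse_basic_properties}\ref{it:to_pm_inf}) spells out a point the paper leaves implicit in Definition \ref{def:color}.
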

\begin{proof}
    Let $\dir \in \dirs^{W\star \varphi}$ and let $(x,t) \in  \clr^{\Wf{\varphi}}(\dir)$. By definition of $\clr^{\Wf{\varphi}}(\dir)$ and Lemma \ref{lm:o1}, $\dir_{x,t}^L = \dir$. Then, by definition of $\Gamma_{x,t}$ \eqref{eq:Gammaxt} and Lemma \ref{lm:o9}, $(\Wf{\varphi})(x,t) = f_\dir(x,t)$. 
\end{proof}

\subsection{Competition interfaces} \label{sec:ci}
  In this section, we will define two notions of competition interfaces, which will then be shown to be equivalent. For $c = (c_1,c_2)\in\R^2$ and $\dir_1< \dir_2$ in $\Xs$, define
    \begin{equation}
        \begin{aligned}
            \tau_t^{R,c,\dir_1,\dir_2} &= \sup\{x \in \R: W^{\dir_1} (0,0;x,t) +c_1 = W^{\dir_2}(0,0;x,t) + c_2\},\quad\text{and}\\
            \tau_t^{L,c,\dir_1,\dir_2} &= \inf\{x \in \R: W^{\dir_1} (0,0;x,t) +c_1 = W^{\dir_2}(0,0;x,t) + c_2\}.
        \end{aligned}
    \end{equation}
\begin{lemma} \label{lem:7842}
    With probability one, for all $\dir_1 < \dir_2$ in $\Xs$ and $t \in \R$, the function $x \mapsto W^{\dir_2}(0,0;x,t) - W^{\dir_1}(0,0;x,t)$ is nondecreasing and satisfies
    \[
    \lim_{x \to \pm \infty} W^{\dir_2} (0,0;x,t) -W^{\dir_1}(0,0;x,t) = \pm \infty,
    \]
    In particular, $\tau_t^{L/R,c,\dir_1,\dir_2} \in \R$ for all $t \in \R$ and $c = (c_1,c_2) \in \R^2$. Furthermore, for all $x >\tau_t^{R,c,\dir_1,\dir_2}$, we have 
    \[
    W^{\dir_1}(0,0;x,t) + c_1 < W^{\dir_2}(0,0;x,t) + c_2,
    \]
    while for $x < \tau_t^{L,c,\dir_1,\dir_2}$,
    \[
    W^{\dir_1}(0,0;x,t) + c_1 > W^{\dir_2}(0,0;x,t) + c_2.
    \]
\end{lemma}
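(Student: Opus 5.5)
The plan is to reduce every claim to properties of the Busemann process already recorded in Proposition \ref{prop:Buse_basic_properties}, working on the single probability-one event where that proposition holds.

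\textbf{Monotonicity and limits.} For $x<y$, additivity gives
\[
\bigl[W^{\dir_2}(0,0;y,t)-W^{\dir_1}(0,0;y,t)\bigr]-\bigl[W^{\dir_2}(0,0;x,t)-W^{\dir_1}(0,0;x,t)\bigr]=W^{\dir_2}(x,t;y,t)-W^{\dir_1}(x,t;y,t).
\]
The right-hand side is nonnegative by monotonicity along a horizontal line (Proposition \ref{prop:Buse_basic_properties}\ref{itm:DL_Buse_gen_mont}). Hence $D(x):=W^{\dir_2}(0,0;x,t)-W^{\dir_1}(0,0;x,t)$ is nondecreasing in $x$. The limits $D(x)\to\pm\infty$ as $x\to\pm\infty$ are exactly the divergence property, Proposition \ref{prop:Buse_basic_properties}\ref{it:to_pm_inf}.

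\textbf{Finiteness of the interfaces.} The defining set of $\tau_t^{L/R,c,\dir_1,\dir_2}$ is the level set $\{x: D(x)=c_1-c_2\}$. By the continuity of the Busemann process, $D$ is continuous. Combined with the limits $\pm\infty$, the intermediate value theorem shows this level set is nonempty. It is bounded because $D\to\pm\infty$ forces $D\neq c_1-c_2$ outside some compact interval. So the infimum and supremum are finite real numbers, and since the set is closed they are attained.

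\textbf{Strict inequalities.} Take $x>\tau^R$. Since $D$ is nondecreasing, $D(x)\ge D(\tau^R)=c_1-c_2$. Equality is impossible by the definition of the supremum, so $D(x)>c_1-c_2$, which is the claimed strict inequality $W^{\dir_1}+c_1<W^{\dir_2}+c_2$. The case $x<\tau^L$ is symmetric and gives $D(x)<c_1-c_2$.

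There is no real obstacle here. The only point to watch is that all quantifiers (every $\dir_1<\dir_2$ in $\Xs$, every $t$, every $c$) are handled on one event. This works because Proposition \ref{prop:Buse_basic_properties} already asserts its properties simultaneously for all parameters.
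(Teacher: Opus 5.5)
Your proposal is correct and follows the same route as the paper. The paper simply states that the lemma follows immediately from the monotonicity and divergence properties in Proposition \ref{prop:Buse_basic_properties}. You fill in the details the paper leaves implicit: additivity to reduce to horizontal increments, and continuity plus the intermediate value theorem to show the level set $\{D=c_1-c_2\}$ is nonempty, compact, and has finite extremes.
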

\begin{proof}
    This follows immediately from Proposition \ref{prop:Buse_basic_properties}\ref{itm:DL_Buse_gen_mont}-\ref{it:to_pm_inf}.  
\end{proof}

 Consistent with the notations in \cite{Rahman-Virag-21}, for $x_0 \in \R$, $t>s$ and a continuous function $f:\R\rightarrow [-\infty,\infty)$, we define
  \begin{equation}\label{eq:d}
     d_{(x_0,s)}(f;x,t)=\sup_{z \geq x_0} \left\{f(z)+\Ll(z,s; x,t) \right\}-\sup_{z \leq x_0} \left\{f(z)+\Ll(z,s; x,t)\right\}. 
  \end{equation}
It follows by a standard paths-crossing argument that $x \mapsto d_{(x_0,s)}(f;x,t)$ is nondecreasing (see \cite[Proposition 4.1]{Rahman-Virag-21}).  For an initial condition $f$ and $t > s$, let us define 
 \be \label{eq:taupm_def}
\begin{aligned}
& \intc^-_{f;(x_0,s)}(t)=\inf \{ x \in \R : d_{(x_0,s)}(f;x,t) \geq 0\},\\
& \intc^+_{f;(x_0,s)}(t)=\sup \{x \in \R: d_{(x_0,s)}(f;x,t) \leq 0 \},
\end{aligned}
\ee
while for $t = s$, we define
\be \label{intcat0}
 \intc^-_{f;(x_0,s)}(s) = \intc^+_{f;(x_0,s)}(s)=x_0
\ee
so that the interface starts at the point $(x_0,s)$. We  call $\intc^-_{f;(x_0,s)}$ (resp.\ $\intc^+_{f;(x_0,s)}$) the \textbf{\textit{leftmost} (resp.\ \textit{rightmost}) competition interface} from initial condition $f$. The monotonicity of $d_{(x_0,s)}(f;x,t)$ implies that  $\intc^-_{f;(x_0,s)}(t) \le \intc^+_{f;(x_0,s)}(t)$ for all $t \ge s$. \\
In the context of this work we will be interested in interfaces starting from $(x_0,s)\in\R^2$ associated with functions of the form
\begin{equation} \label{eq:fxi1xi2_def}
    f^{\dir_1,\dir_2}_{(x_0,s)}(x)=
    \begin{cases}
        W^{\dir_1}(x_0,s;x,s) & x\leq x_0\\
        W^{\dir_2}(x_0,s;x,s) & x\geq x_0,
    \end{cases}
\end{equation}
where $\dir_1<\dir_2$ in $\Xs$. 
The connection between the two types of interfaces is summarized in the following result.
\begin{lemma} [{\cite[Proposition 2.3]{Dunlap-Sorensen-2025} and \cite[Proposition 4.5]{Rahman-Virag-21}}]\label{lemma 4.1}
    Fix any $\dir_1<\dir_2$ and $c_1,c_2\in\R$. Then, for any $s\in\R$,
    \begin{equation} \label{tau_is_interface}
        \tau^{L,c,\dir_1,\dir_2}_t=\tau^-_{f;p}(t), \qquad \text{and} \qquad\tau^{R,c,\dir_1,\dir_2}_t=\tau^+_{f;p}(t)\qquad \text{for all } t\geq s,
    \end{equation}
    where $p=(\tau^{c,\dir_1,\dir_2}_s,s)$ and $f=f^{\dir_1,\dir_2}_{p}$. In particular, the function $t \mapsto \tau_t^{c,\dir_1,\dir_2}$ is continuous. 
\end{lemma}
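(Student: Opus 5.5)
The plan is to compare the sign of $d_p(f;x,t)$ with the sign of the difference of the two shifted Busemann functions, and then read off both interfaces from the monotonicity in Lemma \ref{lem:7842}. Set $b_i(x,t) := W^{\dir_i}(0,0;x,t) + c_i$ for $i=1,2$. Each $b_i$ is an eternal solution, and $D_t(x) := b_2(x,t) - b_1(x,t)$ is nondecreasing in $x$ with limits $\pm\infty$ (Lemma \ref{lem:7842}). Hence $\{x : D_t(x) = 0\} = [\tau^{L}_t,\tau^{R}_t]$, where I suppress $c,\dir_1,\dir_2$ from the notation. Fix $s$ and $x_0 = \tau_s$, which is any point with $D_s(x_0)=0$. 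By additivity,
\[
f^{\dir_1,\dir_2}_p(z) = \bigl(b_1 \vee b_2\bigr)(z,s) - b_1(x_0,s)\quad\text{for all }z \in \R,
\]
because $b_1 \ge b_2$ on $z \le x_0$ and $b_2 \ge b_1$ on $z \ge x_0$. Since $d_p$ is unchanged by adding a constant to $f$, we may drop the constant $b_1(x_0,s)$. Then for $t>s$ we write $d_p(f;x,t) = A(x,t) - B(x,t)$, where
\[
A(x,t) = \sup_{z \ge x_0}\bigl[b_2(z,s) + \Ll(z,s;x,t)\bigr],\qquad B(x,t) = \sup_{z \le x_0}\bigl[b_1(z,s) + \Ll(z,s;x,t)\bigr].
\]

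\emph{Step 1 (strict signs).} Clearly $A \le b_2(x,t)$ and $B \le b_1(x,t)$, since each $b_i$ is an eternal solution. By Lemma \ref{lem:interchange_sups}, the max $b_1\vee b_2$ is also an eternal solution, so $\max(A,B) = (b_1\vee b_2)(x,t)$. Suppose $D_t(x)>0$. Then $\max(A,B) = b_2(x,t) > b_1(x,t) \ge B$, which forces $A>B$, that is, $d_p(f;x,t)>0$. Symmetrically, $D_t(x)<0$ gives $d_p(f;x,t)<0$. It follows that $\tau^-_{f;p}(t) \ge \tau^L_t$ and $\tau^+_{f;p}(t) \le \tau^R_t$.

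\emph{Step 2 (the equality set), which I expect to be the main obstacle.} It remains to show that $d_p(f;x,t) = 0$ whenever $D_t(x) = 0$. At such $x$ we know $\max(A,B) = b_1(x,t) = b_2(x,t)$, and we need both suprema to attain this value. I would argue with geodesics. Let $y_i$ be the location at time $s$ of the $\dir_i$-geodesic from $(x,t)$, and recall $y_1 \le y_2$ by Lemma \ref{lem:SIG_facts_collect}\ref{it:xi_g_mont}.
\begin{itemize}
\item If $y_2 \ge x_0$, then $A = b_2(x,t)$.
\item If $y_2 < x_0$, then $b_1(y_2,s) \ge b_2(y_2,s)$, so $B \ge b_2(x,t) = b_1(x,t)$ and therefore $B = b_1(x,t)$.
\end{itemize}
The same dichotomy applies with $y_1$ and the roles of $A$ and $B$ exchanged. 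The difficult case is when $y_1,y_2$ both lie on the same side of $x_0$, say $y_1 > x_0$: then $A$ attains the value but $B$ is not yet controlled. Here I would try the path-crossing argument of \cite[Proposition 4.5]{Rahman-Virag-21}. Take $x' < \tau^L_t$; by Step 1 the maximizer for $B$ at $(x',t)$ realises the full sup, so the corresponding geodesic lands at some point $\le x_0$. Crossing it with the $\dir_1$-geodesic from $(x,t)$, and using additivity of $W^{\dir_1}$ along both paths together with the fact that $D$ vanishes at $(x,t)$ and at $(x_0,s)$, should produce a competitor $z \le x_0$ with $b_1(z,s)+\Ll(z,s;x,t) = b_1(x,t)$. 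If this direct argument does not close, the fallback is to use continuity and monotonicity of $x\mapsto d_p(f;x,t)$: it suffices to show $d_p(f;\tau^L_t,t) \ge 0$ and $d_p(f;\tau^R_t,t) \le 0$, which only needs the geodesic argument at the two endpoints, where $D_t$ changes from zero to nonzero.

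\emph{Step 3 (conclusion).} Combining Steps 1 and 2, $\{d_p(f;\cdot,t) \ge 0\} = [\tau^L_t,\infty)$ and $\{d_p(f;\cdot,t)\le 0\} = (-\infty,\tau^R_t]$, which gives \eqref{tau_is_interface} for $t>s$. The case $t=s$ holds by the convention \eqref{intcat0}, provided $\tau_s$ is the chosen endpoint. Finally, continuity of $t\mapsto \tau_t$ on $[s,\infty)$ follows from \eqref{tau_is_interface} together with the continuity of competition interfaces \cite{Rahman-Virag-21}. Since $s$ is arbitrary, this gives continuity on all of $\R$.
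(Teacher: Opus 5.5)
There is a genuine gap in Step 2, and it cannot be closed as formulated: for an arbitrary $x_0\in\{D_s=0\}$, the claim ``$d_p(f;x,t)=0$ whenever $D_t(x)=0$'' is false.

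\textbf{Counterexample.} The level set $\{D_s=0\}=[\tau^L_s,\tau^R_s]=:[a,b]$ can be nondegenerate. For example, take $c_1=c_2$ and $\dir_1<\dir_2$ close to a fixed direction on either side, and use the local constancy in Proposition~\ref{prop:Buse_basic_properties}\ref{itm:general_cts}. Record one comparison first. If $y_i$ is the time-$s$ point of a $\dir_i$-geodesic from $(x,t)$, then $b_i(x,t)=b_i(y_i,s)+\Ll(y_i,s;x,t)$ and $b_j(x,t)\ge b_j(y_i,s)+\Ll(y_i,s;x,t)$. Hence $D_s(y_1)\le D_t(x)\le D_s(y_2)$.

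Now let $x_0<b$ and pick $x_0<x<x'<b$. For $t>s$ close enough to $s$, Lemma~\ref{lem:Landscape_global_bound} and continuity of $b_i(\cdot,s)$ force the following to land in $(x_0,b)$ at time $s$:
the $\dir_i$-geodesics from $(x,t)$ and from $(x',t)$, and all maximizers of $z\mapsto(b_1\vee b_2)(z,s)+\Ll(z,s;x,t)$.
The comparison then gives $D_t(x)=D_t(x')=0$, so $\tau^R_t\ge x'$. But $B<\max(A,B)$, so $d_p(f;x,t)>0$ and $\tau^+_{f;p}(t)\le x<\tau^R_t$.

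This is exactly your ``difficult case'' $y_1>x_0$. No competitor $z\le x_0$ exists, and your endpoint fallback fails too, since $d_p(f;\tau^R_t,t)\ge d_p(f;x,t)>0$. Symmetrically, the $\tau^-$ identity fails if $x_0>a$. So when $a<b$ no single $p$ works. Your caveat at $t=s$ is detecting the same thing: there, the two identities force $\tau^L_s=x_0=\tau^R_s$.

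\textbf{The fix.} Use the correct endpoint for each identity: $p=(\tau^L_s,s)$ for the $\tau^-$ identity and $p=(\tau^R_s,s)$ for the $\tau^+$ identity. By your setup, $f^{\dir_1,\dir_2}_p$ equals $(b_1\vee b_2)(\cdot,s)$ up to a constant in both cases, and Step 1 holds for any $x_0$ in the level set. Step 2 then follows at once from the comparison above:
\begin{itemize}
\item With $x_0=\tau^L_s$: $D_t(x)\ge 0$ gives $D_s(y_2)\ge 0$, hence $y_2\ge x_0$. Then $A\ge b_2(y_2,s)+\Ll(y_2,s;x,t)=b_2(x,t)=\max(A,B)$, so $d\ge 0$ on $[\tau^L_t,\infty)$.
\item With $x_0=\tau^R_s$: $D_t(x)\le 0$ gives $y_1\le x_0$, so $B=\max(A,B)$ and $d\le 0$ on $(-\infty,\tau^R_t]$.
\end{itemize}
Combined with Step 1, this yields both identities with no path-crossing argument.

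For comparison, the paper does not prove the interface identity itself. It checks, as you do, that $f^1_s\vee f^2_s$ equals $f^{\dir_1,\dir_2}_p$ up to an additive constant, and then cites \cite[Proposition 2.3]{Dunlap-Sorensen-2025} for $\tau_t=\tau^+_{f;p}(t)$. Once repaired, your argument is a short, self-contained substitute for that citation.
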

\begin{proof}
Some justification is required to reduce it to the cited results, so we take care of that here. Once \eqref{tau_is_interface} is proved, the continuity is directly \cite[Proposition 4.5]{Rahman-Virag-21}.  Let $b$ denote the eternal solution
 \[
 b(x,t) = \max\{ W^{\dir_1} (0,0;x,t) +c_1, W^{\dir_2}(0,0;x,t) + c_2\}
 \]
Since each Busemann function is an eternal solution, we have 
\begin{align*}
    b(x,t) = \mathfrak h_t(x; f_s^{1,c_1},s) \vee \mathfrak h_t(x; f_s^{2,c_2},s) = \mathfrak h_t(x;f_s^{1,c_1} \vee f_s^{2,c_2},s),
\end{align*}
where, for $(x,t) \in \R^2$ and $i \in \{1,2\}$,
 \begin{equation} \label{eq:ft12_def}
    \begin{aligned}
    f^{i}_t(x)&=W^{\dir_i}(0,0;x,t)+c_i.
\end{aligned}
\end{equation}
Then, by \cite[Proposition 2.3]{Dunlap-Sorensen-2025} \footnote{The result in \cite{Dunlap-Sorensen-2025} is stated for functions $f_s^1,f_s^2$ satisfying $\lim_{|x| \to \infty} \f{f_s^1(x)}{x} = \theta > 0$ and $\lim_{|x| \to \infty} \f{f_s^2(x)}{x} = -\theta$, but the proof only requires the fact that $x \mapsto f_s^2(x)-f_s^1(x)$ is nondecreasing and $\lim_{x \to \pm \infty} f_s^2(x)-f_s^1(x) = \pm \infty$, which holds by Lemma \ref{lem:7842}.} for any $s \in \R$, we have that 
\[
\tau_t^{c,\xi_1,\xi_2} = \tau_{f_s^{1} \vee f_s^{2};p}^+(t), 
\]
where $p = (\tau_s^{c,\xi_1,\xi_2},s)$. It follows immediately from the definition \eqref{eq:taupm_def} that if we shift the initial function $f$ by a constant, the competition interface stays the same. Hence, to get to \eqref{tau_is_interface}, it suffices to show that $f_s^{1} \vee f_s^{2}$ is the same as the function $f_p^{\xi_1,\xi_2}$, modulo addition by a constant.  Set $x_0 = \tau_s^{c,\xi_1,\xi_2}$. By Lemma \ref{lem:7842}, we have 
\begin{align*}
f_s^{1}(x) \vee f_s^{2}(x) &= \begin{cases}
    W^{\xi_1}(0,0;x,s) + c_1, &x \le x_0 \\
    W^{\xi_2}(0,0;x,s) + c_2,& x \ge x_0.
\end{cases}
\end{align*}
By continuity, we have that $C := W^{\xi_1}(0,0;x_0,s) + c_1 = W^{\xi_2}(0,0;x_0,s) + c_2$. Hence, recalling the definition of $f_p^{\xi_1,\xi_2}$ \eqref{eq:fxi1xi2_def}, we have
\begin{align*}
f_s^{1}(x)\vee f_s^{2}(x) &= \begin{cases}
    W^{\xi_1}(0,0;x_0,s) +W^{\xi_1}(x_0,s;x,s) + c_1, &x \le x_0 \\
    W^{\xi_2}(0,0;x_0,s) + W^{\xi_2}(x_0,s;x,s) + c_2,& x \ge x_0 
\end{cases} =C + f_p^{\xi_1,\xi_2}(x),
\end{align*}
as desired. 
\end{proof}

\begin{lemma}\label{lm:o8}
Let $(x,t)\in\R^2$. Then for any $\dir_1<\dir_2$  and $c\in\R^2$, there exists $T_0 \in \R$ so that, for all $T < T_0$,
\begin{equation}
\label{eq:str_ineq_1}
    g^{\dir_1,R}_{(x,t)}(T) <\tau_T^{L,c,\dir_1,\dir_2} \le \tau_T^{R,c,\dir_1,\dir_2}<g^{\dir_2,L}_{(x,t)}(T).
\end{equation}
\end{lemma}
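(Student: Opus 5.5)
The plan is to compare the asymptotic directions of the three paths as $T \to -\infty$. The geodesics $g^{\dir_1,R}_{(x,t)}$ and $g^{\dir_2,L}_{(x,t)}$ have directions $\mathcal P(\dir_1)$ and $\mathcal P(\dir_2)$. The interfaces $\tau_T^{L/R,c,\dir_1,\dir_2}$ should lie, for very negative $T$, strictly between these paths.

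\textbf{Case $\mathcal P(\dir_1)<\mathcal P(\dir_2)$.} I would use the quantitative bounds of Lemma \ref{lem:Buse_global}, with $s$ a fixed integer and $T$ in place of $t$, or simply the time-$T$ analogues obtained from the additivity $W^\dir(0,0;y,T) = W^\dir(0,0;0,T) + W^\dir(0,T;y,T)$. Together with Lemma \ref{lem:Buse_fixed_time_bd} at a single time, this gives that
\[
y\mapsto W^{\dir_2}(0,T;y,T)-W^{\dir_1}(0,T;y,T)
\]
behaves like $2(\mathcal P(\dir_2)-\mathcal P(\dir_1))y$ plus lower order terms. The time-$0$ offsets $W^{\dir_i}(0,0;0,T)$ behave like $T\mathcal P(\dir_i)^2$ up to $o(|T|)$ corrections; this uses the shape of $\Ll$ from Lemma \ref{lem:Landscape_global_bound}. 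Solving for the zero of the difference then locates both interfaces at $\tau_T \approx \tfrac{\mathcal P(\dir_1)+\mathcal P(\dir_2)}{2}|T| + o(|T|)$. That is the Burgers shock speed: the midpoint of the two slopes. Meanwhile the geodesics satisfy $g^{\dir_i}(T)/|T|\to \mathcal P(\dir_i)$. Since the midpoint lies strictly between $\mathcal P(\dir_1)$ and $\mathcal P(\dir_2)$, the strict inequalities hold for $T$ sufficiently negative. The middle inequality $\tau^L\le\tau^R$ is trivial from the definitions.

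\textbf{Case $\dir_1=\eta^-$, $\dir_2=\eta^+$.} Here the directions agree and the asymptotic argument fails, so I would argue geometrically instead.
\begin{enumerate}
\item Consider the eternal solution $b=\max(W^{\dir_1}+c_1,W^{\dir_2}+c_2)$.
\item Points to the left of $\tau^L_T$ have $\dir_1$-colored leftmost $b$-geodesics. This follows from Lemma \ref{lm:o1} and Lemma \ref{lem:7842}, since left of the interface only $\dir_1$ attains the max. Likewise, points right of $\tau^R_T$ are $\dir_2$-colored.
\item The geodesic $g^{\dir_1,R}_{(x,t)}$ and $g^{\dir_2,L}_{(x,t)}$ never coalesce with each other, because the $\eta^\pm$ families are distinct and non-coalescing. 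However, each coalesces with the $\dir_i$-geodesics from a fixed reference point on its own side.
\item By Lemma \ref{lemma 4.1}, the interface is a competition interface. Backward $b$-geodesics from points on either side of it cannot cross it; a $\dir_1$ point cannot be $\dir_2$-colored.
\item Moving backward, a point of $g^{\dir_1,R}_{(x,t)}$ lying at or right of $\tau^R_T$ would force its $b$-geodesic to be $\dir_2$-colored. Because $\dir_1$-geodesics from that point agree with $g^{\dir_1,R}_{(x,t)}$ below $T$, the argument should exploit a comparison at time $T$ and use that the maximum along $g^{\dir_1,R}_{(x,t)}$ realizes $W^{\dir_1}$ by Lemma \ref{lem:W=landscape}. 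This would give $W^{\dir_1}+c_1 \ge W^{\dir_2}+c_2$ at $(g^{\dir_1,R}_{(x,t)}(T),T)$ once $T$ is very negative.
\end{enumerate}

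More concretely for step 5, I would first show that the quantity $W^{\dir_1}(0,0;g^{\dir_1}(T),T)-W^{\dir_2}(0,0;g^{\dir_1}(T),T)\to+\infty$ as $T\to-\infty$. Writing it via additivity along the $\dir_1$-geodesic, with $\Ll$ equal to $W^{\dir_1}$ by \eqref{L =landscape} and $\Ll\le W^{\dir_2}$ by \eqref{eq:landscape_leq}, shows this difference is monotone in $T$. Strict divergence should come from non-coalescence of $\dir_1$ and $\dir_2$ geodesics together with the no-bubbles property. The symmetric statement handles $g^{\dir_2,L}$. Lemma \ref{lem:7842} then converts the sign of the difference into the strict position relative to $\tau^{L}_T$ and $\tau^R_T$.

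\textbf{Main obstacle.} I expect the hardest step to be the strict divergence, not just monotonicity, in the $\eta^\pm$ case. The fallback is to note that a bounded limit would, via continuity and Proposition \ref{prop:Buse_basic_properties}\ref{itm:general_cts}, produce a bi-infinite geodesic structure contradicting \cite{Busani_N3G}.
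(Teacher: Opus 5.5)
Your proposal has a genuine gap in the case $\dir_1=\eta^-$, $\dir_2=\eta^+$, exactly where you locate your main obstacle. You never prove that
$$D(T):=W^{\dir_1}(0,0;g(T),T)-W^{\dir_2}(0,0;g(T),T),\qquad g=g^{\dir_1,R}_{(x,t)},$$
exceeds $c_2-c_1$ for all sufficiently negative $T$.

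\begin{itemize}
\item Non-coalescence of the $\eta^\pm$ families only shows that $D$ is not eventually constant. On a constant stretch $\Ll=W^{\dir_2}$ along $g$, so $g$ would be a $\dir_2$-geodesic there. It does not show that $D$ is unbounded.
\item Your fallback, that a bounded limit yields a bi-infinite geodesic structure, is not an argument as stated.
\item The coloring remarks in steps 2--4 give no contradiction either. A point on a $\dir_1$-geodesic may perfectly well carry a $\dir_2$-colored $b$-geodesic.
\item Your Case 1 is not supported by the available estimates. Lemma \ref{lem:Buse_fixed_time_bd} holds on a $t$-dependent event. Lemma \ref{lem:Buse_global} has $s$-dependent constants and errors of order $(t-s)^{1/3}|y|$. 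Neither gives the $o(|T|)$ control at time $T\to-\infty$ and spatial scale $|T|$ that you need. You would also need a sandwich argument when $\dir_i\in\Xi$ is random.
\end{itemize}

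You do not need divergence along the geodesic from $(x,t)$. Your own monotonicity computation already contains the paper's key step, provided you start from a better point.

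Fix $c$ and take $y'<\tau^{L,c,\dir_1,\dir_2}_t$, with $g:=g^{\dir_1,R}_{(y',t)}$. Lemma \ref{lem:7842} gives $D(t)>c_2-c_1$. Additivity, \eqref{L =landscape} and \eqref{eq:landscape_leq} give
\[
D(T')-D(T)=W^{\dir_2}(g(T'),T';g(T),T)-\Ll(g(T'),T';g(T),T)\ge 0,\qquad T'<T\le t .
\]
Hence $D(T)>c_2-c_1$ for every $T<t$, and so $g(T)<\tau^{L,c,\dir_1,\dir_2}_T$ for every $T<t$. The paths $g^{\dir_1,R}_{(x,t)}$ and $g^{\dir_1,R}_{(y',t)}$ coalesce by Lemma \ref{lem:all_coalesce}. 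This gives the first inequality in \eqref{eq:str_ineq_1} for $T$ below the coalescence time.

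The last inequality is symmetric. Use a $\dir_2$-geodesic from $y>\tau^{R,c,\dir_1,\dir_2}_t$ and the quantity $W^{\dir_2}-W^{\dir_1}$ along it.

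This is precisely the paper's proof. The paper runs the $\dir_2$ side as a chain of inequalities equivalent to your monotonicity, then uses coalescence. It treats all $\dir_1<\dir_2$ uniformly, so your asymptotic shock-speed analysis in Case 1 is unnecessary.
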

\begin{proof}
We prove the last inequality of \eqref{eq:str_ineq_1}, with the first following symmetrically, and the middle inequality following immediately from the definition. Let $b$ denote the eternal solution
\[
b(x,t)=\max\{ W^{\dir_1} (0,0;x,t) +c_1, W^{\dir_2}(0,0;x,t) + c_2\}.
\]
    Choose $y> \tau_t^{R,c,\dir_1,\dir_2}$. Then, by Lemma \ref{lem:7842}, $b(y,t) = W^{\dir_2}(0,0;y,t) + c_2 > W^{\dir_1}(0,0;x,t)+c_1$, and Lemma \ref{lm:o1} implies that
    $
    g_{(y,t)}^{\dir_2,L} = g_{(y,t)}^{b,L}$.
    We now claim that for all $T<t$,
    \be \label{eq:taut_true}
    \tau_T^{R,c,\dir_1,\dir_2}<g^{\dir_2,L}_{(y,t)}(T).
    \ee
    Assume, by way of contradiction, that $\tau_T^{R,c,\dir_1,\dir_2} \ge g^{\dir_2,L}_{(y,t)}(T)$.  Then, we have 
    \be \label{eq:7032}
\begin{aligned}
&\quad \;W^{\dir_1}\left(0,0;y,t\right)-\Ll \left( g^{\dir_2,L}_{(y,t)}(T),T; y,t\right) + c_1 =  W^{\dir_1}\left(0,0;y,t\right)-W^{\dir_2} \left( g^{\dir_2,L}_{(y,t)}(T),T; y,t\right) + c_1\\
&<W^{\dir_2}\left(0,0;y,t\right)+W^{\dir_2}\left( y,t; g^{\dir_2,L}_{(y,t)}(T),T \right)+c_2 =W^{\dir_2}\left(0,0;g^{\dir_2,L}_{(y,t)}(T),T \right)+c_2 \\
&\leq W^{\dir_1}\left(0,0;g^{\dir_2,L}_{(y,t)}(T),T \right)+c_1 = W^{\dir_1}(0,0;y,t) - W^{\dir_1}\Bigl(g^{\dir_2,L}_{(y,t)}(T),T;y,t \Bigr)+c_1 ,
\end{aligned}
\ee
where the first equality is the equality of the Busemann function and $\Ll$ along a semi-infinite geodesic (Lemma \ref{lem:W=landscape}, Equation \eqref{L =landscape}), the strict inequality follows by the assumption that $y > \tau_t^{R,c,\dir_1,\dir_2}$  and Lemma \ref{lem:7842}, the next equality is additivity of the Busemann functions, the last inequality is the assumption that $\tau_T^{R,c,\dir_1,\dir_2} \ge g^{\dir_2,L}_{(y,t)}(T)$ and Lemma \ref{lem:7842}, and the last equality is the additivity of the Busemann functions. Rearranging the first and last terms in \eqref{eq:7032}, we have 
\[
W^{\dir_1}\Bigl(g^{\dir_2,L}_{(y,t)}(T),T;y,t \Bigr) < \Ll \left( g^{\dir_2,L}_{(y,t)}(T),T; y,t\right),
\]
a contradiction to Lemma \ref{L =landscape}, Equation \eqref{eq:landscape_leq}. Hence, \eqref{eq:taut_true} holds, as desired.

Now by coalescence of $\dir_2$ geodesics (Lemma \ref{lem:all_coalesce}), we have that for $x \in \R$, there exists $T_0 < t$ so that for all $T < T_0$, $g^{\dir_2,L}_{(x,t)}(T) = g^{\dir_2,L}_{(y,t)}(T) $. The result then follows by combining this with \eqref{eq:taut_true}. 
\end{proof}

For $\varphi\in\fun$ and $A\subseteq \Xs$, define the function $\varphi_A:\Xs \to \R$ by
\begin{equation}
    \varphi_A(\varphi) =
    \begin{cases}
       \varphi(\dir) & \dir\in A\\
       -\infty  & \dir\notin A.
    \end{cases}
\end{equation}
For $\ve > 0$, recall the definition of the interval $I_\ve(\dir)$ \eqref{eq:o36}, and define
\begin{equation}\label{eq:o20}
\begin{aligned}
      I^{L,-}_\ve(\dir)&=\sup\{\dir' \in \Xs_\omega:\dir'<I_\ve(\dir)\}\\
      I^{L,+}_\ve(\dir)&=\inf\{\dir' \in \Xs_\omega:\dir'\in I_\ve(\dir)\}\\
       I^{R,+}_\ve(\dir)&=\inf\{\dir' \in \Xs_\omega:\dir'> I_\ve(\dir)\}\\
       I^{R,-}_\ve(\dir)&=\sup\{\dir' \in \Xs_\omega:\dir'\in I_\ve(\dir)\}.
\end{aligned}
\end{equation}
Here, the notation $\dir' < I_\ve(\dir)$ means that $\dir'< \wh \dir$ for all $\wh \dir \in I_\ve(\dir)$. As an example, if $\dir_0 = \eta_0^+\in\Xi^+$, then $I^{L,-}_\ve(\dir_0)=\eta_0^-$ and $I^{L,+}_\ve(\dir_0)=\dir_0.$ The elements in \eqref{eq:o20} will allow us to prove the results below without having to treat the different types of elements in $\Xs$ (i.e.\ whether in $\Xi^c,\Xi^-$ or $\Xi^+$).
\begin{figure}[htbp]
    \centering

    \begin{subfigure}[t]{0.48\textwidth}
        \centering
       \begin{minipage}[c][5cm][c]{\linewidth}
    \centering

    \begin{tikzpicture}

        \node[
            anchor=south west,
            inner sep=0
        ] (image) at (0,0) {%
            \includegraphics[
                page=1,
                width=\linewidth,
                height=4.8cm,
                keepaspectratio,
                trim=10 240 50 200,
                clip
            ]{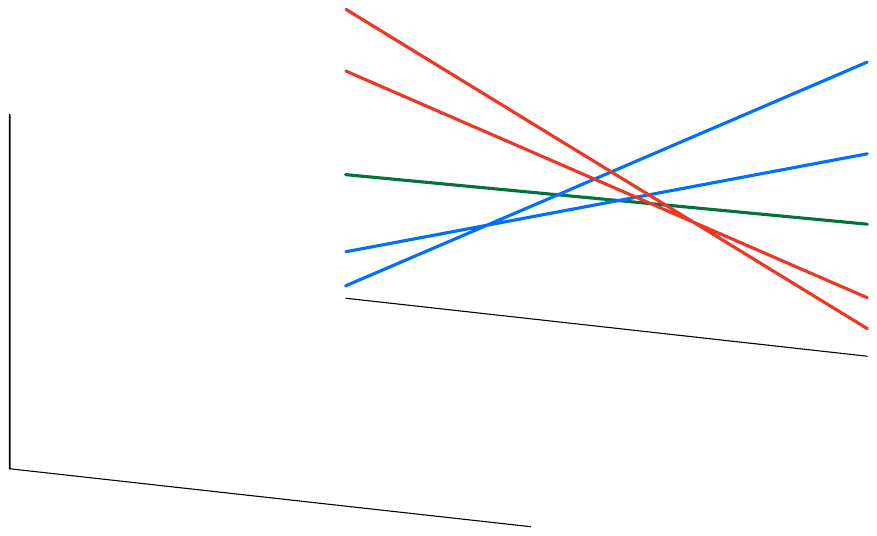}%
        };

        \begin{scope}[
            x={(image.south east)},
            y={(image.north west)}
        ]
            \coordinate (timearrow) at (0.15,0.4);
             \coordinate (spacearrow) at (0.15,0.2);
        \end{scope}

        \begin{scope}[
            shift={(timearrow)},
            rotate=25,
            transform shape
        ]
            \draw[
                ->
            ]
                (0,0) -- (1.5,0)
                node[midway, above] {\(\mathrm{time}\)};
        \end{scope}
        \begin{scope}[
            shift={(spacearrow)},
            rotate=-6,
            transform shape
        ]
            \draw[
                <->
            ]
                (0,0) -- (2.1,0)
                node[midway, above] {\(\mathrm{space}\)};
        \end{scope}
         \node at (7.1,2.4) {$t=0$};
         \node at (5.4,1.2) {$t=T<0$};
    \end{tikzpicture}
\end{minipage}
        \caption{At time $t=0$, the function $f_\dir(\cdot,0)$
        (green line) is dominated by colours of slope smaller than
        $\dir-\ve$ (red) and slope larger than $\dir+\ve$ (blue).}
    \end{subfigure}
    \hfill
    \begin{subfigure}[t]{0.48\textwidth}
        \centering
        \begin{minipage}[c][5cm][c]{\linewidth}
            \centering
            \includegraphics[
                page=2,
                width=\linewidth,
                height=4.8cm,
                keepaspectratio,
                trim=10 240 50 200,
                clip
            ]{interface_proof.pdf}
        \end{minipage}
        \caption{We pick the largest red slope and the smallest blue
        slope and raise them to the maximum of $\varphi$.}
    \end{subfigure}

    \medskip

    \begin{subfigure}[t]{0.48\textwidth}
        \centering
        \begin{minipage}[c][5cm][c]{\linewidth}
            \centering
             \begin{tikzpicture}

        \node[
            anchor=south west,
            inner sep=0
        ] (image) at (0,0) {%
            \includegraphics[
                page=3,
                width=\linewidth,
                height=4.8cm,
                keepaspectratio,
                trim=10 240 50 200,
                clip
            ]{interface_proof.pdf}%
        };

        \begin{scope}[
            x={(image.south east)},
            y={(image.north west)}
        ]
            
             \coordinate (spacearrow) at (0.2,0.15);
        \end{scope}

       
        \begin{scope}[
            shift={(spacearrow)},
            rotate=-6,
            transform shape
        ]
            \draw[
                <->
            ]
                (0,0) -- (2.1,0)
                node[midway, above] {\(J_{T,\dir}\)};
        \end{scope}
         \begin{scope}[
    x={(image.south east)},
    y={(image.north west)}
]
    \coordinate (interfaceTail) at (0.75,0.25);
    \coordinate (interfaceTip)  at (0.55,0.3);

    \draw[->, thick, red]
        (interfaceTail) -- (interfaceTip);

   \node[
    anchor=west,
    align=center,
    xshift=2pt,
    font=\small
] at (interfaceTail) {interface\\with\\the green};
\end{scope}
    \end{tikzpicture}            
        \end{minipage}
        \caption{Our choice of lifted curves (purple) ensures that
        the interfaces between the purple curves and the green one
        will: (a) cross and create the interval $J_{T,\dir}$; and
        (b) place every interface formed by a red (respectively, blue)
        curve and the green curve to the left (respectively, right)
        of $J_{T,\dir}$.}
    \end{subfigure}
    \hfill
    \begin{subfigure}[t]{0.48\textwidth}
        \centering
        \begin{minipage}[c][5cm][c]{\linewidth}
            \centering
            \includegraphics[
                page=4,
                width=\linewidth,
                height=4.8cm,
                keepaspectratio,
                trim=10 240 50 200,
                clip
            ]{interface_proof.pdf}
        \end{minipage}
        \caption{The spatial interval $J_{T,\dir}$ at time $t$
        ensures that colour $\dir$ (green) dominates every colour
        that is not in $I_\ve(\dir)$.}
    \end{subfigure}

    \caption{A sketch of the proof of Lemma~\ref{lm:o2}.}
    \label{fig:backwards}
\end{figure}

The following lemma is one of the key inputs in the proof of the uniqueness in Proposition \ref{prop:uniq} below. It allows us to restrict our attention to a bounded set of colors. 

\begin{lemma}\label{lm:o2} 
   The following holds for almost-every $\omega$. Let $\varphi\in\fun_\omega$ and let $\dir\in\supp(\varphi)$.  Then for every $\ve>0$ there exists   $T_0 = T_0(\ve,\dir)<0$ such that for all $T<T_0$, there exists an interval $J_{T,\dir}\subseteq \R$ \rm{(}depending on $\ve$\rm{)} such that 
    \begin{equation} \label{eq:o391}
        (\Wf{\varphi})(x,T)=(\Wf{{\varphi_{I_\ve}}})(x,T)>(\Wf{{\varphi_{I^c_\ve}}})(x,T), \qquad \forall x\in J_{T,\dir}.
    \end{equation}
    Moreover, for every $(x_0,t_0)\in\R^2$, there exists $T_0' = T_0'(\ve,\xi,x_0,t_0) < t_0$ so that for all $T < T_0'$, 
\begin{equation}\label{eq:o23}
        [g^{\dir,L}_{(x_0,t_0)}(T),g^{\dir,R}_{(x_0,t_0)}(T)]\subseteq J_{T,\dir}.
    \end{equation}
\end{lemma}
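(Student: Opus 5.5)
The approach follows Figure~\ref{fig:backwards}: compare $f_\dir$ with two ``lifted'' Busemann functions sitting just outside $I_\ve(\dir)$, and use the interfaces from Lemma~\ref{lm:o8}.

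\textbf{Setup.} Write $\dir_- := I^{L,-}_\ve(\dir)$ and $\dir_+ := I^{R,+}_\ve(\dir)$. These are the largest color strictly to the left of $I_\ve(\dir)$ and the smallest color strictly to its right; when $\dir\in\Xi^\pm$ one of them is the partner color $\eta^\mp$. Let $M$ be the supremum of $\varphi$ on a compact interval containing $\dir_-,\dir_+$. This is finite by Lemma~\ref{lem:USC_bd_cpct}. Put $c_- = c_+ = M$ and $c_0=\varphi(\dir)$.

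\textbf{Step 1: two competing solutions bound everything outside $I_\ve(\dir)$.} For $\dir'\le \dir_-$, monotonicity of Busemann functions along horizontal lines gives, for $x$ to the right of a reference point $y_T$,
\[
\varphi(\dir')+W^{\dir'}(0,0;x,T)\le \varphi(\dir')+W^{\dir'}(0,0;y_T,T)+W^{\dir_-}(y_T,T;x,T).
\]
The idea is to pick the level so that $\sup_{\dir'\le\dir_-}[\varphi(\dir')+W^{\dir'}(0,0;\cdot,T)]$ is dominated by $W^{\dir_-}(0,0;\cdot,T)+C_-$ on the right of some interface. A symmetric statement with $\dir_+$ holds on the left.

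Rather than fixing constants directly, I would argue with interfaces:
\begin{itemize}
\item[(a)] Apply Lemma~\ref{lm:o8} to the pairs $(\dir_-,\dir)$ and $(\dir,\dir_+)$ with constants $(c_-,c_0)$ and $(c_0,c_+)$.
\item[(b)] For $T$ very negative, the interface $\tau^{R,\cdot,\dir_-,\dir}_T$ lies left of $g^{\dir,R}_{(x_0,t_0)}(T)$, and $\tau^{L,\cdot,\dir,\dir_+}_T$ lies right of $g^{\dir,L}_{(x_0,t_0)}(T)$. (Lemma~\ref{lm:o8} gives $g^{\dir_1,R}<\tau^L$ and $\tau^R<g^{\dir_2,L}$; here I use the versions with $\dir$ in the role of $\dir_2$ and of $\dir_1$ respectively.)
\item[(c)] Define $J_{T,\dir}=(\tau^{R}_T(\dir_-,\dir),\,\tau^{L}_T(\dir,\dir_+))$. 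By Lemma~\ref{lem:7842}, on $J_{T,\dir}$ we have $f_\dir > W^{\dir_\pm}(0,0;\cdot,T)+M$.
\end{itemize}
Then \eqref{eq:o23} follows directly, and so does nonemptiness for $T<T_0$, by taking $(x_0,t_0)=(0,0)$.

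\textbf{Step 2: domination of all colors outside $I_\ve(\dir)$ on $J_{T,\dir}$.} This is the main obstacle, because $\varphi$ is unbounded over $\dir'\to\pm\infty$. The constant $M$ only controls the colors in the compact window $K$.

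For $\dir'\in K\setminus I_\ve$ with $\dir'\le\dir_-$, I would proceed as follows:
\begin{itemize}
\item For $x$ to the right of the point $x_*=\tau^R_T(\dir_-,\dir)$, monotonicity gives $W^{\dir'}(x_*,T;x,T)\le W^{\dir_-}(x_*,T;x,T)$.
\item At $x_*$ itself I need $\varphi(\dir')+W^{\dir'}(0,0;x_*,T)\le M+W^{\dir_-}(0,0;x_*,T)$. This requires $W^{\dir'}(0,0;x_*,T)\le W^{\dir_-}(0,0;x_*,T)$, which holds if $x_*$ is to the left of $0$ (horizontal monotonicity after moving to time $T$ along a vertical connection). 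That sign control is not automatic.
\end{itemize}
To fix this, I would anchor everything at a base point on $\dir$'s geodesic instead of at $(0,0)$. Rewrite $f_{\dir'}(x,T)=\varphi(\dir')+W^{\dir'}(0,0;z,T)+W^{\dir'}(z,T;x,T)$, and choose $M$ as $\sup_{\dir'\notin I_\ve}[f_{\dir'}(z_T,T)-W^{\dir_\pm}(0,0;z_T,T)]$. This is finite by Lemma~\ref{lem:Buse_global} together with the superquadratic decay, which handles the unbounded colors in the same way as Lemma~\ref{lm:o3}.

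The difficulty is that $M$ then depends on $T$, while Lemma~\ref{lm:o8} is stated for fixed constants. I would first try to get around this by using the coalescence of $\dir_\pm$- and $\dir$-geodesics to transport the comparison from a fixed time $T_1$ down to all $T<T_1$, via the eternal-solution property of $\max(f_{\dir_-}+\cdot,f_\dir)$.

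\textbf{Step 3: identify the maximum on $J_{T,\dir}$.} With the domination from Step 2, \eqref{eq:o391} follows. On $J_{T,\dir}$ we have $(\Wf{\varphi_{I^c_\ve}})<f_\dir\le(\Wf{\varphi_{I_\ve}})$, and $\Wf{\varphi}$ is the maximum of these two.
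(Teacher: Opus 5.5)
Your overall architecture matches the paper's: lift the boundary colors $\dir_-=I^{L,-}_\ve(\dir)$ and $\dir_+=I^{R,+}_\ve(\dir)$ by a constant, take $J_{T,\dir}$ between the two interfaces, and read off \eqref{eq:o23} from Lemma \ref{lm:o8}. However, Step 2, which is the heart of the lemma, is not proved. You end with a $T$-dependent constant that, as you note yourself, does not fit Lemma \ref{lm:o8}, and then only a vague ``transport by coalescence'' plan.

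Two things go wrong there. First, $\varphi$ is \emph{not} unbounded above. For $\varphi\in\fun$, superquadratic decay together with Lemma \ref{lem:USC_bd_cpct} gives $\varphi^M:=\sup_{\Xs}\varphi<\infty$. This global, $T$-independent constant, not a supremum over a compact window, is the right lifting level. Second, your sign criterion is wrong. The inequality $W^{\dir'}(0,0;x_*,T)\le W^{\dir_-}(0,0;x_*,T)$ for $\dir'<\dir_-$ does not follow from $x_*<0$, because the vertical increments $W^{\dir}(0,T;0,0)$ are not monotone in $\dir$. In the deterministic analogue $w^{\dir}(x,t)=\dir^2t+2\dir x$ the inequality actually fails when $T<0$, $x_*<0$ and $\dir'+\dir_->0$. (Also, in Step 1(b) the inequalities you need are $\tau^R_T<g^{\dir,L}_{(x_0,t_0)}(T)$ and $g^{\dir,R}_{(x_0,t_0)}(T)<\tau^L_T$; your sentence has $L$ and $R$ swapped, though your citation of Lemma \ref{lm:o8} is the right one.)

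The missing idea is to place the reference point $y_T$ of your Step 1 on the $\dir_-$-geodesic from $(0,0)$. Suppose $z^*:=g^{\dir_-,L}_{(0,0)}(T)\le y$ and $\dir'<\dir_-$. By the eternal-solution property and horizontal monotonicity on $[z^*,y]$,
\[
W^{\dir_-}(y,T;0,0)=W^{\dir_-}(y,T;z^*,T)+\Ll(z^*,T;0,0)\le W^{\dir'}(y,T;z^*,T)+\Ll(z^*,T;0,0)\le W^{\dir'}(y,T;0,0).
\]
That is, $W^{\dir'}(0,0;y,T)\le W^{\dir_-}(0,0;y,T)$, hence $f_{\dir'}(y,T)\le\varphi^M+W^{\dir_-}(0,0;y,T)$ uniformly over all $\dir'<I_\ve(\dir)$, unbounded colors included.

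It then remains to know that $g^{\dir_-,L}_{(0,0)}(T)\le\tau^{R,(\varphi^M,\varphi(\dir)),\dir_-,\dir}_T$. The paper gets this for all $T<0$ from three facts: $\varphi^M\ge\varphi(\dir)$, so $(0,0)$ lies on the $\dir_-$ side at time $0$; Lemma \ref{lm:o1}; and the competition-interface description of Lemma \ref{lemma 4.1}. For $T$ below the threshold of Lemma \ref{lm:o8} applied at $(x,t)=(0,0)$, it also follows directly from the inequality $g^{\dir_-,R}_{(0,0)}(T)<\tau^{L}_T\le\tau^R_T$ there. The right side is handled symmetrically with $\dir_+$.

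Together these give $(W\star\varphi_{I_\ve^c})(y,T)\le\max_{\pm}\{\varphi^M+W^{\dir_\pm}(0,0;y,T)\}<f_\dir(y,T)$ for $y\in J_{T,\dir}$. This is exactly the strict domination your Step 3 needs, and no coalescence or transport argument is required.
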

\begin{proof}
    Figure \ref{fig:backwards} gives a visual representation of the proof. To ease the notation, we abbreviate $I_\ve:=I_\ve(\dir)$.  For $t \in \R$, we define 
    \begin{equation}\label{eq:o14}
        \begin{aligned}
            \text{Int}^+(I_\ve,t)&=\sup\big\{\tau_t^{R,(\varphi(\dir'),\varphi(\dir)),\dir',\dir}:\dir'< I_\ve\big\}, \qquad \text{and}\\
        \text{Int}^-(I_\ve,t)&=\inf\big\{\tau_t^{L,(\varphi(\dir),\varphi(\dir')),\dir,\dir'}:\dir'>  I_\ve\big\}.
        \end{aligned}     
    \end{equation}
 Note that if $\text{Int}^+(I_\ve,t) < x < \text{Int}^-(I_\ve,t)$, then by Lemma \ref{lem:7842}, we would have that 
\be \label{eq:5621}
f_\dir(x,t) > f_{\dir'}(x,t),\quad \forall \; \dir' \in I_\ve^c.
\ee
By Lemma \ref{lm:o3}, there exists $\dir^* \in \Xs$ so that $(W\star \varphi)(x,t) = f_{\dir^*}(x,t)$. Then, if \eqref{eq:5621} holds, we would have that all such $\dir^*$ lie in $I_\ve$, and so
\be \label{eq:o954}
(W\star \varphi)(x,t) = (W\star \varphi_{I_\ve})(x,t) >(W\star \varphi_{I_\ve^c})(x,t),\quad \forall \; \text{Int}^+(I_\ve,t) < x < \text{Int}^-(I_\ve,t). 
\ee
Let $\varphi^{M}=\sup_{\eta \in \Xs}\varphi(\eta)$ and $\varphi_0=\varphi(\dir)$. By assumption that $\varphi \in \Phi$ and Lemma \ref{lem:USC_bd_cpct}, we have $\varphi^M < \infty$.  Note that for any $(x_0,t_0) \in \R^2$, Lemma \ref{lm:o8} implies that  there exists $T_0' = T_0'(\ve,\xi,x_0,t_0) < t_0$ so that for all $T < T_0'$,
    \be \label{eq:3781}
    \tau_T^{R,(\varphi^M,\varphi_0),I_\ve^{L,-},\dir} < g_{(x_0,t_0)}^{\dir,L}(T) \le g_{(x_0,t_0)}^{\dir,R}(T) < \tau_T^{L,(\varphi_0,\varphi^M),\dir,I_\ve^{R,+}}.
    \ee
    Then, set $T_0(\ve,\xi) := T_0'(\ve,\xi,0,0)$, and for $T < T_0$,
    Then, \eqref{eq:o391} and \eqref{eq:o23} follow immediately from \eqref{eq:o954}-\eqref{eq:3781} with $J_{T,\xi} = (\text{Int}^+(I_\ve,T),\text{Int}^-(I_\ve,T))$ once we show that, for all $t \in \R$, \begin{equation}\label{eq:o15}
    \begin{aligned}
        &\text{Int}^+(I_\ve,t)\leq \sup\big\{\tau_t^{R,(\varphi^M,\varphi_0),\dir',\dir}:\dir'< I_\ve\big\}\leq \tau_t^{R,(\varphi^M,\varphi_0),I_\ve^{L,-},\dir},\quad\text{and} \\
        &\text{Int}^-(I_\ve,t) \ge \inf\big\{\tau_t^{L,(\varphi_0,\varphi^M),\dir',\dir}:\dir'< I_\ve\big\} \geq \tau_t^{L,(\varphi_0,\varphi^M),\dir,I_\ve^{R,+}}.
    \end{aligned}
    \end{equation}
We prove the first line of \eqref{eq:o15} with the second following a symmetric proof. To get the first inequality, compare to the definition of $\text{Int}^+(I_\ve,t)$ in \eqref{eq:o14} and note that we just replaced $\varphi(\dir')$ with the larger value $\varphi^M$. Then, the inequality follows immediately from the fact that $x \mapsto W^\dir(0,0;x,T) - W^{\dir'}(0,0;x,T)$ is nondecreasing for $\dir < \dir'$ (Lemma \ref{lem:7842}). 

Now, to get the second inequality in the first line of \eqref{eq:o15}, fix $\dir' < \inf I_{\vep}$. Let $t<0$ and let $y > x:=\tau_t^{R,(\varphi^M,\varphi_0),I_\ve^{L,-},\dir}$.  Then, by Lemma \ref{lem:7842} and the fact that the Busemann functions are eternal solutions,
        \be \label{eq:89345}\begin{aligned}
            W^{\dir}(0,0;y,t)+\varphi_0 &> W^{I_\ve^{L,-}}(0,0;y,t)+\varphi^M=-W^{I_\ve^{L,-}}(y,t;0,0)+\varphi^M\\
            &=-\sup_{z \in \R}\left [ W^{I_\ve^{L,-}}(y,t;z,t)+\Ll(z,t;0,0)\right ]+\varphi^M
        \end{aligned}
        \ee
        Observe that $\varphi^M\geq \varphi_0$ which, in particular, implies that $W^{I_\ve^{L,-}}(0,0;0,0)+\varphi^M \geq W^{\dir}(0,0;0,0)+\varphi_0$.  Therefore, $\tau_0^{R,(\varphi^M,\varphi_0),I_\ve^{L,-},\dir} \geq 0$. 
        Then, by Lemma \ref{lemma 4.1}, we have that $g_{(0,0)}^{b,L}(t) \le x$, where $b$ is the eternal solution
        \[
        b(x,t) = \max\Bigl\{\varphi^M + W^{I_\ve^{L,-}}(0,0;x,t), \varphi_0 + W^{\dir}(0,0;x,t)\Bigr\}.
        \]
Then, by the monotonicity of Busemann geodesics in Lemma \ref{lem:SIG_facts_collect}\ref{it:xi_g_mont}, followed by Lemma  \ref{lm:o1},
        \be \label{eq:6943}
        g^{\dir',L}_{(0,0)}(t) \leq g^{I_\ve^{L,-},L}_{(0,0)}(t) = g_{(0,0)}^{b,L}(t) \leq x.
        \ee
        Then, by \eqref{eq:6943} followed by monotonicity of the Busemann process, then another application of  \eqref{eq:6943},
        \be \label{eq:67812}
        \begin{aligned}
            &\quad \;\sup_{z \in \R}\left [ W^{I_\ve^{L,-}}(y,t;z,t)+\Ll(z,t;0,0)\right ]=\sup_{z \leq x }\left [ W^{I_\ve^{L,-}}(y,t;z,t)+\Ll(z,t;0,0)\right ]\\
            & \le\sup_{z \leq x }\left [ W^{\dir'}(y,t;z,t)+\Ll(z,t;0,0)\right ] =\sup_{z \in \R }\left [ W^{\dir'}(y,t;z,t)+\Ll(z,t;0,0)\right ]= W^{\dir'}(y,t;0,0).
        \end{aligned}
        \ee
        Combining \eqref{eq:89345} and \eqref{eq:67812}, we have
        \begin{align*}
          &W^{\dir}(0,0;y,t)+\varphi_0> W^{\dir'}(0,0;y,t)+\varphi^M,
        \end{align*}
        and therefore $\tau_t^{R,(\varphi^M,\varphi_0),\dir',\dir} \le y$. Since this holds for all $y > x = \tau_t^{R,(\varphi^M,\varphi_0),I_\ve^{L,-},\dir}$, this completes the proof of the second inequality of the first line in \eqref{eq:o15}.  
    \end{proof}

\subsection{Dominating colors} \label{sec:dominating}
       Recall the definition of dominating colors in Definition \ref{def:dominating}.

\begin{lemma}\label{lm:o5}
   For all $\varphi \in \Phi_\omega$, the following hold.
    \begin{enumerate} [label=(\roman*), font=\normalfont]
    \item \label{it:o1}$\dirs^{\Wf{\varphi}}$ is dense in $\supp(\varphi)$.
        \item \label{it:o2}$\dirs^{\Wf{\varphi}}=\Dom(\varphi)$.
    \end{enumerate}
\end{lemma}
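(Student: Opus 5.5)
We write $b=\Wf{\varphi}$ and prove \ref{it:o1} and the inclusion $\Dom(\varphi)\subseteq\dirs^b$ together using Lemma \ref{lm:o2}. The reverse inclusion $\dirs^b\subseteq\Dom(\varphi)$ we obtain from Lemma \ref{lm:o1} and the minimality of $\dir^L_{x,t}$.

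\emph{Step 1: $\dirs^b\subseteq\Dom(\varphi)$.} Let $\dir\in\dirs^b$ and $(x,t)\in\clr^b(\dir)$. By Lemma \ref{lm:o1}, $\dir=\dir^L_{x,t}=\inf\Gamma_{x,t}$. By Corollary \ref{cor:o1}, $\dir\in\Gamma_{x,t}$, so $\varphi(\dir)>-\infty$ and $\dir\in\supp(\varphi)$. Suppose \eqref{eq:o17} failed, so that there are $\dir'<\dir$ arbitrarily close to $\dir$ in $I_\ve(\dir)$ with $\varphi(\dir')\ge\varphi(\dir)$. By continuity of the Busemann process in the strong form of Proposition \ref{prop:Buse_basic_properties}\ref{itm:general_cts}, for $\dir'$ close enough to $\dir$ we have $W^{\dir'}(0,0;x,t)=W^{\dir}(0,0;x,t)$. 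Then $f_{\dir'}(x,t)\ge f_\dir(x,t)$, so $\dir'\in\Gamma_{x,t}$, contradicting the minimality of $\dir$. For \eqref{eq:o18}, take $\dir'\ge\dir$ near $\dir$ with $\varphi(\dir')>\varphi(\dir)$. The same local constancy gives $f_{\dir'}(x,t)>f_\dir(x,t)=b(x,t)$, which is impossible. The case $\dir\in\Xi^\pm$ is handled by the one-sided neighborhoods, where the corresponding condition is vacuous.

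\emph{Step 2: $\Dom(\varphi)\subseteq\dirs^b$.} Fix $\dir\in\Dom(\varphi)$ and $\ve<\ve_0$. By Lemma \ref{lm:o2}, for $T$ very negative, on $J_{T,\dir}$ we have $b=\Wf{\varphi_{I_\ve}}$, and $J_{T,\dir}$ contains $[g^{\dir,L}_{(x_0,t_0)}(T),g^{\dir,R}_{(x_0,t_0)}(T)]$. Take $x\in J_{T,\dir}$ on a $\dir$-geodesic. On $I_\ve(\dir)$ the dominating property says $\varphi(\dir')<\varphi(\dir)$ for $\dir'<\dir$ and $\varphi(\dir')\le\varphi(\dir)$ for $\dir'\ge\dir$. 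We want to conclude that $\dir=\inf\Gamma_{x,T}$, so that Lemma \ref{lm:o1} gives color $\dir$ at $(x,T)$.

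This is the main obstacle. A competitor $\dir'\in I_\ve$ with $\dir'<\dir$ can have larger $W^{\dir'}(0,0;x,T)$ when $x<0$. To handle this, shrink $\ve$ depending on $T$: local constancy of $W$ on a compact set near $(x,T)$ makes $W^{\dir'}=W^\dir$ there. Because $\varphi\in\mathrm{USC}$, the inequality $\varphi(\dir')<\varphi(\dir)$ cannot be spoiled in the limit. One then has to rerun Lemma \ref{lm:o2} with this smaller $\ve$, so the order of quantifiers is delicate. The plan is to first fix $T$ and $x=g^{\dir,L}_{(0,0)}(T)$, then choose $\ve$ small enough that $W^{\dir'}(0,0;\cdot,T)$ is constant in $\dir'\in I_\ve$ near $x$, and then apply Lemma \ref{lm:o2} along a sequence $T\to-\infty$ with the corresponding $\ve$. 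If the quantifiers cannot be arranged this way, the fallback is a direct comparison with $\Wf{\varphi_{I_\ve^c}}$ via Lemma \ref{lm:o8}.

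\emph{Step 3: density.} For $\dir\in\supp(\varphi)$ and $\ve>0$, apply Lemma \ref{lm:o2}. On $J_{T,\dir}$ we have $b=\Wf{\varphi_{I_\ve}}>\Wf{\varphi_{I_\ve^c}}$, so for $x\in J_{T,\dir}$ Lemma \ref{lm:o1} gives $\dir^b_{x,T}=\dir^L_{x,T}\in\Gamma_{x,T}\subseteq I_\ve(\dir)$. Hence $\dirs^b\cap I_\ve(\dir)\ne\varnothing$ for every $\ve$. Together with $\dirs^b\subseteq\supp(\varphi)$ from Step 1, this gives \ref{it:o1}.
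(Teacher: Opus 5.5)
Your Steps 1 and 3 are correct and follow the paper: the inclusion $\dirs^{\Wf{\varphi}}\subseteq\Dom(\varphi)$ via Lemma \ref{lm:o1}, minimality of $\dir^L_{x,t}$ and local constancy of the Busemann process, and density via Lemma \ref{lm:o2}. The gap is in Step 2, the inclusion $\Dom(\varphi)\subseteq\dirs^{\Wf{\varphi}}$, which you leave unresolved. Your proposed order is: fix $T$, then shrink $\ve$ until $W^{\dir'}(0,0;\cdot,T)$ is locally constant in $\dir'\in I_\ve(\dir)$, then rerun Lemma \ref{lm:o2}. This is circular. Lemma \ref{lm:o2} gives no control at a prescribed $T$. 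Its threshold $T_0(\ve)$ comes from Lemma \ref{lm:o8} applied to $I^{L,-}_\ve(\dir)$ and $I^{R,+}_\ve(\dir)$ with constants $\varphi^M\ge\varphi(\dir)$. As $\ve\downarrow0$ these colors become locally indistinguishable from $\dir$, so typically $T_0(\ve)\to-\infty$, and your fixed $T$ need not satisfy $T<T_0(\ve)$. Your fallback via Lemma \ref{lm:o8} does not help either: it only controls colors outside $I_\ve(\dir)$, which Lemma \ref{lm:o2} already handles, and says nothing about the competitors inside $I_\ve(\dir)$ that are the real issue.

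The missing idea is that the obstacle you describe does not occur at the point you chose. For $y=g^{\dir,L}_{(0,0)}(T)$, Lemma \ref{lem:W=landscape} gives $W^{\dir}(y,T;0,0)=\Ll(y,T;0,0)\le W^{\dir'}(y,T;0,0)$ for every $\dir'\in\Xs$. By additivity,
\[ W^{\dir'}(0,0;y,T)\le W^{\dir}(0,0;y,T)\qquad\text{for all }\dir'\in\Xs\text{ and all }T<0. \]
So the dominating inequalities \eqref{eq:o17}--\eqref{eq:o18} transfer directly to $(y,T)$. Namely, $f_{\dir'}(y,T)<f_\dir(y,T)$ for $\dir'<\dir$ in $I_\ve(\dir)$, and $f_{\dir'}(y,T)\le f_\dir(y,T)$ for $\dir'\ge\dir$ in $I_\ve(\dir)$. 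Here $\ve<\ve_0$ is fixed once, independently of $T$. Now apply Lemma \ref{lm:o2} with this same $\ve$ and $(x_0,t_0)=(0,0)$. For $T<T_0(\ve,\dir)$ you get $y\in J_{T,\dir}$, so $\Gamma_{y,T}\subseteq I_\ve(\dir)$. Hence $\dir\in\Gamma_{y,T}$ and $\inf\Gamma_{y,T}=\dir$, and Lemma \ref{lm:o1} gives $(y,T)$ color $\dir$. This is exactly the paper's argument in \eqref{eq:phi_var}--\eqref{78924}. There it is phrased via $\Wf{\varphi_{I_\ve(\dir)}}$ at $(0,0)$, where $f_{\dir'}(0,0)=\varphi(\dir')$, and then propagated back along the $\dir$-geodesic.
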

\begin{proof}
\textbf{Item \ref{it:o1}:} From Lemma \ref{lm:o2}, for every $\dir\in\supp(\varphi)$ and for every $\ve>0$ there exists $(x,t)\in\R^2$ such that
 \begin{equation}\label{eqo21}
        \Wf{\varphi}(x,t)=\Wf{{\varphi_{I_\ve}}}(x,t)>\Wf{{\varphi_{I^c_\ve}}}(x,t).
    \end{equation}
    By Lemma \ref{lm:o1}, this implies that $\dir^{L}_{x,t}\in I_\ve(\dir)$, which in turn shows that there exists $\dir'\in I_\ve(\dir)$ such that $\dir'\in\dirs^{\Wf{\varphi}}$. This shows that $\dirs^{\Wf{\varphi}}$ is dense in  $\supp(\varphi)$. 

    \medskip \noindent \textbf{Item \ref{it:o2}:} We start by showing that 
      $\Dom(\varphi) \subseteq \dirs^{\Wf{\varphi}}$. Let $\xi_0 \in \Dom(\varphi)$. By definition of this set, for all sufficiently small $\ve > 0$,
      \be \label{eq:dir0_min}
      \dir_0 = \inf\{\dir \in \Xs_\omega: \varphi(\dir) = (W\star \varphi_{I_{\ve}(\dir_0)})(0,0) = \sup_{\dir \in I_\ve(\dir_0)} \varphi(\dir)\},
      \ee
      where the second equality above is just the definition to aid the reader (note that $W^\dir(0,0;0,0) = 0$).  
       Next, for all such $\ve > 0$, when $t < 0$, we observe that 
       \be \label{eq:phi_var}
      \begin{aligned}
          \varphi(\dir_0) &= (W\star \varphi_{I_\ve(\dir_0)})(0,0)  \\
          &\ge (W\star \varphi_{I_\ve(\dir_0)})\bigl(g_{(0,0)}^{\dir_0,L}(t),t\bigr) + \Ll\bigl(g_{(0,0)}^{\dir_0,L}(t),t;0,0\bigr) \\
          &\ge \varphi(\dir_0) + W^{\dir_0}\bigl(0,0;g_{(0,0)}^{\dir_0,L}(t),t\bigr) + \Ll\bigl(g_{(0,0)}^{\dir_0,L}(t),t;0,0\bigr) \\
          &= \varphi(\dir_0) + \sup_{y \in \R}[W^{\dir_0}(0,0;y,t) + \Ll(y,t;0,0)] = \varphi(\dir_0). 
      \end{aligned}
      \ee
      The first equality used \eqref{eq:dir0_min}, the first inequality used the fact that $W\star \varphi_{I_\ve(\dir_0)}$ is an eternal solution (Lemma \ref{lem:interchange_sups} and Proposition \ref{prop:Wstarphi_bd}),  the second inequality is the variational definition of $W\star \varphi_{I_\ve(\dir_0)}$, the second equality comes by the definition of the semi-infinite geodesics as maximizers, and the third uses the fact that Busemann functions are eternal solutions, along with the fact that $W^\dir(0,0;0,0) = 0$. Comparing the first and last term, we see that all inequalities in \eqref{eq:phi_var} are equalities. In particular, by comparing the second and third lines, we obtain
      \be \label{eq:8912}
        f_{\dir_0}\bigl(g_{(0,0)}^{\dir_0,L}(t),t\bigr) = \varphi(\dir_0) + W^{\dir_0}(0,0;g_{(0,0)}^{\dir_0,L}(t),t) = (W\star \varphi_{I_\ve(\dir_0)})\bigl(g_{(0,0)}^{\dir_0,L}(t),t\bigr).
      \ee
      Now, if $\dir \in I_{\ve}(\dir_0)$ with $\dir < \dir_0$, then $\varphi(\dir) < \varphi(\dir_0)$ by \eqref{eq:dir0_min}. Then, using \eqref{eq:phi_var} in the first equality below, we have 
      \begin{align*}
          &\quad \; (W\star \varphi_{I_\ve(\dir_0)})\bigl(g_{(0,0)}^{\dir_0,L}(t),t\bigr) + \Ll\bigl(g_{(0,0)}^{\dir_0,L}(t),t;0,0\bigr)
        \\&= \varphi(\dir_0)  > \varphi(\dir) 
          = \varphi(\dir) + W^\dir(0,0;0,0) \\
          &\ge \varphi(\dir) + W^\dir\bigl(0,0;g_{(0,0)}^{\dir_0,L}(t),t\bigr) + \Ll\bigl(g_{(0,0)}^{\dir_0,L}(t),t;0,0\bigr).
      \end{align*}
      Rearranging, we have 
      \be \label{eq:6892}
            f_{\dir}\bigl(g_{(0,0)}^{\dir_0,L}(t),t\bigr) = \varphi(\dir) + W^\dir\bigl(0,0;g_{(0,0)}^{\dir_0,L}(t),t\bigr) < (W\star \varphi_{I_\ve(\dir_0)})\bigl(g_{(0,0)}^{\dir_0,L}(t),t\bigr).
      \ee
      Hence, by comparing \eqref{eq:8912} and \eqref{eq:6892}, we see that
      \be \label{78924}
      \dir_0 = \inf\Bigl\{\dir \in I_\ve(\dir_0):f_{\dir}\bigl(g_{(0,0)}^{\dir_0,L}(t),t\bigr) = (W\star \varphi_{I_\ve(\dir_0)})\bigl(g_{(0,0)}^{\dir_0,L}(t),t\bigr)  \Bigr\}.
      \ee
By Lemma \ref{lm:o2} (using both \eqref{eq:o391} and \eqref{eq:o23}), there exists $T < 0$ so that  
\begin{align*}
(\Wf{\varphi})\bigl(g_{(0,0)}^{\dir_0,L}(T),T\bigr)=\Wf{\varphi_{I_\ve(\dir_0)}}\bigl(g_{(0,0)}^{\dir_0,L}(T),T\bigr)>\Wf{\varphi_{I_\ve(\dir_0)^c}}\bigl(g_{(0,0)}^{\dir_0,L}(T),T\bigr).    
\end{align*}
Hence, for $t = T$, we may replace the inf over $I_\ve(\dir_0)$ in \eqref{78924} with $ \Xs_\omega$.  Then, by the definition \eqref{eq:xixt}, we have that $\dir_0 = \dir_{x,t}^L$ for $(x,t) = \bigl(g_{(0,0)}^{\dir_0,L}(T),T\bigr)$. Then, by Lemma \ref{lm:o1}, $\dir_0 \in \dirs^{\Wf{\varphi}}$, as desired.

Next, we show that $\dirs^{\Wf{\varphi}} \subseteq \Dom(\varphi)$. We will assume that $\dir_0 \notin \Dom(\varphi)$ and show that $\dir_0 \notin \dirs^{\Wf{\varphi}}$. Lemma \ref{lm:o1} implies that it suffices to take  arbitrary $(x,t) \in \R^2$ and show that $\dir_0 \neq \dir_{x,t}^L$. Considering \eqref{eq:o17}-\eqref{eq:o18} which defines $\Dom(\varphi)$, there are two cases to consider. 

\medskip \noindent \textbf{Case 1:}
There exists a sequence $\{\dir_n\}$ with $\dir_n\rightarrow\dir_0$ such that $\dir_n>\dir_0$ and $\varphi(\dir_n)>\varphi(\dir_0)$  for every $n$. Then, by the strong continuity of the Busemann process in Proposition \ref{prop:Buse_basic_properties}\ref{itm:general_cts}, for sufficiently large $n$,
\[
    f_{\dir_0}(x,t)<f_{\dir_n}(x,t).
\]
Then, $\dir_0 \notin \Gamma_{x,t}$, so Lemma \ref{lm:o9} implies that $\dir_0\neq \dir_{x,t}^L$.

\medskip \noindent \textbf{Case 2:} there exists a sequence $\{\dir_n\}$ with $\dir_n \to \dir_0$ such that $\dir_n<\dir_0$ and $\varphi(\dir_n)\ge \varphi(\dir_0)$ for every $n$. Then again by continuity of the Busemann process, for $n$ large enough
\[
    f_{\dir_0}(x,t)\le f_{\dir_n}(x,t).
\]
As $\dir_n < \dir_0$, it follows that $\dir_0\neq \dir^{L}_{x,t}$.
\end{proof}

\begin{proposition}\label{prop:uniq}
    The following holds for almost-every $\omega$. Let $b$ be an eternal solution. Then, there exists a unique $ \varphi \in \Phi_\omega$ such that $b = W\star \varphi$. 
\end{proposition}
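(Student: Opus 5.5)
Existence and uniqueness are handled separately. For existence, start from the countably supported $\varphi_0$ of Proposition \ref{prop:exists_varphi}, which satisfies $b = W\star\varphi_0$. Proposition \ref{prop:Wstarphi_bd} shows that $\varphi_0$ is bounded above on sets with bounded projection and has superquadratic decay. Take its upper semicontinuous envelope
\[
\bar\varphi(\dir) = \max\Bigl(\varphi_0(\dir),\ \limsup_{\eta\to\dir}\varphi_0(\eta)\Bigr).
\]
By Lemma \ref{lem:one_extension}, this is the standard extension and leaves $W\star\varphi_0$ unchanged; the argument is the strong continuity of the Busemann process in Proposition \ref{prop:Buse_basic_properties}\ref{itm:general_cts}, since $W^{\eta}(0,0;x,t)=W^{\dir}(0,0;x,t)$ for $\eta$ near $\dir$. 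The decay condition passes to $\bar\varphi$, so $\bar\varphi\in\fun$ and $b=W\star\bar\varphi$.

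For uniqueness, suppose $\varphi,\psi\in\fun$ with $W\star\varphi=W\star\psi=b$. By Lemma \ref{lm:o5}\ref{it:o2}, $\Dom(\varphi)=\dirs^b=\Dom(\psi)$. Corollary \ref{cor:o1} then gives, for every $\dir\in\dirs^b$ and any $(x,t)\in\clr^b(\dir)$,
\[
\varphi(\dir)=b(x,t)-W^\dir(0,0;x,t)=\psi(\dir).
\]
Hence $\varphi=\psi$ on $\dirs^b$. It remains to show that each function in $\fun$ is determined by its values on its dominating colors. The natural claim is
\[
\varphi(\dir)=\limsup_{\eta\to\dir,\ \eta\in\Dom(\varphi)\cup\{\dir\}}\varphi(\eta)\quad\text{when }\dir\in\Dom(\varphi),
\]
and, for $\dir\notin\Dom(\varphi)$,
\[
\varphi(\dir)=\limsup_{\eta\to\dir,\ \eta\in \Dom(\varphi)}\varphi(\eta),
\]
with the convention $\limsup\emptyset=-\infty$. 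Granting this, both $\varphi$ and $\psi$ equal the USC envelope of the same function supported on $\dirs^b$, so $\varphi=\psi$.

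To prove the claim, take $\dir\notin\Dom(\varphi)$. The inequality "$\ge$" is upper semicontinuity of $\varphi$. For "$\le$", the case $\dir\notin\supp(\varphi)$ is immediate, because then $\varphi(\dir)=-\infty$. If $\dir\in\supp(\varphi)$, fix $\ve>0$ and apply Lemma \ref{lm:o2}. This gives a point $(x,T)$ at which $b=W\star\varphi_{I_\ve(\dir)}$ strictly exceeds $W\star\varphi_{I_\ve(\dir)^c}$, with $x$ chosen on the $\dir$-geodesic $g^{\dir,L}_{(0,0)}(T)$ as in \eqref{eq:o23}. The color $\dir'=\dir^L_{x,T}$ then lies in $I_\ve(\dir)\cap\dirs^b$ (Lemma \ref{lm:o1}), and we need $\varphi(\dir')\gtrsim\varphi(\dir)$ up to an error that vanishes as $\ve\to0$. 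Choosing $x$ on the $\dir$-geodesic should give this: the computation \eqref{eq:phi_var} with $I_\ve$ in place of the full space gives
\[
f_{\dir'}(x,T)\ \ge\ f_\dir(x,T)=\varphi(\dir)+W^\dir(0,0;x,T).
\]
Strong continuity of the Busemann process on compacts makes $W^{\dir'}(0,0;x,T)=W^{\dir}(0,0;x,T)$ for $\ve$ small, but only once $(x,T)$ is fixed. Here $(x,T)$ depends on $\ve$, so $T$ must be taken independent of $\ve$, or uniform control is needed.

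This is the main obstacle: the choice of $T$ in Lemma \ref{lm:o2} depends on $\ve$, while the continuity of $W$ at $\dir$ holds only on compacts. I would first try to avoid it by working at $(0,0)$, where $W^\eta(0,0;0,0)=0$ for all $\eta$. The color $\dir'$ of the point $(x,T)$ is also the color of some point on the geodesic from $(0,0)$ restricted to $I_\ve(\dir)$, so Corollary \ref{cor:o1} together with $W\star\varphi_{I_\ve}\ge \varphi(\dir)$ at $(0,0)$ and the $\ve$-independent bounds of Lemma \ref{lem:Buse_global} should yield
\[
\sup_{I_\ve(\dir)\cap\dirs^b}\varphi\ \ge\ \varphi(\dir)-o(1).
\]
If that fails, I would instead argue by contradiction: assume $\varphi(\dir)>\limsup_{\Dom}\varphi+\delta$, and then find a neighborhood in which $\dir$ itself satisfies \eqref{eq:o17}--\eqref{eq:o18} after moving to the leftmost maximizer of $\varphi$ on a small compact interval, using Lemma \ref{lm:o9}. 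That point would be a dominating color carrying a value at least $\varphi(\dir)$, which is the required contradiction.
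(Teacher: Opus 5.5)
Your existence step is fine. Reducing uniqueness to the claim that every $\varphi\in\fun$ is the upper semicontinuous envelope of its restriction to $\Dom(\varphi)$ is also a sound plan. Combined with $\Dom(\varphi)=\dirs^b=\Dom(\psi)$ and Corollary \ref{cor:o1}, it would be a legitimate alternative to the paper's argument. But the claim is where all the work lies, and you do not prove it. What must be shown is that for $\dir\in\supp(\varphi)\setminus\Dom(\varphi)$, every neighborhood of $\dir$ contains some $\dir'\in\Dom(\varphi)$ with $\varphi(\dir')$ at least $\varphi(\dir)$ up to a vanishing error.

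None of your three attempts gets there:
\begin{itemize}
\item Your main route stalls on the $\ve$-dependence of $T$.
\item Your first fallback is only asserted.
\item In your second fallback, the step ``that point would be a dominating color'' is unjustified. The leftmost maximizer of $\varphi$ on an arbitrary small compact interval $[\alpha,\beta]\ni\dir$ can be an endpoint. There \eqref{eq:o17} or \eqref{eq:o18} fails because of larger values of $\varphi$ just outside the interval, for instance when $\varphi$ is increasing through $\beta\in\Xi^c$.
\end{itemize}

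The gap closes in one line on your own main route, because you never need $W^{\dir'}=W^{\dir}$ at $(x,T)$. You only need an inequality, and it holds because $x$ lies on the $\dir$-geodesic from $(0,0)$. Take $x=g^{\dir,L}_{(0,0)}(T)\in J_{T,\dir}$ and $\dir'=\dir^L_{x,T}\in I_\ve(\dir)\cap\dirs^b$. Then $f_{\dir'}(x,T)\ge f_\dir(x,T)$, together with \eqref{L =landscape} and \eqref{eq:landscape_leq}, gives
\[
\varphi(\dir')-\varphi(\dir)\ \ge\ W^{\dir'}(x,T;0,0)-W^{\dir}(x,T;0,0)=W^{\dir'}(x,T;0,0)-\Ll(x,T;0,0)\ \ge\ 0 .
\]
So $\varphi(\dir')\ge\varphi(\dir)$ exactly, for every $\ve$.

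Alternatively, fix your second fallback by choosing the interval as $[\alpha^+,\beta^-]$ with $\alpha,\beta\in\Xi$ close to $\mathcal P(\dir)$ and $\dir\in[\alpha^+,\beta^-]$. Such intervals are both compact and open in $\Xs$, so the leftmost maximizer of $\varphi$ on them is automatically a dominating color with value at least $\varphi(\dir)$.

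The paper sidesteps your claim entirely. It places any competitor $\widehat\varphi$ above the envelope. If $\widehat\varphi(\dir_0)$ is strictly larger at some $\dir_0$, it raises the envelope only at $\dir_0$. The resulting isolated strict maximum is trivially dominating, so Lemma \ref{lm:o5} forces $\dir_0\in\dirs^b$, a contradiction. Once completed, your route yields slightly more than the statement: an explicit envelope formula for $\varphi$ in terms of its values on $\dirs^b$.
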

\begin{proof}
    First, let $\wt \varphi$ be the function constructed in Proposition \ref{prop:exists_varphi} that satisfies $b = W\star \wt \varphi$ and whose support is the countable set $\dirs^b$. We first claim that any upper semicontinuous function $ \varphi \in \Phi$ that satisfies $W\star \varphi = b$ must be an extension of $\wt \varphi$; that is, $ \varphi(\xi) =  \wt \varphi(\xi)$ for all $\xi \in \dirs^{b}$. 

    To see this, let $\xi \in \dirs^{b}$, and choose $(x,t) \in \mathfrak C^b(\xi)$. Then, by applying Proposition \ref{prop:exists_varphi}, followed by the assumption $W\star \wt \varphi = W\star  \varphi$, then using Corollary \ref{cor:o1}, we have that 
    \[
    \wt \varphi(\xi)  = b(x,t) - W^\xi(0,0;x,t) = (W\star  \varphi)(x,t) - W^\xi(0,0;x,t) =  \varphi(\xi),
    \]
    which proves our claim. 

     Lemma \ref{lem:one_extension} shows that one such upper semicontinuous extension is given by
    \[
     \varphi(\xi) := \limsup_{\eta \to \xi} \wt \varphi(\eta).
    \]
     Let $\wh \varphi$ be another upper semicontinuous extension satisfying $W \star \wh \varphi = b$. Then, $\wh \varphi \ge \wt \varphi$ since $\wt \varphi(\dir) = -\infty$ whenever $\wt\varphi(\dir) \neq \wh \varphi(\dir)$. Along with the assumed upper semicontinuity of $\wh \varphi$, this implies that
    \be \label{eq:other_extension}
    \widehat \varphi(\xi) \ge \limsup_{ \eta \to \xi} \widehat \varphi(\eta) \ge \limsup_{\eta \to \xi} \wt \varphi(\eta) = \varphi(\xi). 
    \ee
 We will therefore assume, by way of contradiction, that $\widehat \varphi(\xi_0) > \varphi(\xi_0)$ for some $\xi_0 \in \Xs$.  Then, define a new function $\varphi'$ as follows:
\[
\varphi'(\xi) := \begin{cases}
    \wh \varphi(\xi) &\xi = \xi_0 \\
     \varphi(\xi) &\xi \neq \xi_0.
\end{cases}
\]
By upper semi-continuity of $ \varphi$, we immediately see that $\varphi'$ is upper semicontinuous at $\xi \neq \xi_0$, and for $\xi = \xi_0$, we have 
\be \label{phi'_ge}
\varphi'(\xi_0) >  \varphi(\xi_0) \ge \limsup_{\xi \to \xi_0}  \varphi(\xi) \ge\limsup_{\xi \to \xi_0, \xi \neq \xi_0} \varphi'(\xi),
\ee
where we used the upper semicontinuity of $\varphi$ in the second inequality. Thus, $\varphi'$ is in fact upper semicontinuous on all of $\Xs$. Furthermore, by definition of $\varphi'$ and \eqref{eq:other_extension}, we have $\varphi \le \varphi' \le \wh \varphi$, so since $W \star \varphi = W\star \wh \varphi = b$, we have $W \star \varphi' = b$ as well. By definition of $\varphi'$ and \eqref{phi'_ge}, $\xi_0 \in \Dom(\varphi')$, so Lemma \ref{lm:o5} implies that $\xi_0 \in \dirs^{b}$.  However, we previously showed that $ \varphi(\xi) = \wh \varphi(\xi)$ for all $\xi \in \dirs^{b}$, a contradiction to the assumption that $\wh \varphi(\xi_0) > \varphi(\xi_0)$.  
\end{proof}

\begin{proposition} \label{prop:Legendre}
    The following holds for almost-every $\omega$. Let $b$ be an eternal solution, and let $\varphi$ be the unique element of $\Phi_\omega$ such that $b = W\star \varphi$, as in Proposition \ref{prop:uniq}. Then, 
     \begin{equation}
        \varphi(\dir)=\inf_{(x,t)\in\R^2}\{b(x,t)-W^\dir(0,0;x,t)\}, \qquad \forall \dir\in \Xs.
     \end{equation}
\end{proposition}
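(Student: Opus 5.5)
Define $\psi(\dir) := \inf_{(x,t)\in\R^2}\{b(x,t)-W^\dir(0,0;x,t)\}$. The plan is to show $\psi \ge \varphi$ directly and then $\psi \le \varphi$ via the uniqueness in Proposition \ref{prop:uniq}. The lower bound is immediate: since $b = W\star\varphi$, for every $(x,t)$ we have $b(x,t) \ge \varphi(\dir) + W^\dir(0,0;x,t)$, so $\psi(\dir) \ge \varphi(\dir)$ for every $\dir \in \Xs$. For the reverse inequality, note that by definition $\psi(\dir) + W^\dir(0,0;x,t) \le b(x,t)$ for all $(x,t)$ and $\dir$, hence $W\star\psi \le b = W\star\varphi \le W\star\psi$, the last inequality because $\psi \ge \varphi$. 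Therefore $W\star\psi = b$.

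It remains to deduce $\psi = \varphi$. The cleanest route is to verify that $\psi \in \Phi_\omega$ and then apply the uniqueness of Proposition \ref{prop:uniq}.

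For the superquadratic decay, we have the bound $\psi(\dir) \le b(0,1) - W^\dir(0,0;0,1)$. By Lemma \ref{lem:Buse_global}, with $s=0$, $x=0$, $t=1$, the quantity $W^\dir(0,0;0,1)$ is bounded below by roughly $\mathcal P(\dir)^2 - C|\mathcal P(\dir)|$ for large $|\mathcal P(\dir)|$. This only gives $\psi \le -c\,\mathcal P(\dir)^2$, which is insufficient. Instead we use points $(0,t)$ with $t$ large: $W^\dir(0,0;0,t) \gtrsim t\,\mathcal P(\dir)^2$ as $|\mathcal P(\dir)|\to\infty$, so $\limsup \psi/\mathcal P^2 \le -t$ for every $t$, which gives $-\infty$.

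The main obstacle is upper semicontinuity. Here $\psi$ is an infimum of the functions $\dir \mapsto b(x,t) - W^\dir(0,0;x,t)$, and each of these is continuous in $\dir$ by the continuity in Proposition \ref{prop:Buse_basic_properties}\ref{itm:general_cts} (under the topology on $\Xs$). An infimum of continuous functions is upper semicontinuous, so this step in fact goes through. We also need $\psi$ to take values in $[-\infty,\infty)$, which holds since $\psi(\dir) \le b(0,0) < \infty$. (If $b\equiv-\infty$ then $\psi\equiv-\infty=\varphi$ trivially.)

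Having established that $\psi \in \Phi_\omega$ and $W\star\psi = b$, Proposition \ref{prop:uniq} gives $\psi = \varphi$, which is the claimed formula. The only delicate point is checking the growth condition carefully via Lemma \ref{lem:Buse_global}, using that for fixed $x=0$ the leading order of $L(0,0,t,\dir)$ is $t\,\mathcal P(\dir)^2$ once $|t\,\mathcal P(\dir)| > a(0,t)t/2$.
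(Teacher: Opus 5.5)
Your proof is correct and follows the paper's argument: you define the infimum, sandwich $b = W\star\varphi \le W\star\psi \le b$, obtain upper semicontinuity because $\psi$ is an infimum of functions that are continuous in $\dir$, and then invoke the uniqueness in Proposition \ref{prop:uniq}. Your explicit check of the superquadratic decay via Lemma \ref{lem:Buse_global} supplies a step the paper leaves implicit; it also follows at once from Proposition \ref{prop:Wstarphi_bd}, since $W\star\psi = b < \infty$.
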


\begin{proof}
Define $\theta:\Xs \to [-\infty,\infty)$ by
\[
\theta(\dir) := \inf_{(x,t) \in \R^2}[b(x,t) - W^\dir(0,0;x,t)].
\]
Since $b$ and $W$ are both continuous (Corollary \ref{cor:cts_bounded} and Proposition \ref{prop:Buse_basic_properties}\ref{itm:general_cts}), $\theta$ is the infimum over continuous functions and is therefore upper semicontinuous. Now, note that, for all $\dir \in \Xs$ and $(x,t) \in \R^2$, 
\[
\varphi(\dir) + W^{\dir}(0,0;x,t) \le b(x,t).
\]
Hence, by rearranging and taking an inf over $(x,t) \in \R^2$, we see that $\varphi(\dir) \le \theta(\dir)$, and thus $b(x,t) = (W\star \varphi)(x,t) \le (W\star \theta)(x,t)$ for all $(x,t) \in \R^2$. On the other hand, $\theta(\dir) \le b(x,t) - W^\dir(0,0;x,t)$ for all $\dir \in \Xs$ and $(x,t) \in \R^2$, so
\[
(W\star \theta)(x,t) \le \sup_{\dir \in \Xs}[b(x,t) - W^{\dir}(0,0;x,t) + W^\dir(0,0;x,t)] = b(x,t).
\]
Thus, $W\star \varphi = W\star \theta$, so by upper semicontinuity of $\theta$, Proposition \ref{prop:uniq} implies that $\varphi = \theta$.
\end{proof}

\section{Proof of Theorem \ref{thm:mr}} \label{sec:main_proof}
In this section, we prove Theorem \ref{thm:mr}. We first prove some topological preliminaries. Denote the metric on $\Xs$ from Lemma \ref{lem:X_metric} by $d_{\Xs}$. Furthermore, we equip the space $\Rm := [-\infty,\infty)$ with the following metric
\begin{equation}
    d_{\Rm}(x,y)=|\arctan(x)-\arctan(y)|, \qquad \forall x,y\in\Rm,
\end{equation}
where $\arctan(-\infty)=-\frac{\pi}{2}$. This choice of a metric guarantees that $ d_{\Rm}(x,x+\ve)\leq \ve$.

We now recall the Painlev\'e-Kuratowski (PK) topology (see, for example, \cite[Chapter 4.B]{RockafellarWets1998}) on the set of closed sets in  a topological space $Y$. For a sequence $F_n\subseteq Y$ of closed sets, we define
\begin{align*}
    \limsup_{n \to \infty} F_n &= \Bigl\{x \in Y: \exists \text{ a subsequence }n_k \text{ such that } \forall k,\; \exists\; x_{n_k} \in F_{n_k} \text{ with }\lim_{k \to \infty} x_{n_k} = x \Bigr\},\quad\text{and} \\
    \liminf_{n \to \infty} F_n &= \Bigl\{x \in Y: \exists N \in \N \text{ such that }\forall \;n \ge N,\exists\; x_n \in F_n\text{ such that }\lim_{n \to \infty} x_n = x  \Bigr\}.
\end{align*}
We say that $F_n\subseteq Y$ converges to $F\subseteq Y$ in the PK topology if 
\begin{equation}
    \limsup F_n=\liminf F_n=F.
\end{equation}
When $Y$ is a compact metric space then the PK topology is equivalent to the Hausdorff topology associated with the Hausdorff metric \cite[Example 4.13]{RockafellarWets1998}. 
We are now in a position to define the topology on the space $\fun$ defined in \eqref{eq:Phi_omega_def}. For $\varphi\in\fun$ we define the hypograph of $\varphi$ to be 
\begin{equation}
    \text{hypo}(\varphi)=\{(x,y):x\in\Xs,-\infty\leq y\leq \varphi(x)\}.
\end{equation}
 It is a standard fact that for $\varphi\in\operatorname{USC}(\Xs,[-\infty,\infty))$, the set $\text{hypo}(\varphi)$ is closed.  As every closed and bounded subset of $\Xs$ is compact, arguments similar to the ones in \cite[Chapter 4.I]{RockafellarWets1998} can be used to construct a metric $d_{\text{PK}}$ compatible with the hypograph topology on $\operatorname{USC}(\Xs,[-\infty,\infty))$. Furthermore, one can readily see that $\operatorname{hypo} \varphi_n \to \operatorname{hypo} \varphi$ is equivalent to the following two conditions:
 \begin{align}
     &\text{Whenever $\xi_n \to \xi$, we have $\limsup_{n \to \infty} \varphi_n(\xi_n) \le \varphi(\xi)$} \label{lephi} \\
     &\text{For each $\dir \in \Xs$, there exists a sequence $\xi_n \to \xi$ so that $\lim_{n \to \infty}\varphi_n(\xi_n) = \varphi(\xi)$.} \label{recov_phi}
 \end{align}

 \begin{definition} \label{def:fun_convergence}
     Let $\mathcal T_\fun$ denote the topology on the space $\fun$ such that $\varphi_n \to \varphi$ if the following two conditions hold: 
\begin{enumerate}
    \item \label{it:c1} For every $R\in\R$ there exists $I(R,\varphi)\subseteq\Xs$ compact and $N_0(R,\varphi)>0$  such that\ 
    \begin{equation}
        \frac{\varphi_n(\dir)}{\mathcal P(\dir)^2}\leq R, \qquad \forall \dir\in I^c, n\geq N_0.
    \end{equation}
    \item \label{it:c2}$\text{hypo}(\varphi_n)\xrightarrow{\operatorname{PK}}\text{hypo}(\varphi)$,  as $n\rightarrow \infty$.
\end{enumerate}
 \end{definition}
 \begin{remark} \label{rmk:topology}
     One can see that Definition \ref{def:fun_convergence} indeed defines a topology as follows: Consider the homeomorphism $F:\R \to (-1,1)$ defined by $F(x) = \f{x}{\sqrt{1 + x^2}}$. Letting $\Xi' = F(\Xi)$, we can construct a homeomorphic copy of the space $\Xs$ by defining 
     \[
     \Xs' = \{\eta^0: \eta \in (-1,1) \cap (\Xi')^c\} \cup \{\eta^\sig: \eta \in (-1,1) \cap \Xi', \sigg \in \{-,+\}\}.
     \]
     Lift $F$ to a bijection $\wt F:\Xs \to \Xs'$, and define the topology on $\Xs'$ so that $\wt F$ is a homeomorphism.  Let $\mathcal P':\Xs'\to (-1,1)$ be the analogous projection map in $\Xs'$. Next, we compactify this space by setting $\overline{\Xs}' = \Xs' \cup\{-1,1\}$. Then, the convergence in Definition \ref{def:fun_convergence} is equivalent to convergence in the PK topology of the hypographs of  $\varphi_n'$ to the hypograph of $\varphi'$, where for each $\varphi:\Xs \to [-\infty,\infty)$, $\varphi':\overline{\Xs}' \to [-\infty,\infty)$ is the function
     \[
     \varphi'(\xi) = \begin{cases} (1-\mathcal P'(\xi)^2)\varphi(\wt F^{-1}(\xi)) &\xi \in \Xs'\\
     -\infty &\xi \in \{-1,1\}
     
     \end{cases}
     \]
     Indeed, convergence of the hypographs away from $\xi = \pm 1$ is equivalent to Condition \eqref{it:c2}, and convergence near $\xi = \pm 1$ is equivalent to Condition \eqref{it:c1} because $\overline{\Xs}'$ is compact and \eqref{lephi} gives us that for all sequences $\dir_n$ converging to $1$ or $-1$,
     \[
     \limsup_{n \to \infty}(1 - \mathcal P'(\dir_n)^2) \varphi(\wt F^{-1}(\dir_n)) = -\infty.
     \]
     But since $F^{-1}(y) = \f{y}{\sqrt{1-y^2}}$, this is equivalent to saying that for all sequences $\dir_n$ converging to $-\infty$ or $+\infty$, we have 
     \[
        \limsup_{n \to \infty}\f{\varphi_n(\dir_n)}{\mathcal P(\dir_n)^2} = -\infty.
     \]
     In particular, using similar ideas from Lemma \ref{lem:X_metric}, the space $\overline{\Xs}'$ can be seen to be metrizable. Then, PK convergence of hypographs is metrizable, the convergence given in Definition \ref{def:fun_convergence} induces defines a metric topology on $\Phi$.   
 \end{remark}

We shall need the following result that gives a sufficient condition for a family of functions in $\fun$ to be sequentially compact.
\begin{lemma}\label{lm:o4}
 Let $\mathcal{F}\subseteq \fun$ be such that 
 \begin{enumerate} [label=\rm(\roman{*}), ref=\rm(\roman{*})]
     \item Uniform boundedness.
     \begin{equation}\label{eq:o41}
     \sup_{\varphi\in\mathcal{F}}\sup_{\dir\in\Xs}\varphi(\dir)<\infty    
     \end{equation}
     \item For any $R\in\R$ there exists $I_R\subseteq \Xs$ compact such that\ 
     \begin{equation}\label{eq:o42}
         \sup_{\varphi\in\mathcal{F}}\sup_{\dir\in I^c_R}\frac{\varphi(\dir)}{\mathcal P(\dir)^2}<R.
     \end{equation}
 \end{enumerate}
 Then $\mathcal{F}$ is sequentially compact i.e.\ any sequence in $\mathcal{F}$ has a convergent subsequence. 
\end{lemma}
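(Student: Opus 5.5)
The plan is to reduce the statement to compactness of the space of closed sets under the PK topology, using the compactified picture from Remark \ref{rmk:topology}. Given a sequence $\{\varphi_n\}\subseteq\mathcal F$, I pass to the transformed functions $\varphi_n':\overline{\Xs}'\to[-\infty,\infty)$ defined there. Their hypographs are closed subsets of $\overline{\Xs}'\times\Rm$, which is a compact metrizable space: $\overline{\Xs}'$ is compact and metrizable, and $\Rm$ with the metric $d_{\Rm}$ is totally bounded. Its completion $[-\infty,\infty]$ is compact, so I work inside $\overline{\Xs}'\times[-\infty,\infty]$ and only need to rule out escape to $+\infty$. By \cite[Theorem 4.18]{RockafellarWets1998}, every sequence of closed sets in a compact metric space has a PK-convergent subsequence. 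I therefore extract $n_k$ with $\operatorname{hypo}(\varphi_{n_k}')\to H$ in the PK sense.

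The next step is to identify $H$ as the hypograph of some $\varphi'$ with $\varphi'=-\infty$ at $\pm1$, and to pull it back to a $\varphi\in\fun$. Since the class of hypographs is closed under PK limits, $H$ is closed and downward closed in the second coordinate. It contains $\overline{\Xs}'\times\{-\infty\}$, because every hypograph does and that set is constant in $n$. Define $\varphi'(\xi)=\max\{y:(\xi,y)\in H\}$; this is upper semicontinuous because $H$ is closed. Condition \eqref{eq:o41} keeps $\varphi'$ below $+\infty$: the factor $1-\mathcal P'^2$ is at most $1$, so all $\varphi_n'$ are bounded above by a constant $M'$ (if the constant $M$ is negative, one simply uses $\max(M,0)$). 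For the endpoints, fix $R$ and take $I_R$ from \eqref{eq:o42}. Outside $I_R$ we have $\varphi_n(\xi)\le R\,\mathcal P(\xi)^2$, which translates to $\varphi_n'(\xi)\le R\,\mathcal P'(\xi)^2\le R$ for $R<0$. Hence any limit point $(\pm1,y)\in H$ has $y\le R$ for every $R<0$, so $y=-\infty$. Setting $\varphi:=$ the pullback of $\varphi'$ restricted to $\Xs'$ gives $\varphi\in\operatorname{USC}(\Xs)$. The same uniform bound at points near $\pm1$ gives $\varphi(\xi)\le R\,\mathcal P(\xi)^2$ outside $I_R$, so the superquadratic decay in \eqref{eq:Phi_omega_def} holds and $\varphi\in\fun$.

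Finally I check the two conditions of Definition \ref{def:fun_convergence}. Condition \eqref{it:c1} holds with $I=I_R$ and $N_0=1$, directly from \eqref{eq:o42}. For condition \eqref{it:c2}, I restrict the PK convergence of the $\varphi_n'$-hypographs to the open set $\Xs'\times\Rm$. There I undo the multiplication by the continuous positive factor $(1-\mathcal P'^2)$ and the homeomorphism $\wt F$. These operations preserve the characterizations \eqref{lephi}--\eqref{recov_phi}, since they are continuous on $\Xs'$ and bounded away from zero on compacts.

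I expect the main obstacle to be this last transfer step: verifying that PK convergence on the compactified space restricts to PK convergence on $\Xs\times\Rm$, and that the recovery sequences in \eqref{recov_phi} can be chosen inside $\Xs'$ rather than at $\pm1$. I would handle this using the sequential characterization. For $\xi\in\Xs'$, a recovery sequence $\xi_n\to\xi$ eventually lies in $\Xs'$ because $\Xs'$ is open in $\overline{\Xs}'$, and \eqref{lephi} transfers pointwise along convergent sequences.
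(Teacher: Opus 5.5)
Your proof is correct. It differs from the paper's mainly in how it handles the non-compactness of $\Xs$.

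The paper works directly on $\Xs$. It uses (ii) to choose nested compact intervals $I_M$ on whose complements $\varphi/\mathcal P^2<-M$ uniformly over $\mathcal F$. It truncates the hypographs to the compact boxes $I_M\times[-\infty,C]$ and extracts a Hausdorff-convergent subsequence in each box. It then diagonalizes over $M$ and reads off the superquadratic decay of the limit from \eqref{eq:o40}.

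You instead pass to the compactification $\overline{\Xs}'$ from Remark \ref{rmk:topology}. A single application of compactness of the hyperspace of $\overline{\Xs}'\times[-\infty,\infty]$ replaces the diagonal argument. The tail control from (ii) becomes the single statement that the limit is $-\infty$ at $\pm1$.

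This spares you from reconciling the limits obtained in consecutive windows; the paper's identity $A^{M+1}\cap(I_M\times[-\infty,C])=A^M$ needs some care at the endpoints of $I_M$. You also verify condition \eqref{it:c1} of Definition \ref{def:fun_convergence} explicitly, which the paper leaves implicit. The cost is that you rely on compactness and metrizability of $\overline{\Xs}'$, and you must transfer PK convergence back through $\wt F$ and the weight $1-\mathcal P'^2=(1+\mathcal P^2)^{-1}$. Your use of \eqref{lephi}--\eqref{recov_phi}, together with the openness of $\Xs'$ in $\overline{\Xs}'$, handles this correctly.

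One small correction. For $R<0$ and $\xi\in\Xs'$ you have $\mathcal P'(\xi)^2<1$, hence $R\,\mathcal P'(\xi)^2\ge R$, not $\le R$ as you wrote. The conclusion survives. A point $(\pm1,y)$ of the limit is approached by points $(\xi_k,y_k)$ of the hypographs with $\xi_k\to\pm1$; points with $\xi_k=\pm1$ have $y_k=-\infty$. Otherwise $\xi_k\notin\wt F(I_R)$ eventually, and $y_k\le R\,\mathcal P'(\xi_k)^2\to R$. This gives $y\le R$ for every $R<0$, so $y=-\infty$.
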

 \begin{proof}
     Let $\{\varphi^n\}\subseteq \mathcal{F}$. For large enough $M_0$, there exist a nested collection of intervals $(I_{M})_{M \ge M_0}$ such that $I_M \subseteq I_{M+1} \subseteq \Xs$ for all $M \ge M_0$ and such that, on $I_M$, the following holds
     \begin{equation}\label{eq:o39}
        \sup_{\varphi\in\mathcal{F}}\sup_{\dir\in I^c_M}\frac{\varphi(\dir)}{\mathcal P(\dir)^2}<-M.
     \end{equation}
  Let $C>\sup_{\varphi\in\mathcal{F}}\sup_{\dir\in\Xs}\varphi(\dir)$. Set 
  \begin{equation}
      A^M_n:=\text{hypo}(\varphi^n)\cap (I_M\times[-\infty,C])
  \end{equation}
  Note that 
  \begin{equation}\label{eq:o47}
      \text{$A^M_n$ is the hypograph of the upper semi-continuous function $\varphi^n|_{I_M}$.}
  \end{equation}
 We claim that there exists a subsequence $\{a^M_n\}$ such that
  \begin{equation}\label{eq:o38}
      A^M_{a^M_n}\xrightarrow{\operatorname{PK}} A^M\subseteq I_M\times[-\infty,C].
  \end{equation}
 where $A^M\subseteq I_M\times[-\infty,C]$ is compact. Indeed, the PK topology on a metric space $Y$ is equivalent to the Hausdorff metric on compact sets in $Y$, whenever $Y$ is compact. This is the case here as  $I_M\times[-\infty,C]$ is compact. But when the metric space $Y$ is compact, any set of closed sets in $Y$ is compact in  the Hausdorff metric and thus one can find a subsequence that is convergent to some compact set in  $I_M\times[-\infty,C]$. We conclude \eqref{eq:o38}. We can now find a subsequence $\{a^{M+1}_n\}\subseteq \{a^M_n\}$ such that\
 \begin{equation}
     A^{M+1}_{a^{M+1}_n}\xrightarrow{\operatorname{PK}} A^{M+1}\subseteq I_{M+1}\times[-\infty,C].
 \end{equation}
 In particular, note that from \eqref{eq:o38}
 \begin{equation}
     A^{M+1}\cap I_{M}\times[-\infty,C]= A^{M}.
 \end{equation}
 Moreover, by \eqref{eq:o39} we see that
 \begin{equation}\label{eq:o40}
     (\dir,c_\dir)\in A^{M+1}\cap I_M^c\times[-\infty,C]\implies c_\dir<-\mathcal P(\dir)^2 M.
 \end{equation}
 Using a diagonal argument  we find a subsequence $c_n$ such that\
 \begin{equation}
     \text{hypo}(\varphi^{c_n})\xrightarrow{\operatorname{PK}} A\subseteq \Xs\times \Rm.
 \end{equation}
 Moreover, by \eqref{eq:o47} $A=\text{hypo}(\varphi^*)$ for some $\varphi^*\in\operatorname{USC}(\Xs)$.  But from \eqref{eq:o40}, it follows that 
 \begin{equation}
     \limsup_{|\dir|\rightarrow\infty} \frac{\varphi^*(\dir)}{\mathcal P(\dir)^2}=-\infty,
 \end{equation}
 which implies $\varphi^*\in\fun$. This concludes the proof. 
 \end{proof}

We are now ready to prove the main result of the paper
\begin{proof}[Proof of Theorem \ref{thm:mr}] 
    The function $\hm$ is a bijection by Proposition \ref{prop:uniq}. Equation \eqref{eq:hom_max} follows immediately from the definition of $\Wf{\varphi}$, and  and the expression for the inverse in \eqref{eq:hom_inv} is shown in Proposition \ref{prop:Legendre}. 
    
    It remains to show that $\hm$ and its inverse are continuous.  Let $\{\varphi_n\}\subseteq\fun$ be a sequence such that  $\varphi_n\xrightarrow{\operatorname{\fun}}\varphi$. 
     Let $b_n=\hm[\varphi_n]$ and $b=\hm[\varphi]$. We must  show that $b_n\rightarrow b$ uniformly on compacts. Without loss of generality, we take our compact set to be $K = [-M,M]^2$ for some $M  \in \Z$. We also make the following preliminary observations. By Lemma \ref{lem:Buse_global}, there exist constants $A,B > 0$ (depending on $M$) so that
     \be \label{eq:Wbd187}
     W^\dir(0,0;x,t) = W^\dir(0,0;0,-M) +  W^\dir(0,-M;x,t) \le A \mathcal P(\dir)^2 + B,\quad \forall\; (x,t) \in K,\dir \in \Xs.
     \ee
     Additionally, Condition \eqref{it:c1} of the definition of convergence $\varphi_n \to \varphi$ implies that 
     \be \label{eq:phin_bd167}
     \varphi_n(\dir) \le -2A\mathcal P(\dir)^2,\quad\text{when } \mathcal |P(\dir)| \text{ is sufficiently large}. 
     \ee
    Now, to show that $b_n \to b$, we handle two cases separately.

     \medskip \noindent \textbf{Case 1: $\varphi \equiv - \infty$}. Let $R < 0$. By \eqref{eq:Wbd187}-\eqref{eq:phin_bd167}, there exists a compact interval $I \subseteq \Xs$ and $N \in \N$ so that for all $n \ge N$, 
     \be \label{eq:phi+WleR}
     \sup_{\dir \notin I} [\varphi_n(\dir) +W^\dir(0,0;x,t)] \le R.
     \ee
     On the other hand, continuity of the process $(\dir,x,t) \mapsto W^\dir(0,0;x,t)$ (Proposition \ref{prop:Buse_basic_properties}\ref{itm:general_cts}) implies that  $W^\dir(0,0;x,t)$ is bounded for $(x,t) \in K$ and $\dir \in I$. Since $I$ is compact, by the hypograph convergence of $\varphi_n \to \varphi \equiv - \infty$, there exists $N \in \N$ so that for all $n \ge N$,
     \be \label{eq:phioutsideI}
     \sup_{\dir \in I} \varphi_n(\dir) \le R -\sup_{(x,t) \in K,\dir \in I} W^\dir(0,0;x,t).
     \ee
     Combining \eqref{eq:phi+WleR}-\eqref{eq:phioutsideI}, we see that $b_n = W\star \varphi_n$ converges to $-\infty$ uniformly on $K$.

     \medskip \noindent \textbf{Case 2: $\varphi \not \equiv - \infty$}
     We first show that $K$ intersects $\clr^b(\dir)$ for only finitely many $\dir \in \dirs^b$. By the limits in Lemma \ref{lem:SIG_facts_collect}\ref{it:limits_to_infty}, there exists $x_1 < -M$ and $x_2 > M$ so that 
     \[
     g_{(x_1,M)}^{b,L}(s) < -M < M < g_{(x_2,M)}^{b,L}(s),\quad\forall\; s \in [-M,M].
     \]
     Using the monotonicity in Lemma \ref{lem:SIG_facts_collect}\ref{it:xi_g_mont} and the consistency of geodesics in Lemma \ref{lem:geodesics_from_b}\ref{itm: consis}, we see that
     \[
     g_{(x_1,M)}^{b,L}(s) < g_{(x,t)}^{b,L}(s) < g_{(x_2,M)}^{b,L}(s),\quad\forall (x,t) \in K, s \in [-M,t]. 
     \]
     Setting
     \[
     y_1 :=  g_{(x_1,M)}^{b,L}(-M),\quad \text{and}\quad y_2 := g_{(x_2,M)}^{b,L}(-M),
     \]
     we see that 
     \[
     \{\xi_{x,t}^b:(x,t) \in K\} \subseteq \{\xi_{y,-M}^b: y \in [y_1,y_2]\},
     \]
     and by Lemma \ref{lem:2}, the set on the right-hand side is finite. Hence, it suffices to show uniform convergence of $b_n \to b$ on the set $K \cap \clr^b(\dir)$ for each $\dir \in \dirs^b$. Letting $\ve > 0$, we need to show that there exists $N \in \N$ so that
     \be \label{eq:unif_conv_up_lbd}
        -\ve \le b_n(x,t) - b(x,t) \le \ve,\quad \forall\;(x,t) \in K \cap \clr^b(\dir) , n \ge N.
     \ee
     We prove the upper and lower bounds separately. By the convergence $\text{hypo}(\varphi_n) \to \text{hypo}(\varphi)$,  \eqref{recov_phi} implies that there exists a sequence $\dir_n \to \dir$ such that $\varphi_n(\dir_n) \to \varphi(\dir)$. By the uniform convergence of the Busemann functions in Proposition \ref{prop:Buse_basic_properties}\ref{itm:general_cts}, there exists $N \in \N$ so that for all $n \ge N$,
\[
b_n(x,t) \ge \varphi_n(\dir_n) + W^{\dir_n}(0,0;x,t) \ge \varphi(\dir) + W^\dir(0,0;x,t) -\ve = b(x,t) -\ve,  \quad \forall \; (x,t) \in \clr^b(\dir) \cap K,
\]
where we used Corollary \ref{cor:o1} in the last equality. This gives the lower bound in \eqref{eq:unif_conv_up_lbd}.

Now, assume by way of contradiction, that the upper bound in \eqref{eq:unif_conv_up_lbd} does not hold. Then, there exists a subsequence $n_k$ and a sequence $\{(x_k,t_k)\} \subseteq K$ so that, for all $k \in \N$,
\be \label{eq:xik}
\varphi_{n_k}(\xi_k) + W^{\xi_k}(0,0;x_k,t_k) = b_{n_k}(x_k,t_k)  \ge  b(x_k,t_k) + \ve,
\ee
where $\xi_k = \xi_{x_k,t_k}^b$ (using Corollary \ref{cor:o1} in the first equality). We first show that $\xi_k$ must be bounded. Since $(x_k,t_k) \in K$,  if $|\mathcal P(\xi_k)| \to \infty$ along some subsequence, then \eqref{eq:Wbd187}-\eqref{eq:phin_bd167},  implies that
\[
\varphi_{n_k}(\xi_k) + W^{\xi_k}(0,0;x_k,t_k) \le -2A \mathcal P(\xi_k)^2 + A \mathcal P(\xi_k)^2 + B \to -\infty,
\]
a contradiction to \eqref{eq:xik}. Hence, $\dir_k$ is bounded, as desired. Since $K$ is compact, there exists a subsequential limit  $(\dir_k,x_k,t_k) \to (\dir,x,t)$ (where we have assumed the convergence holds along $k$ without loss of generality). By the hypograph convergence $\text{hypo}(\varphi_n) \to \text{hypo}(\varphi)$, \eqref{lephi}, continuity of the Busemann process, and continuity of finite eternal solutions (Corollary \ref{cor:cts_bounded}),   
\[
\limsup_{k \to \infty} \varphi_{n_k}(\xi_k) + W^{\xi_k}(0,0;x_k,t_k) \le \varphi(\dir) + W^\dir(0,0;x,t) \le b(x,t) = \lim_{k \to \infty} b(x_k,t_k),
\]
a contradiction to \eqref{eq:xik}. 

 We are left to show that if a sequence $\varphi_n\nrightarrow \varphi$ then $b_n := \hm[\varphi_n] \nrightarrow \hm[\varphi] :=b$. For $\varphi_n$ not to converge to $\varphi$ in $\fun$, one of the conditions of Definition \ref{def:fun_convergence} must fail. We consider two cases:

 \medskip \noindent \textbf{Case 1: Condition \eqref{it:c1} fails.}
     Then there exists $R>0$,  a subsequence  $n_k$, and a sequence $\dir_k$ such that\ $|\mathcal P(\dir_{k})|\rightarrow \infty$ and $\varphi_{n_k}(\dir_{k})\geq -\mathcal P(\dir_{k})^2R$ for all $k \in \N$.  Then, by the $s = 0$ case of Lemma \ref{lem:Buse_global} gives us that $W^{\dir_{k}}(0,0;0,t) \ge (t-1)\mathcal P(\dir_{k})^2 - B$ for all $t > 0$, where $B$ is a constant. Then, for $t > R + 1$, we have 
\[
\liminf_{k \to \infty} b_{n_k}(0,t) \ge \liminf_{k \to \infty} \varphi_{n_k}(\dir_k) + W^{\dir_k}(0,0;0,t) = \infty,
\]
so $b_n \nrightarrow b$. 

\medskip \noindent
     \textbf{Case 2: Condition  \eqref{it:c1} holds but \eqref{it:c2} fails.} 
     We consider two further subcases.

     \medskip \noindent \textbf{Case 2.1:}
     Assume there exists a sequence $\{\dir_n\}$ such that $\varphi_n(\dir_n) \to \infty$. Then, 
     \begin{equation}
          b_n(0,0)\geq \varphi_n(\dir_n) + W^{\dir_n}(0,0;0,0) = \varphi_n(\dir_n) \rightarrow \infty, 
     \end{equation}
     so $b_n\nrightarrow b$.
     
     \medskip \noindent \textbf{Case 2.2:} We assume that
     \begin{equation}\label{eq:o44}
     \sup_{\dir\in\Xs, n \in \N}\varphi_n(\dir)<\infty.     
     \end{equation}
     As condition  \eqref{it:c1} and \eqref{eq:o44} hold,  Lemma \ref{lm:o4} implies that $\{\varphi_{n}\}$ is sequentially compact. There must be some subsequential limit $\varphi'$ that is not equal to $\varphi$ since we assumed that $\varphi_n \nrightarrow \varphi$. Since $\varphi_{n_k} \to \varphi'$ along some subsequence, we showed earlier in this proof that $\hm[\varphi_{n_k}] \to \hm[\varphi']$. But since $\hm$ is a bijection, $\hm[\varphi'] \neq \hm[\varphi] = b$, so $b_n \nrightarrow b$.  
     \end{proof}

\section{Proof of Theorem \ref{thm:shocks_coal}} \label{sec:coal_proofs}
In this section, we prove the results about the coloring map in Theorem \ref{thm:shocks_coal}.  We prove each of the items separately. For these proofs, recall Definition \ref{def:color} that $\dir_{x,t}^b$ is the color of the geodesic $g_{(x,t)}^{b,L}$. We also recall the definition in \eqref{eq18} that $\dirs^{b,t} = \{\dirs_{x,t}^b: x \in \R\}$ for each $t \in \R$.

\begin{proof}[Proof of Theorem \ref{thm:shocks_coal}\ref{it:Cb_set}] 
Note that $\dirs^b$ is nonempty if and only if $b$ is a finite eternal solution, so we will assume this is the case for the remainder of the proof. The equality \eqref{eq:Cb_intro} was previously shown in \eqref{eq:Cb_rep}. The fact that $\clr^b(\dir)$ is unbounded follows from Lemma \ref{lm:4}: specifically, for $\dir \in \dirs^b$. we have $\dir \in \dirs^{b,s}$ for all $s < \tex(\dir)$, so for each $s < \tex(\dir)$, there exists $x \in \R$ so that $(x,s) \in \clr^b(\dir)$. 

Next, we show that if $\rs_\dir^{s'} < \infty$ for some $\dir \in \dirs^b$ and $s' < \tex(\dir)$, then $\rs_\dir^{s} < \infty$ for all $s < \tex(\dir)$. Since $\rs_{\dir}^{s'} < \infty$, the monotonicity of colors in Lemma \ref{lem:5} implies that there exists $\dir' > \dir$ so that $\dir' \in \dirs^{b,s'}$. Then, by Lemma \ref{lm:4}, $\dir' \in \dirs^{b,s}$ for all $s < s'$, so using the monotonicity of colors again, we must have $\rs_{\dir}^s < \infty$ for $s < s'$. On the other hand,  if $\rs_\dir^s = \infty$ for some $s \in (s',\tex(\dir))$, then there exists $M \in \R$ so that $\dir_{x,s}^b =\dir$ for all $x \ge M$. By Lemma \ref{lem:SIG_facts_collect}\ref{it:some_g}, this implies that $g_{(x,s)}^{b,L} = g_{(x,s)}^{\dir,L}$ for all $x \ge M$. By Lemma \ref{lem:SIG_facts_collect}\ref{it:limits_to_infty}, $x' := g_{(x,s)}^{b,L}(s') > \rs_\dir^{s'}$ for sufficiently large $x'$. But then, by the consistency of geodesics in Lemma \ref{lem:geodesics_from_b}\ref{itm: consis}, $\dir_{x',t'}^b = \dir$, a contradiction to $x' > \rs_\dir^{s'}$. Hence, we indeed have that $\rs_\dir^s < \infty$ for all $s < \tex(\dir)$. By a symmetric proof, if $\ls_\dir^{s'} > -\infty$ for some $s' < \tex(\dir)$, then $\ls_\dir^s > -\infty$ for all $s < \tex(\dir)$. 

Lastly, we prove the continuity of $s \mapsto \ls_\dir^s$ and $s \mapsto \rs_\dir^s$, which along with the definition that $\ls_\dir^s < \rs_\dir^s$ also proves that $\clr^b(\dir)$ is connected. Suppose, by way of contradiction and without loss of generality, that $t \mapsto \rs_\dir^t$ is discontinuous at some $t_0 < \tex(\dir)$. 
    We consider four cases.

    \medskip \noindent 
    \textbf{Case 1:} There exists $\ve > 0$ and a sequence $t_n\uparrow t_0$ such that  $\rs^{t_0}_{\dir}-\rs^{t_n}_{\dir}>\varepsilon$ for all $n\in\N$. By definition of $\rs_\dir^{t_0}$ in \eqref{cm}-\eqref{eq:Cb_rep}, $\dir_{\rs_{\dir}^{t_0},t_0}^b = \dir$. For each $n \in \N$, let $y_n := g^{b,L}_{(\rs^{t_0}_\dir,t_0)}(t_n)$. By the consistency of geodesics in Lemma \ref{lem:geodesics_from_b}\ref{itm: consis}, $\dir_{y_n,t_n}^b = \dir$.  On the other hand, since $g^{b,L}_{(\rs^{t_0}_\dir,t_0)}$ is continuous, for $n$ large enough it must be that 
    \[
        y_n  > \rs_\dir^{t_0} - \ve > \rs_\dir^{t_n}.
    \]
    Thus, $\dir_{y_n,t_n}^b >\dir$, a contradiction. 

    \medskip \noindent 
    \textbf{Case 2:} There exists $\ve > 0$ and a sequence $t_n\uparrow t_0$ such that  $\rs^{t_n}_{\dir}-\rs^{t_0}_{\dir}>\varepsilon$ for all $n\in\N$. Pick $x_0\in(\rs^{t_0}_\dir,\rs^{t_0}_\dir+\varepsilon/2)$ so that $\dir_{x_0,t_0}^b > \dir$.  On the other hand,  continuity of geodesics implies that $g^{b,L}_{(x_0,t_0)}(t_n)\le \rs^{t_n}_\dir$ for sufficiently large $n$. Again using the consistency of geodesics in Lemma \ref{lem:geodesics_from_b}\ref{itm: consis},  this implies that $\dir_{x_0,t_0}^b \le \dir$, a contradiction.

    \medskip \noindent \textbf{Case 3:} There exists $\ve > 0$ and a sequence $t_n\downarrow t_0$ such that  $\rs^{t_0}_{\dir}-\rs^{t_n}_{\dir}>\varepsilon$ for all $n\in\N$. Set $x  = \rs^{t_0}_{\dir}-\f{\vep}{2}$. Then $x > \rs_\dir^{t_n}$ for all $n$, and so $\dir_{x,t_n}^b > \dir$. Recalling that 
    \[
    g_{(x,t_n)}^{b,L}(t_0 - 1) = \inf \arg\max_{y \in \R}[b(y,t_0 - 1) + \Ll(y,t_0-1;x,t_n)],
    \]
    the finiteness of the eternal solution together with Lemma \ref{lem:general_restrict_max} implies that the sequence $\{g_{(x,t_n)}^{b,L}(t_0 - 1)\}_{n \in \N}$ is bounded. Thus, there exists a subsequence along which $g_{(x,t_{n})}^{b,L}(t_0 - 1)$ is convergent. Then, by Lemma \ref{lem:precompact}, there exists a further subsequence along which the graph of the restricted geodesic $g_{(x,t_{n})}^{b,L}|_{[t_0-1,t_n]}$ converges with respect to the Hausdorff distance to a geodesic $\wt g$ whose upper endpoint is $(x,t_0)$. Hence, for all sufficiently large $n$, we must have $g_{(x,t_n)}^{b,L}(t_0) \le x + \f{\ve}{4} < \rs_\dir^{t_0}$, and just as in the previous cases, the consistency of geodesics implies that $\dir_{x,t_n}^b \le \dir$, a contradiction.

    \medskip \noindent 
     \textbf{Case 4:} There exists a sequence $t_n\downarrow t_0$ such that  $\rs^{t_n}_{\dir}-\rs^{t_0}_{\dir}>\varepsilon$ for all $n\in\N$. Set  $x= \rs^{t_0}_{\dir}+\frac \vep2$. Then for all $n$, $\dir_{x,t_n}^b \le \dir $. However, by a symmetric argument to the previous case, for sufficiently large $n$,  $g^{b,L}_{(x,t_n)}(t_0) > \rs_\dir^{t_0}$, and hence $\dir_{x,t_n}^b > \dir$, a contradiction.   
\end{proof}

\begin{proof}[Proof of Theorem \ref{thm:shocks_coal}\ref{it:ext_pt}]
First, we claim that if $\dir \in \dirs^b$ and $\rs_\dir^s = \infty$ for some $s < \tex(\dir)$, then $\dir \in \dirs^{b,t}$ for all $t > s$ and therefore $\tex(\dir) = \infty$. The assumption that $\rs_\dir^s = \infty$ implies that there exists $M \in \R$ so that $\dir_{x,s}^b = \dir$ for all $x \ge M$. Let $t > s$ be arbitrary. By Lemma \ref{lem:SIG_facts_collect}\ref{it:limits_to_infty},
\[
\lim_{x \to \infty} g_{(x,t)}^{b,L}(s) \ge \infty,
\]
and so $g_{(x,t)}^{b,L}(s) \ge M$ for sufficiently large $x$. By the consistency of geodesics in Lemma \ref{lem:geodesics_from_b}\ref{itm: consis}, $\dir_{x,t}^b = \dir$ for such $x$, and so $\dir \in \dirs^{b,t}$. By a symmetric proof, we see that if $\ls_\dir^s = -\infty$ for some $s \in \R$, then $\tex(\dir) = \infty$ as well. 

Now, assume that $\ls_\dir^s > -\infty$ and $\rs_\dir^s < \infty$ for some $s \in \R$, and assume, by way of contradiction, that $\tex(\dir)=\infty$. We will show that we can construct a bi-infinite geodesic in the directed landscape, similar to the proof of Lemma 5.1 in our previous work \cite{bhat-bus-soren}. By Item \ref{it:Cb_set}, we have $-\infty < \ls_\dir^s < \rs_\dir^s < \infty$ for all $s \in \R$.
        Now take a sequence $t_n \geq s$ such that $\lim_{n \rightarrow \infty}t_n=\infty$, and define
        \[
        y_n:=\frac{\ls^{t_n}_{\dir}+\rs^{t_n}_{\dir}}{2}.
        \]
        Now consider the geodesics $g_{n}:=g^{b,L}_{(y_n,t_n)}$. Since each of these has color $\dir$, we have that
        \[
       \ls^{r}_{\dir}<g^{b,L}_{(y_n,t_n)}(r) \leq \rs^{r}_\dir,\quad\forall\; r < s.
        \]
         By compactness, there exists a subsequence  along which $g_{(y_n,t_n)}^{b,L}(s)$ and $g_{(y_n,t_n)}^{b,L}(s)$ converge. By Lemma \ref{lem:precompact}, there exists a subsequence $\{k^1_{\ell}\}$ such that \
       \begin{equation}
           g_{k^1_l}|_{[s,s+1]}\rightarrow \Tilde{g}_1,
       \end{equation}
       uniformly, where the limiting object $\Tilde{g}_1$ is a geodesic on $[s,s+1]$. By a similar argument, we can find a subsequence  $\{k^2_{\ell}\}\subseteq \{k^1_\ell\}$ such that \ 
       \begin{equation}
           g_{k^2_{\ell}}|_{[s,s+2]}\rightarrow \Tilde{g}_2,
       \end{equation}
       uniformly,  where $\Tilde{g}_2$ is a geodesic on $[s,s+2]$ and $\Tilde{g}_2|_{[s,s+1]}=\Tilde{g}_1$.  Using a diagonalization argument, we can find a sequence $g_{k^\ell_{\ell}}$ such that \ $g_{k^\ell_{\ell}}\rightarrow \Tilde{g}_\infty$ as $\ell\rightarrow \infty$, uniformly on compact sets. We now define
       \begin{equation}
           g_{\infty}(r)=
           \begin{cases}
               \Tilde{g}_\infty(r) & r\geq s\\
               g^{\dir,L}_{(g_{\infty}(s),s)}(r) & r<s.
           \end{cases}
       \end{equation}
       We claim $g_\infty$ is a bi-infinite geodesic. 
       To do this, we show that for $s_1 < s_2$,
       \be \label{eq:L=W_final}
       \Ll\bigl(g_\infty(s_1),s_1;g_\infty(s_2),s_2\bigr) = W^\dir\bigl(g_\infty(s_1),s_1;g_\infty(s_2),s_2\bigr).
       \ee
       Then, the fact that $g_\infty$ is a bi-infinite geodesic follows from the additivity of the Busemann functions and \eqref{eq:landscape_leq}. By Equation \eqref{L =landscape} of Lemma \ref{lem:W=landscape}, we have that \eqref{eq:L=W_final} holds when $s_2 < s$. By additivity and \eqref{eq:landscape_leq}, it then suffices to show that, for all $r' > s$,
       \be \label{L=W_f2}
       \Ll(g_{\infty}(s),s;r',g_\infty(r')) = W^{\dir}(g_{\infty}(s),s;r',g_\infty(r')).
       \ee
       To see this, recall $g_k =g_{(y_k,t_k)}^{\dir,L}$, and then Lemma \ref{lem:W=landscape} implies that for all $k \ge r'$,
       \[
       \Ll(g_{\infty}(s),s;r',g_k(r')) = W^{\dir}(g_{\infty}(s),s;r',g_k(r')).
       \]
       Then, \eqref{L=W_f2} follows by taking limits and using continuity of the Busemann functions. This contradicts the non-existence of bi-infinite geodesics in the directed landscape \cite[Proposition 34]{Bha24}. 

      We finish by proving \eqref{eq:extinction}. Since $\ls_\dir^s <\rs_\dir^s$ for all $\dir \in \dirs^b$ and $s < \tex(\dir)$, it suffices to derive a contradiction if we assume that
\[
-\infty \le a := \liminf_{s\uparrow t} \ls_\dir^s < \limsup_{s \uparrow t} \rs_\dir^s =: b \le \infty. 
\]
Choose $a',b' \in (a,b)$ so that $a' < b'$. Then, there exists a sequence of times $s_n \uparrow t$ so that, for all $n \in \N$,
\be \label{eq:bdab}
\ls_\dir^{s_n} < a' < b' < \rs_\dir^{s_n}.
\ee
Now, choose $x \in (a',b')$. Then, $\dir_{x,\tex(\dir)}^b \neq \dir$ by definition of $\tex(\dir)$. However, by continuity of geodesics and since $s_n \uparrow t$, there exists $N \in \N$ so that for all $n \ge N$,
\[
g_{(x,\tex(\dir))}^{b,L}(s_n) \in (a',b').
\]
Then, by \eqref{eq:bdab} and the consistency of geodesics in Lemma \ref{lem:geodesics_from_b}\ref{itm: consis}, $\dir_{x,\tex(\dir)}^b = \dir$, a contradiction.
\end{proof}

\begin{proof}[Proof of Theorem \ref{thm:shocks_coal}\ref{it:locally_finite}]
     We first show that no two colors have the same extinction point. Assume, without loss of generality that $\dir_1< \dir_2$ but $\pex(\dir_1)=\pex(\dir_2)=(x_{\text{ext}},t_{\text{ext}}
    )$. We will show that we can construct four disjoint semi-infinite geodesics from $(x_{\text{ext}},t_{\text{ext}}
    )$, giving a contradiction to the non-existence of `4-stars' in the directed landscape shown in \cite[Theorem 1.5]{Dauvergne-23}.  Let $\{t_n\}_{n\geq 1}$ be a sequence of times such that $t_n < t_{\text{ext}}(\dir)$ for all $n$ and $t_n \rightarrow t_{\text{ext}},$ as $n \rightarrow \infty.$ Consider the sequences $\{w_n\}_{n \geq 1},\{x_n\}_{n \geq 1}, \{y_n\}_{n \geq 1}, \{z_n\}_{n \geq 1}$ defined as follows
    \[
    w_n:=\ls^{t_n}_{\dir_1}-\frac 1n, x_n:=\frac{\ls_{\dir_1}^{t_n}+\rs_{\dir_1}^{t_n}}{2},y_n:=\frac{\ls_{\dir_2}^{t_n}+\rs_{\dir_2}^{t_n}}{2},  z_n:=\rs_{\dir_2}^{t_n}+\frac 1n.
    \]
    Note that as the set  of colors of the leftmost geodesics is locally finite (Lemma \ref{lem:2}), and since the function $t \mapsto \mathfrak a_\xi^t$ is continuous by Item \ref{it:Cb_set}, there is a subsequence $n_k$ such that $g^{b,L}_{(w_{n_k},t_{n_k})}$ (resp.\ $g^{b,L}_{(z_{n_k},t_{n_k})}$) will have the same color $\dir_{\ell}$ (resp.\ $\dir_r$). For ease of notation, we will assume this holds for the whole sequence, instead along the subsequence $n_k$. Since these geodesics all have different colors, and any two leftmost $b$-geodesics coalesce if they ever meet (Lemma \ref{lem:geodesics_from_b}\ref{itm: consis}), for each $n$ the leftmost $b$ geodesics starting from $(w_n,t_n),(x_n,t_n),(y_n,t_n),(z_n,t_n)$ are disjoint. Therefore, for all $s \leq t_n$, 
    \be \label{eq:geod_distinct}
  g^{b,L}_{(w_n,t_n)}(s)<  g^{b,L}_{(x_n,t_n)}(s)<g^{b,L}_{(y_n,t_n)}(s)<g^{b,L}_{(z_n,t_n)}(s).
    \ee
    By Lemma \ref{lem:precompact}, each of the four sequences of geodesics $g^{b,L}_{(w_n,t_n)}, g^{b,L}_{(x_n,t_n)}, g^{b,L}_{(y_n,t_n)},g^{b,L}_{(z_n,t_n)}$ converges to a semi-infinite geodesic starting from $(\xex,\tex)$ in the sense of the local Hausdorff topology on paths. Call these four geodesics $g_1,g_2,g_3,g_4$. By way of contradiction, assume that these geodesics are not disjoint. Then, without loss of generality, we may assume that there exists $t < \tex$ such that $g_1(t) = g_2(t)$.
    Fix $\delta \in (0,\f{\tex - t}{2})$. Since geodesics do not form interior bubbles (see \cite[Theorem 1]{Bha24}), it must be the case that the restrictions $g_1|_{[t - \delta,t+\delta]}$ and $g_2|_{[t - \delta,t+\delta]}$ are unique geodesics between their endpoints. By Lemma \ref{lem:overlap}, this implies  that $g^{b,L}_{(w_n,t_n)}|_{[t-\delta,t+\delta]}$ and$ g^{b,L}_{(x_n,t_n)}|_{[t-\delta,t+\delta]}$ converge to  $g_1|_{[t - \delta,t+\delta]}$ and $g_2|_{[t - \delta,t+\delta]}$, respectively, in the overlap sense. Then, also using \eqref{eq:geod_distinct}, for sufficiently large $n$,
\[
g_1(t) = g_{(w_n,t_n)}^{b,L}(t) < g_{(x_n,t_n)}^{b,L}(t) = g_2(t),
\]
a contradiction to $g_1(t) = g_2(t)$. Hence, no two colors have the same extinction point. The existence of three semi-infinite geodesics from each extinction point follows the same construction above. 

     Now, we show that the set of extinction points is locally finite. Assume by way of contradiction that there exists $M>0$ such that in the open square $S_M:=(-M,M)^2$ there exist infinitely many extinction points. Take $T$ large enough such that
    \begin{equation}
    \begin{aligned}
      &g^{b,L}_{-M-T,M}(s)<-M\qquad \forall s\in[-M,M],\quad\text{and}\\
      &g^{b,L}_{M+T,M}(s)>M\qquad \forall s\in[-M,M].
    \end{aligned}
    \end{equation}
    Then, by order of geodesics, any geodesic leaving from $S_M$ will cross the time horizon $-M$ through the interval $I^{M,L}=[g^{b,L}_{-M-T,M}(-M),g^{b,L}_{M+T,M}(-M)]$.  We now claim that the number of disjoint geodesics leaving from $S_M$ is at least the number of extinction points in $S_M$. Indeed, for each extinction point $\pex(\dir) \in S_M$, we can find a point $(y(\dir),s(\dir)) \in S_M$ sufficiently close so that $\dir_{y(\dir),s(\dir)}^b = \dir$. By the consistency in Lemma \ref{lem:geodesics_from_b}\ref{itm: consis}, if $\dir_{x,t}^b \neq \dir_{y,s}^b$ for $(x,t),(y,s) \in \R^2$, the geodesics $g_{(x,t)}^{b,L}$ and $g_{(y,s)}^{b,L}$ cannot meet.  Thus for distinct values $\dir$, the associated geodesics with color $\dir$ starting near $p(\dir)$ must be distinct.  We conclude that  there exist infinitely many disjoint geodesics started from $S_M$ and crossing the interval $I^{M,L}$. But this does not occur in the directed landscape (see, for example, \cite[Lemma B.14]{Busa-Sepp-Sore-22a}).
\end{proof}

   \appendix
\section{Auxiliary results}
 Recall the set $\Xs = \Xs_\omega$ defined in \eqref{eq:X_space}, along with the projections $\mathcal P$ and $\sn$ defined in \eqref{eq:P_s_funs}.

\begin{lemma} \label{lem:X_metric}
    The space is $\Xs$ is metrizable.
\end{lemma}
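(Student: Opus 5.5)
Since the topology on $\Xs$ is generated by the neighborhoods $I_\ve(\dir)$, the natural approach is to write down an explicit metric that reproduces the convergence described in Remark \ref{rmk:convergence}. We embed $\Xs$ order-preservingly into $\R$ by "blowing up" each point of the countable set $\Xi$ into a small gap. Enumerate $\Xi=\{\eta_1,\eta_2,\dots\}$ (countable by Proposition \ref{prop:Buse_basic_properties} and the discussion of $\Xi$). Define $G:\Xs\to\R$ by
\[
G(\dir) = \mathcal P(\dir) + \sum_{k:\,\eta_k<\mathcal P(\dir)} 2^{-k} + \begin{cases} 2^{-k} & \dir=\eta_k^+,\\ 0 & \text{otherwise},\end{cases}
\]
and set $d_{\Xs}(\dir_1,\dir_2)=|G(\dir_1)-G(\dir_2)|$.

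First we check that $G$ is strictly increasing with respect to the total order on $\Xs$. For $\mathcal P(\dir_1)<\mathcal P(\dir_2)$, the jump terms can only add, and the $\mathcal P$ term increases strictly. For $\eta_k^-<\eta_k^+$, the values differ by exactly $2^{-k}>0$. Hence $G$ is injective, and $d_{\Xs}$ is a genuine metric.

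Next we verify that $d_{\Xs}$ generates the topology, by checking sequential convergence against Remark \ref{rmk:convergence}. Both topologies are first countable: the $I_{1/n}(\dir)$ form a countable base at $\dir$. So it suffices to compare convergent sequences, or equivalently to show that each $I_\ve(\dir)$ contains a metric ball around $\dir$ and vice versa. There are three cases.
\begin{enumerate}
\item If $\dir\in\Xi^c$, the function $\eta\mapsto \eta+\sum_{\eta_k<\eta}2^{-k}$ is continuous at $\mathcal P(\dir)$, because the jump mass in $(\mathcal P(\dir)-\delta,\mathcal P(\dir)+\delta)$ tends to $0$ as $\delta\to0$ by summability. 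Hence $G(\dir_n)\to G(\dir)$ if and only if $\mathcal P(\dir_n)\to\mathcal P(\dir)$.
\item If $\dir=\eta_k^-$ and $\mathcal P(\dir_n)\nearrow\eta_k$ with $\dir_n<\dir$, then $G(\dir_n)\to G(\eta_k^-)$ by left continuity of the cumulative sum.
\item If $\dir=\eta_k^+$ and $\mathcal P(\dir_n)\searrow\eta_k$ with $\dir_n>\dir$, then $G(\dir_n)\to G(\eta_k^+)$ by right continuity.
\end{enumerate}
For the converse in cases 2 and 3, note that any $\dir_n$ on the wrong side of the gap satisfies $|G(\dir_n)-G(\dir)|\ge 2^{-k}$, so metric convergence forces approach from the correct side. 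Metric convergence also forces $\mathcal P(\dir_n)\to\mathcal P(\dir)$, since $|\mathcal P(\dir_1)-\mathcal P(\dir_2)|\le|G(\dir_1)-G(\dir_2)|$ by monotonicity of the jump part.

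The main obstacle is this last comparison, namely making sure that the one-sided neighborhoods $[\dir,\dir+\ve)$ and $(\dir-\ve,\dir]$ match metric balls exactly. This reduces to the observation that the gap of length $2^{-k}$ in the image $G(\Xs)$ isolates $\eta_k^-$ from the right and $\eta_k^+$ from the left. Everything else is routine bookkeeping with a summable series.
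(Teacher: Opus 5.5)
Your proof is correct and is essentially the paper's argument: the paper's metric is exactly $|G(\xi_1)-G(\xi_2)|$ with the $\mathcal P(\xi)$ term dropped, i.e.\ the total weight $2^{-n}$ of the gaps $(\xi_n^-,\xi_n^+)$ lying between the two points. Your added $\mathcal P$ term is a mild improvement: it gives injectivity and the bound $|\mathcal P(\xi_1)-\mathcal P(\xi_2)|\le d_{\Xs}(\xi_1,\xi_2)$ directly, whereas the paper must invoke density of $\Xi$ for positivity of $d$ (and, implicitly, to see that metric convergence forces convergence of projections).
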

\begin{proof}
    This can be derived directly through Urysohn's metrization theorem, but here we construct the metric explicitly. Let $\{\dir_n\}_{n\in\N}=\Xi$ be the set of exceptional directions. That we can index this set by $\N$ is a consequence of that fact that $\Xi$ is countable. Define the   function $l$ on pairs of the form $(\dir_n^-,\dir_n^+)$  by
    \begin{equation}
        l(\dir_n^-,\dir_n^+)=2^{-n}.
    \end{equation}
    We now construct  $d$ from $l$ by
    \begin{equation}
        d(\dir_1,\dir_2)=\sum_{\substack{\dir_1\wedge \dir_2\leq\dir^-_n<\dir_1\vee \dir_2}} l(\dir_n^-,\dir_n^+)
    \end{equation}
    In words, one can imagine an edge between any pair $(\dir^-_n,\dir^+_n)$ and a weight function $l$ giving weight $2^{-n}$ to that edge. The distance between two points in $\Xs$ is the total weight on all edges between them. Clearly $d$ is finite and positive. As $\Xi$ is dense,  $d(\dir_1,\dir_2)=0$ if and only if $\dir_1=\dir_2$. The triangle inequality follows immediately, so $d$ is indeed a metric. To see that it generates the topology on $\Xs$ note that as $\Xs$ is first countable, it is sequential and it is therefore enough to show that any sequence converging with respect to the topology implies convergence in the metric and vice versa. Note that  a sequence $\dir_n$ converges to a point $\dir\in\Xi^+$ in the topology on $\Xs$ if and only if $\mathcal{P}(\dir_n)\downarrow\mathcal P(\dir)$ if and only if $d(\dir_n,\dir)\rightarrow 0$. The convergence to points in $\Xi^-$  and $\Xi^c$ is shown analogously. Hence, the topology on $\Xs$ is generated by $d$.      
\end{proof}

           \begin{lemma}[\cite{Directed_Landscape}, Corollary 10.7] \label{lem:Landscape_global_bound}
There exists a random constant $C_{\text{DL}}$ such that for all $v = (x,s;y,t) \in \Rup$, we have 
\[
\Bigl|\Ll(x,s;y,t) + \f{(x - y)^2}{t - s}\Bigr| \le C_{\text{DL}} (t - s)^{1/3} \log^{4/3} \Bigl(\f{2(\|v\| + 2)}{t - s}\Bigr)\log^{2/3}(\|v\| + 2),
\]
where $\|v\|$ is the Euclidean norm.
\end{lemma}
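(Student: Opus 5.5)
This bound is \cite[Corollary 10.7]{Directed_Landscape}, and in the paper we simply invoke it. The proof I would give to verify it is a chaining argument combining three ingredients: one-point tail bounds, the symmetries of $\Ll$, and a modulus of continuity.

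\textbf{Step 1: one-point bound.} For fixed $v=(x,s;y,t)\in\Rup$, KPZ scaling and shear invariance give
\[
\Ll(x,s;y,t)+\f{(x-y)^2}{t-s} \overset{d}{=} (t-s)^{1/3}\,\mathrm{TW}_{\mathrm{GUE}}.
\]
Both Tracy--Widom tails are stretched exponential, so for some constants $c,d>0$,
\[
\Pp\Bigl(\Bigl|\Ll(v)+\f{(x-y)^2}{t-s}\Bigr|>a(t-s)^{1/3}\Bigr)\le c\,e^{-d a^{3/2}}.
\]

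\textbf{Step 2: local uniform bound.} I would promote Step 1 to a bound that is uniform over a unit-scale block. Set
\[
B=\{(x,s;y,t): |x|,|y|,|s|\le 1,\ t-s\in[1,2]\}.
\]
The first tool is the modulus of continuity of the Airy sheet and of $\Ll$ (\cite[Section 10]{Directed_Landscape}). This is a Hölder-$(1/2^-)$ bound in space and $(1/3^-)$ in time, with logarithmic corrections, and it holds with random constants having stretched-exponential tails. Combining it with Step 1 on a finite mesh of $B$ shows that the supremum of the recentred landscape over $B$ also has a tail bounded by $c\,e^{-d a^{3/2}}$, after adjusting constants.

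\textbf{Step 3: union bound over a multiscale cover.} Next I would cover $\Rup$ by images of $B$ under the scaling, translation and shear symmetries. Write $t-s\in[2^{k},2^{k+1}]$ for $k\in\Z$. Within each such time scale, use translated and sheared copies indexed by lattice points at spatial mesh $2^{2k/3}$ and temporal mesh $2^{k}$. In a given copy near $v$, set the threshold to be
\[
a\asymp \log^{4/3}\bigl(\tfrac{2(\|v\|+2)}{t-s}\bigr)\,\log^{2/3}(\|v\|+2).
\]
This choice makes $e^{-d a^{3/2}}$ summable over all copies. The number of copies grows polynomially in both $\|v\|$ and $(t-s)^{-1}$, and raising the logarithmic factor to the power $3/2$ gives enough decay to beat this polynomial count. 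By Borel--Cantelli, only finitely many copies exceed their threshold, and absorbing those finitely many into the constant produces the random $C_{\text{DL}}$.

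\textbf{Main obstacle.} I expect the hardest part to be the regime $t-s\to 0$ at a fixed location. There the number of blocks blows up, so the logarithmic factor in $(t-s)^{-1}$ must come out exactly right. The delicate point is that the Step 2 block bound must be uniform across scales; I would obtain this by applying scale invariance to a single estimate rather than proving it separately at each scale.
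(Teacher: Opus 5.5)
This agrees with the paper, which gives no argument of its own and simply imports \cite[Corollary 10.7]{Directed_Landscape}. Your outline (Tracy--Widom one-point tails, an $e^{-da^{3/2}}$ tail for the supremum of the recentred landscape over a unit block, and a Borel--Cantelli union bound over KPZ-rescaled, translated and sheared copies of that block) is a sound sketch of how such a global bound is obtained, with one correction to your closing remark: the union bound has slack as $t-s\to 0$ (a factor $\log^{2/3}$ would already suffice there), while the tight regime is $t-s\asymp\|v\|\to\infty$, where the factor $\log^{2/3}(\|v\|+2)$ requires the block tail in Step 2 to have exponent $3/2$ rather than just some stretched exponential, and your finite-mesh argument delivers this if the mesh is allowed to shrink polynomially in $a$.
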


\begin{lemma}[\cite{Dauvergne-Sarkar-Virag-2022}, Lemma 3.1] \label{lem:precompact}
The following holds on a single event of probability one. Let $(p_n;q_n) \to (p,q) = (x,s;y,t) \in \Rup$, and let $g_n$ be any sequence of geodesics from $p_n$ to $q_n$. Then, the sequence of graphs $\graph g_n$ is precompact in the Hausdorff metric, and any subsequential limit of $\graph g_n$ is the graph of a geodesic from $p$ to $q$. 
\end{lemma}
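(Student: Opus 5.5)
The statement to prove is Lemma \ref{lem:precompact}: on a single full-probability event, for geodesics $g_n$ from $p_n$ to $q_n$ with $(p_n;q_n)\to(p;q)\in\Rup$, the graphs $\graph g_n$ are precompact in the Hausdorff metric, and every subsequential limit is the graph of a geodesic from $p$ to $q$. The plan has two parts: compactness, which comes from confining the graphs to a bounded region, and identification of the limit, which uses continuity of $\Ll$ and the metric composition law.

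\textbf{Step 1 (a common bounded region).} Work on the event where $\Ll$ is continuous and the global bound of Lemma \ref{lem:Landscape_global_bound} holds. All $g_n$ live on time intervals $[s_n,t_n]$ with $s_n\to s$ and $t_n\to t$. I will show that $\sup_n\sup_r|g_n(r)|<\infty$.

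Suppose instead that some $g_n(r_n)$ is very large. Split $\Ll(p_n;q_n)$ at the intermediate point:
\[
\Ll(p_n;q_n)=\Ll\bigl(p_n;(g_n(r_n),r_n)\bigr)+\Ll\bigl((g_n(r_n),r_n);q_n\bigr).
\]
Each term is at most the parabola $-(\text{distance})^2/\text{time}$ plus a subpolynomial error from Lemma \ref{lem:Landscape_global_bound}. This sum tends to $-\infty$, while the left side stays bounded by continuity. That contradiction gives the uniform bound. The error terms contain $\log$ factors that blow up as the time increment shrinks, but they remain $o(\text{distance}^2/\text{time})$, so the argument survives.

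Once the graphs lie in a fixed compact set, Blaschke selection (equivalently, compactness of the hyperspace of compact subsets of a compact set) gives precompactness in the Hausdorff metric.

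\textbf{Step 2 (the limit is a graph).} Let $G$ be a subsequential Hausdorff limit; it is a compact connected set containing $p$ and $q$. I need $G$ to be the graph of a continuous function on $[s,t]$, and for that I need equicontinuity-type control.

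The tool is the modulus of continuity for geodesics, which also comes from Lemma \ref{lem:Landscape_global_bound} via the same splitting. Take $r<r'$ in the interior of $[s,t]$, with $|r-r'|=\delta$. Geodesicity gives
\[
\Ll\bigl(g_n(r),r;g_n(r'),r'\bigr)\ \ge\ \Ll(p_n;\cdot)+\dots,
\]
which forces $|g_n(r)-g_n(r')|\le C\delta^{2/3}\log^{c}(1/\delta)$ uniformly in $n$. Arzel\`a--Ascoli then applies on interior subintervals.

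Near the endpoints I use the same bound together with $s_n\to s$ and $t_n\to t$. The result is that $G$ is the graph of a continuous $g$ with $g(s)=x$ and $g(t)=y$. I expect this equicontinuity near endpoints, uniform over the moving endpoints, to be the main technical obstacle. The first thing I would try is to apply the geodesic modulus from the literature (Dauvergne--Ortmann--Vir\'ag), stated uniformly over compact sets of endpoints.

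\textbf{Step 3 (the limit is a geodesic).} Fix a partition $s=r_0<\dots<r_k=t$ and choose points of $\graph g_n$ at the nearby times. Additivity along $g_n$ gives
\[
\Ll(p_n;q_n)=\sum_i\Ll\bigl(g_n(r_i^n),r_i^n;g_n(r_{i+1}^n),r_{i+1}^n\bigr).
\]
Passing to the limit with continuity of $\Ll$ gives $\Ll(p;q)=\sum_i\Ll\bigl(g(r_i),r_i;g(r_{i+1}),r_{i+1}\bigr)$ for every partition. The reverse triangle inequality \eqref{triangle} then shows that $g$ has maximal $\Ll$-length, so $g$ is a geodesic from $p$ to $q$.
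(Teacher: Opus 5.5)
Your argument is correct and follows the route behind the cited result of Dauvergne--Sarkar--Vir\'ag; the paper itself gives no proof and only cites that result. The route is to confine the graphs to a compact box using Lemma~\ref{lem:Landscape_global_bound}, get equicontinuity, apply Arzel\`a--Ascoli, and identify the limit as a geodesic via continuity of $\Ll$ along partitions.

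The endpoint difficulty you anticipate does not arise, and you do not need the $2/3$ exponent. Once the graphs lie in a fixed box, split $\Ll(p_n;q_n)$ at times $r<r'$ into at most three pieces. Bound each piece above by Lemma~\ref{lem:Landscape_global_bound}, whose error term stays bounded as the time increment shrinks, and use that $\Ll(p_n;q_n)$ is bounded below. This gives $|g_n(r')-g_n(r)|^2\le C|r'-r|$ uniformly in $n$ and in $s_n\le r<r'\le t_n$, which is all the equicontinuity the argument needs.
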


\begin{lemma}[\cite{Dauvergne-Sarkar-Virag-2022}, Lemma 3.3] \label{lem:overlap}
The following holds on a single event of full probability. Let $(p_n;q_n) = (x_n,s_n;y_n,u_n) \in \Rup \to (p;q) = (x,s;y,u) \in \Rup$, and let $g_n$ be any sequence of geodesics from $p_n$ to $q_n$. Suppose that either
\begin{enumerate} [label=\rm(\roman{*}), ref=\rm(\roman{*})]  \itemsep=3pt
    \item \label{uniqn} For all $n$, $(x_n,s_n;y_n,u_n) \in \Rup \cap \Q^4$, or
    \item \label{uniqueg} There is a unique geodesic $g$ from $p$ to $q$.
    \end{enumerate}
    Then, the \textbf{overlap}
    \[
    O(g_n,g) := \{t \in [s_n,u_n]\cap [s,u]: g_n(t) = g(t)\}    
    \]
    is an interval for all $n$ whose endpoints converge to $s$ and $u$. 
\end{lemma}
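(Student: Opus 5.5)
The plan is to prove the two assertions separately: first that the overlap is an interval, then that its endpoints converge to $s$ and $u$. The first is a splicing argument. The second combines the precompactness of Lemma \ref{lem:precompact} with the fact that geodesic fragments crossing a fixed time strip take only finitely many values. Throughout I work on the intersection of the probability-one events of Lemma \ref{lem:precompact}, Lemma \ref{lem:no_bubbles}, and the following standard fact: a.s. all pairs of rational points are joined by a unique geodesic.

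\textbf{Step 1: the overlap is an interval.} Suppose $t_1<t_2$ both lie in $O(g_n,g)$ but $g_n(r)\neq g(r)$ for some $r\in(t_1,t_2)$. Then $g_n|_{[t_1,t_2]}$ and $g|_{[t_1,t_2]}$ are two distinct geodesics between the same two points. I form a spliced path in one of two ways.
\begin{itemize}
\item Under \ref{uniqueg}, insert $g_n|_{[t_1,t_2]}$ into $g$. This is a path from $p$ to $q$ whose $\Ll$-length equals $\Ll(p;q)$, so it is a geodesic from $p$ to $q$ distinct from $g$. This contradicts uniqueness.
\item Under \ref{uniqn}, insert $g|_{[t_1,t_2]}$ into $g_n$ instead. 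This gives a second geodesic between the rational points $p_n,q_n$, contradicting the uniqueness of geodesics between rational pairs.
\end{itemize}
Hence $O(g_n,g)$, whenever nonempty, is an interval.

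\textbf{Step 2: uniform convergence.} By Lemma \ref{lem:precompact}, every subsequence of $\graph g_n$ has a further subsequence converging in the Hausdorff metric to the graph of a geodesic from $p$ to $q$. Under \ref{uniqueg} that geodesic must be $g$, so $g_n\to g$ uniformly on compact subintervals of $(s,u)$. Under \ref{uniqn}, I would read the statement with $g$ taken to be a subsequential limit of the $g_n$, as in the original source, and argue along the corresponding subsequence.

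\textbf{Step 3: endpoints converge (main obstacle).} Fix $\delta>0$ small; I must show that $[s+\delta,u-\delta]\subseteq O(g_n,g)$ for all large $n$.
\begin{itemize}
\item By uniform convergence, for $n$ large the points $g_n(s+\delta/2)$ and $g_n(u-\delta/2)$ lie in a fixed compact interval.
\item The key input is a finiteness statement for geodesics joining two bounded sets at separated times, as in \cite[Lemma B.14]{Busa-Sepp-Sore-22a}. I would invoke it in this form: the set of restrictions to $[s+\delta,u-\delta]$ of geodesics from a compact set at time $s+\delta/2$ to a compact set at time $u-\delta/2$ is finite.
\item If the cited lemma only gives finiteness at a single intermediate time, I would upgrade it by applying it on a dense countable set of times. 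Combined with Lemma \ref{lem:no_bubbles}, this should identify the whole middle fragment.
\item Granting this, $g_n|_{[s+\delta,u-\delta]}$ ranges over finitely many paths and converges uniformly to $g|_{[s+\delta,u-\delta]}$. Hence it equals $g$ on that interval for all large $n$.
\end{itemize}
Since $\delta$ is arbitrary and $O(g_n,g)$ is an interval by Step 1, its endpoints converge to $s$ and $u$. The finiteness input in Step 3 is where I expect the real difficulty.
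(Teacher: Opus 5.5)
The paper gives no proof of this lemma; it is quoted from \cite{Dauvergne-Sarkar-Virag-2022}. Your argument is a correct reconstruction of the standard proof. The splicing in Step 1 is right in both cases. Step 2 correctly upgrades Hausdorff precompactness to uniform convergence on $[s+\delta,u-\delta]$. Your reading of case (i), with $g$ a subsequential limit and convergence only along that subsequence, is the right one, since as transcribed the statement leaves $g$ undefined there.

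One correction concerns the fallback in Step 3. Single-time finiteness on a countable dense set of times only gives $g_n(r)=g(r)$ for $n\ge N(r)$, where $N(r)$ may be unbounded in $r$. Lemma \ref{lem:no_bubbles} does not repair this, because it needs agreement on neighbourhoods of both endpoints, not at isolated times. You need neither. Take finiteness at just two times, $r_1\in(s+\delta/2,s+\delta)$ and $r_2\in(u-\delta,u-\delta/2)$. Together with uniform convergence, this gives $r_1,r_2\in O(g_n,g)$ for all large $n$. Your Step 1 then yields $[s+\delta,u-\delta]\subseteq[r_1,r_2]\subseteq O(g_n,g)$.

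This single-time finiteness of crossing points is exactly the form in which the paper invokes \cite[Lemma B.14]{Busa-Sepp-Sore-22a} in the proof of Lemma \ref{lem:2}. So the fragment-level finiteness you were worried about is not actually needed.
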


\begin{lemma}\label{lem:f-ay_bd}
    The following holds on a single event of probability one. Let $s \in \R$, and let $f:\R  \to \R\cup \{-\infty\}$ be a function such that $\kpzs_t(0;f,s) < \infty$ for some $t > s$. Then, for for every $a > \f{1}{t-s}$,
    \[
    \sup_{y \in \R}[f(y) - ay^2] < \infty.
    \]
\end{lemma}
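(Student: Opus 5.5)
The plan is to prove the contrapositive: assume $\sup_{y}[f(y)-ay^2]=\infty$ for some $a>\f{1}{t-s}$, and show that $\kpzs_t(0;f,s)=\infty$. The only input is the global shape bound for the directed landscape, Lemma \ref{lem:Landscape_global_bound}, which holds on a single event of probability one simultaneously for all $(x,s;y,t)\in\Rup$. The argument therefore works for every $s$, $t$, $f$ at once, with no need for a countable reduction.

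First, since the supremum is infinite, I pick a sequence $y_n$ with $f(y_n)-ay_n^2\to\infty$. If $y_n$ stayed bounded along a subsequence, then $f(y_n)\to\infty$ there. Since $y\mapsto\Ll(y,s;0,t)$ is continuous and hence bounded on bounded sets, this gives $\kpzs_t(0;f,s)\ge f(y_n)+\Ll(y_n,s;0,t)\to\infty$ immediately. So I may assume $|y_n|\to\infty$.

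Next, apply Lemma \ref{lem:Landscape_global_bound} with $v=(y_n,s;0,t)$ and $\tau:=t-s$ fixed. This gives
\[
\Ll(y_n,s;0,t)\ge -\f{y_n^2}{\tau}-C_{\text{DL}}\,\tau^{1/3}\log^{4/3}\Bigl(\f{2(\|v\|+2)}{\tau}\Bigr)\log^{2/3}(\|v\|+2),
\]
where $\|v\|\le |y_n|+|s|+|t|$. The error term therefore grows only like $O(\log^2|y_n|)$. Hence
\[
f(y_n)+\Ll(y_n,s;0,t)\ge \bigl(f(y_n)-ay_n^2\bigr)+\Bigl(a-\f1\tau\Bigr)y_n^2 - O(\log^2|y_n|)\longrightarrow\infty,
\]
because the first bracket tends to $\infty$ and $a-\f1\tau>0$ makes the quadratic term dominate the logarithmic error. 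Thus $\kpzs_t(0;f,s)=\infty$, which is the desired contradiction.

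There is no real obstacle. The only points needing care are (i) that the constant in Lemma \ref{lem:Landscape_global_bound} is uniform over all of $\Rup$, so the event does not depend on $s,t,f$, and (ii) the bounded-$y_n$ case, which is handled by continuity of $\Ll$.
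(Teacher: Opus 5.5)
Your proof is correct and takes the same approach as the paper: the only input is the shape bound of Lemma \ref{lem:Landscape_global_bound}, and the gap $a-\f{1}{t-s}>0$ absorbs its logarithmic error. The paper packages this as one uniform inequality $\Ll(y,s;0,t)\ge -ay^2-C$ for all $y\in\R$, which gives $\sup_y[f(y)-ay^2]\le C+\kpzs_t(0;f,s)$ directly, so you do not need the contrapositive or the bounded/unbounded split (your case analysis is exactly what justifies that uniform constant).
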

\begin{proof}
    Since $\f{1}{t-s} < a$, Lemma \ref{lem:Landscape_global_bound} implies that 
    $\Ll(y,s;0,t) \ge -ay^2 - C$ for a random constant $C$ depending on $t$ and $s$. Then,
    \[
    \sup_{y \in \R}[f(y) - ay^2] \le C + \sup_{y \in \R}[f(y)  + \Ll(y,s;0,t)] = C + \kpzs_t(0;f,s) < \infty. \qedhere
    \]
\end{proof}

\begin{lemma} \label{lem:general_restrict_max}
    The following holds on a single event of probability one. Let $s \in \R$, and let $f:\R \to [-\infty,\infty)$ be any function which satisfies $f(y^\star) > -\infty$ for some $y^\star \in \R$ and $\sup_{y \in \R}[f(y) - ay^2] < \infty$ for all $a > 0$. Then, for any compact subset $K \subseteq \R \times \R_{>s}$, there exists $A = A(s,K) > 0$ so that for all $(x,t) \in K$,
    \be \label{eq:restrict_max}
    \kpzs_t(x;f,s) = \sup_{y \in [-A,A]}[f(y) + \Ll(y,s;x,t)] > \sup_{|y| > A}[f(y) + \Ll(y,s;x,t)].
    \ee
    Furthermore, the function 
    $
    (x,t) \mapsto \kpzs_t(x;f,s)$
    is continuous on $\R \times \R_{>s}$. 
\end{lemma}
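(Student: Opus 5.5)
The plan is to use the global shape bound for the directed landscape, Lemma \ref{lem:Landscape_global_bound}, to show that $f(y)+\Ll(y,s;x,t)$ becomes uniformly very negative for large $|y|$. Then the sup in \eqref{eq:restrict_max} is attained, or at least approached, on a compact $y$-window. Continuity will follow from continuity of $\Ll$ on that compact window.

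\textbf{Step 1: a lower bound on the solution.} Since $K\subseteq\R\times\R_{>s}$ is compact, there are $\delta>0$ and $M$ with $t-s\in[\delta,M]$ and $|x|\le M$ for all $(x,t)\in K$. By Lemma \ref{lem:Landscape_global_bound}, the function $\Ll(y^\star,s;x,t)$ is bounded below on $K$ by a constant. Hence
\[
\kpzs_t(x;f,s)\ge f(y^\star)+\Ll(y^\star,s;x,t)\ge -C_0
\]
for all $(x,t)\in K$.

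\textbf{Step 2: an upper bound on the tail.} For $|y|$ large and $(x,t)\in K$, Lemma \ref{lem:Landscape_global_bound} gives
\[
\Ll(y,s;x,t)\le -\frac{(x-y)^2}{t-s}+C_{\mathrm{DL}}M^{1/3}\log^{4/3}\!\Bigl(\tfrac{2(\|v\|+2)}{\delta}\Bigr)\log^{2/3}(\|v\|+2).
\]
The logarithmic error term is $o(y^2)$ uniformly over $K$. Also
\[
-\frac{(x-y)^2}{t-s}\le -\frac{y^2}{2M}+\frac{M^2}{M}\quad\text{for }|y|\ge 2M,
\]
up to adjusting constants. Choose $a=\frac{1}{4M}$. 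The hypothesis gives $f(y)\le C_a+ay^2$. Together these yield
\[
f(y)+\Ll(y,s;x,t)\le C_a-\frac{y^2}{4M}+o(y^2).
\]
This is below $-C_0-1$ for $|y|>A$, uniformly in $(x,t)\in K$, which proves the strict inequality and equality in \eqref{eq:restrict_max}.

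\textbf{Step 3: continuity.} Given $(x_0,t_0)\in\R\times\R_{>s}$, take a compact neighborhood $K$ of it and the $A$ from Step 2. On $K$, the solution equals
\[
\sup_{y\in[-A,A]}\bigl[f(y)+\Ll(y,s;x,t)\bigr].
\]
$\Ll$ is uniformly continuous on the compact set $[-A,A]\times\{s\}\times K$, which lies inside $\Rup$. Hence, uniformly in $y\in[-A,A]$, the maps $(x,t)\mapsto f(y)+\Ll(y,s;x,t)$ with $f(y)>-\infty$ form an equicontinuous family. The $y$ with $f(y)=-\infty$ contribute nothing. A supremum of an equicontinuous family that is finite, bounded below by Step 1 and above by Step 2, is continuous; the bound $|\sup_y g_y(p)-\sup_y g_y(q)|\le\sup_y|g_y(p)-g_y(q)|$ gives this directly.

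\textbf{Main obstacle.} The main delicate point is Step 2: making the logarithmic error in Lemma \ref{lem:Landscape_global_bound} uniform over $K$ and over large $|y|$. This is routine once $t-s\ge\delta$ is used to control $\log\bigl(\frac{2(\|v\|+2)}{t-s}\bigr)\le \log\bigl(\frac{2(\|v\|+2)}{\delta}\bigr)$. The statement holds on the single full-probability event of Lemma \ref{lem:Landscape_global_bound} together with continuity of $\Ll$, independently of $s$ and $f$.
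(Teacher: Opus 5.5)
Your proposal is correct and follows essentially the same route as the paper: a uniform lower bound on $K$ from the point $y^\star$, a quadratic tail bound from Lemma \ref{lem:Landscape_global_bound} that beats $f(y)\le C+ay^2$ once $a$ is below the landscape's quadratic coefficient on $K$, and continuity from uniform continuity of $\Ll$ over the compact window $y\in[-A,A]$. The only cosmetic difference is that the paper obtains the lower bound at $y^\star$ from continuity of $\Ll$ and explicitly takes $A>|y^\star|$; in your argument the second point is automatic, since $y^\star$ cannot lie in the tail region.
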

\begin{proof}
   We first prove \eqref{eq:restrict_max}. By the assumption that $f(y^\star) > -\infty$ and by continuity of the directed landscape, we have 
    \[
    M_K := \inf_{(x,t) \in K}[f(y^\star) + \Ll(y^\star,s;x,t)] > -\infty
    \]
    By compactness of $K$, we may choose 
    $c \in \bigl(0, \inf\{\f{1}{t-s}: (x,t) \in K\}\bigr)$. Since $K$ is a compact set of $\R \times \R_{>s}$, $t-s$ is also bounded away from $0$ for $(x,t) \in K$, so by Lemma \ref{lem:Landscape_global_bound}, there exists a constant $C_1 = C_1(s,K)$ so that 
    \[
    \Ll(y,s;x,t) \le -cy^2 + C_1,\quad\text{for all }y \in \R \text{ and }(x,t) \in K.
    \]
    Then, setting $a = \f{c}{2}$, our assumption implies there is another constant $C_2 > 0$ so that $f(y) \le ay^2 + C_2$ for all $y \in \R$. Then, choose $A = A(C_1,C_2)$ sufficiently large so that $A > |y^\star|$ and
    \[
     C_1 + C_2 + \sup_{y > |A|}[(a-c)y^2] < M_K.
    \]
    Then, for all $(x,t) \in K$, recalling the definition of $M_K$, and since $A > |y^\star|$,
    \[
    \sup_{y > |A|}[f(y) + \Ll(y,s;x,t)] \le C_1 + C_2 + \sup_{y > |A|}[(a-c)y^2] < M_K \le \sup_{y \in [-A,A]}[f(y) + \Ll(y,s;x,t)],
    \]
    thus proving \eqref{eq:restrict_max}. To see the continuity, let $(x,t) \in \R \times \R_{>s}$, and let $(x_n,t_n)$ be a sequence converging to $(x,t)$. By \eqref{eq:restrict_max}, there exists $A > 0$ so that for all sufficiently large $n$, 
    \be \label{eq:restrict_to_A}
\begin{aligned}
    \kpzs_t(x;f,s) &= \sup_{y \in [-A,A]}[f(y) + \Ll(y,s;x,t)],\quad \text{and}\\
    \kpzs_{t_n}(x_n;f,s) &= \sup_{y \in [-A,A]}[f(y) + \Ll(y,s;x_n,t_n)].
\end{aligned}
    \ee
    By continuity of the directed landscape, the function $\Ll(y,s;x_n,t_n)$ converges to $ \Ll(y,s;x,t)$, uniformly over $y \in [-A,A]$. Hence, for $\ve > 0$, and all sufficiently large $n$, 
    \[
    -\ve + \sup_{y \in [-A,A]}[f(y) + \Ll(y,s;x,t)] \le\sup_{y \in [-A,A]}[f(y) + \Ll(y,s;x_n,t_n)] \le \ve + \sup_{y \in [-A,A]}[f(y) + \Ll(y,s;x,t)].
    \]
    Hence, by \eqref{eq:restrict_to_A}, we have that  $\lim_{n \to \infty}\kpzs_{t_n}(x_n;f,s) = \kpzs_t(x;f,s)$, as desired. 
\end{proof}

\begin{corollary} \label{cor:cts_bounded}
    Let $b$ be a finite eternal solution to the KPZ fixed point. Then, $(x,t) \mapsto b(x,t)$ is continuous, and for each $s \in \R$ and $(x,t) \in \R$, the set of maximizers of $y \mapsto b(y,s) + \Ll(y,s;x,t)$ is nonempty and bounded. 
\end{corollary}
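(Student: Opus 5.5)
The plan is to reduce everything to Lemmas \ref{lem:f-ay_bd} and \ref{lem:general_restrict_max}, applied with initial data $f := b(\cdot,s)$ at a fixed time $s$.

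\textbf{Checking the hypotheses.} Fix $s\in\R$. Since $b$ is a finite eternal solution, $b(\cdot,s)$ is real-valued, so $f(y^\star)>-\infty$ for every $y^\star$. Moreover, for every $t>s$ we have
\[
\kpzs_t(0;f,s)=b(0,t)<\infty .
\]
Given $a>0$, choose $t>s$ large enough that $\frac{1}{t-s}<a$. Lemma \ref{lem:f-ay_bd} then gives $\sup_y[f(y)-ay^2]<\infty$. As $a>0$ is arbitrary, $f$ satisfies all the hypotheses of Lemma \ref{lem:general_restrict_max}.

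\textbf{Continuity.} By Lemma \ref{lem:general_restrict_max}, the map $(x,t)\mapsto \kpzs_t(x;f,s)=b(x,t)$ is continuous on $\R\times\R_{>s}$. Given any point $(x_0,t_0)$, take $s=t_0-1$; then $b$ is continuous on a neighborhood of $(x_0,t_0)$. Since $(x_0,t_0)$ was arbitrary, $b$ is continuous on all of $\R^2$. The event of probability one here is the single event from the lemmas, which do not depend on $s$, so no countable-union issue arises.

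\textbf{Maximizers.} Fix $s<t$ and $x$, and apply \eqref{eq:restrict_max} with $K=\{(x,t)\}$. This gives $A$ such that the supremum over $[-A,A]$ strictly exceeds the supremum over $|y|>A$, so every maximizer lies in $[-A,A]$ and the maximizer set is bounded. For nonemptiness, note that $y\mapsto b(y,s)+\Ll(y,s;x,t)$ is continuous, because $b(\cdot,s)$ is continuous by the previous step and $\Ll$ is continuous. A continuous function attains its supremum on the compact interval $[-A,A]$, and this supremum equals the global one.

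\textbf{Main obstacle.} The only subtle point is ordering the argument correctly: continuity of $b(\cdot,s)$ is needed before attaining the maximum, and it is obtained from an earlier time level $s'<s$. This is harmless since continuity holds at every level, so the argument is not circular.
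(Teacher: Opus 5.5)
Your proof is correct and follows the same route as the paper. You use Lemma \ref{lem:f-ay_bd}, taking $t$ large so that every $a>0$ is covered, to verify the growth hypothesis for $f=b(\cdot,s)$, and then Lemma \ref{lem:general_restrict_max} gives both the continuity and the localization of maximizers. You also make explicit two details the paper's one-line proof leaves implicit: passing to an earlier time level to get continuity at every point, and using that continuity to ensure the supremum over $[-A,A]$ is attained.
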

\begin{proof}
    Recall that a finite eternal solution satisfies
    \[
   -\infty <  b(x,t) = \sup_{y \in \R}[b(y,s) + \Ll(y,s;x,t)] < \infty
    \]
    for all $s < t$ and $x \in \R$. Then, by Lemma \ref{lem:f-ay_bd}, for any $s \in \R$ and $a > 0$,
    \[
    \sup_{y \in \R}[b(y,s) - ay^2] < \infty. 
    \]
    The result now follows from  Lemma \ref{lem:general_restrict_max}. 
\end{proof}

\begin{lemma} \cite[Lemma A.12]{bhat-bus-soren}\label{lem:geodesics_from_b}
Let $b:\R^2 \to \R$ be an eternal solution for the directed landscape. For $(x,t) \in \R^2$, define $g_{(x,t)}^{b,L/R}$ as in Definition \ref{def:b-geodesics}.
Then, the following hold
\begin{enumerate} [label=(\roman*), font=\normalfont]
\item \label{itm:geod} $g_{(x,t)}^{b,L}:(-\infty,t] \to \R$ \rm{(}resp. $g_{(x,t)}^{b,R}:(-\infty,t] \to \R$\rm{)} is a semi-infinite geodesic that is the leftmost (resp. rightmost) directed landscape geodesic between any two of its points. In particular, $g_{(x,t)}^{b,L}$ and $g_{(x,t)}^{b,R}$ are continuous functions $(-\infty,t] \to \R$.
\item \label{itm: consis} Let $s < r < t$ and $x \in \R$, and let $
w = g_{(x,t)}^{b,L}(r)$.
Then,
\[
g_{(x,t)}^{b,L}(s) = g_{(w,r)}^{b,L}(s),
\]
 In particular, recalling Definition \ref{def:color}, we have $\dir_{x,t}^b = \dir_{w,r}^b$. The analogous statement holds when replacing $L$ with $R$.
\item \label{itm: general}
More generally, let $t = s_0 > s_1 > \cdots$ be any decreasing sequence with $\lim_{n \to \infty} s_n = -\infty$. Set $g(t) = x$, and for each $i \ge 1$, let $g(s_i)$ be \textit{any} maximizer of $z \mapsto b(z,s_{i}) + \Ll(z,s_i; g(s_{i - 1}),s_{i - 1})$ over $z \in \R$. Then, pick \textit{any} point-to-point geodesic of $\Ll$ from $(g(s_{i}),s_{i})$ to $(g(s_{i-1}),s_{i-1})$, and for $s_{i} < s < s_{i-1}$, let $g(s)$ be the location of this geodesic at time $s$. Then, $g:(-\infty,t] \to \R$ is a semi-infinite geodesic. 
\end{enumerate}
\end{lemma}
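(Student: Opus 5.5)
The approach is to derive everything from one composition identity, which comes from the eternal property together with the metric composition law $\Ll(y,s;x,t)=\sup_z[\Ll(y,s;z,r)+\Ll(z,r;x,t)]$ for $s<r<t$. By Corollary \ref{cor:cts_bounded}, $b$ is continuous and all maximizer sets are nonempty and bounded. Exchanging suprema gives
\[
b(x,t)=\sup_{z}\bigl[b(z,r)+\Ll(z,r;x,t)\bigr]=\sup_{y,z}\bigl[b(y,s)+\Ll(y,s;z,r)+\Ll(z,r;x,t)\bigr].
\]
Consequently, $y$ maximizes $y\mapsto b(y,s)+\Ll(y,s;x,t)$ if and only if there is a $z$ on a geodesic from $(y,s)$ to $(x,t)$ at time $r$ such that $z$ maximizes at level $r$ for $(x,t)$ and $y$ maximizes at level $s$ for $(z,r)$. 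I will prove item \ref{itm: general} first and then use it for the other two items.

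\textbf{Item \ref{itm: general}.} Let $g$ be the concatenated path. Each piece satisfies $b(g(s_{i-1}),s_{i-1})-b(g(s_i),s_i)=\Ll(g(s_i),s_i;g(s_{i-1}),s_{i-1})$, and intermediate points of the chosen point-to-point geodesic split this length additively. So for any $u<v\le t$ and any partition of $[u,v]$ refining the $s_i$,
\[
\Ll(g(u),u;g(v),v)\ \ge\ \sum\Ll(\text{pieces})\ =\ b(g(v),v)-b(g(u),u)\ \ge\ \Ll(g(u),u;g(v),v).
\]
The last inequality is the eternal property. Hence all of these are equalities. By the reverse triangle inequality \eqref{triangle}, $g|_{[u,v]}$ is then a geodesic, and it satisfies \eqref{b=Landscape}. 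Continuity holds because each piece is a continuous point-to-point geodesic and the pieces agree at the junction times $s_i$.

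\textbf{Item \ref{itm: consis}.} Fix $s<r<t$, set $w=g^{b,L}_{(x,t)}(r)$, $y=g^{b,L}_{(x,t)}(s)$, and $y'=g^{b,L}_{(w,r)}(s)$.
\begin{enumerate}
\item First, $y'$ is a maximizer for $(x,t)$:
\[
b(y',s)+\Ll(y',s;x,t)\ \ge\ b(y',s)+\Ll(y',s;w,r)+\Ll(w,r;x,t)\ =\ b(w,r)+\Ll(w,r;x,t)\ =\ b(x,t).
\]
Since $y$ is the leftmost maximizer for $(x,t)$, this gives $y\le y'$.
\item Conversely, the composition identity gives a $z$ with $y$ a maximizer for $(z,r)$ and $z$ a maximizer for $(x,t)$. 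Because $w$ is the leftmost maximizer for $(x,t)$, $z\ge w$.
\item Next I need monotonicity of leftmost maximizers in the endpoint: if $w\le z$, then the leftmost maximizer for $(w,r)$ is at most any maximizer for $(z,r)$. This follows from the quadrangle (crossing) inequality $\Ll(a,s;w,r)+\Ll(a',s;z,r)\ge \Ll(a,s;z,r)+\Ll(a',s;w,r)$ for $a<a'$, $w<z$, by the standard exchange argument. Applied here it gives $y'\le y$.
\end{enumerate}
Hence $y=y'$, which is the consistency claim. The statement about colors follows immediately, since the two geodesics coincide below time $r$.

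\textbf{Item \ref{itm:geod}.} Consistency shows that $g^{b,L}_{(x,t)}$ is exactly a path of the form in item \ref{itm: general}, with leftmost maximizers at a sequence $s_i\to-\infty$. It remains to check that between consecutive times the path agrees with a point-to-point geodesic, not just at the sampled times; this is the main obstacle.
\begin{enumerate}
\item For $u<v$, consistency shows that each $g(r)$ with $r\in(u,v)$ is the leftmost maximizer for $(g(v),v)$ at level $r$.
\item Take the leftmost point-to-point geodesic $\gamma$ from $(g(u),u)$ to $(g(v),v)$; it exists by precompactness, Lemma \ref{lem:precompact}. By the item \ref{itm: general} computation, every point $\gamma(r)$ is a maximizer at level $r$ for $(g(v),v)$, so $g(r)\le\gamma(r)$.
\item By the composition identity, $g(r)$ lies on some geodesic from $(g(u),u)$ to $(g(v),v)$. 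Since $\gamma$ is the leftmost such geodesic, $\gamma(r)\le g(r)$.
\end{enumerate}
Therefore $g=\gamma$ on $[u,v]$, which gives continuity and the leftmost property between any two of its points. The rightmost case is symmetric.
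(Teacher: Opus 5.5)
Your proof is correct. The paper gives no argument for this lemma; it imports it from \cite[Lemma A.12]{bhat-bus-soren}. So your write-up is a self-contained replacement rather than a variant of a proof in the text. The structure is the natural one:
\begin{itemize}
\item for (iii), the sandwich between the eternal inequality and the reverse triangle inequality;
\item for (ii), the exchange argument with the crossing inequality $\Ll(a,s;w,r)+\Ll(a',s;z,r)\ge \Ll(a,s;z,r)+\Ll(a',s;w,r)$, which gives monotonicity of leftmost maximizers;
\item for (i), squeezing $g(r)$ between the leftmost-maximizer property and the leftmost-geodesic property.
\end{itemize}
What this buys is transparency. The lemma visibly needs only the eternal property, metric composition, planarity of geodesics, existence of leftmost geodesics, and Corollary \ref{cor:cts_bounded}. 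That corollary is proved in the appendix without using this lemma, so nothing is circular.

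\textbf{Points to tighten.}
\begin{itemize}
\item In (iii), the equality $\sum\Ll(\text{pieces})=b(g(v),v)-b(g(u),u)$ is not just additivity along the chosen geodesics when $u$ or $v$ lies strictly inside some $(s_i,s_{i-1})$. You need $\Ll(g(a),a;g(c),c)=b(g(c),c)-b(g(a),a)$ for $s_i\le a<c\le s_{i-1}$. This follows by summing the eternal inequalities over the sub-pieces of $[s_i,s_{i-1}]$ and comparing with the equality on the full piece.
\item In step 3 of (i) you are using the converse direction of your composition identity. Both maximizing conditions hold, with $g(u)$ a maximizer for $(g(r),r)$ by consistency, and together they force $\Ll(g(u),u;g(v),v)=\Ll(g(u),u;g(r),r)+\Ll(g(r),r;g(v),v)$. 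State it that way, since your identity as phrased presupposes that $z$ already lies on a geodesic.
\item Existence of a leftmost geodesic needs existence of geodesics between all pairs plus closure under pointwise minimum (by crossing), as in \cite{Directed_Landscape}. Lemma \ref{lem:precompact} alone presupposes that geodesics exist.
\item The $R$-version of (ii) requires the mirrored exchange argument for rightmost maximizers.
\end{itemize}
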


\begin{lemma} \label{lem:one_extension}
    Let $\wt \varphi$ be any function $\Xs\rightarrow [-\infty,\infty)$.   Define the function $ \varphi:\Xs \to [-\infty,\infty]$ by 
    \be \label{eq:extension}
    \varphi(\xi) := \limsup_{\eta \to \xi}\wt \varphi(\eta) =  \lim_{\ve \downarrow 0} \sup_{0 \le |\eta - \xi| \le \ve} \wt \varphi(\eta),
    \ee
    where we have added the second equality to clarify that we allow $\eta = \xi$ in the supremum. 
    Then, $\varphi \in \operatorname{USC}(\Xs)$, and $W\star  \varphi = W\star \wt \varphi$.
\end{lemma}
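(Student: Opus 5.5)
We must prove Lemma \ref{lem:one_extension}: $\varphi(\xi):=\limsup_{\eta\to\xi}\wt\varphi(\eta)$, with $\eta=\xi$ allowed, is upper semicontinuous and satisfies $W\star\varphi=W\star\wt\varphi$. The plan has two parts, handled in that order.

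\textbf{Upper semicontinuity.} This is the standard fact that the upper envelope is USC, and we check it directly in the metric $d_{\Xs}$ of Lemma \ref{lem:X_metric}. Fix $\xi$ and $\ve>0$. Every $\xi'$ with $d(\xi',\xi)<\ve/2$ satisfies
\[
\varphi(\xi')\le \sup_{d(\eta,\xi')\le \ve/2}\wt\varphi(\eta)\le \sup_{d(\eta,\xi)\le\ve}\wt\varphi(\eta).
\]
Hence $\limsup_{\xi'\to\xi}\varphi(\xi')\le \sup_{d(\eta,\xi)\le\ve}\wt\varphi(\eta)$ for every $\ve>0$. Letting $\ve\downarrow0$ gives $\limsup_{\xi'\to\xi}\varphi(\xi')\le\varphi(\xi)$. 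The value $+\infty$ is allowed here, matching the stated codomain $[-\infty,\infty]$.

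\textbf{Equality of the convolutions.} Since $\eta=\xi$ is included in the supremum, $\varphi\ge\wt\varphi$, so $W\star\varphi\ge W\star\wt\varphi$. For the reverse inequality, fix $(x,t)$ and $\xi$. By the strong continuity of the Busemann process (Proposition \ref{prop:Buse_basic_properties}\ref{itm:general_cts}, applied to the single point $(0,0;x,t)$), there is $\delta>0$ such that $W^\eta(0,0;x,t)=W^\xi(0,0;x,t)$ for all $\eta$ with $d(\eta,\xi)<\delta$. To justify this, argue by contradiction: if no such $\delta$ existed, we could pick $\eta_n\to\xi$ with $W^{\eta_n}\neq W^\xi$ at that point, which contradicts the eventual-equality statement. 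Then for $\ve<\delta$,
\[
\sup_{d(\eta,\xi)\le\ve}\wt\varphi(\eta)+W^\xi(0,0;x,t)=\sup_{d(\eta,\xi)\le\ve}\bigl[\wt\varphi(\eta)+W^\eta(0,0;x,t)\bigr]\le (W\star\wt\varphi)(x,t).
\]
Letting $\ve\downarrow0$ gives $\varphi(\xi)+W^\xi(0,0;x,t)\le (W\star\wt\varphi)(x,t)$, and taking the supremum over $\xi$ yields the claim. If one prefers not to use exact local constancy, ordinary continuity suffices: it gives an extra error of at most $\ve'$ that vanishes in the limit.

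\textbf{Main obstacle.} The only subtle point is the topology of $\Xs$. One-sided neighbourhoods at $\Xi^\pm$ must be used, and they are exactly what makes $\xi\mapsto W^\xi$ continuous, and in fact locally constant at fixed points. Working throughout with the metric $d$ and the continuity statement of Proposition \ref{prop:Buse_basic_properties} handles this uniformly, with no case split over $\Xi^c,\Xi^-,\Xi^+$.
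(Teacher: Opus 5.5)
Your proposal is correct and follows the paper's proof: the same $\ve/2$ triangle-inequality argument gives upper semicontinuity, and $\varphi\ge\wt\varphi$ together with continuity of the Busemann process gives equality of the convolutions. The only cosmetic difference is that you take suprema over small balls and use local constancy, whereas the paper picks a sequence $\xi_n\to\xi_0$ with $\wt\varphi(\xi_n)\to\varphi(\xi_0)$ and passes to the limit.
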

\begin{remark}
    This lemma allows the case where $(W\star \varphi)(x,t) = \infty$ for some $(x,t) \in \R^2$, although we shall always work with functions such that $W\star \varphi < \infty$ in practice. 
\end{remark}
\begin{proof}
    This is a standard fact about upper semicontinuous extensions, and we provide a proof for completeness.   For all $\xi \in \Xs$, 
    \[
    \limsup_{\eta \to \xi} \varphi(\eta) = \limsup_{\eta \to \xi}\limsup_{\zeta \to \eta} \wt \varphi(\zeta) \le \lim_{\ve \downarrow 0}\sup_{|\eta -\xi| \le \f{\ve}{2}} \sup_{|\zeta - \eta| \le \f{\ve}{2}} \wt \varphi(\zeta) \le \lim_{\ve \downarrow 0} \sup_{|\zeta - \xi| \le \ve} \wt \varphi(\zeta) =  \varphi(\xi).
    \]
Thus, $ \varphi$ is upper semicontinuous. 

 It follows immediately from the definition that $ \varphi(\xi) \ge \wt \varphi(\xi)$ for all $\xi \in \Xs_\omega$, which immediately gives us $(W\star \varphi)(x,t) \ge (W\star \wt \varphi)(x,t)$ for all $(x,t) \in \R^2$. To show the opposite inequality, note that by definition, for each $\xi_0 \in \Xs$, there exists a sequence $\xi_n \to \xi_0$ such that $\wt \varphi(\xi_n) \to  \varphi(\xi_0)$ as $n \to \infty$. Then, for each $(x,t) \in \R^2$, by continuity of the Busemann process, 
    \begin{align*}
     \varphi(\xi_0) + W^{\xi_0}(0,0;x,t) &= \lim_{n \to \infty} [ \wt \varphi(\xi_n) + W^{\xi_n}(0,0;x,t)]  \\
    &\le \sup_{\xi \in \Xs_\omega}[\wt \varphi(\xi) + W^{\xi}(0,0;x,t)] = (W\star \wt \varphi)(x,t),
    \end{align*}
    and the result now follows by taking the supremum of the left-hand side over $\xi_0 \in \Xs_\omega$.
\end{proof}

\begin{lemma} \label{lem:USC_bd_cpct}
    If $\varphi:\Xs_\omega\rightarrow[-\infty,\infty)$ is upper semicontinuous, then it is bounded on compact sets. 
\end{lemma}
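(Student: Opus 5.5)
The plan is to reduce compact subsets of $\Xs$ to closed intervals via Lemma \ref{lem:compact_interval}, and then use the standard compactness argument for upper semicontinuous functions. Only boundedness from above needs proof: since $\varphi$ takes values in $[-\infty,\infty)$, boundedness from below is not meaningful, and the paper only uses $\sup_K\varphi<\infty$.

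Let $K\subseteq\Xs$ be compact, and suppose $K$ is nonempty (otherwise the claim is vacuous). Assume, for contradiction, that $\sup_{\xi\in K}\varphi(\xi)=\infty$. Then there is a sequence $\{\xi_n\}\subseteq K$ with $\varphi(\xi_n)\ge n$. By Lemma \ref{lem:X_metric}, $\Xs$ is a metric space, so compactness of $K$ is equivalent to sequential compactness. We may therefore pass to a subsequence $\xi_{n_k}\to\xi\in K$. Upper semicontinuity then gives
\[
\infty=\limsup_{k\to\infty}\varphi(\xi_{n_k})\le \varphi(\xi)<\infty,
\]
which is a contradiction.

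One subtlety is that the definition of upper semicontinuity is stated as $\limsup_{\eta\to\xi}\varphi(\eta)\le\varphi(\xi)$. If the $\limsup$ there is taken over $\eta\neq\xi$, the subsequence might be eventually equal to $\xi$. In that case $\varphi(\xi)\ge n_k$ for all large $k$, which contradicts $\varphi(\xi)<\infty$ directly, so this case poses no difficulty.

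As a side remark, $\varphi$ actually attains its supremum on $K$ whenever $K$ is nonempty, by the same subsequence argument applied to a maximizing sequence. The statement only needs finiteness, so this stronger conclusion is not required.

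I expect no real obstacle in this argument. The only points needing care are invoking metrizability (Lemma \ref{lem:X_metric}) to pass from compactness to sequential compactness, and handling the edge case above where the subsequence stabilizes at $\xi$.
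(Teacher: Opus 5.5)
Your proof is correct and is essentially the paper's argument: assume the supremum over a compact set is infinite, extract a sequence with $\varphi(\xi_n)\to\infty$, pass to a convergent subsequence using sequential compactness, and contradict upper semicontinuity at the limit. Your extra handling of subsequences that hit $\xi$ infinitely often is harmless but not needed, since the paper's convention (made explicit in Lemma \ref{lem:one_extension}) allows $\eta=\xi$ in the $\limsup$; you also state the reduction to intervals via Lemma \ref{lem:compact_interval} but never actually use it.
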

\begin{proof}
    Suppose not. Then, there exists a bounded sequence $\xi_n$ so that $\varphi(\xi_n) \to \infty$. By compactness, there exists a subsequence $n_k$ so that $x_{n_k} \to x$ for some $x$. But this is a contradiction, because upper semicontinuity implies 
    \[
    \infty = \limsup_{k \to \infty} \varphi(x_{n_k}) \le  \varphi(x) < \infty. \qedhere
    \]
\end{proof}
\begin{lemma}\label{lm:mincon}
    
For almost every $\omega$, $(\varphi_1,\varphi_2)\in\fun_\omega\times\fun_\omega$ is a continuity point of $(\fun_\omega,\wedge)$ if and only if $\supp(\varphi_1)\cap \supp(\varphi_2)=\varnothing$.
\end{lemma}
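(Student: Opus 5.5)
We prove the two directions separately. Throughout, continuity is with respect to the topology $\mathcal T_\fun$ of Definition \ref{def:fun_convergence} on each factor, i.e.\ convergence in Condition \eqref{it:c1} together with PK convergence of hypographs, characterized by \eqref{lephi}--\eqref{recov_phi}. Here $\supp(\varphi) = \{\dir : \varphi(\dir) > -\infty\}$.

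\textbf{Sufficiency.} Assume $\supp(\varphi_1)\cap\supp(\varphi_2)=\varnothing$, so $\varphi_1\wedge\varphi_2\equiv-\infty$. Let $(\varphi_1^n,\varphi_2^n)\to(\varphi_1,\varphi_2)$. We must show $\varphi_1^n\wedge\varphi_2^n\to-\infty$ in $\mathcal T_\fun$.

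1. Condition \eqref{it:c1} for the minimum is inherited from either sequence, since $\varphi_1^n\wedge\varphi_2^n\le\varphi_1^n$.
2. Condition \eqref{recov_phi} is trivial for the limit $-\infty$: take $\dir_n=\dir$.
3. For \eqref{lephi}, let $\dir_n\to\dir$. By \eqref{lephi} applied to each sequence,
\[
\limsup_n (\varphi_1^n\wedge\varphi_2^n)(\dir_n)\le \varphi_1(\dir)\wedge\varphi_2(\dir)=-\infty,
\]
because $\dir$ lies outside at least one of the two supports.

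So the disjoint-support pairs are continuity points.

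\textbf{Necessity.} Suppose $\dir_0\in\supp(\varphi_1)\cap\supp(\varphi_2)$, and set $m:=\varphi_1(\dir_0)\wedge\varphi_2(\dir_0)>-\infty$. We construct sequences $\varphi_1^n\to\varphi_1$ and $\varphi_2^n\to\varphi_2$ whose supports near $\dir_0$ are disjoint.

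The natural choice is to delete $\dir_0$ from one function and keep it in the other. This works only if $\dir_0$ is not isolated in $\Xs$. Every point of $\Xs$ is non-isolated, since each $I_\ve(\dir)$ is an infinite interval ($\Xi$ is dense and the one-sided intervals are nondegenerate). So pick $\eta_n\to\dir_0$ with $\eta_n\neq\dir_0$ and define:
\begin{itemize}
\item $\varphi_1^n:=\varphi_1$;
\item $\varphi_2^n(\dir):=-\infty$ for $\dir\in U_n$, where $U_n$ is a small closed neighborhood of $\dir_0$ shrinking to $\{\dir_0\}$ and not containing $\eta_n$;
\item $\varphi_2^n(\eta_n):=\varphi_2(\dir_0)$;
\item $\varphi_2^n:=\varphi_2$ elsewhere.
\end{itemize}
Upper semicontinuity may require replacing $\varphi_2^n$ by its usc regularization (Lemma \ref{lem:one_extension}). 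Choosing $U_n$ as an interval with the boundary handled carefully preserves the growth condition.

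We then check that $\varphi_2^n\to\varphi_2$:
\begin{itemize}
\item \eqref{recov_phi} holds at $\dir_0$ via $\eta_n$.
\item \eqref{recov_phi} holds at $\dir\neq\dir_0$, since $\dir\notin U_n$ eventually.
\item \eqref{lephi} holds because the values are bounded by those of $\varphi_2$ on nearby points, and $\varphi_2$ is usc.
\end{itemize}

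Now observe that $(\varphi_1^n\wedge\varphi_2^n)(\dir)=-\infty$ for all $\dir\in U_n$. The PK limit of the hypographs of $\varphi_1\wedge\varphi_2^n$ must still dominate $\varphi_1\wedge\varphi_2$, which equals $m>-\infty$ at $\dir_0$. However, any sequence $\dir_n\to\dir_0$ has $(\varphi_1\wedge\varphi_2^n)(\dir_n)$ controlled by $\varphi_1(\dir_n)\wedge\varphi_2^n(\dir_n)$.

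This is the main obstacle: at $\eta_n$ we have $\varphi_1(\eta_n)$, which may stay near $m$ if $\varphi_1$ is continuous at $\dir_0$. Two fixes are available.

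\emph{Fix 1.} Also lower $\varphi_1$ at $\eta_n$. Take $\varphi_1^n=\varphi_1$ off $\{\eta_n\}$ and $\varphi_1^n(\eta_n)=-\infty$, and use sequences $\eta_n$ and $U_n$ that interleave. Then $\varphi_1^n$ lives where $\varphi_2^n$ does not, on a neighborhood $V_n$ of $\dir_0$. For $\varphi_1^n\to\varphi_1$, removing single points is harmless, because $\dir_0$ remains in $\supp\varphi_1^n$ and recovery sequences exist.

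\emph{Fix 2 (cleaner).} Let $\varphi_1^n=\varphi_1$, and let $\varphi_2^n$ be $\varphi_2$ with a neighborhood $U_n$ of $\dir_0$ set to $-\infty$, except that one point $\eta_n\in U_n$ receives the value $\varphi_2(\dir_0)$. Then arrange $\varphi_1(\eta_n)$ to be small, for instance by additionally modifying $\varphi_1^n(\eta_n):=-\infty$, as in Fix 1. Both modifications converge, since single-point changes converging to $\dir_0$ with limiting value $\varphi_i(\dir_0)$ preserve \eqref{lephi} and \eqref{recov_phi}.

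Conclusion. With Fix 2, $\varphi_1^n\wedge\varphi_2^n\equiv-\infty$ on $U_n$, so the PK upper limit at $\dir_0$ is $-\infty<m$. Hence the minima fail to converge to $\varphi_1\wedge\varphi_2$, and $(\varphi_1,\varphi_2)$ is not a continuity point.
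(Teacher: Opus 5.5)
Your sufficiency argument is correct. Applying \eqref{lephi} to each factor and using $\limsup_n(a_n\wedge b_n)\le\limsup_n a_n\wedge\limsup_n b_n$ is, if anything, tidier than the paper's pointwise argument. The necessity direction, however, has a genuine gap.

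Since $\varphi_i^n\to\varphi_i$, your own step~3 shows that \eqref{lephi} and Condition \eqref{it:c1} hold automatically for $\varphi_1^n\wedge\varphi_2^n$. So convergence of the minima can only fail through \eqref{recov_phi}. The relevant quantity at $\dir_0$ is $\sup\{\limsup_n(\varphi_1^n\wedge\varphi_2^n)(\zeta_n):\zeta_n\to\dir_0\}$, not the values on $U_n$.

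In general your $U_n$ must shrink to $\dir_0$; otherwise $\varphi_2^n$ cannot recover $\varphi_2$ at points of $\supp\varphi_2$ accumulating at $\dir_0$. You can then take $\zeta_n\to\dir_0$ with $\zeta_n\notin U_n\cup\{\eta_n\}$, where $\varphi_1^n\wedge\varphi_2^n=\varphi_1\wedge\varphi_2$. The reason you give for single-point removals being harmless for $\varphi_1^n\to\varphi_1$ makes them equally harmless for the minima.

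Concretely, let $\varphi_1=\varphi_2$ equal $0$ on $\{|\mathcal P(\dir)|\le 1\}$ and $-\infty$ elsewhere, with $\mathcal P(\dir_0)=0$ and $U_n=\{|\mathcal P(\dir)|<1/n\}$. Then $\mathcal P(\zeta_n)=2/n$ gives $(\varphi_1^n\wedge\varphi_2^n)(\zeta_n)=0=m$. In fact $\varphi_1^n\wedge\varphi_2^n\to\varphi_1\wedge\varphi_2$ in $\mathcal T_\fun$, so no discontinuity is exhibited. The same happens whenever $\dir_0$ is a limit of other points where $\varphi_1\wedge\varphi_2$ is close to $m$.

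There is also a membership problem. Setting $\varphi_1^n(\eta_n):=-\infty$ breaks upper semicontinuity at $\eta_n$ whenever $\varphi_1$ is finite at points accumulating at $\eta_n$, so $\varphi_1^n\notin\fun$. The envelope of Lemma \ref{lem:one_extension} restores $\varphi_1^n(\eta_n)=\limsup_{\zeta\to\eta_n}\varphi_1(\zeta)$ and undoes the modification. Likewise, setting $\varphi_2^n=-\infty$ on a closed $U_n$ breaks upper semicontinuity on $\partial U_n$.

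The missing idea is that the two approximants must have disjoint supports on a region that does not shrink, while each support becomes dense at a scale tending to $0$. The paper takes $D_i^n$ with $\mathcal P(D_i^n)=(2\Z+(i-1))/n$. It lets $\varphi_i^n$ be the supremum of $\varphi_i$ over a window of size $1/n$ around each point of $D_i^n$, and $-\infty$ off $D_i^n$. This gives three things:
\begin{itemize}
\item a support without accumulation points makes $\varphi_i^n$ automatically upper semicontinuous;
\item the local supremum yields \eqref{recov_phi};
\item upper semicontinuity of $\varphi_i$ yields \eqref{lephi}.
\end{itemize}
Hence $\varphi_i^n\to\varphi_i$ while $\varphi_1^n\wedge\varphi_2^n\equiv-\infty$ for every $n$, and these hypographs cannot converge to that of $\varphi_1\wedge\varphi_2\not\equiv-\infty$.

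When you implement this, choose the windows compatibly with the one-sided neighborhoods at points of $\Xi^\pm$. A symmetric window around a lattice point just to the right of some $\eta\in\Xi$ contains $\eta^-$, and \eqref{lephi} then fails at $\eta^+$ if $\varphi_i(\eta^-)>\varphi_i(\eta^+)$. So for each fixed $\eta\in\Xi$, the windows must eventually stop straddling $\eta$.
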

\begin{proof}
 Assume first that $\supp(\varphi_1)\cap \supp(\varphi_2)=\varnothing$. This implies that $\varphi_1\wedge\varphi_2\equiv -\infty$.  Assume that $(\varphi^n_1,\varphi^n_2)\rightarrow (\varphi_1,\varphi_2)$. Condition \eqref{it:c1} of Definition \ref{def:fun_convergence} automatically holds for $\varphi_1^n \wedge \varphi_2^n$ from the convergence $\varphi_1^n \to \varphi_1$. Since $\supp(\varphi_1)\cap \supp(\varphi_2) = \varnothing$,  for any $\dir\in\supp(\varphi_1)$,  $\varphi_2^n(\dir)\rightarrow-\infty$ so that $\varphi_1^n(\dir)\wedge \varphi_2^n(\dir)\rightarrow-\infty$. By a symmetric argument for $\dir\in\supp(\varphi_2)$,   we conclude that 
 \begin{equation}
     \lim_{n\rightarrow\infty} \varphi_1^n(\dir)\wedge \varphi_2^n(\dir)=-\infty, \qquad \forall \dir\in \Xs.
 \end{equation}
   Furthermore, by the characterization of hypograph convergence in \eqref{lephi}-\eqref{recov_phi}, we see that Condition \eqref{it:c2} of Definition \ref{def:fun_convergence} holds, and $(\varphi_1,\varphi_2)$ is a continuity point.

 Assume now that $\supp(\varphi_1)\cap \supp(\varphi_2)\neq\varnothing$. We now claim that  as the space $\Xs$ has no isolated points, one can find two sequences  $\{D_1^n\}$ and $\{D_2^n\}$ where $D_1^n,D_2^n\subseteq \Xs$ such that $D_1^n\cap D_2^n=\varnothing$ and $\varphi^n_i$ supported on $D_i^n$ such that $\varphi^n_i\xrightarrow{\mathrm{\mathcal{T}_\fun}} \varphi_i$. Indeed let $D_i^n$ be such that
 \begin{equation}
     \mathcal{P}(D_i^n)=\frac{2\Z+(i-1)}{n}, \qquad i\in\{1,2\},
 \end{equation}
 and for each $\dir\in D^n_i$, let
 \begin{equation}
 \varphi^n_i(\dir)=
 \begin{cases}
      \sup_{\dir'\in (\dir-n^{-1},\dir+n^{-1})}\varphi_i(\dir') & \dir\in D^n_i\\
      -\infty & \dir\in (D^n_i)^c.
 \end{cases}
 \end{equation}
 Since the support of each $\varphi_i^n$ has no limit points for fixed $n$, we see that $\varphi_i^n$ is upper semicontinuous and $\varphi_i^n \in \Phi$. Since $\varphi_i \in \Phi$, we readily see that Condition \eqref{it:c1} of Definition \ref{def:fun_convergence} holds. By upper semicontinuity of $\varphi_i$, we readily see that for any $\dir_n \to \dir \in \Xs$, we have $\limsup_{n \to \infty} \varphi_n(\dir_n) \le \varphi(\dir)$. Furthermore, taking $\dir_n^i \to \dir$ with $\dir_n^i \in D_i^n$, we see that $\lim_{n \to \infty} \varphi_n(\dir_n^i) = \varphi(\dir)$. Hence, Item \eqref{it:c2} of Definition \ref{def:fun_convergence} holds by the equivalent conditions in \eqref{lephi}-\eqref{recov_phi}. Hence,  $\varphi_i^n\xrightarrow{\mathrm{\mathcal{T}_\fun}} \varphi_i$ and $\supp(\varphi^n_1)\cap \supp(\varphi^n_2)=\varnothing$. It follows that  $\varphi_1^n\wedge\varphi_2^n=-\infty$ for every $n$. But $\varphi_1\wedge\varphi_2\neq -\infty$ which shows that $(\varphi_1,\varphi_2)$ is not a continuity point, a contradiction.
\end{proof}
\begin{lemma}\label{lm:o6}
    For almost-every $\omega$, whenever $b\in\es_\omega$ satisfies $b \not \equiv -\infty$, there exists $\{b_n\}\subseteq\es$ such that $b_n\rightarrow-\infty$ and \ $b_n\wedge b\notin \es$ for all $n\in\N$. Additionally, there exists a sequence $(b_1^n,b_2^n) \to (-\infty,-\infty)$ such that $b_i^n \in \es$ for $i = 1,2$, $b_1^n \wedge b_2^n \notin \es$ for all $n$.  
\end{lemma}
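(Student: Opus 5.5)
The plan is to work entirely on the $\fun$ side via the homeomorphism $\hm$ of Theorem \ref{thm:mr}, and then detect failure of $b_n\wedge b \in \es$ through the colors. The key tool is \eqref{eq:o60}: whenever $b_1\wedge b_2\in\es$ it equals $\hm[\hm^{-1}[b_1]\wedge\hm^{-1}[b_2]]$. So it suffices to construct $\varphi_n\to -\infty$ in $\mathcal T_\fun$ such that $b\wedge \hm[\varphi_n] \neq \hm[\varphi\wedge\varphi_n]$, where $\varphi=\hm^{-1}[b]$.

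\textbf{Construction of $\varphi_n$.} Since $b\not\equiv-\infty$, the set $\supp(\varphi)$ is nonempty, and by Theorem \ref{thm:cc}\ref{it:dominating} there is a color $\dir\in\dirs^b$. I would take $\varphi_n$ supported on one or two colors $\eta<\eta'$ that are disjoint from $\supp(\varphi)$, or at least where $\varphi$ is small. The first attempt is $\varphi_n = (-n)\mathbf 1_{\{\eta\}}$, with $-\infty$ elsewhere, for a fixed $\eta\notin\supp(\varphi)$. If $\supp(\varphi)=\Xs$, take instead $\eta$ with $\varphi(\eta)$ finite. Then $\varphi_n\to-\infty$ in $\mathcal T_\fun$, and $\hm[\varphi_n]=W^\eta(0,0;\cdot)-n$.

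If $\eta\notin\supp\varphi$, the function $\hm[\varphi\wedge\varphi_n]$ is $\equiv-\infty$. But $b\wedge(W^\eta-n)$ is finite everywhere, so it cannot equal $-\infty$ and hence is not in $\es$, by \eqref{eq:o60}. This already handles the first claim whenever $\supp\varphi\ne\Xs$. In the remaining case, $\varphi\wedge\varphi_n$ is $-n$ at $\eta$ and $-\infty$ elsewhere for large $n$, so $\hm[\varphi\wedge\varphi_n]=W^\eta-n$. We then need $b\wedge(W^\eta-n)\ne W^\eta-n$, that is, $b<W^\eta-n$ somewhere. To arrange this I would use two colors $\eta<\eta'$ and take $\varphi_n=-n$ on $\{\eta,\eta'\}$. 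Then $\hm[\varphi_n]=\max(W^\eta,W^{\eta'})-n$, while $\hm[\varphi\wedge\varphi_n]$ is the same function for large $n$. We would then need $b<\max(W^\eta,W^{\eta'})-n$ at some point, which follows if $b$ grows slower than this maximum along some direction. Taking $\eta'$ large and using Lemma \ref{lem:Buse_global}, $W^{\eta'}(0,0;0,t)\approx t\eta'^2$, whereas $b(0,t)=\sup_\dir[\varphi(\dir)+W^\dir]$ with $\varphi(\dir)\le C$; this comparison looks delicate. A simpler route is to note that $\supp\varphi=\Xs$ gives no contradiction directly, so I would instead choose $\eta,\eta'$ where $\varphi$ is very negative relative to the growth of $W$ near $\eta$. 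Since $\varphi$ is superquadratic decaying, for $|\eta|$ large we have $\varphi(\eta)+W^\eta(0,0;x,t)\ll W^\eta(0,0;x,t)-n$ at a fixed $(x,t)$ with $t>0$, while $b(x,t)$ is a fixed finite number. So pick $\eta=\eta_n$ depending on $n$, with $\varphi(\eta_n)\le -n\,\mathcal P(\eta_n)^2$, and set $\varphi_n=\varphi(\eta_n)\mathbf 1_{\{\eta_n\}}$. Then $\hm[\varphi\wedge\varphi_n]=\hm[\varphi_n]$, and we need $b(x_0,t_0)<\varphi(\eta_n)+W^{\eta_n}(0,0;x_0,t_0)$ for some point. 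Since $b\ge\varphi(\eta_n)+W^{\eta_n}$ always, this fails. So the right mechanism is the opposite one: $\hm[\varphi\wedge\varphi_n]$ must be strictly below $b\wedge\hm[\varphi_n]$ somewhere.

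\textbf{Corrected mechanism.} Use two colors $\eta<\eta'$ with $\varphi_n=c_n$ on both. Then $\hm[\varphi\wedge\varphi_n]\le\max(W^\eta,W^{\eta'})+(\text{values}\le\varphi)$. The minimum $b\wedge\max(W^\eta+c_n,W^{\eta'}+c_n)$ should be compared with $\max(W^\eta+\varphi(\eta)\wedge c_n,\ W^{\eta'}+\varphi(\eta')\wedge c_n)$. When $\varphi(\eta),\varphi(\eta')<c_n$, the latter is $\le b$ and is dominated by colors of $b$ off $\{\eta,\eta'\}$. Here $c_n\to-\infty$ slowly, but we can place $\eta,\eta'$ far out (depending on $n$) so that the required comparison $b<W^{\eta'}+c_n$ holds at some point via the quadratic growth $t\mathcal P(\eta')^2$ from Lemma \ref{lem:Buse_global}. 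Meanwhile $\varphi(\eta')+W^{\eta'}<b$ at that point, since $\varphi(\eta')\ll c_n$ by superquadratic decay. Condition \eqref{it:c1} must still hold uniformly, which forces $c_n\le -R\,\mathcal P(\eta'_n)^2$. Balancing this against the growth of $W^{\eta'}$ at a large time $t$ (with $t> R$) is the main obstacle; I expect to choose $t_n\to\infty$ so that the comparison works. For the second claim, take $b_1^n,b_2^n$ to be single Busemann functions $W^{\eta}-n$ and $W^{\eta'}-n$ with $\eta\ne\eta'$. Their $\varphi$'s have disjoint supports, so $\hm[\cdot\wedge\cdot]\equiv-\infty$, while $b_1^n\wedge b_2^n$ is finite; hence $b_1^n\wedge b_2^n\notin\es$ by \eqref{eq:o60}.
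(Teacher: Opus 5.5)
Your first case ($\supp\varphi\neq\Xs$) and your treatment of the second claim are correct. The route also differs from the paper's. The paper builds, at time $0$, a translate $c_M+W^{\dir^M}$ that crosses $b$ inside one color interval, so that $b\wedge b_n$ would show a larger color to the left of a smaller one, and then invokes monotonicity of colors. You instead detect $b\wedge b_n\notin\es$ abstractly through \eqref{eq:o60}, which is legitimate because Theorem \ref{thm:mr} and Proposition \ref{prop:Legendre} do not depend on this lemma.

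The gap is the case $\supp\varphi=\Xs$, i.e.\ $\varphi>-\infty$ everywhere, which you leave unproved. Your ``corrected mechanism'' ends at an unresolved ``main obstacle'': balancing Condition \eqref{it:c1} against the growth of $W^{\eta'}$ with $t_n\to\infty$. It also contains a step that does not follow, namely ``$\varphi(\eta')+W^{\eta'}<b$ at that point, since $\varphi(\eta')\ll c_n$''. If the chosen point lies in $\clr^b(\eta')$, then $b=\varphi(\eta')+W^{\eta'}$ there no matter how small $\varphi(\eta')$ is. The analogous requirement for $\eta$ is not addressed at all.

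The obstacle is illusory, and one color suffices. By superquadratic decay, choose $\eta_n$ with $|\mathcal P(\eta_n)|\to\infty$ and $\varphi(\eta_n)<0$. Put $c_n:=\varphi(\eta_n)/2$ (or $c_n:=-n\mathcal P(\eta_n)^2$ if $\varphi(\eta_n)=-\infty$), $\varphi_n:=c_n\mathbf 1_{\{\eta_n\}}$, and $b_n:=\hm[\varphi_n]=W^{\eta_n}+c_n\in\es$.

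\begin{itemize}
\item Since $c_n/\mathcal P(\eta_n)^2\to-\infty$, the bound \eqref{eq:Wbd187} (from Lemma \ref{lem:Buse_global}) gives $b_n\to-\infty$ uniformly on compacts.
\item Set $\psi_n:=\varphi\wedge\varphi_n=\varphi(\eta_n)\mathbf 1_{\{\eta_n\}}\in\fun$. Then $\hm[\psi_n]=W^{\eta_n}+\varphi(\eta_n)<b_n$ at every point, because $\varphi(\eta_n)<c_n$ strictly.
\item For large $n$, $\supp\varphi\not\subseteq\{\eta_n\}$, so $\psi_n\neq\varphi$. Injectivity of $\hm$ then gives a point $p$ with $b(p)>\hm[\psi_n](p)$.
\item Consequently $(b\wedge b_n)(p)>\hm[\psi_n](p)$, so $b\wedge b_n\neq\hm[\psi_n]$, and \eqref{eq:o60} yields $b\wedge b_n\notin\es$.
\end{itemize}

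No growth comparison or choice of large times is needed. The existence of a point with $b<W^{\eta_n}+c_n$, which you were trying to engineer, is in any case automatic from \eqref{eq:hom_inv}, since $\varphi(\eta_n)<c_n$. This argument handles both of your cases at once.
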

\begin{proof}
    We first consider the case where $b$ is a finite eternal solution. We know by Lemma \ref{lem:eternal_time_s} that there exists an increasing sequence $\{x_i\}_{i\in\Z}\subseteq \R$, an nondecreasing sequence $\{\dir_i\}_{i \in \Z}$ of colors, and constants $\{\varphi_i\}_{i \in \Z}$ such that
    \begin{equation}
        b(x,0)=\varphi_i + W^{\dir_i}(0,0;x,0), \qquad \forall x\in[x_i,x_{i+1}].
    \end{equation}
    Let $M\rightarrow \infty$ be such that $M\in(x_{i_M},x_{i_{M+1}})$ for some $i_M\in\N$. The Busemann process has the following property: For a fixed $\dir_0\in \Xs$, $t\in\R$, any non-trivial interval $[a,b]\subseteq \R$ and $x_0\in(a,b)$ there exists $a<x^-<x_0<x^+<b$ and $\dir_0<\dir$ such that
    \begin{equation}\label{eq90}
    \begin{aligned}
          &W^{\dir_0}(x_0,t;x,t)<W^\dir(x_0,t;x,t), \qquad \forall x\in[a,x^-]\\           &W^{\dir_0}(x_0,t;x,t)<W^\dir(x_0,t;x,t), \qquad \forall x\in[x^+,b],
    \end{aligned}
    \end{equation}
    This can be seen from the queuing representation for the Busemann process \cite{CGM_Joint_Buse,Busanidiffuse,Seppalainen-Sorensen-21b}. From  \eqref{eq90} we see that  for each $M$, there exists $\dir^M\in\Xs$ and $c_M\in\R$ and $x_{i_M}<x_M^-<x_M^+<x_{i_M+1}$ such that
    \begin{equation}
    \begin{aligned}
          &c_M + W^{\dir^M}(0,0;x,0)<\varphi_{i_M} + W^{\dir_{i_M}}(0,0;x,0), \qquad \forall x\in[x_{i_M},x^-_M]\\           &c_M + W^{\dir^M}(0,0;x,0)>\varphi_{i_M}+W^{\dir_{i_M}}(0,0;x,0), \qquad \forall x\in[x^+_M,x_{i_{M+1}}].
    \end{aligned}
    \end{equation}
    Now set $b_n:= c_n + W^{\dir^n}(0,0;x,t)$, and assume without loss of generality that $n\in (x_{i_n},x_{i_{n+1}})$ (otherwise we can take a subsequence for which this holds). It follows that, along time horizon $t = 0$, $b\wedge b_n$ has the increments of  $W^{\dir_M}$ on $[x_{i_M}x^-_M]$ and the increments of  $W^{\dir_{i_M}}$ on $[x^+_M,x_{i_{M+1}}]$. But for eternal solutions, the colors are increasing on any time horizon by Lemnma \ref{lem:s3}. This shows that $b\wedge b_n\notin \es$. We are left to show that $b_n\rightarrow -\infty$. For $R>0$, let $\mathfrak{R}=[-R,R]^2$ be a rectangle on the plane with side length $R$. Then for $M_0$ large enough depending on $R$, for any $M>M_0$,
    \begin{equation}\label{eq100}
        R<\tau^{L,(c_M,c_{M+1}),\dir^M,\dir^{M+1}}_t, \qquad \forall t\in[-R,R].
    \end{equation}
    In words, the interface between $c_M + W^{\dir^M}(0,0;x,t)$ and $c_{M+1} + W^{\dir^{M+1}}(0,0;x,t)$ goes to the right of the rectangle $\mathfrak{R}$. Without loss of generality we may assume that
    \begin{equation}\label{eq101}
        \tau^{L,(c_M,c_{M+1}),\dir^M,\dir^{M+1}}_t<\tau^{L,(c_{M+1},c_{M+2}),\dir^{M+1},\dir^{M+2}}_t, \qquad \forall t\in[-R,R],
    \end{equation}
    otherwise we may take a subsequence $\{b_{n_k}\}$ for which this holds. Now, using the monotonicity of the Busemann process as well as \eqref{eq100}-\eqref{eq101} this shows that for any $(x,t)\in\mathfrak{R}$ we have the following monotone limit $b_n(x,t)\downarrow -\infty$. As the function $f=-\infty$ is continuous, Dini's Theorem implies that $b_n\rightarrow -\infty$ uniformly on $\mathfrak{R}$. This concludes the proof of the first statement. To see the second statement, pick any two eternal solutions $b_1,b_2$ such that $b_1\wedge b_2 \notin \es$ (we saw that these exist in the previous part). Then, setting $b_i^n = b_i - n$ gives the desired sequence.  
\end{proof}

\bibliographystyle{plain}
\bibliography{references}
\end{document}